\setlist[itemize]{noitemsep} 			
\newtheorem{theorem}{Theorem}[section]
\newtheorem{lemma}[theorem]{Lemma}
\newtheorem{proposition}[theorem]{Proposition}
\theoremstyle{definition}
\newtheorem{sublemma}[theorem]{Sublemma}
\newtheorem{definition}[theorem]{Definition}
\newtheorem{corollary}[theorem]{Corollary}
\theoremstyle{remark}
\newtheorem{remark}[theorem]{Remark}
\numberwithin{equation}{section}
\newcommand{\T}{\mathbb{T} }
 \newcommand{\R}{\mathbb{R}}
 \newcommand{\C}{\mathbb{C}}
  \newcommand{\N}{\mathbb{N}}
 \newcommand{\Z}{\mathbb{Z}}
      \newcommand{\BB}{\mathcal{B}}
    \newcommand{\MM}{\mathcal{M}}
     \newcommand{\CC}{\mathcal{C}}
          \newcommand{\Tr}{\operatorname{Tr}}
\DeclareMathOperator{\Span}{span}
\DeclareMathOperator{\proj}{proj}
\author{M.Chatal, C.Chavaudret, L.H.Eliasson}
\title{Real almost reducibility of differentiable real quasi-periodic cocycles}
\date{}
\begin{document}

\maketitle

\begin{abstract}
\noindent We prove that infinitely differentiable almost reducible quasi-periodic cocycles, under a Diophantine condition on the frequency vector, are almost reducible to a real constant cocycle with a real conjugation, up to a period doubling.\end{abstract}


\section{Introduction}
Let $\omega \in \R^d$ a rationally independent vector, i.e. a vector satisfying
\[ \langle k, \omega \rangle \neq 0, \quad \forall k \in \Z^d\setminus\{0\}, \]
and let $A : \T^d \rightarrow \mathcal{M}(n, \C)$ of class $\CC^\infty$, where $\T^d=\R^d/\Z^d$. The quasiperiodic cocycle associated to $A$ is the map (of class $\CC^\infty$)  $X_{\omega, A} : \R \times \T^d \rightarrow Gl(n, \C)$  which is solution of  
\[
\left\{\begin{array}{l}
\frac{d}{dt}X_{\omega, A}^t(\theta) = A(\theta + t\omega)X^t_{\omega, A}(\theta) \\
X^0_{\omega, A}(\theta) = I.
\end{array}\right.
\]

\begin{remark}
In this paper, all functions and mappings will, unless otherwise specified,  be of class $\CC^\infty$.
\end{remark}

\medskip

\noindent We will say that the cocycle $X_{\omega, A}$  is \textit{real} if $A$ is a real valued map, and that it is  \textit{constant} if $A$ is a constant map.

\noindent A cocycle $X_{\omega, A}$ is \textit{conjugated} to a cocycle $X_{\omega, B}$ if and only if there exists a map $Z: \T^d \rightarrow Gl(n, \C)$  such that 
\begin{equation}\label{eq1.1}X^t_{\omega, A}(\theta) = Z(\theta + t\omega)X^t_{\omega, B} Z(\theta)^{-1}\quad \forall (t, \theta) \in \R \times \T^d.   \end{equation}
The mapping $Z: \T^d \rightarrow Gl(n, \C)$ is a \textit{conjugation} between $X_{\omega, A}$ and $X_{\omega, B}$. It satisfies the condition
\begin{equation}\label{eq1.2}
 \partial_\omega Z (\theta) = A(\theta)Z(\theta) - Z(\theta)B(\theta)\quad \forall  \theta \in  \T^d , \end{equation}
 where
 \[ \partial_\omega Z(\theta) = \frac{d}{dt}Z(\theta + t \omega)_{\vert t=0},\]
which is equivalent to  (\ref{eq1.1}).

\noindent A cocycle  is \textit{reducible} if and only if it is conjugated to a constant cocycle.
A cocycle  is \textit{real reducible} if and only if it is real and conjugated to a constant cocycle by a real conjugation.

\medskip

\noindent A natural question is whether a real and reducible cocycle is real reducible. The answer is yes modulo a  ``\textit{period-doubling}'':

\begin{theorem}\label{mainthm1}
        If $X_{\omega, A}$ is a real and reducible cocycle, then $X_{\frac\omega2, A_2}$ is real reducible, where
$$A_2(\theta)=A(2\theta)\quad \forall \theta\in\T^d.$$
\end{theorem}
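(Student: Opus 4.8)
The plan is to upgrade the complex reduction provided by the hypothesis to a real one for the doubled cocycle, by isolating the obstruction to reality and observing that halving the frequency removes it. Since $X_{\omega,A}$ is reducible there are $Z\colon\T^d\to Gl(n,\C)$ and a constant matrix $B$ with $\partial_\omega Z=AZ-ZB$, i.e. $X^t_{\omega,A}(\theta)=Z(\theta+t\omega)e^{tB}Z(\theta)^{-1}$. As $A$ is real the cocycle $X_{\omega,A}$ is $Gl(n,\R)$-valued, so conjugating this identity shows that $(\overline Z,\overline B)$ is again a reduction. Hence $P(\theta):=Z(\theta)^{-1}\overline{Z(\theta)}$ satisfies
\[
\partial_\omega P=BP-P\overline B,\qquad P(\theta)\,\overline{P(\theta)}=I .
\]
Comparing Fourier coefficients in the first equation forces $P$ to be a trigonometric polynomial: a frequency $k$ can occur only when $2\pi i\langle k,\omega\rangle$ is one of the finitely many eigenvalues of $M\mapsto BM-M\overline B$. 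This $P$ is the only obstruction to reality: if we could write $P=S^{-1}\overline S$ with $S$ a conjugation of $X_{\omega,B}$ to a \emph{real} constant cocycle (i.e. $\partial_\omega S=CS-SB$, $C$ real), then $W:=ZS^{-1}$ would be real-valued and reduce $A$ to $C$; but in general no such $S$ exists on $\T^d$.

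Now pass to the doubled data. Since $X^t_{\omega/2,A_2}(\theta)=X^t_{\omega,A}(2\theta)$, the map $\tilde Z(\theta):=Z(2\theta)$ reduces $A_2$ to $B$ for the frequency $\tfrac\omega2$, and $\tilde Z(\theta)^{-1}\overline{\tilde Z(\theta)}=P(2\theta)=:\tilde P(\theta)$ is a trigonometric polynomial \emph{all of whose Fourier modes lie in $2\Z^d$}. It suffices to produce $U\colon\T^d\to Gl(n,\C)$ with
\[
\partial_{\omega/2}U=CU-UB\quad\text{for some \emph{real} constant }C,\qquad U^{-1}\overline U=\tilde P ,
\]
since then $W:=\tilde Z\,U^{-1}$ is real-valued, satisfies $\partial_{\omega/2}W=A_2W-WC$, and exhibits $X_{\omega/2,A_2}$ as real reducible. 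The whole point of the period doubling is that $\tilde P$ carries frequencies only in $2\Z^d$, so the ``half-winding'' maps needed below — which simultaneously trivialise the reality defect à la Hilbert 90 and shift the spectrum of $B$ onto the real axis — descend to honest maps on $\T^d$.

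To construct $U$ I would first normalise $B$: conjugating $B$ by a constant (and adjusting $Z$), one splits $\C^n$ into $B$-invariant subspaces compatible with complex conjugation (stable under it, after grouping a conjugate pair of blocks if necessary), on each of which $\tilde P$ is monochromatic, $\tilde P(\theta)=e^{2\pi i\langle 2m,\theta\rangle}\tilde P_0$ with $m\in\Z^d$ and $\tilde P_0\,\overline{\tilde P_0}=I$; here the vector $m$ is dictated by the spectral/resonance structure of $P$, and for eigenvalues of $B$ that are carried to themselves by conjugation-up-to-winding this $m$ need not be even — which is exactly the failure of real reducibility over $\omega$ itself. On such a block set $U_0(\theta):=e^{-2\pi i\langle m,\theta\rangle}S_0$, where $S_0$ is a constant with $S_0^{-1}\overline{S_0}=\tilde P_0$ furnished by Hilbert 90; then $U_0$ is well defined on $\T^d$ precisely because $m\in\Z^d$, one has $U_0^{-1}\overline{U_0}=\tilde P$ on the block, and a short computation using the block version of $\partial_{\omega/2}\tilde P=B\tilde P-\tilde P\overline B$ gives $\partial_{\omega/2}U_0=CU_0-U_0B$ with
\[
C=-\pi i\langle m,\omega\rangle\,I+S_0BS_0^{-1}
\]
\emph{real}. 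Assembling these block maps into one $U$ and the block constants into one real $C$ produces the desired reduction.

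The step I expect to be the main obstacle is the normalisation: producing a single constant conjugation of $B$ after which $\C^n$ decomposes into pieces that are at once $B$-invariant, compatible with complex conjugation, and on which $\tilde P$ is monochromatic — and then verifying that the reassembled $W$ and $C$ are genuinely real-valued, in particular when $B$ is not semisimple, so that Jordan structure and windings interact. Everything else (the fibrewise Hilbert 90, the bookkeeping of the winding vectors $m$, and the check that $\partial_{\omega/2}U=CU-UB$ with $C$ real) is routine linear algebra.
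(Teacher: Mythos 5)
Your reduction of the theorem to producing $U:\T^d\to Gl(n,\C)$ with $\partial_{\omega/2}U=CU-UB$ for a \emph{real} constant $C$ and $U^{-1}\overline U=\tilde P$ is correct, as are the observation that $P=Z^{-1}\overline Z$ is a trigonometric polynomial and the per-block computation showing that $C=-\pi i\langle m,\omega\rangle I+S_0BS_0^{-1}$ is real once $\tilde P$ is monochromatic on a $B$-invariant piece. The gap is exactly the step you flag yourself: the decomposition of $\C^n$ into $B$-invariant, conjugation-compatible subspaces on which $\tilde P$ is monochromatic does not exist in general, and nothing in your argument produces it. Write $P$ in blocks $P_i^j$ adapted to the generalized eigenspaces of $B$ (in Jordan form). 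Then $P_i^j=0$ unless $\alpha_i-\bar\alpha_j=2i\pi\langle k_{ij},\omega\rangle$, and in that case its only Fourier mode is $k_{ij}$, which is \emph{uniquely determined} by the pair $(\alpha_i,\alpha_j)$ by rational independence of $\omega$. Hence $\tilde P$ is block diagonal only with respect to the classes of the equivalence relation generated by these resonances, and inside one class it can have all admissible blocks nonzero, i.e.\ it mixes all the corresponding generalized eigenspaces; so any $B$-invariant piece preserved by $\tilde P$ must contain a whole class. As soon as a class contains two eigenvalues differing by a nonzero element of $2i\pi\langle\Z^d,\omega\rangle$ (for instance two self-linked eigenvalues $\alpha_1,\alpha_2$ with $\alpha_1-\bar\alpha_1$, $\alpha_2-\bar\alpha_2$, $\alpha_1-\bar\alpha_2$ all resonant, or a partition $\Sigma_1=\{\beta_1,\beta_2\}$, $\Sigma_2=\{\gamma_1,\gamma_2\}$ with all cross-blocks nonzero), the restriction of $\tilde P$ to that piece carries several distinct modes $2k_{ij}$, so no single $m$ exists; equivalently, monochromaticity would force $\tilde P_0\overline B\tilde P_0^{-1}=B-2i\pi\langle m,\omega\rangle I$ on the piece, one winding vector serving all resonant pairs, which fails in these configurations. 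So the maps $U_0=e^{-2i\pi\langle m,\theta\rangle}S_0$ cannot be assembled into the required $U$.

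This is precisely the difficulty the paper's proof is organised around, and it avoids solving your Hilbert-90 problem altogether. There, $W$ is diagonal with one exponential \emph{per eigenvalue} (frequency $k_i$ coming from the resonance of $\alpha_i$ with itself in the odd-loop case, or with a fixed base point $\alpha$ of its class otherwise), so several frequencies inside one class are allowed; this only moves the spectrum of $B$ onto $\R$ or onto conjugate pairs, and the resulting constant matrix $B'$ is not yet real. Two further ingredients are then needed, both missing from your sketch: first, the blocks of $B'$ attached to conjugate eigenvalues have the same Jordan structure, proved via the constancy and invertibility of $W_{[\alpha]}^{-1}\tilde U_{[\alpha]}\overline W_{[\alpha]}$ (this is where your concern about non-semisimple $B$ is actually resolved), after which a constant unitary puts $B'$ in real form; second, the final conjugation is still complex, and it is made real not by splitting the reality cocycle but by the elementary trick of Proposition \ref{redpreli}: replace $Z'$ by $\Re Z'+\lambda\Im Z'$ for a suitable $\lambda\in\R$. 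If you repair your scheme by using a diagonal of exponentials, one per eigenvalue, instead of a single exponential per piece, you are led back to exactly this route — and then the reality of $C$ is no longer automatic, so these two extra steps cannot be dispensed with.
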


\noindent Hence, there exist a map $Z: \T^d \rightarrow Gl(n, \R)$ and a constant matrix $B\in gl(n,\R)$ such that
\begin{equation}\label{eq1.3}
 \partial_{\frac\omega2} Z (\theta) = A(2\theta)Z(\theta) - Z(\theta)B \quad \forall  \theta \in  \T^d . \end{equation}
 
\noindent If we denote
 $$W(2\theta)=Z(\theta),$$
then  (\ref{eq1.3}) says that
$$ \partial_{\omega} W (\theta) = A(\theta)W(\theta) - W(\theta)B\quad \forall  \theta \in 2\T^d.$$
This looks very much like real reducibility of $X_{\omega, A}$, with the difference that $W$ is not defined on $\T^d$, but only on  the $2^d$-fold covering  $\R^d/(2\Z)^d$  of $\T^d$ --  this ``period-doubling''  cannot be avoided in general.

Theorem \ref{mainthm1} was proven in the article \cite{C10}, which also contains several other results of similar nature. 

\medskip

In this paper we shall discuss a similar result in the framework of almost reducible cocycles, i.e. cocycles that can be conjugated arbitrarily close to constant cocycles.
There is no canonical meaning of ``arbitrarily close'' and we shall use a pretty stringent formulation.

A cocycle $X_{\omega, A}$ is \textit{almost reducible } if and only if there exist sequences of maps $Z_j: \T^d \rightarrow Gl(n, \C)$, $F_j : \T^d \rightarrow gl(n, \C)$  and a sequence of matrices $B_j\in gl(n,\C)$ such that 
\begin{equation}\label{eq1.4}
 \partial_\omega Z_j (\theta) = A(\theta)Z_j(\theta) - Z_j(\theta)\big( B_j+F_j(\theta)\big) \quad \forall  \theta \in  \T^d  \end{equation}
 with
 \begin{equation} \lim_{j \rightarrow + \infty} \Vert Z_j^{\pm 1} \Vert_{\CC^r}^m \Vert F_j \Vert_{\CC^r} = 0, \quad \forall r, m \in \N. \ \ 
 \footnote { this formulation denotes the condition:  $Z_j^{ +1}=Z_j$
 }
  \label{conditionconvergence}\end{equation}
  
We will say that a real cocycle $X_{\omega, A}$ is \textit{real almost reducible} if and only if it is almost reducible with a sequence of real-valued maps $Z_j$ and a sequence of
real matrices $B_j$ satisfying \eqref{eq1.4} and \eqref{conditionconvergence} (then $F_j$ is automatically real).

\medskip

In this paper we shall prove:

\begin{theorem} If $X_{\omega, A}$ is a real cocycle which is almost reducible and  $\omega$ is Diophantine, then $X_{\frac{\omega}{2}, A_2}$ is real almost reducible.
\end{theorem}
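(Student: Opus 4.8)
The plan is to transport the reducible statement (Theorem~\ref{mainthm1}) through an approximation by nearly reducible cocycles, keeping every conjugation under polynomial control so that \eqref{conditionconvergence} survives. Fix sequences $Z_j,B_j,F_j$ as in \eqref{eq1.4}--\eqref{conditionconvergence}, so that $Z_j$ conjugates $X_{\omega,A}$ to the nearly constant cocycle $X_{\omega,B_j+F_j}$. Taking complex conjugates in \eqref{eq1.4} and using that $A$ is real, $\overline{Z_j}$ conjugates $X_{\omega,A}$ to $X_{\omega,\overline{B_j+F_j}}$; hence the ``reality defect'' $Y_j:=\overline{Z_j}^{-1}Z_j$ satisfies
\[
\partial_\omega Y_j=\overline{(B_j+F_j)}\,Y_j-Y_j(B_j+F_j),\qquad Y_j\,\overline{Y_j}=I .
\]
Were $Y_j$ constant and equal to $I$, no period doubling would be needed; in general $Y_j$ varies with $\theta$, and it is exactly the trivialization of this defect that will force the passage to $\theta\mapsto 2\theta$. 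Following the proof of Theorem~\ref{mainthm1}, I would seek the real conjugation in the form $W_j(\theta)=Z_j(2\theta)\,V_j(\theta)$, with $V_j:\T^d\to Gl(n,\C)$ to be constructed. A short computation from \eqref{eq1.4} gives
\[
\partial_{\frac\omega2}W_j=A_2\,W_j-W_j\,\Theta_j,\qquad
\Theta_j=V_j^{-1}\!\left(\,(B_j+F_j(2\cdot))\,V_j-\partial_{\frac\omega2}V_j\,\right),
\]
while $W_j$ is real precisely when $\overline{V_j}=Y_j(2\cdot)\,V_j$. So it suffices to produce $V_j$ with $\overline{V_j}=Y_j(2\cdot)\,V_j$ and with $\Theta_j=C_j+G_j$, where $C_j\in gl(n,\R)$ is a constant matrix and $G_j$ is a small real remainder.

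I would build $V_j$ in two steps. First, a small-divisor (KAM-type) conjugation cancelling the nonconstant — but, by \eqref{conditionconvergence}, extremely small — term coming from $F_j(2\cdot)$; this is where I expect the Diophantine hypothesis on $\omega$ to enter, and where the statement genuinely goes beyond Theorem~\ref{mainthm1}, which required no arithmetic condition because it concerned exact reducibility. Second, an algebraic realification exploiting $Y_j(2\cdot)\,\overline{Y_j(2\cdot)}=I$ — a ``conjugate to its conjugate'' / Hilbert~90 type argument, as in \cite{C10} — after which the normal form $C_j$ can be taken real; the period doubling is precisely what makes this last passage possible. The delicate point is to run the two steps so that the perturbation and the reality structure do not interfere, which couples them.

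It then remains to verify \eqref{conditionconvergence} for $(W_j,C_j,G_j)$. Here $\|G_j\|_{\CC^r}$ is bounded by a product of powers of $\|Z_j^{\pm1}\|_{\CC^r}$, $\|V_j^{\pm1}\|_{\CC^r}$ and $\|F_j\|_{\CC^r}$, and in turn $\|V_j^{\pm1}\|_{\CC^r}$, $\|W_j^{\pm1}\|_{\CC^r}$ and $\|C_j\|$ are bounded by a polynomial in $\|Z_j^{\pm1}\|_{\CC^r}$ (note that $\|B_j\|\le C(1+\|A\|_{\CC^0})\,\|Z_j^{-1}\|_{\CC^1}\|Z_j\|_{\CC^1}+o(1)$ follows at once from \eqref{eq1.4}, and the two constructions above must be carried out quantitatively). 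Since \eqref{conditionconvergence} gives $\|Z_j^{\pm1}\|_{\CC^r}^N\|F_j\|_{\CC^r}\to 0$ for every $N$, this yields $\|W_j^{\pm1}\|_{\CC^r}^m\|G_j\|_{\CC^r}\to 0$ for all $r,m\in\N$, which is the asserted real almost reducibility of $X_{\frac\omega2,A_2}$.

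The main obstacle is precisely the construction of $V_j$: trivializing the nonconstant reality defect $Y_j(2\cdot)$ and cancelling $F_j(2\cdot)$ simultaneously, with estimates that are only polynomial in $\|Z_j^{\pm1}\|_{\CC^r}$ — any worse dependence would not be absorbed by \eqref{conditionconvergence}. In particular the small-divisor scheme has to be arranged so as to respect the reality constraint at every step, under only a Diophantine (not a stronger) condition on $\omega$; reconciling these requirements is the heart of the argument.
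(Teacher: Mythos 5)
Your starting point coincides with the paper's: you form the reality defect $U_j=Z_j^{-1}\overline{Z_j}$ (your $Y_j$ is its inverse-conjugate variant), you look for the new conjugation in the form $Z_j(2\theta)V_j(\theta)$, and you correctly insist that every bound must be polynomial in $\Vert Z_j^{\pm1}\Vert_{\CC^r}$ so that \eqref{conditionconvergence} survives. But the proposal stops exactly where the work begins: the construction of $V_j$ is explicitly left as ``the main obstacle'', and the mechanism you sketch for it --- an iterative small-divisor scheme cancelling $F_j(2\cdot)$ combined with an exact Hilbert-90-type trivialization $\overline{V_j}=Y_j(2\cdot)V_j$ --- is neither carried out nor the one that works. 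Solving the twisted equation $\overline{V_j}=Y_j(2\cdot)V_j$ exactly (so that $W_j$ itself is real) is essentially as hard as the original problem, and nothing in your outline explains why a solution with polynomially controlled norms should exist; in particular, you give no argument why the constant normal form $C_j$ can be chosen \emph{real}, which is the crux of the theorem.

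The paper's proof rests on three components that are absent from your plan. First (Sections \ref{section4}--\ref{section4bis}), the matrices $B_j$ are conjugated to Jordan normal form with quantitative control; this is delicate because the Jordan conjugation carries no a priori bounds, and it requires the $\varepsilon$-dependent echelon-form analysis of Propositions \ref{echelonnee} and \ref{JNF-estimates} together with the spectral separation of Proposition \ref{ARtoblocks}. Second, $U_j$ is truncated to a trigonometric polynomial of controlled degree (Lemma \ref{troncation}), and the approximate conjugation $\partial_\omega U_j=B_jU_j-U_j\bar B_j+G_j$ with $U_j$ invertible is shown to force the eigenvalues of $B_j$ into $(N,\rho)$-chains: either odd loops, or a balanced partition $\Sigma_1\cup\Sigma_2$ whose two halves have equal multiplicities and, by the stability result Proposition \ref{pApp2} (via Lemma \ref{l6.8}), the same Jordan structure. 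This is what produces the real constant: an explicit \emph{diagonal} conjugation $W_j$ by exponentials $e^{2i\pi\langle k_i,\theta\rangle}$ or $e^{4i\pi\langle k_i,\theta\rangle}$ (Lemma \ref{construction-de-W} --- this is precisely where the frequency $\tfrac{\omega}{2}$ appears) plus an $O(\rho)$ perturbation $B''_j$ turns the constant part into a real matrix $B'_j$; no KAM iteration and no cancellation of $F_j$ is involved, $F_j$ is merely kept small. Third, the conjugations $Z_j(2\cdot)W_j$ are still complex; the passage to real conjugations is a separate quantitative version of the $\Re Z+\lambda\Im Z$ trick (Proposition \ref{presqueconjugaisonreelle}), and it is there --- through the scalar small-divisor equation for $\det W_j$ (Lemma \ref{l3.1}) --- that the Diophantine condition is actually used, not in a KAM scheme. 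Without the resonance/Jordan-structure analysis there is no reason the limiting constants can be taken real, so as it stands your text is a plausible strategy outline rather than a proof.
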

This means that there exist sequences of maps $Z_j: \T^d \rightarrow Gl(n, \R)$, $F_j : \T^d \rightarrow gl(n, \R)$  and a sequence of matrices $B_j\in gl(n,\R)$ such that
$$
 \partial_{\frac{\omega}2} Z_j (\theta) = A(2\theta)Z_j(\theta) - Z_j(\theta)( B_j+F_j(\theta)) \quad \forall  \theta \in  \T^d$$
 and satisfying \eqref{conditionconvergence}.

There is an important difference compared to the reducible case. In the reducible case, no arithmetical condition on the frequency vector $\omega$ is needed, but in the almost reducible case the proof requires some such conditions. Indeed, we need to ensure that the primitive of a quasi-periodic function
$$\R\ni t\mapsto f(t\omega)$$
is quasi-periodic and smooth, and this requires arithmetical conditions on the frequency vector $\omega$. We have no idea if the theorem remains true without such conditions.

\bigskip

Reducibility and almost reducibility of quasiperiodic cocycles are important properties to understand the behaviour of cocycles. See for example   \cite{E92}  and \cite{Y18} for applications to one-dimensional quasi-periodic Schr\"odinger operators, and   \cite{E01} for applications to  quasi-periodic cocycles 
on $SO(3,\R)$.

The notion of almost reducibility is strictly weaker than that of reducibility
\footnote{ except when $n=1$ or $d=1$}.  For example, in the analytic perturbative case, under arithmetical assumptions on the frequency vector, there are constant cocycles, all of whose perturbations are  almost reducible but, generically, not reducible --  see for example  \cite{E92} and   \cite{E01}.

Both  reducibility and almost reducibility are very much perturbative phenomena\footnote{ except when $n=1$ or $d=1$}. Most results are available in analytic or ultradifferentiable category (see \cite{E92},\cite{E01},\cite{HY12},\cite{CC23}...), much fewer
in class $\CC^\infty$  --  for  a result in $\CC^\infty$ see for example \cite{FK09}.

Perturbative reducibility results require arithmetical conditions on the frequency vector. A Diophantine condition is most often used but it can be relaxed to a Brjuno-Rüssmann condition (see \cite{CM12},\cite{BCL21}...).

Without arithmetical conditions, weaker properties 
\footnote{such as rotational reducibility and almost rotational reducibility}
have been proven using renormalization techniques (see for example \cite{AK06}). These results are for the moment very much restricted to two frequencies $\omega=(\omega_1,\omega_2)$.

\subsection{A word about the proof}

It is  pretty forward to show that if a real cocycle can be conjugated to a real matrix by a complex conjugation, then it can be conjugated by a real conjugation. Therefore it suffices to prove that a real cocycle can be conjugated to a real matrix.

In the article \cite{C10}, (complex)  invariant  subbundles of a real cocycle are used to construct real invariant  subbundles. In this paper we give another proof of this. Indeed here we prove that a complex matrix to which a real reducible cocycle can be conjugated, has the same spectral properties (i.e. Jordan normal form)  as a real matrix -- it can therefore be conjugated to a real matrix. We prove this in section \ref{section2}. We generalize then this approach to almost reducible cocycles, but there are several complications. 

We would like to prove that if a real cocycle can be almost conjugated to a sequence of real matrices by complex conjugations, then it can be conjugated by real conjugations. This may be true, but we have only been able to prove this under a Diophantine condition on the frequency vector $\omega$  --  see section \ref{section3}.

We can always conjugate a matrix to Jordan normal form but we have no control on the conjugation.  In the reducible case this gives no problem, but in the almost reducible case it does since we need to ensure convergence condition \eqref{conditionconvergence}. This problem is treated in section \ref{section4}. Finally a result about almost reducibility to a Jordan normal form is given in section \ref{section4bis}.

In section \ref{section5} we analyse the spectral properties of a complex matrix to which a real cocycle is almost reduced. We show that, up to a 
sufficiently small perturbation, it has the same spectral properties as a real matrix.

Finally we put the results from sections \ref{section4bis} and \ref{section5} together and show that the estimates obtained are good enough to guarantee almost reducibility to real cocycles.

\subsection{Notations}

For any set $X$, we denote by $\# X$ its cardinality.

\medskip

\noindent 
For any $n\times n$-matrix $A$ we denote by $\sigma(A)$ its spectrum, that is to say the subset of $\C$ consisting of the eigenvalues of $A$. Clearly $\#\sigma(A)\le n$. Given $\alpha \in \sigma(A)$, we denote by $\text{mult}(\alpha)$ its (algebraic) multiplicity.

\medskip

\noindent 
Since all norms on $gl(n,\C)$ are equivalent, the definition of almost reducibility does not depend on the choice of matrix norms. We shall usually, unless otherwise said,
use the operator norm denoted by $\Vert \cdot \Vert$, but any other norm on $gl(n,\C)$  would do.

\noindent For a vector $k=(k_1,\dots k_d)\in\mathbb{R}^d$, denote by $|k|$ its $l^1$ norm: 
$\vert k \vert = \sum_{i=1}^d \vert  k_i \vert$.

\noindent 
As for the function norms, they are the usual:
\[ \Vert A \Vert_{\CC^0} = \displaystyle \sup_{\theta} \Vert A(\theta) \Vert\]
and for all $r \in \N$, 
\[ \Vert A \Vert_{\CC^r} = \max \{ \Vert \partial^\alpha A \Vert_{\CC^0} : \alpha \in \N^d, \vert \alpha \vert \leq r \}.\]
The norm $\Vert . \Vert_{\CC^r}$ is a complete norm on the space of (matrix-valued) $\CC^r$-functions.

\noindent 
Let us also recall  two  inequalities which we shall use frequently:
\begin{equation}\label{1}
\Vert AB\Vert_{\CC^r} \leq C_r \Vert A \Vert_{\CC^r}\Vert B \Vert_{\CC^r}, \quad \forall A, B \in \CC^r(\T^d, gl(n,\C))
\end{equation}

\begin{equation}\label{2}
\Vert A^{-1}\Vert_{\CC^r} \leq C_r \Vert A^{-1} \Vert_{\CC^0}^{r+1}\Vert A \Vert_{\CC^r}^r, \quad \forall A \in \CC^r(\T^d, Gl(n,\C))
\end{equation}
\noindent where $C_r$ is a constant which depends on $r$.

\medskip

\noindent 
Let us finally recall the Diophantine condition. We  say that $\omega \in \mathcal{DC}(\kappa, \tau)$ (for some $\kappa >0$ and $\tau > d-1$) if and only if
 \begin{equation}\label{DC} \vert \langle k, \omega \rangle \vert \geq \frac{\kappa}{\vert k \vert^{\tau}}, \quad \forall k\in \Z^d\setminus\{0\}.\end{equation}

\section{Real reducibility}\label{section2}
In this section, we prove a real reducibility proposition. This result has been already proved in \cite{C10}, but the proof here is different, and will be useful to understand the real almost reducibility result later.
\begin{proposition} \label{redpreli}
If two real cocycles $X_{\omega, A}$ and $X_{\omega, B}$ of dimension $n$ are conjugated, then they are conjugated by a real conjugation.
\end{proposition}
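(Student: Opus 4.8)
The plan is to start from a complex conjugation $Z:\T^d\to Gl(n,\C)$ satisfying $\partial_\omega Z=AZ-ZB$ and extract from it a real one. Writing $Z=X+iY$ with $X,Y:\T^d\to gl(n,\R)$ and using that $A,B$ are real valued, taking real and imaginary parts of \eqref{eq1.2} shows that both $X$ and $Y$ satisfy the same equation: $\partial_\omega X=AX-XB$ and $\partial_\omega Y=AY-YB$. Consequently, for every $s\in\C$ the matrix-valued map $Z+s\,\bar Z = (1+s)X + i(1-s)Y$ — or more usefully the family $X+\lambda Y$ for $\lambda\in\R$, and even $\mu X+\lambda Y$ for $(\mu,\lambda)\in\R^2$ — also satisfies the conjugation equation. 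So I have a whole two-real-parameter family of solutions of \eqref{eq1.2}, and I just need one member of it to be everywhere invertible.

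The key step is therefore: show that the real-affine family $\{\,\mu X(\theta)+\lambda Y(\theta): (\mu,\lambda)\in\R^2\,\}$ cannot consist entirely of non-invertible matrices for some $\theta$. Consider the function $(\mu,\lambda)\mapsto \det(\mu X(\theta)+\lambda Y(\theta))$; this is a homogeneous polynomial of degree $n$ in $(\mu,\lambda)$ with real coefficients. We know it is not identically zero as a polynomial in $(\mu,\lambda)\in\C^2$, because the point $(\mu,\lambda)=(1,i)$ (up to normalization, corresponding to $Z=X+iY$) gives $\det Z(\theta)\neq 0$. A nonzero homogeneous polynomial of degree $n$ in two variables has only finitely many projective roots, hence only finitely many "bad" directions $[\mu:\lambda]$ in $\R P^1$ (at most $n$ of them). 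The subtlety is that "bad" here means $\theta$-dependent: for each $\theta$ there is a finite bad set $S(\theta)\subset\R P^1$, but I need a single $[\mu:\lambda]$ that is good for all $\theta\in\T^d$ simultaneously.

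To get the uniform choice I would use a measure/Baire argument: for each fixed direction $[\mu:\lambda]\in\R P^1$, the set of $\theta$ with $\det(\mu X(\theta)+\lambda Y(\theta))=0$ is a closed subset of $\T^d$; I want to show that for a generic (in particular, for at least one) direction this closed set is empty. Alternatively — and more cleanly — I would argue by contradiction and a dimension count: the "bad locus" $\mathcal{B}=\{(\theta,[\mu:\lambda]) : \det(\mu X(\theta)+\lambda Y(\theta))=0\}\subset \T^d\times\R P^1$ is a closed set whose fibers over $\T^d$ are finite (cardinality $\le n$), so its projection to $\R P^1$ is a countable union of images of "finite-to-one-over-$\T^d$" pieces; since $\det(\mu X(\theta_0)+\lambda Y(\theta_0))$ is a nonzero polynomial in $[\mu:\lambda]$ for the particular $\theta_0$ witnessing $\det Z(\theta_0)\ne 0$ — actually for every $\theta$, since $\det Z(\theta)\neq 0$ everywhere — each fiber over $\R P^1$ in the other direction is a proper closed subset of $\T^d$, hence (as $\T^d$ is connected of positive dimension) has empty interior; a Fubini/Sard-type argument then yields a direction $[\mu_0:\lambda_0]$ whose fiber over it is empty. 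Having fixed such $(\mu_0,\lambda_0)$, the real map $\tilde Z:=\mu_0 X+\lambda_0 Y$ takes values in $Gl(n,\R)$, satisfies $\partial_\omega\tilde Z=A\tilde Z-\tilde Z B$, and is smooth, so it is the required real conjugation.

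I expect the main obstacle to be precisely this passage from "pointwise finitely many bad directions" to "one globally good direction", i.e. ruling out that the bad directions conspire to cover all of $\R P^1$ as $\theta$ ranges over $\T^d$. The cleanest route is probably to note that $g(\theta,[\mu:\lambda]):=\det(\mu X(\theta)+\lambda Y(\theta))$, restricted to any affine chart $\mu=1$, is a polynomial in $\lambda$ of degree $\le n$ whose coefficients are smooth (indeed real-analytic in $\lambda$) functions of $\theta$, with leading-type behaviour controlled by $\det Z(\theta)\neq 0$; then the zero set in $\T^d\times\R$ is a smooth "branched" set of dimension $d$ inside a $(d{+}1)$-manifold, so its projection to the $\lambda$-axis has measure zero (e.g. by the coarea/Sard theorem, or simply because for fixed $\theta$ it is finite and one can integrate), leaving full-measure many admissible $\lambda$. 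Once the existence of one good $\lambda$ (equivalently one good direction) is secured, the rest is immediate from the linearity observations above.
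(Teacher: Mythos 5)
Your reduction to finding one real direction $(\mu_0,\lambda_0)$ such that $\mu_0\Re Z+\lambda_0\Im Z$ is everywhere invertible is the right target, and your observation that any such combination again solves \eqref{eq1.2} is correct. But the step you yourself flag as the main obstacle — passing from ``for each $\theta$ at most $n$ bad directions'' to ``one direction good for all $\theta$'' — is exactly where the argument breaks, and none of the proposed soft arguments repairs it. Fubini does give that the bad locus $\mathcal{B}\subset\T^d\times\R$ has measure zero (finite fibers over $\theta$), but that says nothing about its \emph{projection} to the $\lambda$-axis: the graph of a smooth function $\T^d\to\R$ has measure zero and finite fibers yet projects onto an interval, and Sard/coarea does not exclude this. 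Likewise, the claim that for each fixed $\lambda$ the bad set of $\theta$ is ``a proper closed subset of $\T^d$, hence has empty interior'' is false: $Z$ is only $\CC^\infty$, so these zero sets are arbitrary closed sets, and a proper closed subset of a connected manifold can have nonempty interior. In fact no argument using only smoothness and pointwise invertibility of $Z$ can work: for $n=1$, $Z(\theta)=e^{2i\pi\theta_1}$ is everywhere invertible, yet every real combination $\mu\cos(2\pi\theta_1)+\lambda\sin(2\pi\theta_1)$ vanishes somewhere, so \emph{every} real direction is bad. (Such a $Z$ cannot intertwine two real cocycles — it conjugates $A=0$ to the non-real constant $-2i\pi\omega_1$ — which is precisely the information your existence argument never uses.)

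The missing ingredient is dynamical, and it is the heart of the paper's proof: since $W=\Re Z+\lambda\Im Z$ satisfies the same conjugation equation, it intertwines the flows, $X^{t}_{\omega,A}(\theta)W(\theta)=W(\theta+t\omega)X^{t}_{\omega,B}(\theta)$, so if $\det W(\theta)=0$ at one point then $\det W(\theta+t\omega)=0$ for all $t\in\R$; by rational independence of $\omega$ the orbit $\{[t\omega]\}$ is dense, and by continuity $\det W\equiv 0$. Hence for each real $\lambda$ the bad set of $\theta$ is either empty or all of $\T^d$, and it suffices to guarantee invertibility at the single point $\theta=0$: the polynomial $\lambda\mapsto\det(\Re Z(0)+\lambda\Im Z(0))$ is nonzero (its value at $\lambda=i$ is $\det Z(0)\neq0$), so it has at most $n$ real roots, and any real $\lambda$ avoiding them yields the desired real conjugation. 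With this propagation step inserted, your construction goes through; without it, the proof has a genuine gap.
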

\begin{proof}
Let $Z : \T^d \rightarrow Gl(n,\C)$ a conjugation between $X_{\omega, A}$ and $X_{\omega, B}$ 
\[\partial_\omega Z = A Z - ZB.\]
The polynomial \[ \det (\Re Z(0) + \lambda \Im Z(0))\]
is of degree $n$ and is not the zero polynomial because it doesn't vanish when $\lambda = i$ (since $Z(0) = \Re Z(0) + i \Im Z(0)$ is invertible) and thus it has at most $n$ zeros. Choose $\lambda \in \R$ such that $\Re Z(0) + \lambda \Im Z(0)$ is invertible and let 
\[ W(\theta) = \Re Z(\theta) + \lambda \Im Z(\theta), \]
then
\[ X^{t}_{\omega, A}(\theta) W(\theta) = W(\theta + t\omega)X^{t}_{\omega, B}(\theta), \quad \theta \in \T^d. \]
Moreover, if there exists $\theta \in \T^d$ such that $\det W(\theta) = 0$, then the previous relations imply $\det W(\theta + t\omega) = 0$ for all $t\in \R$.  Since $\omega$ is rationally independent, $\{ [t\omega] ; t\in \R \}$ is a dense set in $\T^d$ and the continuity of $\det W$  implies $\det W = 0$, which is impossible because we chose $\lambda$ real such that $\Re Z(0) + \lambda \Im Z(0)$ is invertible. Thus $W(\theta)$ is invertible for all $\theta \in \T^d$.
\end{proof} 

\begin{remark}
The real conjugation in the proposition \ref{redpreli} and the given conjugation have the same period: there is no period doubling.
    
\end{remark}

\begin{proposition} \label{reducprincipal}
Let 
$U : \T^d \rightarrow Gl(n,\C)$ continuous and let $B\in \mathcal{M}(n,\C)$ in Jordan normal form.
If
\begin{equation}
\partial_\omega U = BU - U \bar{B} \label{conjug}
\end{equation}
then there exist $W:\T^d \rightarrow Gl(n,\C)$ of class $\mathcal{C}^\infty$ and $B' \in \mathcal{M}(n, \R)$ such that, for all $\theta \in \T^d$
\[ \partial_{\frac{\omega}{2}}W(\theta) = BW(\theta) - W(\theta)B'.\]
\end{proposition}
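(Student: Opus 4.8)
The plan is to determine the Jordan structure of $B$ from the constraints that the cohomological equation, together with the continuity of $U$ on the compact torus, imposes on $U$, and then to remove the resulting obstruction by an explicit resonant conjugation — the step that halves the frequency. First I would integrate along the rotation flow: for each fixed $\theta$ the curve $t\mapsto U(\theta+t\omega)$ solves the linear ODE $\frac{d}{dt}Y=BY-Y\bar B$, hence $U(\theta+t\omega)=e^{tB}U(\theta)e^{-t\bar B}$ for all $t\in\R$. Since $\T^d$ is compact and $U$ continuous, $\Vert U\Vert_{\CC^0}<\infty$, so $t\mapsto e^{tB}U(\theta)e^{-t\bar B}$ is bounded on all of $\R$, for every $\theta$.

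Next I would decompose $\C^n$ into the generalized eigenspaces $V_\lambda$ of $B$ (on which $B$ acts as $\lambda I+N_\lambda$ with $N_\lambda$ nilpotent) and $W_\nu$ of $\bar B$ (on which $\bar B$ acts as $\nu I+\overline{N_\nu}$, with $W_{\bar\lambda}=\overline{V_\lambda}$), and write $U_{\lambda\nu}(\theta)$ for the corresponding block of $U(\theta)$. The $(\lambda,\nu)$-block of $e^{tB}U(\theta)e^{-t\bar B}$ equals $e^{t(\lambda-\nu)}$ times a matrix polynomial in $t$ whose value at $t=0$ is $U_{\lambda\nu}(\theta)$; boundedness on $\R$ forces this polynomial to vanish when $\Re\lambda\neq\Re\nu$ (so $U_{\lambda\nu}\equiv 0$) and to be constant when $\Re\lambda=\Re\nu$, which yields $N_\lambda U_{\lambda\nu}(\theta)=U_{\lambda\nu}(\theta)\overline{N_\nu}$ for every $\theta$. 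Substituting this into the $(\lambda,\nu)$-component of $\partial_\omega U=BU-U\bar B$ collapses the equation to $\partial_\omega U_{\lambda\nu}=(\lambda-\nu)U_{\lambda\nu}$; since $\lambda-\nu$ is purely imaginary and $\omega$ is rationally independent, expanding $U_{\lambda\nu}$ in Fourier series shows $U_{\lambda\nu}\equiv 0$ unless $\lambda-\nu\in\Lambda:=2\pi i\langle\Z^d,\omega\rangle$, in which case $U_{\lambda\nu}(\theta)=e^{2\pi i\langle k,\theta\rangle}M_{\lambda\nu}$ for the unique $k\in\Z^d$ with $2\pi i\langle k,\omega\rangle=\lambda-\nu$ and a constant $M_{\lambda\nu}$ still satisfying $N_\lambda M_{\lambda\nu}=M_{\lambda\nu}\overline{N_\nu}$.

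Then I would read off the spectral consequence. The matrix $U(0)=\sum_{\lambda-\nu\in\Lambda}M_{\lambda\nu}$ is invertible, and since its blocks vanish unless $\lambda\equiv\nu\pmod\Lambda$ it is block-diagonal with respect to the decompositions $\C^n=\bigoplus_c\bigoplus_{[\lambda]=c}V_\lambda=\bigoplus_c\bigoplus_{[\nu]=c}W_\nu$ indexed by $c\in\C/\Lambda$, hence restricts, for each occurring class $c$, to an isomorphism of $\bigoplus_{[\nu]=c}W_\nu$ onto $\bigoplus_{[\lambda]=c}V_\lambda$ intertwining the corresponding nilpotent parts of $\bar B$ and $B$. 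Thus these nilpotent parts are similar, so they have the same multiset of Jordan block sizes; since $\bigoplus_{[\nu]=c}W_\nu=\overline{\bigoplus_{[\lambda]=\bar c}V_\lambda}$ and complex conjugation preserves Jordan block sizes, it follows that for every $c$ the multiset of sizes of Jordan blocks of $B$ with eigenvalue in $c$ coincides with that for $\bar c$. In other words, the Jordan data of $B$, with eigenvalues reduced modulo $\Lambda$, is invariant under complex conjugation.

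Finally I would build $W$ and $B'$ by a resonant shift followed by a constant conjugation. Let $P_1,\dots,P_s$ be the projections onto the spans of the Jordan blocks of $B$; they commute with $B$ since $B$ is in Jordan form. Using the matching above, I choose for each block a shift lying in $\Lambda_{\frac{\omega}{2}}:=2\pi i\langle\Z^d,\frac{\omega}{2}\rangle=\pi i\langle\Z^d,\omega\rangle\supseteq\Lambda$: a block whose eigenvalue lies in a non-self-conjugate class is paired, by equal size, with a block in the conjugate class and shifted so that their eigenvalues become conjugate (a shift already in $\Lambda$); a block whose eigenvalue $\lambda$ lies in a self-conjugate class satisfies $\lambda-\bar\lambda\in\Lambda$, i.e. $\Im\lambda\in\pi\langle\Z^d,\omega\rangle$, so the shift $-i\,\Im\lambda\in\Lambda_{\frac{\omega}{2}}$ turns its eigenvalue into the real number $\Re\lambda$ — and it is exactly this last shift that need not lie in $\Lambda$, which is what forces the passage from $\omega$ to $\frac{\omega}{2}$. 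Writing the $j$-th shift as $-2\pi i\langle k_j,\frac{\omega}{2}\rangle$ with $k_j\in\Z^d$, put $Z(\theta):=\sum_j e^{2\pi i\langle k_j,\theta\rangle}P_j$; this is $\CC^\infty$, invertible on $\T^d$ (with inverse $\sum_j e^{-2\pi i\langle k_j,\theta\rangle}P_j$), and a direct computation using $BP_j=P_jB$ and $P_iP_j=\delta_{ij}P_j$ gives $\partial_{\frac{\omega}{2}}Z=BZ-Z\tilde B$, where $\tilde B:=B-\sum_j 2\pi i\langle k_j,\frac{\omega}{2}\rangle P_j$ now has a conjugation-symmetric spectrum and Jordan structure. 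Hence $\tilde B=P_0 B' P_0^{-1}$ for some $P_0\in Gl(n,\C)$ and $B'\in gl(n,\R)$, and $W:=ZP_0$ is $\CC^\infty$, invertible on $\T^d$, and satisfies $\partial_{\frac{\omega}{2}}W=BW-WB'$, as required. The step I expect to be the main obstacle is the passage in the second and third paragraphs from the equation and the boundedness to the single-Fourier-mode form of each block and then to the matching of Jordan block sizes (the latter via similarity of nilpotent parts); the arithmetic observation behind the period doubling — that centering a self-conjugate-mod-$\Lambda$ eigenvalue requires a half-resonance, available only for $\frac{\omega}{2}$ — is short but is the conceptual reason it cannot be avoided.
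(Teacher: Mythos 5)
Your proof is correct, and it reaches the conclusion by a genuinely different route in the key middle steps. Where the paper analyses the block equations coefficient by coefficient in Fourier space, respecting the nilpotent structure (its Lemma \ref{lemma1}, solved recursively), you first integrate along the flow, $U(\theta+t\omega)=e^{tB}U(\theta)e^{-t\bar B}$, and use boundedness of the continuous $U$ on the compact torus to kill the blocks with $\Re\lambda\neq\Re\nu$ and to extract the pointwise intertwining $N_\lambda U_{\lambda\nu}=U_{\lambda\nu}\bar N_\nu$; only then does the collapsed scalar equation $\partial_\omega U_{\lambda\nu}=(\lambda-\nu)U_{\lambda\nu}$ give the single-Fourier-mode form. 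Second, instead of the paper's linked/chain/odd-loop combinatorics (Sublemma \ref{souslemme}, Lemma \ref{relationsvp}) and the partition $\Sigma_1\cup\Sigma_2$ with equal multiplicities, you group eigenvalues by their class modulo $\Lambda=2i\pi\langle\Z^d,\omega\rangle$ and deduce, from the block-diagonality of the invertible matrix $U(0)$ with respect to these classes together with the nilpotent intertwining, that the Jordan data of $B$ over a class $c$ and over $\bar c$ coincide — i.e.\ you obtain the Jordan matching \emph{before} constructing $W$, whereas the paper builds $W$ first and proves the matching afterwards via the constancy and invertibility of $W_{[\alpha]}^{-1}\tilde U_{[\alpha]}\bar W_{[\alpha]}$. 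The final step is the same in substance: a diagonal exponential built from shifts in $2i\pi\langle\Z^d,\tfrac{\omega}{2}\rangle$ (your self-conjugate classes are exactly the paper's case 1, and the half-resonance there is precisely where period doubling enters), followed by a constant conjugation to a real matrix, which is the paper's appendix lemma on matrices whose Jordan data is stable under complex conjugation and which you correctly quote as standard. What your route buys is a shortcut past the chain/loop analysis and the recursive Fourier computation, with the spectral rigidity coming cleanly from the dynamics; what the paper's route buys is combinatorial machinery (linkedness, loops, $\Sigma_1,\Sigma_2$) that is reused almost verbatim in the approximate setting of Section \ref{section5}, where your exact flow formula has no analogue because the conjugation equation only holds up to a small error.
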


\bigskip
\noindent 
\textbf{Remark:} The map $W$ defined here is in fact a trigonometric polynomial.

\medskip
We postpone the proof of Proposition \ref{reducprincipal} after a few lemmas.

\noindent Denote
\[ \mathcal{M} = \left\{2i\pi \langle k , \omega \rangle, k \in \Z^d \right\}.\]
Let  $\sigma(B) = \{\alpha_1, \cdots, \alpha_l\}$ the spectrum of $B$. 
Relation \eqref{conjug} implies, denoting $B = diag(B_j)_{j = 1, ...,l}$, $\alpha_j$ the eigenvalue of a block $B_j$ and $U =(U^j_i)_{i,j = 1, ...., l}$
\begin{equation}\label{conjugaison-B-blocks} \partial_{\omega}U^j_i = B_i U^j_i - U^j_i \bar B_j. \end{equation}
\begin{lemma}\label{lemma1}
\begin{enumerate}
\item If $\alpha_i - \bar \alpha_j \notin \MM$, 
the block $U_i^j$ is zero. In particular, if $\alpha_i - \bar \alpha_j \notin \MM$ for a given $j$ and for all $i$, then $\det U = 0$.

\item Moreover, if $\alpha_i-\bar{\alpha}_j=2i\pi \langle k_{i,j},\omega\rangle$ for some $k_{i,j}\in \Z^d\backslash \{0\}$, then the only non zero Fourier mode of the block $U_i^j$ is indexed by $k_{i,j}$.
\end{enumerate}
\end{lemma}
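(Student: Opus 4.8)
The plan is to expand $U$ in Fourier series in $\theta$ and observe that the partial differential equation \eqref{conjugaison-B-blocks} decouples, mode by mode, into a finite-dimensional Sylvester equation. Writing $U_i^j(\theta)=\sum_{k\in\Z^d}\hat U_i^j(k)\,e^{2i\pi\langle k,\theta\rangle}$ and using that $\partial_\omega$ multiplies the mode $e^{2i\pi\langle k,\theta\rangle}$ by $2i\pi\langle k,\omega\rangle$, equation \eqref{conjugaison-B-blocks} becomes, for every $k\in\Z^d$,
\[ \bigl(B_i-2i\pi\langle k,\omega\rangle\,I\bigr)\,\hat U_i^j(k)=\hat U_i^j(k)\,\bar B_j . \]
Since here $U$ is only assumed continuous, one first notes that the right-hand side of \eqref{conjug} has the same regularity as $U$, so a bootstrap shows $U$ is $\CC^\infty$ along the direction $\omega$, which is all that is needed to legitimize this computation; alternatively one integrates \eqref{conjugaison-B-blocks} against $e^{-2i\pi\langle k,\theta\rangle}$ and integrates by parts along the flow $t\mapsto\theta+t\omega$.

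Next I would invoke the standard fact that a Sylvester equation $MX=XN$ admits only the trivial solution $X=0$ as soon as $\sigma(M)\cap\sigma(N)=\emptyset$: indeed $MX=XN$ implies $P(M)X=XP(N)$ for every polynomial $P$, and taking $P$ to be the characteristic polynomial of $N$ gives $P(M)X=0$ with $P(M)$ invertible (its roots are exactly $\sigma(N)$, which misses $\sigma(M)$). Here, since $B$ is in Jordan normal form and $B_i$ is the part carrying the eigenvalue $\alpha_i$, we have $\sigma\bigl(B_i-2i\pi\langle k,\omega\rangle I\bigr)=\{\alpha_i-2i\pi\langle k,\omega\rangle\}$ and $\sigma(\bar B_j)=\{\bar\alpha_j\}$. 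Hence $\hat U_i^j(k)=0$ whenever $\alpha_i-2i\pi\langle k,\omega\rangle\neq\bar\alpha_j$, i.e.\ whenever $\alpha_i-\bar\alpha_j\neq 2i\pi\langle k,\omega\rangle$.

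The two assertions now follow at once. If $\alpha_i-\bar\alpha_j\notin\MM$ then no integer $k$ can satisfy $\alpha_i-\bar\alpha_j=2i\pi\langle k,\omega\rangle$, so all Fourier coefficients of $U_i^j$ vanish and $U_i^j\equiv 0$; if moreover this holds for a fixed $j$ and every $i$, the whole $j$-th block column of $U$ vanishes and $\det U=0$, which is item~1. For item~2, rational independence of $\omega$ makes $k\mapsto 2i\pi\langle k,\omega\rangle$ injective on $\Z^d$, so the equation $\alpha_i-\bar\alpha_j=2i\pi\langle k,\omega\rangle$ has the unique solution $k=k_{i,j}$; therefore $\hat U_i^j(k)=0$ for all $k\neq k_{i,j}$, i.e.\ $k_{i,j}$ indexes the only possibly nonzero Fourier mode of $U_i^j$. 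The only genuinely delicate point is the regularity remark above; the algebraic heart of the argument (the Sylvester vanishing together with the injectivity from rational independence) is routine.
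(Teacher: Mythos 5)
Your proof is correct, and it takes a genuinely different route for the algebraic core than the paper does, although both begin identically: take Fourier coefficients of \eqref{conjugaison-B-blocks}, so that each mode satisfies $\bigl(B_i-2i\pi\langle k,\omega\rangle I\bigr)\hat U_i^j(k)=\hat U_i^j(k)\bar B_j$. The paper does not invoke a Sylvester criterion at this point: it writes the equation entry by entry inside the block, obtaining the scalar cascade \eqref{relcoefred}, and eliminates the coefficients $u_{i',j'}$ recursively ``in the right order'', using at each step that the scalar equation with coefficient $\alpha_i-\bar\alpha_j\notin\MM$ forces the Fourier coefficients to vanish; the same recursion yields part~2. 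You instead keep the equation at the block level and prove (via Cayley--Hamilton) the standard fact that $MX=XN$ with $\sigma(M)\cap\sigma(N)=\emptyset$ forces $X=0$; since $B_i$ and $B_j$ each carry a single eigenvalue, this kills every mode $k$ with $\alpha_i-\bar\alpha_j\neq 2i\pi\langle k,\omega\rangle$, and part~2 then follows from the injectivity of $k\mapsto\langle k,\omega\rangle$ on $\Z^d$ given rational independence, with no recursion at all. Your version is shorter and more robust, since it uses only that each diagonal block of $B$ has a single eigenvalue and not the internal ordering of the Jordan structure; the paper's coefficient cascade is more elementary and hands-on, and a quantitative cousin of the same spectral-gap mechanism (a Neumann-type expansion in the nilpotent Sylvester operator) reappears later in Lemma \ref{l6.4}. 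Your regularity remark also addresses a point the paper passes over in silence: with $U$ merely continuous, the equation gives $U(\theta+t\omega)=e^{tB}U(\theta)e^{-t\bar B}$, so $U$ is automatically smooth along the flow and the identity $\widehat{\partial_\omega U}(k)=2i\pi\langle k,\omega\rangle\hat U(k)$ is justified exactly as you indicate.
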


\begin{proof}

The relation \eqref{conjug} implies, if $U_i^j = (u_{i',j'})$, denoting $s_i = \text{mult}(\alpha_i)$ and $s_j = \text{mult}(\alpha_j)$, for all $(i',j') \in \llbracket 1, s_i \rrbracket \times \llbracket 1, s_j \rrbracket$ (
letting $\delta_0 = \delta_{s_i} = 0$), 
\begin{equation}
\label{relcoefred} \partial_\omega u_{i',j'} = (\alpha_i - \bar \alpha_j)u_{i',j'} + \delta_i u_{i'+1,j'} - \delta_{j'-1}u_{i',j'-1},  
\end{equation}
with the $\delta_{i'}, \delta_{j'} \in \{0,1 \}$. 
Let $i,j$ such that $\alpha_i - \bar \alpha_j \notin \mathcal{M}$, then decomposing these equations with Fourier coefficients and solving the equations in the right order, this implies
\[ u_{i',j'} = 0\]
whenever $\alpha_{i'} = \alpha_i$ and $\alpha_{j'} = \alpha_j$.

Now, given $\alpha_j$, if $\alpha_j - \beta \notin \MM$ for all $\beta$, this implies that there is a zero column in $U$ and then $\det U = 0$ which is impossible by assumption on $U$.

\bigskip
Now assume that for some $i,j$, $\alpha_i-\bar \alpha_j = 2i\pi \langle k_{i,j},\omega\rangle$ where $k_{i,j}\in\mathbb{Z}^d$. If $\delta_{i'}=\delta_{j'-1}=0$, then \eqref{relcoefred} implies that the only non zero Fourier mode of $u_{i',j'}$ is indexed by $k_{i,j}$. Then recursively on $i',j'$ corresponding to the same two eigenvalues of $B$, one proves that the only non zero Fourier mode of the block $U_i^j$ is indexed by $k_{i,j}$.

\end{proof}

\begin{definition}

We will say that two complex numbers $\alpha_i$ and $\alpha_j$  are linked if and only if there exists $k_{i,j}\in \Z^d$ such that

\[ \alpha_i - \bar \alpha_j = 2i\pi \langle k_{ij}, \omega \rangle \]
(that is to say $\alpha_i - \bar \alpha_j \in \MM$). 
\end{definition}

\begin{remark}

The relation of being linked is symmetric, but neither reflexive nor transitive.
\end{remark}

\begin{definition}\label{chain-loop}
We call
\textit{chain of length $k-1$} a sequence $\alpha_{i_1}, \cdots, \alpha_{i_{k}}$ such that for all $j \in \{1, \cdots, k-1\}$, $\alpha_{i_j}$ is linked to $\alpha_{i_{j+1}}$.
If moreover $\alpha_{i_{k}}= \alpha_{i_1}$ and $k\geq 2$, we will say it is a \textit{loop of length $k-1$}.

We will say that two numbers $\alpha,\beta$ are chain-linked if there is a chain between $\alpha$ and $\beta$. 
\end{definition}

\begin{remark} \label{relations} 
The relation of being chain-linked is an equivalence relation on any set of complex numbers $\Gamma$ which satisfy 
\begin{equation}\label{star} \text{for all $x\in \Gamma$, there exists $y\in \Gamma$ such that $x$ and $y$ are linked} \end{equation} (this will be the case when we will consider the spectrum of our matrix $B$). Considering such a set $\Gamma \subseteq \C$, we shall denote by $[\alpha]$ the equivalence class of $\alpha\in \Gamma$.

\bigskip
Notice that if $\alpha,\beta$ are linked by a chain with even length, then $\alpha-\beta \in\MM$, and if they are linked by a chain with odd length, then $\alpha-\bar \beta \in \MM$.

\bigskip
Also, it is easy to notice that if there is a chain of length $3$ then the first and the last numbers are linked (simply write the resonances relations).
\end{remark}

\begin{sublemma}\label{souslemme}
Let $\Gamma \subseteq \C$ satisfying property $\eqref{star}$.  Given $\alpha \in \Gamma$, $\alpha$ is linked to itself if and only if $[\alpha]$ contains a loop of odd length.
\end{sublemma}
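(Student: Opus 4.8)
The plan is to prove the two implications separately. The forward direction is essentially a tautology: if $\alpha$ is linked to itself, i.e.\ $\alpha-\bar\alpha\in\MM$, then the two-term sequence $(\alpha,\alpha)$ is a chain of length $1$ with equal endpoints and $k=2\ge 2$, hence a loop of length $1$ — an odd length — contained in $[\alpha]$. Nothing more is needed there.

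For the backward direction the key structural facts I will use are that $\MM=\{2i\pi\langle k,\omega\rangle:k\in\Z^d\}$ is a subgroup of $(\C,+)$ (being the image of $\Z^d$ under a group homomorphism), and that every element of $\MM$ is purely imaginary, so that complex conjugation restricted to $\MM$ is the map $z\mapsto -z$ and in particular sends $\MM$ into $\MM$. Suppose now $[\alpha]$ contains a loop of odd length, and let $\beta\in[\alpha]$ be one of its terms. A loop is in particular a chain, so $\beta$ is chain-linked to itself by a chain of odd length, and by Remark \ref{relations} this gives $\beta-\bar\beta\in\MM$; equivalently, $\beta$ is linked to itself. It then remains to transport this property from $\beta$ to $\alpha$, using that $\beta\in[\alpha]$ means there is a chain in $\Gamma$ joining $\alpha$ and $\beta$.

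I would distinguish two cases according to the parity of that chain, again via Remark \ref{relations}. If the chain has even length, then $\alpha-\beta\in\MM$, and the decomposition
\[ \alpha-\bar\alpha=(\alpha-\beta)+(\beta-\bar\beta)+(\bar\beta-\bar\alpha) \]
exhibits $\alpha-\bar\alpha$ as a sum of three elements of $\MM$ (the last term being $-\overline{\alpha-\beta}$), hence $\alpha-\bar\alpha\in\MM$. If the chain has odd length, then $\alpha-\bar\beta\in\MM$, and the analogous decomposition
\[ \alpha-\bar\alpha=(\alpha-\bar\beta)+(\bar\beta-\beta)+(\beta-\bar\alpha) \]
again displays $\alpha-\bar\alpha$ as a sum of elements of $\MM$, using $\bar\beta-\beta=-(\beta-\bar\beta)$ and $\beta-\bar\alpha=-\overline{\alpha-\bar\beta}$. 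In both cases $\alpha$ is linked to itself, which closes the proof.

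There is no genuine obstacle here: once Remark \ref{relations} is available, the argument is just a short computation in the abelian group $\MM$. The only points requiring mild care are the parity bookkeeping, the repeated use of the fact that conjugation preserves $\MM$, and making sure the chain connecting $\alpha$ to $\beta$ is taken inside $\Gamma$ so that the even/odd dichotomy of Remark \ref{relations} genuinely applies — which is exactly why the hypothesis \eqref{star} (guaranteeing that $[\alpha]$ is well defined) is invoked.
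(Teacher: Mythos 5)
Your proof is correct, and its core is the same computation the paper uses: telescoping a sum of differences around an odd chain and using that every element of $\MM$ is purely imaginary, so that conjugation acts on $\MM$ as $z\mapsto -z$ and $\MM$ is stable under it. The forward direction (loop of length $1$) is identical. The one place where you genuinely do more than the paper is the backward direction: the paper's proof simply writes ``suppose $\alpha=\alpha_1$'', i.e.\ it treats the case where the odd loop is based at $\alpha$ itself, whereas you handle the general case where the loop sits elsewhere in $[\alpha]$, by first getting $\beta-\bar\beta\in\MM$ for a term $\beta$ of the loop (via Remark \ref{relations}) and then transporting self-linkedness from $\beta$ to $\alpha$ through a connecting chain, with the even/odd parity dichotomy. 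That transport step is exactly what the paper does carry out in the quantitative analogue (Lemma \ref{paires2}), though there it proceeds differently, by concatenating the connecting chain with the loop to build a longer odd loop through $\beta$; your algebraic decomposition of $\alpha-\bar\alpha$ into three elements of $\MM$ is an equally valid and arguably cleaner alternative, at the price of invoking the group structure of $\MM$ explicitly. One microscopic point: when you say an arbitrary term $\beta$ of the loop is chain-linked to itself by an odd chain, you should either take $\beta$ to be the loop's basepoint $\alpha_1$ (which suffices for your argument) or note that a cyclic rotation of the loop gives an odd loop based at $\beta$; this is a trivial fix, not a gap.
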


\begin{proof}
    If $\alpha$ is linked to itself, then there is a loop of length $1$ between $\alpha$ and itself.
    Suppose now there is a loop of odd length in $[\alpha]$. Denote $\alpha_1, \dots \alpha_{k}$ this loop, and suppose $\alpha = \alpha_1$. Then,
    \[\alpha_1 - \bar \alpha_1 = \alpha_1 - \bar \alpha_2 + \bar \alpha_2 - \alpha_3 + \dots + \alpha_{k}-\bar \alpha_1 \in \MM\]
    and then $\alpha_1$ is linked to itself. 
\end{proof}

Let us investigate the possible links between the eigenvalues of $B$. 

\begin{lemma}\label{relationsvp} Under the assumptions of Proposition \ref{reducprincipal},
$\sigma(B)$ satisfies property \eqref{star}. Moreover, for all $\alpha\in \sigma(B)$, if $[\alpha]$ does not contain any loop of odd length, then there is a partition of $[\alpha]=\Sigma_1\cup \Sigma_2$ where
\begin{itemize}
\item every element of $\Sigma_1$ is linked to every element of $\Sigma_2$,
\item the sum of multiplicities of the eigenvalues in $\Sigma_1$ equals the sum of multiplicities of the eigenvalues in $\Sigma_2$.
\end{itemize}
\end{lemma}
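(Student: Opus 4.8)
The plan is to extract everything from Lemma \ref{lemma1} together with the invertibility of $U$. Writing $U=(U_i^j)$ in the block decomposition induced by the Jordan form of $B$, Lemma \ref{lemma1}(1) tells us that $U_i^j=0$ unless $\alpha_i$ and $\alpha_j$ are linked. First I would prove property \eqref{star}: if some $\alpha_j\in\sigma(B)$ were linked to no eigenvalue, then $\alpha_i-\bar\alpha_j\notin\MM$ for every $i$, and Lemma \ref{lemma1}(1) gives $\det U=0$, contradicting invertibility of $U$. Hence every eigenvalue of $B$ is linked to some eigenvalue, which is exactly \eqref{star}, and by Remark \ref{relations} being chain-linked is then an equivalence relation on $\sigma(B)$, so the classes $[\alpha]$ are well defined.

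Next I would record the block structure of $U$. If $\alpha_i$ and $\alpha_j$ lie in different chain-linked classes, they are in particular not linked, so $U_i^j=0$; thus $U$ is block-diagonal with respect to the partition of $\sigma(B)$ into classes. Each diagonal block $U_{[\alpha]}$, whose rows and columns are indexed by the eigenvalues of $[\alpha]$, is square by construction, and since $\det U=\prod_{[\alpha]}\det U_{[\alpha]}\neq 0$, each $U_{[\alpha]}$ is invertible. Now fix $\alpha$ such that $[\alpha]$ contains no loop of odd length. By Sublemma \ref{souslemme} no element of $[\alpha]$ is linked to itself. I would then consider the graph on $[\alpha]$ whose edges are the linked pairs: it is connected by definition of $[\alpha]$, and since it has no closed walk of odd length (such a walk would be a loop of odd length) it is bipartite; let $[\alpha]=\Sigma_1\cup\Sigma_2$ be the resulting bipartition. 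In particular two elements of the same $\Sigma_k$ are not linked.

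For the first bullet, take $\beta\in\Sigma_1$ and $\gamma\in\Sigma_2$. There is a chain from $\beta$ to $\gamma$, and by bipartiteness it has odd length. Using repeatedly the observation from Remark \ref{relations} that a chain of length $3$ links its endpoints, I would reduce any odd chain to one of length $1$, concluding that $\beta$ and $\gamma$ are linked. Hence the linked graph on $[\alpha]$ is the complete bipartite graph on $\Sigma_1\sqcup\Sigma_2$: every element of $\Sigma_1$ is linked to every element of $\Sigma_2$, and to no element of $\Sigma_1$, and symmetrically for $\Sigma_2$.

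For the second bullet, order the eigenvalues of $[\alpha]$ so that those of $\Sigma_1$ come first, and set $m_1=\sum_{\beta\in\Sigma_1}\mathrm{mult}(\beta)$, $m_2=\sum_{\beta\in\Sigma_2}\mathrm{mult}(\beta)$. Since the $(\Sigma_1,\Sigma_1)$ and $(\Sigma_2,\Sigma_2)$ blocks of $U_{[\alpha]}$ vanish (no links inside a $\Sigma_k$), we have $U_{[\alpha]}=\begin{pmatrix} 0 & P\\ Q & 0\end{pmatrix}$ with $P$ of size $m_1\times m_2$ and $Q$ of size $m_2\times m_1$. If $m_1\neq m_2$, say $m_1>m_2$, the first $m_1$ rows of $U_{[\alpha]}(\theta)$ are supported on only $m_2<m_1$ columns, hence linearly dependent, so $U_{[\alpha]}(\theta)$ is singular, contradicting invertibility of $U$. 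Therefore $m_1=m_2$, which is the second bullet. The main obstacle is precisely this last step: it is the only place where invertibility of $U$ is genuinely used, and it relies on having established that the diagonal blocks of $U_{[\alpha]}$ vanish, i.e.\ on the bipartiteness deduced from the no-odd-loop hypothesis; everything else is bookkeeping with Lemma \ref{lemma1} and elementary facts about connected bipartite graphs.
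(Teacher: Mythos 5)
Your proposal is correct and follows essentially the same route as the paper: property \eqref{star} and the block-diagonality of $U$ over chain-linked classes come from Lemma \ref{lemma1} and invertibility of $U$, the partition $\Sigma_1\cup\Sigma_2$ (which you obtain via bipartiteness of the link graph, the paper via the ``even-length chain'' equivalence $\sim$) has no links inside each part and all links across parts by Remark \ref{relations}, and the anti-diagonal block shape of $U_{[\alpha]}$ forces the two total multiplicities to coincide. The bipartite-graph phrasing and the explicit rank/row-dependence argument are only cosmetic variations on the paper's proof.
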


\begin{proof}

By the lemma \ref{lemma1}, the property \eqref{star} holds, and the blocks of $U$ relating different equivalence classes of eigenvalues are zero. Therefore, if $U$ has invertible values, the blocks of $U$ corresponding to an equivalence class of $\sigma(B)$ are invertible.

\bigskip 
Define the equivalence relation
\[ \alpha_i \sim \alpha_j \Leftrightarrow \text{there exists a chain of even length between } \alpha_i \text{ and } \alpha_j \text{ in } [\alpha].\]
From remark \ref{relations}, there are only 2 distinct equivalence classes $\Sigma_1 = \{\beta_1, \dots, \beta_r \}$ and $\Sigma_2 = \{\gamma_1, \cdots, \gamma_s \}$ for $\sim$. Moreover, two elements of the same equivalence class cannot be linked, otherwise we would have a loop of odd length, which is impossible by assumption.
Any element in $\Sigma_1$ is linked to all elements of $\Sigma_2$. Indeed let $\beta \in \Sigma_1$ and $\gamma\in \Sigma_2$, then $\beta $ and $\gamma$ are linked by a chain (because they both are in $[\alpha]$), now this chain has an odd length by definition of $\Sigma_1,\Sigma_2$, therefore they are linked.

Then from lemma \ref{lemma1}, the block $ \tilde U $ corresponding to $[\alpha]$ has the form 
\[ \tilde U = 
\begin{blockarray}{cccccccc}
& \beta_1  &\cdots& \beta_r & \gamma_1 & \cdots & \gamma_s \\
  \begin{block}{c(ccccccc)}
\beta_1 & 0 & \cdots & 0 & \ \ast & \cdots & \ast \\
\vdots & \vdots & \ddots  & \vdots & \vdots & \ddots & \vdots \\
\beta_r & 0 & \cdots & 0 &  \ast & \cdots & \ast \\
\gamma_1 & \ast & \cdots & \ast &  0 & \cdots & 0 \\
\vdots & \vdots & \ddots & \vdots &\vdots & \ddots & \vdots \\
\gamma_s & \ast & \cdots & \ast & 0 & \cdots & 0 \\
  \end{block}
\end{blockarray}
\]

\noindent 
(because the $\beta_i$ are not linked to each other, nor are the $\gamma_i$),
which is invertible only if $\sum_{i=1}^r \text{mult}(\beta_i) = \sum_{i=1}^s \text{mult}(\gamma_i)$.
\end{proof}

We can now prove proposition \ref{reducprincipal}:
\begin{proof}{[of the proposition \ref{reducprincipal}]}
\textbf{Construction of $W$.}
By lemma \ref{relationsvp}, $\sigma(B)$ satisfies the property  \eqref{star} therefore the relation of being chain linked is an equivalence.
Let $\mathcal{E} \subset \sigma(B)$ be an equivalence class for the relation of being chain linked. There are two cases:
\begin{enumerate}
\item If $\mathcal{E}$ contains a loop of odd length, then by sublemma \ref{souslemme}, for all $\alpha_i \in \mathcal{E}$,
\[\alpha_i - \bar \alpha_i = 2i\pi \langle k_{i}, \omega \rangle\] with $k_{i} \in \Z^d$ (therefore $\operatorname{Im}(\alpha_i)= \pi \langle k_i,\omega\rangle$). 
\item Otherwise, $\mathcal{E}$ does not contain odd loops, and from lemma \ref{relationsvp} write $\mathcal{E} = \Sigma_1 \cup \Sigma_2$ where $\Sigma_1,\Sigma_2$ are the equivalence classes for $\sim$ (here the index 1 or 2 is arbitrary). Choose arbitrarily $\alpha \in \Sigma_1$. Then from remark \ref{relations}, 
for all $\alpha_i \in \Sigma_1$, there exists 
$k_{i} \in \Z^d$
such that \[ \alpha_i - \alpha = 2i\pi \langle k_{i}, \omega \rangle. \] 
Also, for all $\alpha_i  \in \Sigma_2$, there is $k_{i} \in \Z^d$ such that 
 \[ \alpha_i - \bar \alpha = 2i\pi \langle k_{i}, \omega \rangle .\]
\end{enumerate}

Then we construct $W \in \CC^\infty(\mathbb{T}^d,Gl(n, \C)), W = \text{diag}(w_k)_{k=1, \cdots, l}$
(where $w_k$ is a submatrix associated to the generalized eigenspaces of the eigenvalue $\alpha_k$)
such that for all $\theta\in \mathbb{T}^d$ and all $i=1,\dots, n$,

\begin{itemize}
\item \[ w_i(\theta) = e^{2i\pi \langle k_{i}, \theta \rangle}I\] if $[\alpha_i]$ is in case 1;
\item 
\[w_i(\theta) = e^{4i\pi \langle k_{i}, \theta \rangle}I\] 

if $[\alpha_i]$ is in case 2.
\end{itemize}
The relation 

\begin{equation}\label{conjugaison-W}\partial_{\frac{\omega}{2}}W =BW-WB' 
\end{equation}

\noindent
defines a matrix $B'$ of dimension $n \times n$ (notice that $B'-B$ is a diagonal matrix, since the coefficients for $B$ and $B'$ outside the diagonal are the same),  
whose diagonal coefficients are either real (in case 1) or come by pairs of complex conjugates with the same multiplicity (in case 2). 

\bigskip
Now we will prove that, in case 2, the two blocks of $B'$ corresponding to complex conjugate eigenvalues are algebraically conjugate, which will imply that they have the same Jordan structure. 

The relation \eqref{conjugaison-B-blocks} combined with \eqref{conjugaison-W} implies that for all $i,j$, if we denote $\tilde U_i^j(\theta ) = U_i^j(2\theta)$,

$$\partial_{\frac{\omega}{2}} (w_i^{-1}\tilde U_i^j\bar{w}_j)
= B_i' (w_i^{-1}\tilde U_i^j\bar{w}_j) - (w_i^{-1}\tilde U_i^j\bar{w}_j) \bar{B}_j'
$$

\noindent and by construction and the second statement of Lemma \ref{lemma1}, $w_i^{-1}\tilde U_i^j\bar{w}_j$ is constant. 
Up to a permutation, one can assume that the blocks of $B$ corresponding to the eigenvalues in the same equivalence class $\mathcal{E}$ are next to each other and can be grouped in a block $B_{\mathcal{E}}$.
For any $\alpha\in\sigma(B)$, letting $W_{[\alpha]}=\text{diag}(w_i,\alpha_i\in[\alpha])$ and $\tilde U_{[\alpha]}$ be the block of $\tilde U(\theta):=U(2\theta)$ corresponding to $[\alpha]$ (so $\tilde U=\text{diag}(\tilde U_{[\alpha]})$), then $W_{[\alpha]}^{-1}\tilde U_{[\alpha]} \bar W_{[\alpha]}$ satisfies 

$$\partial_{\frac{\omega}{2}} (W_{[\alpha]}^{-1}\tilde U_{[\alpha]} \bar W_{[\alpha]})
=B'_{[\alpha]}W_{[\alpha]}^{-1}\tilde U_{[\alpha]} \bar W_{[\alpha]}-W_{[\alpha]}^{-1}\tilde U_{[\alpha]} \bar W_{[\alpha]}\bar B'_{[\alpha]}.
$$

\noindent Moreover, $W_{[\alpha]}^{-1}\tilde U_{[\alpha]} \bar W_{[\alpha]} $ is constant and also invertible 
since $\tilde U$ is invertible. Thus $B'_{[\alpha]}$ and $\bar B'_{[\alpha]}$ are algebraically conjugate, which means that two blocks of $B' $ corresponding to complex conjugate eigenvalues have the same Jordan structure.

\bigskip

The matrix $B'$ is not necessary real, but can be conjugated to a real matrix applying lemma \ref{spectre-conjugué} in appendix.
\end{proof}

\bigskip 
We now prove the first main theorem:
\begin{theorem}[Real reducibility]
Let $A: \T^d \rightarrow gl(n, \R)$ such that $X_{\omega, A}$ is reducible. Then $X_{\frac{\omega}{2},A_2}$ is real reducible, where 
\[ A_2(\theta) := A(2\theta), \quad \forall \theta \in \T^d.\]
\end{theorem}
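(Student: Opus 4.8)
The plan is to reduce the Real Reducibility theorem to Proposition \ref{redpreli} and Proposition \ref{reducprincipal}. Since $X_{\omega,A}$ is reducible, there is a conjugation $Z:\T^d\to Gl(n,\C)$ and a constant $\tilde B\in gl(n,\C)$ with $\partial_\omega Z = AZ - Z\tilde B$. First I would bring $\tilde B$ to Jordan normal form by a constant conjugation $P$: replacing $Z$ by $ZP$ we may assume $\tilde B=B$ is in Jordan normal form, at the cost of nothing since $P$ is constant. So $X_{\omega,A}$ is conjugated to $X_{\omega,B}$ with $B$ in Jordan form.

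The key observation is that since $A$ is real, $X_{\omega,A}$ is also conjugated to $X_{\omega,\bar B}$: indeed $\bar Z$ satisfies $\partial_\omega \bar Z = \overline{AZ-ZB} = A\bar Z - \bar Z\bar B$ because $A=\bar A$. Composing the two conjugations, $U:=Z^{-1}\bar Z:\T^d\to Gl(n,\C)$ conjugates $X_{\omega,B}$ to $X_{\omega,\bar B}$, that is $\partial_\omega U = BU - U\bar B$, which is precisely hypothesis \eqref{conjug} of Proposition \ref{reducprincipal} (note $\bar B$ need not be the Jordan form written in the usual convention, but it is a direct sum of Jordan blocks, so the proposition applies, or one conjugates by a fixed permutation). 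Applying Proposition \ref{reducprincipal} yields $W:\T^d\to Gl(n,\C)$ of class $\CC^\infty$ (in fact a trigonometric polynomial) and $B'\in gl(n,\R)$ with $\partial_{\frac\omega2}W = BW - WB'$.

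Now I would compose: $Z(\theta)W(\theta/2)$ — wait, one must be careful with the frequency. Set $\hat Z(\theta) = Z(2\theta)W(\theta)$ for $\theta\in\T^d$; this is well-defined and $\CC^\infty$ on $\T^d$ since $W$ is defined on $\T^d$ (here we use that the period-doubling is already absorbed into $A_2$ and $\frac\omega2$). Then $\partial_{\frac\omega2}\hat Z(\theta) = (\partial_\omega Z)(2\theta)W(\theta) + Z(2\theta)\partial_{\frac\omega2}W(\theta) = \big(A(2\theta)Z(2\theta)-Z(2\theta)B\big)W(\theta) + Z(2\theta)\big(BW(\theta)-W(\theta)B'\big) = A_2(\theta)\hat Z(\theta) - \hat Z(\theta)B'$. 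Hence $X_{\frac\omega2,A_2}$ is conjugated to the constant cocycle $X_{\frac\omega2,B'}$ with $B'$ real, i.e. $X_{\frac\omega2,A_2}$ is reducible to a real constant cocycle, possibly by a complex conjugation $\hat Z$. Finally, since $A_2$ and $B'$ are both real, Proposition \ref{redpreli} (applied with the frequency $\frac\omega2$, which is still rationally independent) upgrades $\hat Z$ to a real conjugation. Therefore $X_{\frac\omega2,A_2}$ is real reducible.

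The main obstacle I anticipate is purely bookkeeping: matching the frequencies and the arguments $\theta$ vs.\ $2\theta$ correctly when composing $Z$, $W$, and the period-doubled data, and checking that $\bar B$ (with the complex-conjugate Jordan blocks, which are not in the ``standard'' upper-triangular Jordan form) still falls under the hypotheses of Proposition \ref{reducprincipal} — this is handled by a fixed constant conjugation and does not affect the frequency-dependent analysis. No new arithmetic on $\omega$ is needed, consistent with the remark that the reducible case requires no Diophantine condition.
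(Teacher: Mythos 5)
Your proposal is correct and follows essentially the same route as the paper: form $U=Z^{-1}\bar Z$ (with the Jordan conjugation $P$ absorbed into $Z$, whereas the paper writes $U=P^{-1}Z^{-1}\bar Z\bar P$), apply Proposition \ref{reducprincipal} to get $W$ and a real $B'$, compose as $Z(2\theta)W(\theta)$ for the frequency $\frac{\omega}{2}$, and finish with Proposition \ref{redpreli}. Your side remark about $\bar B$ is unnecessary since Proposition \ref{reducprincipal} only requires $B$ itself to be in Jordan normal form, but it does no harm.
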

\begin{proof}
By assumption, there exist $B \in gl(n, \C)$ and $Z : \T^d \rightarrow Gl(n, \C)$ such that
\[\partial_\omega Z = AZ - ZB \]
and then 
\[ \partial_\omega \bar Z = A \bar Z - \bar Z \bar B\]
and
\[ \partial_\omega Z^{-1} = B Z^{-1} - Z^{-1}A.\]
Let $P$ an invertible matrix such that $B=PJP^{-1}$, where $J$ is in normal Jordan form. Let
\[ U = P^{-1}Z^{-1}\bar Z \bar P, \]
then
\begin{align*}
\partial_\omega U & =  \partial_\omega(P^{-1}Z^{-1}\bar Z\bar P) \\
& = P^{-1}(\partial_\omega (Z^{-1})\bar Z + Z^{-1}\partial_\omega(\bar Z)) \bar P  \\
& = P^{-1}((B Z^{-1} - Z^{-1}A)\bar Z + Z^{-1}(A \bar Z - \bar Z \bar B))\bar P \\
& = JU - U \bar J.
\end{align*}
We can apply proposition \ref{reducprincipal} and deduce that there exist $W : \T^d \rightarrow Gl(n, \C)$ and $J' \in gl(n, \R)$ such that
\[ \partial_{\frac{\omega}{2}}W = JW - WJ'.\]
Let $Z'(\theta) = Z(2\theta)PW(\theta)$. Then

\begin{align*}
\partial_{\frac{\omega}{2}}Z'(\theta)  & = \partial_{\frac{\omega}{2}}Z(2\theta)PW(\theta) + Z(2\theta) P\partial_{\frac{\omega}{2}}W(\theta) \\
						       & = (A(2\theta)Z(2\theta) - Z(2\theta)B)PW(\theta) + Z(2\theta)P(JW(\theta) - W(\theta)J') \\
						       & = A(2\theta)Z(2\theta)PW(\theta) - Z(2\theta)PW(\theta)J' \quad \text{ since $BP = JP$} \\
						       & = A_2(\theta)Z'(\theta)-Z'(\theta)J'.
\end{align*}
Notice that $Z'$ is not necessarily real, however by proposition \ref{redpreli}, there exists $\lambda \in \R$ such that \[ \Re Z' + \lambda \Im Z' : \T^d \rightarrow Gl(n, \R) \] conjugates the two real cocycles $X_{\frac{\omega}{2}, A_2}$ and $X_{\frac{\omega}{2}, J'}$. 

\end{proof}

In the remainder of the article, we will prove the second main result, which is that almost reducibility for a real cocycle implies real almost reducibility.

\section{Construction of a real change of variables}\label{section3}

\subsection{Lemmas}

In this section, we prove a few lemmas about the trace of a system and the determinant of a conjugation. They will be used to construct real changes of variables for real almost-reducible cocycles. Here, we need an arithmetical condition on $\omega$.

\subsubsection{A small divisor lemma}

Let $f:\T^d\to\C$ be of class $\CC^\infty$ and consider the equation
\begin{equation}\label{eq} 
\begin{cases}
\partial_\omega g = f-\hat f (0)\\
\hat g(0)=0.
\end{cases} 
\end{equation}

\medskip

\begin{lemma}\label{l3.1}
If $\omega \in \mathcal{DC}(\kappa, \tau)$, then there exists a unique solution $g:\T^d\to\C$
to \eqref{eq} and it satisfies
$$\Vert g \Vert_{\CC^r}\leq C_{r,d}  \frac{1}{\kappa}\Vert f\Vert_{\CC^{\tau+d+1}},\quad \forall r\geq 
0,$$
where $C_{r,d}$ is a constant depending only on $r,d$.
\end{lemma}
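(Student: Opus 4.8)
The plan is to solve \eqref{eq} by Fourier series and then estimate the resulting series in each $\CC^r$ norm using the Diophantine condition. Writing $f=\sum_{k\in\Z^d}\hat f(k)e^{2i\pi\langle k,\theta\rangle}$, the equation $\partial_\omega g=f-\hat f(0)$ becomes, mode by mode, $2i\pi\langle k,\omega\rangle\,\hat g(k)=\hat f(k)$ for $k\neq 0$, together with the normalization $\hat g(0)=0$. Since $\omega$ is rationally independent, $\langle k,\omega\rangle\neq 0$ for $k\neq 0$, so this forces $\hat g(k)=\frac{\hat f(k)}{2i\pi\langle k,\omega\rangle}$ for $k\neq0$ and $\hat g(0)=0$; this already proves uniqueness. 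It remains to show that the candidate $g:=\sum_{k\neq 0}\frac{\hat f(k)}{2i\pi\langle k,\omega\rangle}e^{2i\pi\langle k,\theta\rangle}$ defines a $\CC^\infty$ function with the stated estimate.

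For the estimate, first I would bound the Fourier coefficients of $f$: integrating by parts (or directly, since $\widehat{\partial^\alpha f}(k)=(2i\pi k)^\alpha\hat f(k)$), one gets for every $N\in\N$ a bound of the form $|\hat f(k)|\le C_{N,d}\,\Vert f\Vert_{\CC^N}\,(1+|k|)^{-N}$. Then for a multi-index $\alpha$ with $|\alpha|\le r$, the coefficient of $e^{2i\pi\langle k,\theta\rangle}$ in $\partial^\alpha g$ is $\frac{(2i\pi k)^\alpha\hat f(k)}{2i\pi\langle k,\omega\rangle}$, which by \eqref{DC} is bounded in absolute value by $\frac{(2\pi)^{|\alpha|}|k|^{|\alpha|}}{2\pi}\cdot\frac{|k|^\tau}{\kappa}\cdot|\hat f(k)|\le \frac{1}{\kappa}(2\pi)^{r}|k|^{r+\tau}|\hat f(k)|$. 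Choosing $N=r+\tau+d+1$ in the coefficient bound for $f$, each such term is $\le \frac{C}{\kappa}\Vert f\Vert_{\CC^{r+\tau+d+1}}(1+|k|)^{-d-1}$, and $\sum_{k\in\Z^d}(1+|k|)^{-d-1}<\infty$. Hence the series for $\partial^\alpha g$ converges absolutely and uniformly, so $g\in\CC^r$ for every $r$ (thus $\CC^\infty$), with $\Vert\partial^\alpha g\Vert_{\CC^0}\le \frac{C_{r,d}}{\kappa}\Vert f\Vert_{\CC^{r+\tau+d+1}}$, and taking the max over $|\alpha|\le r$ gives $\Vert g\Vert_{\CC^r}\le \frac{C_{r,d}}{\kappa}\Vert f\Vert_{\CC^{r+\tau+d+1}}$.

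One small gap between this and the statement as written: the lemma claims the right-hand side is $\Vert f\Vert_{\CC^{\tau+d+1}}$ (loss independent of $r$), whereas the naive argument above loses $r+\tau+d+1$ derivatives. To get the sharper form one uses that the extra factor $|k|^r$ coming from $\partial^\alpha$ can be absorbed by the decay of $\hat f(k)$ itself when $f$ is smooth — but in a quantitative $\CC^r$-to-$\CC^{\tau+d+1}$ estimate this is not literally true term by term; the cleanest fix is to interpret the bound with the understanding that $C_{r,d}$ may also depend on $\Vert f\Vert$ implicitly, or (more likely what the authors intend) to read the norm index as $r+\tau+d+1$ absorbed into the notation. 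In the write-up I would simply prove the honest estimate $\Vert g\Vert_{\CC^r}\le \frac{C_{r,d}}{\kappa}\Vert f\Vert_{\CC^{r+\tau+d+1}}$, which is what is actually needed downstream.

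The main obstacle is not conceptual — it is entirely the bookkeeping of the two competing powers of $|k|$: the gain $|k|^{-\tau}$ lost from the small divisor versus the gain from the smoothness of $f$, and making sure enough summability ($d+1$ extra powers) is left over to control the lattice sum $\sum_k(1+|k|)^{-d-1}$. Everything else (uniqueness from rational independence, term-by-term differentiation justified by absolute convergence, completeness of $\CC^r$) is routine.
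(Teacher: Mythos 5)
Your proposal is correct and follows essentially the same route as the paper: expand in Fourier series (uniqueness from rational independence), use the Diophantine lower bound on the divisors, trade powers of $|k|$ against the decay of $\hat f(k)$ coming from smoothness, and keep $d+1$ extra powers for summability over $\Z^d\setminus\{0\}$. Your remark about the norm index is also consistent with the paper's own argument, which only estimates $\Vert g\Vert_{\CC^0}$ by $\kappa^{-1}\Vert f\Vert_{\CC^{\tau+d+1}}$ and gets higher derivatives ``by differentiating in the Fourier series'', i.e.\ exactly your honest bound $\Vert g\Vert_{\CC^r}\le C_{r,d}\,\kappa^{-1}\Vert f\Vert_{\CC^{r+\tau+d+1}}$, which is all that is needed in the later applications.
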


\begin{proof}
Developing $g$ in Fourier series, we get
$$g(\theta) \simeq \sum_{k\neq 0} \frac{\hat f(k)}{2i\pi \langle k,\omega\rangle}e^{2i\pi \langle k,\theta\rangle} .$$
Then for all $s \geq 1$, since $\omega\in\mathcal{DC}(\kappa, \tau)$,
\begin{align*}
    \Vert g \Vert_{\CC^0}& \leq \sum_{k\neq 0} \frac{\vert k \vert^{\tau} \vert \hat f(k) \vert}{2\pi \kappa} \\
    & \leq C_s   \frac{1}{\kappa}
\sum_{k\neq 0}\vert k \vert^{\tau -s}\sup_{\vert \alpha \vert=s} \vert \widehat{\partial^\alpha f(k)} \vert \\
& \le C_s 
 \frac{1}{\kappa} \Vert f\Vert_{\CC^{s}} \sum_{k\neq 0} \vert k \vert^{\tau-s} \\
 & \leq C_{s,d} 
 \frac{1}{\kappa} \Vert f\Vert_{\CC^{s}}  \sum_{j> 0}j^{\tau-s+d-1} 
\end{align*}
and this converges if $s\geq \tau+d+1$.

The higher derivatives are obtained by differentiating in the Fourier series.
\end{proof}

\subsubsection{Trace and determinant}

\begin{lemma}\label{trace-constante}
Let $A: \T^d\to gl(n,\C)$ and $\omega \in \mathcal{DC}(\kappa, \tau)$. There exist $Z: \T^d\to Gl(n,\C)$ and $B\in  gl(n,\C)$ of constant trace and of class $\CC^\infty$ such that
$$\partial_\omega Z=AZ-ZB.$$
\end{lemma}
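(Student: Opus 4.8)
The statement asks us to show that any quasi-periodic cocycle $X_{\omega,A}$ with $\omega$ Diophantine can be conjugated to some $B+F$ — wait, no: it asks to conjugate $X_{\omega,A}$ to a cocycle $X_{\omega,B}$ with $B$ having \emph{constant trace}, meaning $\theta\mapsto\Tr B(\theta)$ is constant. Here $B$ need not be a constant matrix, only its trace must be constant (and $Z,B$ of class $\CC^\infty$). The natural approach is to look for a conjugation of the form $Z=e^{P}$ with $P:\T^d\to gl(n,\C)$ a scalar-times-identity map, i.e. $P(\theta)=p(\theta)I$ for a scalar function $p:\T^d\to\C$; then $Z$ and $Z^{-1}$ are automatically invertible, and the conjugation equation $\partial_\omega Z=AZ-ZB$ reads $(\partial_\omega p)\,e^{p}I = A e^{p} - e^{p}B$, i.e. $B = A - (\partial_\omega p)I$. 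Thus $\Tr B(\theta) = \Tr A(\theta) - n\,\partial_\omega p(\theta)$, and we want this to be constant.

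\textbf{Key steps.} First I would set $f(\theta) = \tfrac1n\Tr A(\theta)$, a $\CC^\infty$ scalar function on $\T^d$. Then I would invoke Lemma \ref{l3.1}: since $\omega\in\mathcal{DC}(\kappa,\tau)$, there is a unique $\CC^\infty$ solution $p:\T^d\to\C$ with $\hat p(0)=0$ to
\[
\partial_\omega p = f - \hat f(0),
\]
and it obeys the quantitative bound $\Vert p\Vert_{\CC^r}\le C_{r,d}\kappa^{-1}\Vert f\Vert_{\CC^{\tau+d+1}}$ for every $r$. Second, I set $Z(\theta)=e^{p(\theta)}I$, which is $\CC^\infty$, takes values in $Gl(n,\C)$ (scalar multiple of the identity with nonvanishing scalar $e^{p(\theta)}$), and has $\CC^\infty$ inverse $e^{-p(\theta)}I$. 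Third, I define $B(\theta) = A(\theta) - (\partial_\omega p(\theta))I = A(\theta) - \bigl(f(\theta)-\hat f(0)\bigr)I$, which is $\CC^\infty$. Fourth, I verify the conjugation equation: $\partial_\omega Z = (\partial_\omega p)e^{p}I = (\partial_\omega p)\,Z$, while $AZ - ZB = e^{p}A - e^{p}\bigl(A - (\partial_\omega p)I\bigr) = (\partial_\omega p)e^{p}I = \partial_\omega Z$, as required (using that $Z$ commutes with everything since it is scalar). Fifth, I compute the trace: $\Tr B(\theta) = \Tr A(\theta) - n\bigl(f(\theta)-\hat f(0)\bigr) = \Tr A(\theta) - \Tr A(\theta) + n\hat f(0) = n\hat f(0)$, a constant. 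This completes the construction.

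\textbf{Main obstacle.} There is essentially no obstacle here — the lemma is a warm-up normalization step whose only subtlety is guaranteeing that the primitive of the quasi-periodic scalar function $\theta\mapsto f(\theta)-\hat f(0)$ along the $\omega$-flow is again a smooth function on $\T^d$, which is exactly where the Diophantine hypothesis enters (via Lemma \ref{l3.1}). Without it, the Fourier series $\sum_{k\ne0}\hat f(k)/(2i\pi\langle k,\omega\rangle)\,e^{2i\pi\langle k,\theta\rangle}$ need not converge to a $\CC^\infty$ (or even continuous) function. The one thing worth a remark is that the choice $Z=e^pI$ automatically keeps $Z$ invertible for free, which is why one looks for a scalar conjugation rather than a general one; if later one also wants quantitative control on $\Vert Z^{\pm1}\Vert_{\CC^r}$ (for the almost-reducibility arguments downstream), it follows from $\Vert p\Vert_{\CC^r}\le C_{r,d}\kappa^{-1}\Vert f\Vert_{\CC^{\tau+d+1}}$ together with the composition/product inequalities \eqref{1}, but for the bare statement of Lemma \ref{trace-constante} no such estimate is needed.
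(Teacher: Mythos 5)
Your proof is correct and follows essentially the same route as the paper: solve the scalar small-divisor equation of Lemma \ref{l3.1} for a primitive of the normalized trace and conjugate by the scalar map $Z=e^{p}I$, absorbing $(\partial_\omega p)I$ into $B$. In fact your inclusion of the factor $\tfrac1n$ is the correct normalization (the paper's displayed choice $B=A-(\Tr A-\widehat{\Tr A}(0))I$ has constant trace only after inserting this $\tfrac1n$), so nothing is missing.
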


\begin{proof}
Let $B=A-(\Tr A -\int_{\T^d}\Tr A (\theta)d\theta)I$. Then $B$ has constant trace.  If $f = \Tr A -\int_{\T^d}\Tr A (\theta)d\theta$, then $\hat{f}(0)=0$ and the equation
\[\left\{\begin{array}{l}
\partial_\omega g=f \\
\hat g(0)=0
\end{array}\right.\]
has a unique solution $\T^d\to\C$ (which is of class $\CC^\infty)$ by Lemma \ref{l3.1}. Let now $Z=e^g I: \T^d\to Gl(n,\C)$, then $Z$ satisfies 
\[ \partial_\omega Z = AZ-ZB.\]
\end{proof}

\begin{lemma}\label{trace}Let $A,B,F,Z: \mathbb{T}^d\to gl(n,\C)$, with $Z$ differentiable. If 
\[ \partial_\omega Z = AZ-ZB +F \]
then 
\[ \partial_\omega \det Z=\Tr(A-B) \det Z +\Tr(FZ^{adj}) \]
where $Z^{adj}$ is the transpose of the cofactor matrix of $Z$.

\end{lemma}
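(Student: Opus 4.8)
The statement is the Liouville-type / Abel-type formula for the determinant of a matrix satisfying a perturbed conjugacy equation. The plan is to differentiate $\det Z$ along the flow $\partial_\omega$ and use the standard Jacobi formula $\partial_\omega \det Z = \Tr\big((\partial_\omega Z)\, Z^{adj}\big)$, which is valid because $Z$ is differentiable and the cofactor expansion of the determinant is polynomial in the entries of $Z$ (no invertibility of $Z$ is required, since $Z^{adj}$ is defined for every square matrix and $Z Z^{adj} = Z^{adj} Z = (\det Z) I$). This reduces the problem to a purely algebraic identity, carried out pointwise in $\theta$.

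\textbf{Key steps.} First I would record the identity $\partial_\omega \det Z = \Tr\big((\partial_\omega Z) Z^{adj}\big)$; one obtains it either by the chain rule applied to the multilinear expansion $\det Z = \sum_{\mathrm{sgn}} \prod z_{i,\pi(i)}$, differentiating one column at a time, or by quoting the Jacobi formula directly (valid in the $\CC^\infty$ setting). Second, substitute the hypothesis $\partial_\omega Z = AZ - ZB + F$ into this expression, obtaining
\[
\partial_\omega \det Z = \Tr\big(AZ Z^{adj}\big) - \Tr\big(ZB Z^{adj}\big) + \Tr\big(F Z^{adj}\big).
\]
Third, use $Z Z^{adj} = (\det Z) I$ in the first term to get $\Tr\big(A (\det Z) I\big) = (\det Z)\,\Tr A$. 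Fourth, for the second term use cyclicity of the trace together with $Z^{adj} Z = (\det Z) I$: $\Tr\big(Z B Z^{adj}\big) = \Tr\big(B Z^{adj} Z\big) = \Tr\big(B (\det Z) I\big) = (\det Z)\,\Tr B$. Combining, $\partial_\omega \det Z = (\det Z)\,\Tr(A-B) + \Tr\big(F Z^{adj}\big)$, which is the claim.

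\textbf{Main obstacle.} There is no serious obstacle: the only point requiring a little care is justifying $\partial_\omega \det Z = \Tr\big((\partial_\omega Z) Z^{adj}\big)$ rigorously — i.e.\ that differentiation of the determinant with respect to the directional derivative $\partial_\omega$ commutes correctly — but this follows immediately from the chain rule since $\det$ is a smooth (polynomial) function of the matrix entries and $\theta \mapsto Z(\theta)$ is differentiable; the partial derivative of $\det$ with respect to the $(i,j)$ entry is exactly the $(j,i)$ cofactor, which is how $Z^{adj}$ (the transpose of the cofactor matrix) enters. One should also note that the manipulations with $ZZ^{adj}$ and $Z^{adj}Z$ hold for every matrix, invertible or not, so the lemma needs no extra hypotheses beyond differentiability of $Z$.
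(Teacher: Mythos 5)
Your proof is correct. The only point that needs justification is the Jacobi formula $\partial_\omega \det Z = \Tr\bigl(Z^{adj}\,\partial_\omega Z\bigr)$, which holds for any differentiable $Z$ since $\det$ is a polynomial in the entries and its partial derivatives are the cofactors; after that, substituting $\partial_\omega Z = AZ-ZB+F$, using $ZZ^{adj}=Z^{adj}Z=(\det Z)I$ (valid without invertibility) and cyclicity of the trace gives the claim in one line, exactly as you argue.

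The paper reaches the same conclusion by a more roundabout route: it differentiates the identity $ZZ^{adj}=(\det Z)\,I$, takes the trace to obtain $n\,\partial_\omega\det Z=\Tr(A-B)\det Z+\Tr(FZ^{adj})+\Tr\bigl(Z\,\partial_\omega Z^{adj}\bigr)$, and then uses the Jacobi formula a second time to show $\Tr\bigl(Z\,\partial_\omega Z^{adj}\bigr)=(n-1)\,\partial_\omega\det Z$, so that the unwanted term cancels. Both arguments rest on the same two ingredients (the adjugate identity and the Jacobi formula), but your version applies the Jacobi formula directly to $\partial_\omega Z$ and never needs to differentiate $Z^{adj}$ at all, which makes it shorter and avoids the bookkeeping with the factors $n$ and $n-1$; the paper's version buys nothing extra here and is simply a less economical organization of the same computation.
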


\begin{proof}  We have, for all $\theta \in \T^d$, $Z(\theta)Z^{adj}(\theta)=\det Z(\theta)\cdot I$, so
$$\partial_\omega \det Z(\theta) \cdot I = \partial_\omega Z(\theta)Z^{adj}(\theta)+Z(\theta)\partial_\omega Z^{adj}(\theta),$$
\noindent and taking the trace we find
$$n\partial_\omega \det Z  = \Tr((AZ-ZB+F)Z^{adj}) + \Tr(Z\partial_\omega Z^{adj})=$$
$$=\Tr(A-B)\det Z + \Tr(FZ^{adj})+ \Tr(Z\partial_\omega Z^{adj}).$$

We want to show that $(n-1)\partial_\omega \det Z = \Tr (Z\partial_\omega Z^{adj})$. From the formula of the differential of the determinant:
\[D(\det A)[ H] = \Tr (
A^{adj}H)  \]
where $A^{adj}$ is the transpose of the cofactor matrix of $A$,
and from the formula of the derivative of composite functions
\[ \frac{\partial}{\partial t} (F \circ f)_{\vert _{t=0}} = D_{f(t)}F[\frac{\partial f}{\partial t}]_{\vert_{t=0}}\]
where here
\[ F = \det, f = Z(\theta + t\omega) \Rightarrow f(0) = Z(\theta), \quad \frac{\partial f}{\partial t}_{\vert_{t=0}} = \partial_\omega Z\]
and therefore
\[ D _{f(t)} F _{\vert_{t=0}}
= \Tr (
Z^{adj}\, 
). \]
Hence,
\begin{align*}
\partial_\omega \det Z & = Tr(Z^{adj}\partial_{\omega}Z)   = Tr(\partial_{\omega} Z Z^{adj}) \\
				&   = Tr(\partial_\omega (Z Z^{adj})) - Tr(Z \partial_\omega Z^{adj}) \\
				&   = Tr(\partial_\omega (\det Z Id)) -  Tr(Z \partial_\omega Z^{adj}) \\
				&   = n \partial_\omega \det Z - Tr(Z \partial_\omega Z^{adj}).
\end{align*}
Finally, 
\begin{align*}
    \partial_\omega \det Z &=n \partial_\omega \det Z - Tr(Z \partial_\omega Z^{adj}) \\
    &=\Tr(A-B)\det Z + \Tr(FZ^{adj}).
\end{align*}

\end{proof}

\medskip

\begin{remark}
This lemma does not require any arithmetical condition on $\omega$.   
\end{remark}

\medskip

The following lemma will be used to construct a real invertible conjugation out of a complex one.

\begin{lemma}\label{det-lambda}
 If $Z:\T^d\to  Gl(n, \C)$  satisfies
$$\left \vert \int_{\T^d}  \det {Z}(\theta) d\theta\right\vert =1,$$
then there exists $\lambda \in [-1,1]$ such that 
\begin{equation}\label{moduleint}\left \vert\int_{\T^d}  \det (\Re Z(\theta)+\lambda \Im Z(\theta)) d\theta\right\vert \geq C_n>0,\end{equation}
and the constant $C_n$ only depends on $n$.
\end{lemma}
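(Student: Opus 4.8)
The plan is to consider the function $P(\lambda) := \int_{\T^d} \det(\Re Z(\theta) + \lambda \Im Z(\theta))\, d\theta$ and show it cannot be uniformly small on $[-1,1]$. The key observation is that $\lambda \mapsto \det(\Re Z(\theta) + \lambda \Im Z(\theta))$ is, for each fixed $\theta$, a polynomial in $\lambda$ of degree at most $n$; integrating over $\theta$ (the integral being a bounded linear functional, and the coefficients being continuous in $\theta$, hence integrable) yields that $P$ is itself a polynomial in $\lambda$ of degree at most $n$. Moreover, its value at the complex point $\lambda = i$ is exactly $\int_{\T^d} \det Z(\theta)\, d\theta$ — here one uses $\det(\Re Z + i \Im Z) = \det Z$ — whose modulus is $1$ by hypothesis. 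So $P$ is a polynomial of degree $\le n$ in one (complex) variable with $|P(i)| = 1$.

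The remaining point is purely a statement about polynomials: if $Q$ is a polynomial of degree at most $n$ with $|Q(i)| \ge 1$, then $\sup_{\lambda \in [-1,1]} |Q(\lambda)| \ge C_n$ for some constant $C_n > 0$ depending only on $n$. I would prove this by contradiction / compactness, or more constructively via a Lagrange-interpolation estimate: pick $n+1$ fixed nodes in $[-1,1]$, say $\lambda_0, \dots, \lambda_n$, and write $Q(i) = \sum_{j=0}^n Q(\lambda_j)\, \ell_j(i)$ where $\ell_j$ are the Lagrange basis polynomials for those nodes. Then $1 \le |Q(i)| \le \bigl(\max_j |Q(\lambda_j)|\bigr)\sum_{j=0}^n |\ell_j(i)|$, so $\max_j |Q(\lambda_j)| \ge \bigl(\sum_{j=0}^n |\ell_j(i)|\bigr)^{-1} =: C_n$, and since each $\lambda_j \in [-1,1]$ this gives the sup bound. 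Taking $\lambda$ to be whichever node realizes the max finishes the argument, and one checks $C_n \le 1$ so that $\lambda$ can be taken in $[-1,1]$ consistently with the stated range (indeed $C_n$ only depends on $n$ and the choice of nodes, e.g. equally spaced points).

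Finally one must verify the mild technical points: that $P$ really is a polynomial, i.e. that one may interchange $\int_{\T^d}$ with the expansion of the determinant in powers of $\lambda$ — this is immediate since $\det(\Re Z(\theta) + \lambda \Im Z(\theta)) = \sum_{m=0}^n c_m(\theta)\lambda^m$ with each $c_m$ a continuous (hence integrable) function of $\theta$ on the compact torus, so $P(\lambda) = \sum_{m=0}^n \bigl(\int_{\T^d} c_m(\theta)\,d\theta\bigr)\lambda^m$ — and that the extension of this polynomial identity from real $\lambda$ to $\lambda = i$ is legitimate, which it is since a polynomial identity valid on $\R$ holds on all of $\C$.

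The main obstacle, such as it is, is the polynomial lemma; but this is genuinely elementary once one fixes an interpolation scheme, and the only real care needed is to ensure the constant $C_n$ depends solely on $n$ (which it does, being built from fixed nodes and fixed Lagrange weights), and that it is at most $1$ so the conclusion $\lambda \in [-1,1]$ is not vacuous. Everything else is bookkeeping with determinants and the identity $\det(\Re Z + i\,\Im Z) = \det Z$.
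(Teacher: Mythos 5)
Your proof is correct, and it shares the paper's reduction: both arguments introduce $P(\lambda)=\int_{\T^d}\det(\Re Z(\theta)+\lambda\Im Z(\theta))\,d\theta$, observe it is a polynomial of degree at most $n$ in $\lambda$, and exploit $\lvert P(i)\rvert=\lvert\int_{\T^d}\det Z\rvert=1$. Where you diverge is in the quantitative heart of the matter. The paper factors $P(\lambda)=\mu(\lambda-\alpha_1)\cdots(\lambda-\alpha_n)$, uses the pigeonhole principle to pick $\lambda_0\in[-1,1]$ at distance at least $\frac{1}{n+1}$ from every $\Re\alpha_j$, and then bounds each ratio $\lvert\lambda_0-\alpha_j\rvert/\lvert i-\alpha_j\rvert$ from below by a case analysis on $\lvert\alpha_j\rvert\le 3$ versus $\lvert\alpha_j\rvert>3$, arriving at the explicit constant $C_n=(4(n+1))^{-n}$. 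You instead prove the abstract polynomial inequality by Lagrange interpolation at $n+1$ fixed nodes in $[-1,1]$: from $P(i)=\sum_j P(\lambda_j)\ell_j(i)$ one gets $\max_j\lvert P(\lambda_j)\rvert\ge\bigl(\sum_j\lvert\ell_j(i)\rvert\bigr)^{-1}=:C_n$, and the chosen $\lambda$ is one of the nodes, hence in $[-1,1]$. Your route is somewhat cleaner: it avoids the root factorization (and the implicit issue that $P$ may have degree strictly less than $n$, which the paper's formula with exactly $n$ roots glosses over), needs no case analysis, works verbatim under the weaker hypothesis $\lvert P(i)\rvert\ge 1$, and makes the dependence of $C_n$ on $n$ transparent (note $\sum_j\ell_j\equiv 1$ gives $\sum_j\lvert\ell_j(i)\rvert\ge 1$, so indeed $C_n\le 1$). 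What the paper's argument buys in exchange is a simple closed-form constant; for the application only the existence of some $C_n>0$ depending on $n$ matters, so both proofs serve equally well.
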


\begin{proof}
Let
$$P(\lambda) = \int_{\T^d}  \det (\Re Z(\theta)+\lambda \Im Z(\theta)) d\theta.$$ 
Then $P$ is a polynomial of degree $n$ which is non zero because $ P(i)$ is a complex number of unit modulus by assumption.
Hence
$$P(\lambda) = \mu (\lambda - \alpha_1)\dots (\lambda-\alpha_n)$$ 
for some $\mu \neq 0$, $\alpha, \cdots, \alpha_n \in \C$, and we have
$$\mu = \frac{P(i) }{(i - \alpha_1)\dots (i-\alpha_n)}.$$

\noindent By the Pigeon hole principle, there exists $k\in\{0,\dots, n\}$ such that 
$$(-1+\frac{2k}{n+1},-1+\frac{2(k+1)}{n+1})\ \bigcap\  \{\Re \alpha_1,\dots, \Re \alpha_n\}=\emptyset.$$ 
If $\lambda_0 = -1+\frac{2k+1}{n+1}$, then one has
$$ \vert \lambda_0 - \alpha_j \vert \geq \vert \lambda_0-\Re \alpha_j \vert \geq \frac{1}{n+1},\quad \forall j. $$

\noindent 
If $|\alpha_j|\leq 3$, then $\frac{|\lambda_0-\alpha_j|}{|i-\alpha_j|}\geq \frac{1}{4(n+1)}$. 
If now $\vert \alpha_j \vert >3$, we have $|\frac{\lambda_0}{\alpha_j}|\leq \frac{2}{3}$ therefore $|\frac{\lambda_0}{\alpha_j}-1|\geq\frac{1}{3}$, and also $|\frac{i}{\alpha_j}|\leq \frac{1}{3}$ which implies $|\frac{i}{\alpha_j}-1|\leq \frac{4}{3}$. Thus

\[ \frac{\vert \lambda_0-\alpha_j \vert }{ \vert i-\alpha_j \vert }=\frac{ \vert \frac{\lambda_0}{\alpha_j}-1 \vert }{ \vert \frac{i}{\alpha_j}-1 \vert}\geq\frac{1}{4} \]
so
$$\vert P(\lambda_0) \vert \geq  \prod_{\vert \alpha_j \vert \leq 3} \frac{\vert \lambda_0-\alpha_j \vert }{\vert i-\alpha_j \vert}  \prod_{\vert \alpha_j \vert > 3}\frac{\vert \lambda_0-\alpha_j \vert }{\vert i-\alpha_j \vert }
\geq \frac{1}{(4(n+1))^n},$$
so $\lambda_0$ satisfies $\eqref{moduleint}$.

\end{proof}

\subsection{Construction of a sequence of real changes of variables}

\begin{proposition}\label{presqueconjugaisonreelle}
Let $X_{\omega, A}$ be a real cocycle which is almost reducible to real matrices, i.e.  there exist sequences  $Z_j:\T^d\to Gl(n,\C)$, $B_j\in \mathcal{M}(n,\R)$ and $F_j : \T^d \rightarrow \mathcal{M}(n, \C)$ such that
$$\partial_\omega Z_j=AZ_j-Z_j(B_j+F_j)$$
and
$$ \lim_{j\rightarrow +\infty} \Vert Z_j^{\pm 1} \Vert_{\CC^r}^m \Vert F_j \Vert_{\CC^r}=0, \quad \forall m,r\geq 0.$$

\noindent If $\omega\in \mathcal{DC}(\kappa,\tau)$,
then  $X_{\omega, A}$ is real almost reducible.

\end{proposition}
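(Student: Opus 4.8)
\textbf{Proof strategy for Proposition \ref{presqueconjugaisonreelle}.}

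The plan is to convert the complex conjugations $Z_j$ into real ones by the same device used in Proposition \ref{redpreli}, namely forming $W_j(\theta)=\Re Z_j(\theta)+\lambda_j\Im Z_j(\theta)$ for a well-chosen real parameter $\lambda_j$, but now with quantitative control so that the convergence condition \eqref{conditionconvergence} survives. The starting point is that $W_j$ is automatically real and satisfies an equation of the form $\partial_\omega W_j=AW_j-W_j(\tilde B_j+\tilde F_j)$: indeed $\Re Z_j$ and $\Im Z_j$ each solve $\partial_\omega(\cdot)=A(\cdot)-(\cdot)(B_j+F_j)$ with a real right-hand side operator (because $A$, $B_j$ are real), so any real linear combination of them does too; here $\tilde B_j=B_j$ and $\tilde F_j=F_j$ (more precisely, $W_j$ solves $\partial_\omega W_j = A W_j - W_j(B_j+\Re F_j) - (\text{something})$ — one must be a little careful since $F_j$ is complex, but writing $Z_j = \Re Z_j + i\Im Z_j$ and separating real and imaginary parts of the original equation shows $\Re Z_j$, $\Im Z_j$ satisfy a coupled system whose real linear combinations $W_j$ satisfy $\partial_\omega W_j = AW_j - W_j B_j - G_j$ with $\|G_j\|_{\CC^r}\le C_r\|Z_j\|_{\CC^r}\|F_j\|_{\CC^r}$; then set $\tilde F_j := W_j^{-1}G_j$ once $W_j^{-1}$ is controlled). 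The only real content is therefore: (i) choose $\lambda_j$ so that $W_j$ is invertible with $\|W_j^{-1}\|_{\CC^0}$ bounded below, and (ii) propagate this to bounds on $\|W_j^{\pm1}\|_{\CC^r}$ that keep $\|W_j^{\pm1}\|_{\CC^r}^m\|\tilde F_j\|_{\CC^r}\to0$.

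For step (i) I would first normalize: using Lemma \ref{trace-constante}, replace $B_j$ by a matrix of constant trace (conjugating $Z_j$ by $e^{g_j}I$, which by Lemma \ref{l3.1} changes the $\CC^r$-norms of $Z_j^{\pm1}$ by factors controlled in terms of $\|A\|_{\CC^{\tau+d+1}}$ only, hence harmless). With $\Tr B_j$ constant, Lemma \ref{trace} applied with $F=0$ to the equation $\partial_\omega Z_j = A Z_j - Z_j B_j$ — or rather with the genuine small $F_j$ present — gives $\partial_\omega\det Z_j = \Tr(A-B_j)\det Z_j + \Tr(F_jZ_j^{adj})$; integrating over $\T^d$ and using that $\Tr(A-B_j)$ is (after the normalization) a constant equal to $\int_{\T^d}\Tr(A-B_j)$, plus a Gronwall/averaging argument, forces $\int_{\T^d}\Tr(A-B_j)=0$ up to an error of size $\|F_j\|_{\CC^0}\|Z_j^{adj}\|_{\CC^0}$, so that $\big|\int_{\T^d}\det Z_j\big|$ is bounded above and below by constants (after rescaling $Z_j$ by a scalar of modulus one to make $|\int\det Z_j|=1$, which costs nothing). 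Now Lemma \ref{det-lambda} supplies $\lambda_j\in[-1,1]$ with $\big|\int_{\T^d}\det W_j\big|\ge C_n$. Finally, since $\det W_j$ solves a scalar linear equation of the form $\partial_\omega(\det W_j)=(\text{bounded})\det W_j+(\text{small})$ whose solutions cannot vanish unless they vanish identically, and since the small divisor/Gronwall estimate gives $\|\det W_j\|_{\CC^0}$ and a lower bound $\inf_\theta|\det W_j(\theta)|\ge c_n>0$ uniformly in $j$, we get $\|W_j^{-1}\|_{\CC^0}=\|W_j^{adj}/\det W_j\|_{\CC^0}\le c_n^{-1}\|W_j\|_{\CC^0}^{n-1}\le C\|Z_j\|_{\CC^0}^{n-1}$.

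For step (ii): $\|W_j\|_{\CC^r}\le\|Z_j\|_{\CC^r}$, and by inequality \eqref{2}, $\|W_j^{-1}\|_{\CC^r}\le C_r\|W_j^{-1}\|_{\CC^0}^{r+1}\|W_j\|_{\CC^r}^r\le C_r'\|Z_j\|_{\CC^0}^{(n-1)(r+1)}\|Z_j\|_{\CC^r}^r$. Hence $\|W_j^{\pm1}\|_{\CC^r}$ is bounded by a fixed power of $\|Z_j^{\pm1}\|_{\CC^r}$ — say $\|W_j^{\pm1}\|_{\CC^r}\le C_r\|Z_j^{\pm1}\|_{\CC^r}^{p(r)}$ for some integer $p(r)$. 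Then $\tilde F_j=W_j^{-1}G_j$ satisfies, by \eqref{1}, $\|\tilde F_j\|_{\CC^r}\le C_r\|W_j^{-1}\|_{\CC^r}\|Z_j\|_{\CC^r}\|F_j\|_{\CC^r}$, so $\|W_j^{\pm1}\|_{\CC^r}^m\|\tilde F_j\|_{\CC^r}\le C_{r,m}\|Z_j^{\pm1}\|_{\CC^r}^{M(r,m)}\|F_j\|_{\CC^r}$ for a suitable integer $M(r,m)$, which tends to $0$ by hypothesis \eqref{conditionconvergence} (the hypothesis is precisely designed to absorb arbitrary polynomial factors in $\|Z_j^{\pm1}\|_{\CC^r}$). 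This establishes that $X_{\omega,A}$ is real almost reducible. The main obstacle is step (i): controlling $\big|\int_{\T^d}\det Z_j\big|$ from below and the pointwise lower bound on $|\det W_j|$ uniformly in $j$; this is exactly where the Diophantine condition enters, through Lemma \ref{l3.1}, since one needs the primitive appearing in the trace normalization to be $\CC^\infty$ with norms controlled independently of $j$, and a non-Diophantine $\omega$ would break both the normalization of $\Tr B_j$ and the quasi-periodicity of the relevant primitives.
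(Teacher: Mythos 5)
Your overall route is the paper's own: normalize traces (Lemmas \ref{trace-constante} and \ref{trace}), set $W_j=\Re Z_j+\lambda_j\Im Z_j$ with $\lambda_j$ supplied by Lemma \ref{det-lambda}, obtain a pointwise lower bound on $\det W_j$ through the small-divisor Lemma \ref{l3.1} (the place where the Diophantine condition is really used), and then propagate the $\CC^r$ bounds with \eqref{1}--\eqref{2}; that part of your plan is sound. The genuine gap is in the normalization that must precede Lemma \ref{det-lambda}. That lemma requires $\vert\int_{\T^d}\det Z_j\vert=1$; you assert that $\vert\int_{\T^d}\det Z_j\vert$ is ``bounded above and below by constants'' and that one can achieve the normalization ``by rescaling $Z_j$ by a scalar of modulus one''. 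A unimodular scalar $c$ multiplies $\det Z_j$ by $c^n$ and therefore leaves $\vert\int\det Z_j\vert$ unchanged, so it cannot produce the normalization; and your lower bound is circular: integrating $\partial_\omega\det Z_j=\Tr(A-B_j)\det Z_j-\Tr(Z_jF_jZ_j^{adj})$ only yields $\Tr(A-B_j)\int\det Z_j=\int\Tr(Z_jF_jZ_j^{adj})$, and to deduce from this that $\Tr(A-B_j)$ is small you already need the lower bound on $\vert\int\det Z_j\vert$ that you are trying to establish. The paper proceeds differently: it divides $Z_j$ by $a_j=(\int\det Z_j)^{1/n}$ and checks (inequality \eqref{inequality36}) that $\vert a_j\vert$ is controlled above by $C\Vert Z_j\Vert_{\CC^0}$ and below by $c\Vert Z_j^{-1}\Vert_{\CC^0}^{-1}$; it is exactly this (and not a $j$-uniform constant bound, which is not available) that guarantees the rescaled sequence still satisfies the convergence condition \eqref{conditionconvergence}.

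A second, related gap concerns the lower bound on $\det W_j$. You invoke a ``small divisor/Gronwall estimate'' for an equation of the shape $\partial_\omega(\det W_j)=(\text{bounded})\cdot\det W_j+(\text{small})$, but with a merely bounded zeroth-order coefficient Lemma \ref{l3.1} gives nothing: you need that coefficient to vanish (or to be demonstrably small and absorbed into the error). This is why the paper, after the $a_j$-normalization, reads off $\Tr B_j=-\int\Tr(F_j)\det Z_j$ exactly and shifts this small real constant into $F_j$, so that $\Tr A=\Tr B_j=0$ and $\partial_\omega\det W_j=-\Tr\big(W_jG_jW_j^{adj}\big)$ is itself a small function; only then does Lemma \ref{l3.1} give $\Vert\det W_j-\widehat{\det W_j}(0)\Vert_{\CC^0}\to0$, which combined with $\vert\widehat{\det W_j}(0)\vert\ge C_n$ from Lemma \ref{det-lambda} yields $\vert\det W_j(\theta)\vert\ge C_n/2$ for $j$ large and hence the bounds on $W_j^{-1}$. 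Your step (ii) (the $\CC^r$ bookkeeping via $W_j^{-1}=W_j^{adj}/\det W_j$ and \eqref{2}) is correct and coincides with the paper's conclusion, but as written the proposal does not close the two normalization points above.
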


\medskip

\begin{proof} 
By Lemma \ref{trace-constante} we can assume  $\Tr A$ is constant, and since $\Tr( A)I$ commutes with $Z_j$,

$$\partial_\omega Z_j = (A-\frac{1}{n}Tr(A)\cdot I)Z_j -Z_j(B_j +F_j-\frac{1}{n}Tr(A)\cdot I)
$$

\noindent therefore, by replacing $B_j$ with $B_j-\frac{1}{n}Tr(A)\cdot I$, we can assume that 
$$\Tr A=0.$$

\medskip

\noindent We have, for all $\theta\in\mathbb{T}^d$,
$$\Vert \det Z_j^{\pm 1}(\theta)\Vert\le C_n\Vert Z_j^{\pm 1}\Vert_{\CC^0}^n  $$
and, hence for all $\theta\in\mathbb{T}^d$,
$$\Vert \det Z_j^{\pm 1}(\theta)\Vert=\Vert \frac1{\det Z_j^{\mp 1}(\theta)}\Vert\ge \frac1{C_n}\Vert Z_j^{\mp 1}\Vert_{\CC^0}^{-n}.$$
So the quantity
$$a_j=\big(\int_{\T^d} \det Z_j(\theta)\, d\theta\big)^{\frac 1n}\in\mathbb{C}$$
satisfies
\begin{equation}\label{inequality36}0 < \frac1{ C_n' \Vert Z_j^{-1} \Vert_{\CC^0} }\le  \vert a_j\vert\le C_n' \Vert Z_j\Vert_{\CC^0}.\end{equation}

\noindent Define now
$$\tilde Z_j=\frac1{a_j} Z_j.$$
Then 
\[\partial_\omega \tilde{Z} _j=A\tilde{Z}_j - \tilde{Z}_j(B_j+F_j)\]
and for all $r\in \N$, \eqref{inequality36} implies
$$
\Vert \tilde{Z}_j\Vert _{\CC^r}\leq C_n'\Vert Z_j^{-1}\Vert_{\CC^0} \Vert Z_j\Vert_{\CC^r}, \quad \Vert \tilde{Z}_j^{-1}\Vert _{\CC^r}\leq C_n'\Vert Z_j\Vert_{\CC^0} \Vert Z_j^{-1}\Vert_{\CC^r},$$

\noindent which implies that
$$ \lim_{j\rightarrow +\infty} \Vert \tilde Z_j^{\pm 1} \Vert_{\CC^r}^m \Vert F_j \Vert_{\CC^r}=0, \quad \forall m,r\geq 0.$$
Replacing $Z_j$ by $\tilde Z_j$, we can therefore simply assume that
\begin{equation}\label{det=1}\int_{\T^d} \det Z_j(\theta) \, d\theta=1,\quad \forall j.\end{equation}

\medskip

\noindent Applying Lemma \ref{trace} with $B=B_j$ and $F=-Z_jF_j$, we have 
\begin{align*}
\partial_\omega \det {Z}_j &= \Tr(A-B_j)\det {Z}_j -\Tr(Z_jF_j Z_j^{adj}) \\
& =-\Tr(B_j+F_j)\det Z_j.
\end{align*}
Hence
$$0=\int_{\T^d} \partial_\omega \det {Z}_j(\theta) d\theta=-\Tr(B_j)-\int_{\T^d}\Tr(F_j(\theta))  \det {Z}_j(\theta) d\theta.$$
Replacing $B_j$ by $B_j + \frac{1}{n}\int_{\T^d} \Tr(F_j(\theta))\det Z_j(\theta)d\theta\cdot  I$ which is real since $B_j$ is real and from the previous equality, and replacing $F_j$ by $F_j - \frac{1}{n}\int_{\T^d}\Tr(F_j(\theta)) \det Z_j(\theta)d\theta\cdot I$ (which still satisfies condition convergence, since $F_j$ does), we can therefore simply assume that
$$\Tr(B_j)=0,\quad \forall j.$$
\medskip

\noindent 
By Lemma  \ref{det-lambda}, which can be applied thanks to \eqref{det=1}, there exists a constant $C_n >0$, and for all $ j$ there exist a $\lambda_j\in [-1,1]$, 
such that 
$$W_j= \Re Z_j + \lambda_j \Im Z_j$$
satisfies
\begin{equation}\vert \widehat{\det W_j}(0)\vert=\vert \int_{\T^d} \det W_j(\theta) d \theta \vert \ge C_n,\quad .\label{W-chapeau}\end{equation}

\noindent 
Clearly, since $A$ and ${B}_j$ are real,
$$\partial_\omega W_ j = AW_j -W_j (B_j+ G_j)$$
where $G_j =W_j^{-1}( \Re({Z}_j{F}_j) +\lambda_j \Im({Z}_j{F}_j))$. In particular
$$ \lim_{j\rightarrow +\infty} \Vert W_j\Vert_{\CC^r}^m \Vert G_j \Vert_{\CC^r}=0, \quad \forall m,r\geq 0.$$

\medskip

\noindent There remains to study the inverse of $W_j$.

\noindent By Lemma \ref{trace}, we have 
$$\partial_\omega \det {W}_j = \Tr(A-B_j)\det {W}_j -\Tr(W_jG_j W_j^{adj})=$$
$$=-\Tr(W_jG_j W_j^{adj})=-Tr(G_j\text{det}(W_j)):=H_j.$$
Since
$$\Vert H_j\Vert_{\CC^r}\le C_r \Vert G_j\Vert_{\CC^r}\Vert W_j\Vert_{\CC^r}^n$$
we have
$$\lim_{j\to + \infty}\Vert H_j\Vert_{\CC^r}= 0,\quad \forall r\geq 0.$$

\noindent 
From 
$$ \partial_\omega \det W_j = \partial_{\omega} (\det W_j + \widehat{\det W_j}(0))= H_j - \hat H_j(0) $$
and applying Lemma \ref{l3.1}, if $\omega \in \mathcal{DC}(\kappa,\tau)$, we then have for all $r \geq 0$,
$$\Vert \det W_j- \widehat{\det W_j}(0)\Vert_{\CC^r}\le  C_{r,d}\frac{1}{\kappa } \Vert H_j \Vert_{\CC^{\tau+d+1}}.$$

\noindent
So for $j$ sufficiently large and thanks to \eqref{W-chapeau}, we have 
$$\vert \det W_j(\theta)\vert\ge \frac12 C_n,\quad \forall \theta\in \T^d.$$
This implies that $W_j$ is invertible, and for $j$ sufficiently large, using the well known formula $W_j^{-1} = \frac{1}{\det W_j}W_j^{adj}$ (where $W_j^{adj}$ is the transpose of the cofactor matrix of $W_j$),
$$\Vert W_j^{-1} \Vert_{\CC^0}\le C_n'\Vert W_j\Vert_{\CC^0}^{n-1}$$
and, by \eqref{2},
$$
\Vert W_j^{-1}\Vert_{\CC^r} \leq C_r \Vert W_j^{-1} \Vert_{\CC^0}^{r+1}\Vert W_j \Vert_{\CC^r}^r,\quad \forall r.$$

\noindent In particular
$$ \lim_{j\rightarrow +\infty} \Vert W_j^{-1}\Vert_{\CC^r}^m \Vert G_j \Vert_{\CC^r}=0, \quad \forall m,r\geq 0.$$
\end{proof}

\section{Jordan normal form with estimates on the conjugation matrix}\label{section4}

This section is devoted to conjugating a matrix to its Jordan normal form with sufficient estimates.

\subsection{Column echelon form by an algebraic conjugation}

\begin{definition}
    We say that a matrix $A$ is in \textit{column echelon form} if it has strictly increasing column lengths, except the first columns which can be zero.
    Its \textit{pivots} are the last non zero coefficient of each non zero column.
\end{definition}

We will now conjugate a nonzero nilpotent matrix $A \in gl(n, \C)$ to a nilpotent one in column echelon form. Moreover, the conjugation will be unitary. \\
Let $m$ the index of $A$, that is to say the smallest integer $m$ such that $A^m =0$ (here $m\geq 2$ since $A$ is nonzero). Denote $L : \C^n \rightarrow \C^n$ the linear map represented by $A$ in the canonical basis, and $K_j = \ker L^j$ the kernel of the iterates of $L$, for all $j \in \{0, \dots, m \}$. Then we have
\[ \{ 0 \} = K_0 \subset_{\neq} K_1 \subseteq K_2 \subseteq \cdots \subseteq K_{m-1} \subset_{\neq} K_m = \C^n.\]
Let for all $j \in \{1, \dots, m\}$, $U_j = K_j \cap K_{j-1}^{\perp}$ where $K_{j-1}^{\perp}$ is the orthogonal complement of $K_{j-1}$ in $\C^n$ equipped with the standard inner product. This implies the orthogonal direct sum
\[ K_j = K_{j-1} \oplus^{\perp} U_j\]
from which we can define $\proj_{U_j}: K_j \rightarrow U_j$ the orthogonal projection onto $U_j$.
Finally,
\[ K_j = U_1 \oplus^{\perp} U_2 \oplus^{\perp} \cdots \oplus^{\perp} U_j.\]
We will also denote
\[ r_j = \dim U_j.\]
\begin{lemma} \label{lemmaech}
With the above notations, 
\begin{enumerate}[(i)]
    \item $L(K_j)\subseteq K_{j-1}$ for all $j \in \{1, \dots, m\}$,
    \item the restriction $L \vert_{U_j}$ of $L$ to $U_j$ is injective for all $j \in \{2, \dots, m\}$,
    \item $L(U_j) \cap K_{j-2} = \{0 \}$ for all $j \in \{2, \dots, m\}$,
    \item $(\dim U_i)_{i=1, \dots,m}$ is a non increasing sequence.
\end{enumerate}
\end{lemma}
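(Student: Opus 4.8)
The four claims are all elementary linear-algebra facts about the kernel filtration of a nilpotent map, and I would prove them in the stated order, using at each step only the definitions $K_j=\ker L^j$ and $U_j=K_j\cap K_{j-1}^\perp$.

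For (i): if $x\in K_j$ then $L^j x=0$, i.e. $L^{j-1}(Lx)=0$, so $Lx\in K_{j-1}$. For (ii): let $x\in U_j$ with $Lx=0$; then $x\in K_1\subseteq K_{j-1}$, but also $x\in K_{j-1}^\perp$, hence $x=0$, so $L|_{U_j}$ is injective (this is where $j\ge 2$ is used, so that $K_1\subseteq K_{j-1}$). For (iii): suppose $x\in U_j$ with $Lx\in K_{j-2}$, i.e. $L^{j-2}(Lx)=L^{j-1}x=0$, so $x\in K_{j-1}$; combined with $x\in K_{j-1}^\perp$ this gives $x=0$, hence $Lx=0$, so $L(U_j)\cap K_{j-2}=\{0\}$.

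The only claim requiring a genuine (though still short) argument is (iv), the monotonicity $r_{j+1}\le r_j$. Here the plan is to exhibit an injective linear map from $U_{j+1}$ into $U_j$. The natural candidate is $\proj_{U_j}\circ L$ restricted to $U_{j+1}$: by (i), $L(U_{j+1})\subseteq K_j=U_1\oplus^\perp\cdots\oplus^\perp U_j$, so the composition makes sense. To check injectivity, suppose $x\in U_{j+1}$ and $\proj_{U_j}(Lx)=0$; then $Lx\in K_{j-1}$, and applying (iii) with index $j+1$ (which gives $L(U_{j+1})\cap K_{j-1}=\{0\}$) forces $Lx=0$, whence $x\in U_{j+1}\cap\ker L\subseteq U_{j+1}\cap K_1$; since $j+1\ge 2$ we have $K_1\subseteq K_j\perp U_{j+1}$... more directly, $x\in K_1\subseteq K_j$ and $x\in K_j^\perp$, so $x=0$. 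Thus $\proj_{U_j}\circ L|_{U_{j+1}}$ is injective, giving $r_{j+1}=\dim U_{j+1}\le\dim U_j=r_j$.

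The main obstacle is essentially bookkeeping: making sure the index ranges line up (several claims need $j\ge 2$ so that $K_1\subseteq K_{j-1}$, and the monotonicity argument must be applied with the shifted index) and that the orthogonal-complement hypotheses are invoked correctly — there is no deep difficulty, since nilpotency enters only through the strict inclusions $K_0\subsetneq K_1$ and $K_{m-1}\subsetneq K_m=\C^n$ already recorded before the lemma. I would present (i)–(iii) in two lines each and devote the bulk of the write-up to the injective map $\proj_{U_j}\circ L$ used for (iv).
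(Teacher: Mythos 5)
Your proposal is correct and follows essentially the same route as the paper: parts (i)--(iii) are the same direct kernel computations, and your injectivity of $\proj_{U_j}\circ L|_{U_{j+1}}$ for (iv) is just the paper's argument (linear independence of the projected images $\proj_{U_{j-1}}(Lu_k)$) phrased with a shifted index. No gaps.
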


\begin{proof}
    \begin{enumerate}[(i)]
    \item For all $j\in \{1, \dots, m\}$, 
    \[u \in K_j \Leftrightarrow L^ju = 0 \Leftrightarrow L^{j-1}(Lu) = 0 \Leftrightarrow Lu \in K_{j-1}.\]
    \item Let $j \in \{2, \dots, m\}$, and let $u \in U_j = K_j \cap K_{j-1}^{\perp}$ with $u\neq 0$. Then $u \notin K_{j-1}$ and then $L^{j-1}u \neq 0$, which implies (since $j-1 \geq 1$) $Lu \neq 0$.
    \item Let $j \in \{2, \dots, m\}$ and let a nonzero $u \in U_j = K_j \cap K_{j-1}^\perp$, then $u \notin K_{j-1}$, and from $(i)$, $Lu \in K_{j-1}$. Reasoning by absurdity, suppose that $Lu \in K_{j-2}$, then $L^{j-1}u = L^{j-2}(Lu) = 0$, which implies $u \in K_{j-1}$ leading to a contradiction.
    \item  Let $j \in \{ 2, \dots, m\}$ and $u_1, \dots, u_r$ linearly independent vectors in $U_j \subset K_j$. From $(ii)$, $Lu_1, \dots, Lu_r$ are linearly independent vectors in $K_{j-1} = K_{j-2} \oplus^\perp U_{j-1}$ and from $(iii)$, $\proj_{U_{j-1}}(Lu_1), \dots, \proj_{U_{j-1}}(Lu_r)$ are linearly independent vectors in $U_{j-1}$, which implies that $\dim U_{j-1} \geq \dim U_j$.
    \end{enumerate}
\end{proof}
\begin{proposition}\label{echok}
    Let $A$ a nilpotent matrix of index $m\geq 2$. With the above notations, for all $j\in \{1, \dots, m\}$, there exists an orthonormal basis $\BB^j = \{u_1^j, \dots, u_{r_j}^j \}$ of $U_j$ (where $r_j=\dim U_j$) such that, for all $j\geq 2$ and for all $k \in \{1, \dots, r_j\}$, letting $K_{-1} = \{0\}$,
\[Lu_k^j \in \Span (u_1^{j-1}, \dots, u_k^{j-1}) \oplus^\perp K_{j-2}\]
and
\[\langle Lu_k^j,u_k^{j-1} \rangle \neq 0.\]
\end{proposition}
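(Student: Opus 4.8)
The plan is to build the bases $\mathcal{B}^j$ by downward induction on $j$, starting from $j=m$ and descending to $j=1$, using Lemma \ref{lemmaech} at each step together with a Gram--Schmidt-type triangularization of the map $L$ restricted to each layer. First I would pick an arbitrary orthonormal basis $\{u_1^m,\dots,u_{r_m}^m\}$ of $U_m$; since there is no condition to satisfy at the top layer, any choice works. Then, assuming an orthonormal basis $\mathcal{B}^{j}=\{u_1^{j},\dots,u_{r_j}^{j}\}$ of $U_j$ has already been constructed for some $j\ge 2$, I would construct $\mathcal{B}^{j-1}$ of $U_{j-1}$ so that the triangular incidence relations $Lu_k^j\in\Span(u_1^{j-1},\dots,u_k^{j-1})\oplus^\perp K_{j-2}$ and $\langle Lu_k^j,u_k^{j-1}\rangle\neq 0$ hold for every $k\in\{1,\dots,r_j\}$.

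The heart of the induction step is the following: by parts (ii) and (iii) of Lemma \ref{lemmaech}, the vectors $\proj_{U_{j-1}}(Lu_1^j),\dots,\proj_{U_{j-1}}(Lu_{r_j}^j)$ are linearly independent in $U_{j-1}$, and by part (iv) we have $r_j\le r_{j-1}=\dim U_{j-1}$. I would then apply the Gram--Schmidt process to the ordered list $\proj_{U_{j-1}}(Lu_1^j),\dots,\proj_{U_{j-1}}(Lu_{r_j}^j)$ (extended by any completion to a spanning set of $U_{j-1}$ if $r_j<r_{j-1}$) to obtain an orthonormal basis $u_1^{j-1},\dots,u_{r_{j-1}}^{j-1}$ of $U_{j-1}$. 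By construction of Gram--Schmidt, $\proj_{U_{j-1}}(Lu_k^j)\in\Span(u_1^{j-1},\dots,u_k^{j-1})$ and its component along $u_k^{j-1}$ is nonzero (this is exactly the nonvanishing of the $k$-th pivot in Gram--Schmidt, which holds precisely because the $\proj_{U_{j-1}}(Lu_i^j)$ are independent). Adding back the $K_{j-2}$-component, which was discarded by the projection, gives $Lu_k^j\in\Span(u_1^{j-1},\dots,u_k^{j-1})\oplus^\perp K_{j-2}$, and since $u_k^{j-1}\in U_{j-1}\perp K_{j-2}$ we get $\langle Lu_k^j,u_k^{j-1}\rangle=\langle\proj_{U_{j-1}}(Lu_k^j),u_k^{j-1}\rangle\neq 0$, as required.

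The main obstacle — though it is more a bookkeeping point than a genuine difficulty — is keeping straight that the Gram--Schmidt order must be chosen so that the $u_k^{j-1}$ are produced in the same order as the $\proj_{U_{j-1}}(Lu_k^j)$, so that the triangular (not merely invertible) incidence structure is obtained; and handling the case $r_j<r_{j-1}$, where the remaining basis vectors $u_{r_j+1}^{j-1},\dots,u_{r_{j-1}}^{j-1}$ are unconstrained and may be chosen to complete an orthonormal basis. One should also note that the relation is only asserted for $j\ge 2$, so the construction terminates cleanly at $j=1$ where $\mathcal{B}^1$ is already determined by the step from $j=2$ (or arbitrary if $m=1$, which is excluded). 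Finally, one should double-check that $\Span(u_1^{j-1},\dots,u_k^{j-1})\oplus^\perp K_{j-2}$ is indeed an orthogonal direct sum, which is immediate since each $u_i^{j-1}\in U_{j-1}=K_{j-1}\cap K_{j-2}^\perp$.
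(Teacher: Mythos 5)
Your proposal is correct and follows essentially the same route as the paper: an arbitrary orthonormal basis of $U_m$, then downward induction applying Gram--Schmidt to the ordered projections $\proj_{U_{j-1}}(Lu_k^j)$ (whose independence comes from parts (ii) and (iii) of Lemma \ref{lemmaech}), completing to an orthonormal basis of $U_{j-1}$ when $r_j<r_{j-1}$. The triangularity and pivot-nonvanishing observations match the paper's argument exactly.
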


\begin{proof}
  Let $\BB^m= \{u_1^m, \dots, u_{r_m}^m\}$ an orthonormal basis of $U_m$. From $(ii)$ of lemma \ref{lemmaech}, $\{Lu_1^m, \dots, Lu_{r_m}^m\}$ are linearly independant vectors of $K_{m-1} = K_{m-2} \oplus^\perp U_{m-1}$ and from $(iii)$ of lemma \ref{lemmaech}, $\{v_1 = \proj_{U_{m-1}}(Lu_1^m), \dots, v_{r_m} = \proj_{U_{m-1}}(Lu_{r_m}^m) \}$ are linearly independant vectors of $U_{m-1}$. Apply Gram–Schmidt on $\{v_1, \dots, v_{r_m} \}$.
   Then there exists an orthonormal basis $\BB = \{u_1', \dots, u_{r_{m-1}}'\}$ (taking $\{ v_1, \dots, v_{r_m} \}$ and completing into an orthonormal basis if $r_{m-1}-r_{m} >0$) of $U_{m-1}$ such that for all $k \in \{1, \dots, r_m \}$ (recall $r_{m} \leq r_{m-1}$),
    \[v_k \in \Span (u_1', \dots, u_k') \]
and
\[\langle u_k', v_k \rangle \neq 0.\]
Then for all $k \in \{ 1, \cdots, r_{m} \}$,
    \[Lu_k^m \in \Span (u_1', \dots, u_k') \oplus^{\perp}K_{m-2} \]
and
\[\langle Lu_k^m,u_k' \rangle = \langle u_k', v_k \rangle \neq 0,\]
and let $\BB^{m-1}=\BB$. Now if $m=2$ we are done, and if $m\geq 3$, we construct every $\BB^{j}$ the same way from $U_{j+1}$ and $\BB^{j+1}$.
\end{proof}

\begin{corollary}\label{coro-ech}
    Let $L$ be
    a nilpotent linear application of index $m\geq 2$. With the above notations, there exists an orthonormal basis $\BB = \{u_1, \dots, u_n \}$ of $
    \C^n$ such that $L$ is represented in this basis by the matrix
    \begin{equation} \label{formedeA}A=\left(\begin{array}{ccccc}
    0 & A_1^2 & \ast  & \cdots & \cdots \\
    0 & 0 & A_2^3 & \ddots  & \vdots \\
    0 & 0 & 0 & \ddots & \ast \\
    0 & 0 & 0 & 0 & A_{m-1}^m \\
    0 & 0 & 0 & 0  & 0
    \end{array}\right)\end{equation}
    where, for all $j \in \{2, m\}$, $A_{j-1}^j$ is a $r_{j-1}\times r_j$ matrix of the form
\[
\left(\begin{array}{ccc}
\alpha_1 & \ast & \ast  \\
0 & \ddots & \ast  \\
0 & 0 & \alpha_{r_j}  \\
0 & 0 & 0  \\
\vdots & \vdots & \vdots  
\end{array}\right)
\]
if $r_{j-1} > r_j$
or
\[
\left(\begin{array}{ccc}
\alpha_1 & \ast & \ast  \\
0 & \ddots & \ast  \\
0 & 0 & \alpha_{r_j}  
\end{array}\right)
\]
if $r_{j-1} = r_j$, and with $\alpha_i \neq 0$ for all $i\in \{1, \dots, r_j \}$.
\end{corollary}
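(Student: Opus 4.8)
The plan is to concatenate the orthonormal bases $\BB^1,\dots,\BB^m$ produced by Proposition \ref{echok}, in the order $U_1,U_2,\dots,U_m$, into a single orthonormal list
\[ \BB = \BB^1\cup\BB^2\cup\cdots\cup\BB^m = \{u_1,\dots,u_n\}. \]
Since $\C^n = K_m = U_1\oplus^\perp\cdots\oplus^\perp U_m$, this is an orthonormal basis of $\C^n$ (and the passage matrix from the canonical basis to $\BB$ is unitary). I then group the rows and columns of the matrix of $L$ in $\BB$ into blocks indexed by this decomposition, writing $A_i^j$ for the block in row-position $U_i$, column-position $U_j$; the statement is obtained by reading off this matrix block by block.

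First, $\BB^1$ spans $U_1 = K_1 = \ker L$, so $Lu = 0$ for every $u\in\BB^1$, giving the zero first block-column of \eqref{formedeA}. For $j\geq 2$ and $u_k^j\in\BB^j$, Proposition \ref{echok} yields $Lu_k^j\in\Span(u_1^{j-1},\dots,u_k^{j-1})\oplus^\perp K_{j-2}$; since $K_{j-2} = U_1\oplus^\perp\cdots\oplus^\perp U_{j-2}$, in the $U_j$-column the only possibly nonzero block-rows are those of $U_1,\dots,U_{j-2}$ (the $\ast$ entries) and of $U_{j-1}$. Hence every diagonal block $A_j^j$ vanishes and the matrix of $L$ in $\BB$ is block strictly upper triangular — exactly the zero-block pattern of \eqref{formedeA}.

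It remains to examine the superdiagonal block $A_{j-1}^j$, a matrix whose $(i,k)$ entry is $\langle Lu_k^j,u_i^{j-1}\rangle$ by orthonormality of $\BB$; it is $r_{j-1}\times r_j$ with $r_{j-1}\geq r_j$ because $(\dim U_i)$ is non-increasing by Lemma \ref{lemmaech}(iv). For $i>k$, writing $Lu_k^j = v+w$ with $v\in\Span(u_1^{j-1},\dots,u_k^{j-1})$ and $w\in K_{j-2}$, the orthonormality of $\BB^{j-1}$ kills $\langle v,u_i^{j-1}\rangle$ and the relation $U_{j-1}\perp K_{j-2}$ kills $\langle w,u_i^{j-1}\rangle$, so the entry is zero; for $i=k$, the second assertion of Proposition \ref{echok} gives $\alpha_k := \langle Lu_k^j,u_k^{j-1}\rangle\neq 0$. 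Thus the top $r_j\times r_j$ part of $A_{j-1}^j$ is upper triangular with nonzero diagonal $\alpha_1,\dots,\alpha_{r_j}$, and when $r_{j-1}>r_j$ its last $r_{j-1}-r_j$ rows are zero — the two shapes displayed. There is no real obstacle here: the substantive work was done in Proposition \ref{echok}, and the only point requiring attention is the bookkeeping of the block indices together with the choice of ordering $U_1,\dots,U_m$ of $\BB$, which is what arranges for $L$ to send $U_j$ into $U_1\oplus^\perp\cdots\oplus^\perp U_{j-1}$.
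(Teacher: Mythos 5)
Your proof is correct and follows essentially the same route as the paper: concatenate the orthonormal bases $\BB^1,\dots,\BB^m$ from Proposition \ref{echok} and read off the matrix entries $\langle Lu_k^j,u_i^{j-1}\rangle$, using $L(U_j)\subseteq K_{j-1}$ for the block strictly upper triangular shape and the two conclusions of Proposition \ref{echok} (together with Lemma \ref{lemmaech}(iv) for $r_{j-1}\geq r_j$) for the form of the superdiagonal blocks. The paper's proof is just a one-line version of this same bookkeeping, so there is nothing to add.
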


\begin{proof}
Concatenating the basis $\BB^i$ obtained for all $i \in \{1, \dots, m \}$ in proposition \ref{echok}, blocks $A_{j-1}^j$ have coefficients
\[  \langle u_k^{j-1}, Lu_{l}^j \rangle \]
for all $k \in \{ 1, \dots, r_{j-1} \}$, $l\in \{1, \dots, r_j\}$.
\end{proof}

The following Lemma will be useful in analyzing the behaviour of the kernel of a matrix of form \eqref{formedeA} when non-zero coefficients are removed.

\begin{lemma}\label{lKerSpectrum}
Let $A$ of the form \eqref{formedeA}.
Let $B$ with the same block-triangular form as $A$, that is to say, according to the notation of corollary \ref{coro-ech}, for all $i\geq j$,
\[ A_i^j = 0 \Rightarrow B_i^j = 0.\]
Denote by $L_A$ and $L_B$ the linear maps represented by $A$ and $B$ respectively in the canonical basis. Then

\begin{itemize}
\item[$(i)$]
$$ \ker L_B^j\supseteq \ker L_A^j,\quad \forall j.$$

\item[$(ii)$]
If 
$$ \ker L_B^j= \ker L_A^j,\quad \forall j,$$
\noindent then the block 
$B^j_{j-1}$ is of maximal rank $r_j$ for all $j$.
\end{itemize}

\end{lemma}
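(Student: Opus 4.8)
The plan is to use only the block-upper-triangular \emph{shape} of $A$ (hence of $B$), not the staircase structure of the diagonal blocks, and to rephrase everything in terms of the flag of coordinate subspaces
\[ \{0\}=K_0\subset K_1\subset\cdots\subset K_m=\C^n,\]
where $K_j$ denotes the span of the first $r_1+\cdots+r_j$ basis vectors of $\BB$, i.e.\ the blocks $U_1,\dots,U_j$. Since $A$ represents the nilpotent map $L$ in the basis $\BB$ of Corollary \ref{coro-ech}, one has $\ker L_A^{\,j}=K_j$ for every $j$, with $\dim K_j=r_1+\cdots+r_j$. The one structural observation driving both parts: a matrix $M$ with the block pattern of $A$ (that is, $M_i^j=0$ whenever $i\ge j$) sends the block $U_j$ into $K_{j-1}$ for each $j$, because the $j$-th block column has nonzero entries only in block rows $1,\dots,j-1$; consequently $M(K_j)\subseteq K_{j-1}$ for all $j$ (with $K_0=\{0\}$). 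Both $A$ and $B$ enjoy this, by the hypothesis on the vanishing blocks of $B$.

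\textbf{Part $(i)$.} First I would iterate the inclusion $L_B(K_j)\subseteq K_{j-1}$, which gives
\[ L_B^{\,j}(K_j)\subseteq L_B^{\,j-1}(K_{j-1})\subseteq\cdots\subseteq L_B(K_1)\subseteq K_0=\{0\},\]
hence $K_j\subseteq\ker L_B^{\,j}$. Since $\ker L_A^{\,j}=K_j$, this is exactly $\ker L_A^{\,j}\subseteq\ker L_B^{\,j}$.

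\textbf{Part $(ii)$.} Now assume $\ker L_B^{\,j}=K_j$ for all $j$, and fix $j\ge 2$. Regard the block $B_{j-1}^j$ as the linear map $U_j\to U_{j-1}$ obtained by composing $L_B|_{U_j}$ (which lands in $K_{j-1}=K_{j-2}\oplus U_{j-1}$) with the projection onto $U_{j-1}$; I claim it is injective. Indeed, if $u\in U_j$ lies in its kernel, then $L_B u\in K_{j-2}$; by hypothesis $K_{j-2}=\ker L_B^{\,j-2}$, so $L_B^{\,j-1}u=L_B^{\,j-2}(L_B u)=0$, i.e.\ $u\in\ker L_B^{\,j-1}=K_{j-1}$; but $u\in U_j$ and $U_j\cap K_{j-1}=\{0\}$, whence $u=0$. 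Since $r_j=\dim U_j\le\dim U_{j-1}=r_{j-1}$ by part $(iv)$ of Lemma \ref{lemmaech}, an injective map $U_j\to U_{j-1}$ has rank $r_j$, which is the maximal possible rank of the $r_{j-1}\times r_j$ block $B_{j-1}^j$.

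There is no serious obstacle here: the content is finite-dimensional linear algebra over $\C$, with no arithmetic or analysis involved. The only point requiring care is the bookkeeping — identifying $\ker L_A^{\,j}$ with the coordinate flag $K_j$, keeping the direction of the block indexing straight (block column $j$ feeds block rows $<j$), and noticing that $(i)$ uses only the crude triangular shape whereas $(ii)$ is precisely the assertion that the chain of inclusions in $(i)$ is tight, one block at a time.
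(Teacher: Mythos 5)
Your argument is correct and follows essentially the same route as the paper: identify $\ker L_A^{\,j}$ with the coordinate flag $K_j$, get $(i)$ from the strict block-upper-triangular shape of $B$, and reduce $(ii)$ to injectivity of the induced map $U_j\to U_{j-1}$ given by $B_{j-1}^j$. The only cosmetic difference is that you re-derive this injectivity inline (via $L_Bu\in K_{j-2}\Rightarrow u\in K_{j-1}\cap U_j=\{0\}$), whereas the paper invokes Lemma \ref{lemmaech} applied to $L_B$ — the underlying argument is the same.
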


\begin{proof} $(i)$ follows from the assumption on $B$.

To see $(ii)$, notice that
$$K^A_j = \ker L_A^j=\R^{r_1+\dots+r_j}\times \{0\}$$
and
$$U^A_j = \{0\}\times \R^{r_j}\times \{0\}.$$
Then
$$L_A:U^A_j\ni  \begin{pmatrix} 0\\ u\\ 0\end{pmatrix}\mapsto  \begin{pmatrix} 0\\ A^j_{j-1} u\\ 0\end{pmatrix}\in U^A_{j-1}$$
which is an injective map.

If now $ \ker L_B^j= \ker L_A^j$
then, by the second statement of Lemma \ref{lemmaech},
$$L_B: U_j^A\ni \begin{pmatrix} 0\\ u\\ 0\end{pmatrix}\mapsto  \begin{pmatrix} 0\\ B^j_{j-1} u\\ 0\end{pmatrix}\in U^A_{j-1}$$
is injective. This is the same as $B^j_{j-1}$ being of maximal rank $r_j$.

\end{proof}

\subsection{Reduced column echelon form}

\begin{definition}
A matrix  $A \in gl(n, \C)$ is on \textit{reduced column echelon form}  if and only if it is on column echelon form with all pivots equal to $1$.
\end{definition}

We shall  conjugate a (nonzero) nilpotent matrix on column echelon form to reduced column echelon form modulo a perturbation
with control on the conjugation.

\begin{lemma}\label{lScaling}
Let $B$ be a matrix on column echelon form with pivots $(\alpha_j)_j$ and let $\delta > 0$.
If 
$$\vert\alpha_j\vert \ge \delta>0,\quad \forall j,$$
then there exists a diagonal matrix $S$,
$$\Vert S^{\pm}\Vert\le C(\frac{\Vert B\Vert}{\delta})^{\frac n2},$$
such that $S^{-1}BS$
is on reduced column echelon form.

\noindent The constant $C$ only depends on $n$.
\end{lemma}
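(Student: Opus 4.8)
The plan is to diagonalize by matching each column of $B$ against the scale of its pivot. Write $B = (b_{ij})$ on column echelon form, so that column $k$ has its pivot $\alpha_{p(k)} = b_{p(k),k}$ in some row $p(k)$, with the columns ordered so that the pivot rows are strictly increasing (the leading zero columns, if any, contribute nothing and can be handled trivially by setting the corresponding diagonal entry of $S$ to $1$). The idea is to seek $S = \mathrm{diag}(s_1, \dots, s_n)$ so that in $S^{-1}BS$ the entry $b_{ij}$ is multiplied by $s_j/s_i$; we want the pivot entries to become $1$, i.e.\ $s_k/s_{p(k)} = 1/\alpha_{p(k)}$ for each column $k$, while all other nonzero entries of $B$ stay in rows/columns that are ``above'' the pivots, so the echelon structure and the zero pattern are preserved (conjugation by a diagonal matrix never creates new nonzero entries or destroys the triangular shape).

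**First I would** set up the recursion for the $s_i$. Because $B$ is on column echelon form, the map $k \mapsto p(k)$ is strictly increasing, and one can solve for the $s_i$ iteratively: fix $s_1 = 1$ (or the first free index), and each time a new pivot is encountered the relation $s_k = s_{p(k)}/\alpha_{p(k)}$ determines one more diagonal entry in terms of an earlier one; the remaining diagonal entries (rows that are never pivot rows, or indices not reached by the recursion) can be set to $1$. Tracking the dependency graph, each $s_i$ is a product of at most $n$ factors, each of which is either $1$ or of the form $1/\alpha_j$ with $|\alpha_j| \ge \delta$, or — going the other way via $s_{p(k)} = s_k \alpha_{p(k)}$ — a factor $\alpha_j$ with $|\alpha_j| \le \|B\|$. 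Hence every $s_i^{\pm 1}$ is a product of at most $n$ factors each lying in $[\delta, \|B\|]$ up to reciprocals, which already gives $\|S^{\pm 1}\| \le (\|B\|/\delta)^n$ or so. To get the sharper exponent $n/2$ claimed, I would renormalize: multiply $S$ by a global scalar $\mu$ (which does not affect $S^{-1}BS$) chosen so that the largest and smallest of the $|s_i|$ are balanced around $1$; since the ratio of the extreme values of $|s_i|$ is at most $(\|B\|/\delta)^{n}$ but the number of ``levels'' in the echelon chain is at most $n$, and each level multiplies the scale by at most $\|B\|/\delta$ in one direction, a symmetric choice of $\mu$ makes both $\|S\|$ and $\|S^{-1}\|$ bounded by the square root, namely $C(\|B\|/\delta)^{n/2}$.

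**The last step is to** check that $S^{-1}BS$ is genuinely on reduced column echelon form: the pivot entries are now $b_{p(k),k} \cdot s_k/s_{p(k)} = \alpha_{p(k)} \cdot (1/\alpha_{p(k)}) = 1$ by construction; all entries below a pivot in its column were already zero and remain zero; the column-length (echelon) structure is preserved because a diagonal conjugation permutes nothing and annihilates nothing. So $S^{-1}BS$ is on column echelon form with all pivots equal to $1$, which is the definition of reduced column echelon form, and the bound on $\|S^{\pm 1}\|$ is as stated with $C$ depending only on $n$.

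**The main obstacle** I expect is the bookkeeping that turns the naive exponent $n$ into the sharp $n/2$: one must argue carefully that after the optimal global rescaling $\mu$, the diagonal entries $|s_i|$ are sandwiched in $[(\delta/\|B\|)^{n/2}\,, (\|B\|/\delta)^{n/2}]$ up to a constant — equivalently that the ``height'' of the index $i$ in the pivot chain is at most $n$ and each unit of height costs a multiplicative factor at most $\|B\|/\delta$, so centering the chain halves the worst-case exponent on each side. Everything else — solvability of the recursion, preservation of the echelon shape, and the pivot normalization — is routine linear algebra once the echelon ordering of columns and the strict monotonicity of $k \mapsto p(k)$ are in hand.
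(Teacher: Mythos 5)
Your proposal is correct and follows essentially the same route as the paper: build the diagonal $S$ recursively from the pivot relations (anchoring the free entries at $1$), observe that the entries of $S^{\pm 1}$ are products of at most $n$ pivots or inverse pivots, hence $\Vert S\Vert\le \delta^{-n}$ and $\Vert S^{-1}\Vert\le \Vert B\Vert^{n}$, and then rescale by a global scalar to balance the two norms and halve the exponent (the paper uses the explicit factor $\sqrt{\Vert B\Vert^{n}\delta^{n}}$, which is the same balancing you describe). Note only that your bound on the spread $\max_i|s_i|/\min_i|s_i|$ by $(\Vert B\Vert/\delta)^{n}$, like the paper's intermediate bounds, implicitly uses $\delta\le 1\le\Vert B\Vert$ (harmless in the paper's application, where $\delta=\varepsilon^{\delta_k}<1$ and constants of the form $(\Vert N\Vert+1)$ absorb the other case).
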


\begin{proof} Let $B$ in column echelon form, then up to a change of orthonormal basis, it has the form of Corollary \ref{coro-ech}; we will look for a block-diagonal $S$
where the diagonal blocks $S_j$ are diagonal and their dimensions are $r_j$ (the number of columns in $B_{j-1}^j$).
Then for all $j$, we want to find $S_j$ such that 
$$S_{j-1}^{-1}B_{j-1}^j S_j=
\left(\begin{array}{ccc}
1& \ast & \ast  \\
0 & \ddots & \ast  \\
0 & 0 & 1\\
\vdots & \vdots & \vdots  
\end{array}\right)$$

\noindent where $S_{j-1}$ is given. These equations can be solved uniquely, one by one, starting with $S_1=I$. 
For a given $j$, it is easy to notice that all coefficients of $S_j$ are $\geq \delta^n$ (as it is either $1$, or a product of the inverse of some pivots of $B$). In the same way, each coefficient of $S_j^{-1}$ is $\leq \Vert B \Vert^n$. We thus obtain
\[\Vert S \Vert \leq \frac{1}{\delta^n}, \quad \Vert S^{-1} \Vert \leq \Vert B \Vert^n. \]
One can then just multiply $S$ by $\sqrt{\Vert B \Vert^n\delta^n}$ to get the estimate.

 \end{proof}



\begin{proposition}\label{echelonnee}
    Let $A$ a nilpotent 
    block diagonal matrix. 
     Let $\varepsilon >0$ and $(\delta_k)_{k\ge 0}$ a positive decreasing sequence such that
     \begin{equation}\label{decroissantedelta}\varepsilon^{\delta_{1}}+\dots+\varepsilon^{\delta_{k}}\le 2\varepsilon^{\delta_{k}},\quad \forall k\ge 1.\end{equation}
     
\noindent    Then there exists   $1\le k\le \frac{n^2}{2}$ and $S\in Gl(n,\C)$ with
      \begin{equation} \label{estimateofS} \Vert S^{\pm}\Vert \leq  C(\Vert A\Vert+2\varepsilon^{\delta_{k-1}})^{\frac n2}  \varepsilon^{-\frac n2 \delta_{k}}  \end{equation}
  such that  
\[ S^{-1}A S =  A'+  F,\]
with 
\begin{equation}\label{estimateofF}\Vert  F \Vert \leq C(\Vert A\Vert+2\varepsilon^{\delta_{k-1}})^{n} \varepsilon^{- n \delta_{k}+\delta_{k-1} },\end{equation}
where the constant $C$ only depends on $n$, and $S$ and $A'$ are 
block diagonal with the same block decomposition as $A$, and each 
block of $A'$ is on reduced column echelon form.
    
 \end{proposition}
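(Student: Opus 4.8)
The plan is to reduce each nilpotent block of $A$ to reduced column echelon form by an iterative procedure, at each step either stopping (if the block is already ``echelon enough'') or producing a genuinely smaller nilpotent block by a unitary conjugation and then rescaling. Since $A$ is block diagonal and a block-diagonal conjugation treats each block independently, I would fix attention on one nilpotent block $N$ and show how to conjugate it to reduced column echelon form plus a small error, keeping track of the norm of the conjugation; then I concatenate the block conjugations into $S$ and the block errors into $F$.

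First I would handle a single block $N$ of index $m$. By Corollary \ref{coro-ech} there is a unitary $P$ (so $\Vert P^{\pm}\Vert = 1$, contributing nothing to the estimate) such that $P^{-1}NP = \tilde N$ has the form \eqref{formedeA}, with each diagonal superblock $\tilde N_{j-1}^j$ in upper-triangular-with-nonzero-diagonal shape; call its pivots $(\alpha_i)$. The issue is that the $\alpha_i$ may be tiny; so I pick a threshold governed by the sequence $(\delta_k)$. If all pivots of $\tilde N$ satisfy $\vert\alpha_i\vert \ge \varepsilon^{\delta_k}$ for the current index $k$, I apply Lemma \ref{lScaling} with $\delta = \varepsilon^{\delta_k}$ to get a diagonal $S_0$ with $\Vert S_0^{\pm}\Vert \le C(\Vert \tilde N\Vert/\varepsilon^{\delta_k})^{n/2}$ such that $S_0^{-1}\tilde N S_0$ is in reduced column echelon form, and we are done for this block with $F = 0$. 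If instead some pivot is smaller than $\varepsilon^{\delta_k}$, I split $\tilde N = \tilde N_{\mathrm{big}} + E$, where $\tilde N_{\mathrm{big}}$ keeps only the pivots (and off-diagonal entries) of size $\ge \varepsilon^{\delta_k}$ in the superdiagonal blocks and zeroes out the rest; then $\Vert E\Vert \le C\varepsilon^{\delta_k}$ by construction (there are at most $n$ such entries, each bounded by $\varepsilon^{\delta_k}$, plus the off-diagonal $\ast$ entries which I also truncate consistently so that $\tilde N_{\mathrm{big}}$ still has the block-triangular form of Lemma \ref{lKerSpectrum}). Now $\tilde N_{\mathrm{big}}$ is a nilpotent matrix of the form \eqref{formedeA} but with at least one superdiagonal block no longer of maximal rank, so by Lemma \ref{lKerSpectrum}(ii) its kernel filtration is strictly larger than that of $\tilde N$ at some level; in particular the index of $\tilde N_{\mathrm{big}}$ is smaller, or at least $\sum_j \dim\ker L_{\tilde N_{\mathrm{big}}}^j > \sum_j\dim\ker L_{\tilde N}^j$. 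This is the quantity that must strictly decrease, and it is bounded by something like $n^2/2$, which is where the bound $k \le n^2/2$ comes from.

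Next I would set up the induction/iteration properly: let $\Phi(N) = \sum_{j\ge 1}(n - \dim\ker L_N^j)$ (or an equivalent monovariant bounded by $n^2/2$), note $\Phi(N) \ge 0$ and that passing from $\tilde N$ to $\tilde N_{\mathrm{big}}$ strictly decreases $\Phi$ by at least $1$. So after at most $n^2/2$ truncation steps the process must terminate with a successful scaling. At step $k$ I conjugate by a unitary $P_k$ (from Corollary \ref{coro-ech} applied to the current nilpotent matrix) and incur a truncation error $E_k$ of norm $\le C\varepsilon^{\delta_k}$; crucially the current nilpotent matrix at step $k$ has norm $\le \Vert A\Vert + \Vert E_1\Vert + \dots + \Vert E_{k-1}\Vert \le \Vert A\Vert + 2\varepsilon^{\delta_{k-1}}$ by the summability hypothesis \eqref{decroissantedelta} (using that the $\delta_k$ decrease so $\varepsilon^{\delta_1} + \dots + \varepsilon^{\delta_{k-1}} \le 2\varepsilon^{\delta_{k-1}}$, for $\varepsilon < 1$). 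Then the final successful scaling uses Lemma \ref{lScaling} with $\delta = \varepsilon^{\delta_k}$ and the current norm bound $\Vert A\Vert + 2\varepsilon^{\delta_{k-1}}$, giving $\Vert S_0^{\pm}\Vert \le C((\Vert A\Vert + 2\varepsilon^{\delta_{k-1}})/\varepsilon^{\delta_k})^{n/2} = C(\Vert A\Vert + 2\varepsilon^{\delta_{k-1}})^{n/2}\varepsilon^{-n\delta_k/2}$, which is exactly \eqref{estimateofS}. Conjugating back, the accumulated error $F$ is $S^{-1}$ times the sum of the conjugated truncation errors times $S$; since each $E_i$ has norm $\le C\varepsilon^{\delta_i} \le C\varepsilon^{\delta_{k-1}}$ (using $\delta_i \ge \delta_{k-1}$ for $i \le k-1$ and $\varepsilon<1$... actually $\delta_i > \delta_{k-1}$ so $\varepsilon^{\delta_i} < \varepsilon^{\delta_{k-1}}$) and there are at most $n^2/2$ of them, $\Vert F\Vert \le C\Vert S\Vert\,\Vert S^{-1}\Vert\,\varepsilon^{\delta_{k-1}} \le C(\Vert A\Vert + 2\varepsilon^{\delta_{k-1}})^n\varepsilon^{-n\delta_k}\varepsilon^{\delta_{k-1}}$, matching \eqref{estimateofF}.

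The main obstacle, I expect, is bookkeeping rather than conceptual: making the truncation $\tilde N = \tilde N_{\mathrm{big}} + E$ precise so that (a) $\tilde N_{\mathrm{big}}$ still falls under the hypotheses of Corollary \ref{coro-ech}/Lemma \ref{lKerSpectrum} at the next iteration — i.e. it is nilpotent, block-triangular in the right shape, so that one can re-apply Corollary \ref{coro-ech} to it — and (b) the monovariant $\Phi$ genuinely drops. The subtlety is that truncating off-diagonal $\ast$ entries along with small pivots must be done so that zeroing a subdiagonal block's pivot does not accidentally increase the rank somewhere else; Lemma \ref{lKerSpectrum} is precisely the tool that controls this, since it says the kernels only grow under such truncations, and they grow strictly at the level where a block lost maximal rank. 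A secondary technical point is carrying the norm bound $\Vert A\Vert + 2\varepsilon^{\delta_{k-1}}$ through the unitary conjugations (which is free) and through the fact that the ``current nilpotent matrix'' at each stage is a block of the previous one (so its norm is no larger). Once these are set up, combining the block conjugations across the block-diagonal decomposition of $A$ is immediate, the worst block governing $k$, $\Vert S^{\pm}\Vert$ and $\Vert F\Vert$.
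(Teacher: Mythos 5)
Your proposal is correct and follows essentially the same route as the paper: unitary reduction of each block to column echelon form via Corollary \ref{coro-ech}, thresholding the pivots against $\varepsilon^{\delta_k}$ and moving small ones into an error term, using Lemma \ref{lKerSpectrum} to show the kernel filtration strictly grows (hence termination in at most $n^2/2$ steps), and a final diagonal scaling by Lemma \ref{lScaling}, with the same norm bookkeeping through \eqref{decroissantedelta}. The only cosmetic deviations (treating blocks separately with the worst block governing $k$, and truncating small off-diagonal entries along with small pivots) do not affect the argument.
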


\begin{proof}
For any nilpotent matrix $A$, define
$$\sigma\\ker(A)=(\dim \ker L_A,\dim \ker L_A^2,\dots,\dim \ker L_A^{n-1})$$
where $L_A$ is associated to $A$ in the canonical basis.
This is a non decreasing sequence of integers $\in \llbracket 1,n \rrbracket$ and
\begin{equation}
    \sigma\ker(A)=(n,n,\dots, n)\quad \iff\quad A=0. \label{algostop}\end{equation}
If $B$ is another nilpotent matrix we say that $\sigma\ker(A)> \sigma\ker(B)$ if and only if
$$\dim \ker L_A^j\quad \begin{cases} \ge  \dim \ker L_B^j & \textrm{\ for\ all\ } j\\  >  \dim \ker L_B^j & \textrm{\ for\ some \ } j\end{cases}$$
(this is of course not a total ordering).

\medskip

By applying Corollary \ref{coro-ech} to each block of $A$ we can assume, without restriction, that each block of $A_1=A$ is on column echelon form. By induction:

\textbf{Base case:} If no pivot is $\leq \varepsilon^{\delta_1}$, applying Lemma \ref{lScaling} to each block of $A_1$, there exists a diagonal matrix $S_1$ with $ \Vert S_1^{\pm 1} \Vert \leq C (\frac{\Vert A_1 \Vert}{\varepsilon^{\delta_1}})^{\frac{n}{2}}$ such that $S_1^{-1}A_1S_1$ is on reduced column echelon form, so we are done with $F=0$.

If there are pivots which are $\leq \varepsilon^{\delta_1}$, let $F_1 \in gl(n, \C)$ whose non zero coefficients are those pivots (then $\Vert F_1 \Vert \leq \varepsilon^{\delta_1}$, and $A_1-F_1$ has a block not of maximal rank). Apply Corollary \ref{coro-ech} to $A_1-F_1$: there exists a unitary matrix $U_1 \in Gl(n, \C)$ such that $A_2 = U_1^{-1}(A_1-F_1)U_1$ is block diagonal, with blocks on column echelon form and by Lemma \ref{lKerSpectrum}, $\sigma \ker (A_2) > \sigma \ker (A_1)$. We have estimates
\[ \Vert F_1 \Vert \leq \varepsilon^{\delta_1}\]
\[ \Vert A_2 \Vert = \Vert U_1^{-1}(A_1-F_1)U_1 \Vert \leq \Vert A_1 \Vert + \varepsilon^{\delta_1} 
.\]

\textbf{Induction step:} Assume that for some $k >1$ there exists a unitary $U'_{k-1} \in Gl(n,\C)$ with ${U'}_{k-1}^{-1}AU'_{k-1} = A_{k}+F'_k$ such that $A_{k}$ is block diagonal and has blocks on column echelon form, with $\Vert F'_{k} \Vert \leq \varepsilon^{\delta_{k-1}} + \dots + \varepsilon^{\delta_1}$ and $\Vert A_k \Vert \leq \Vert A \Vert+\varepsilon^{\delta_1}+\dots +\varepsilon^{\delta_{k-1}}$.

If no pivot of $A_k$ is $\leq \varepsilon^{\delta_{k}}$, applying Lemma \ref{lScaling} to each block of $A_k$, we get $S_k \in Gl(n, \C)$ diagonal, with $\Vert S_k^{\pm 1} \Vert \leq C (\frac{\Vert A_k \Vert}{\varepsilon^{\delta_k}})^{\frac{n}{2}}$, and $A'=S_k^{-1} A_k S_k$ is block diagonal with blocks on reduced column echelon form. Let then $S=U'_{k-1}S_k$, then
\[ S^{-1}AS = (U'_{k-1}S_k)^{-1}A(U'_{k-1}S_k) =S_k^{-1}(A_k+F'_k)S_k\]
\[=A' + F'\]
with $F' \in gl(n, \C)$ of norm
\[ \Vert F' \Vert \leq \Vert S_k \Vert\ \Vert S_k^{-1}  \Vert (\varepsilon^{\delta_{k-1}} + \dots + \varepsilon^{\delta_1}) \leq 2C^2(\frac{\Vert A_{k}\Vert}{\varepsilon^{\delta_k}})^n\varepsilon^{\delta_{k-1}} \leq 2C^2(\Vert A \Vert +2\varepsilon^{\delta_{k-1}})^n \varepsilon^{-n\delta_k+\delta_{k-1}}\]
where the constant $C$ depends only on $n$, and the proposition is proved.

If some pivots of $A_k$ are $\leq \varepsilon^{\delta_k}$, let $F_k \in gl(n, \C)$ whose non zero coefficients are those pivots (then $\Vert F_k \Vert \leq \varepsilon^{\delta_k}$, and $A_k - F_k$ is block diagonal and has an block not of maximal rank). Apply Corollary \ref{coro-ech} to $A_k - F_k$: there exists a unitary matrix $U_k \in Gl(n,\C)$ such that $A_{k+1} = U_k^{-1} (A_k-F_k)U_k$ is block diagonal and has blocks in column echelon form and by Lemma \ref{lKerSpectrum}, $\sigma \ker (A_{k+1}) > \sigma \ker (A_k)$ with estimates 
\[ \Vert F_{k} \Vert \leq \varepsilon^{\delta_k},\]
\[\Vert A_{k+1} \Vert \leq  \Vert A_k \Vert +\varepsilon^{\delta_k} \]

\noindent therefore 

$$U_k^{-1} U_{k-1}^{'-1} A U'_{k-1}U_k = U_k^{-1}(A_k+F'_k) U_k
=A_{k+1} + U_k^{-1} (F_k+F'_k)U_k 
$$

\noindent and then, letting $U'_k = U'_{k-1}U_k$ and $F'_{k+1} = F_k+F'_k$, the induction step is established. By construction, the constructed matrices have the same block decomposition as $A$.

\bigskip
After at most $\frac{n^2}{2}$ steps, the algorithm stops according to \ref{algostop}, since the matrix is zero and is then trivially in normal Jordan form.
\end{proof}
\subsection{From reduced  echelon to Jordan}

We shall  conjugate a (nonzero) nilpotent matrix $A \in \mathcal{M}(n, \C)$ on reduced column echelon form to Jordan normal form with control on the conjugation. If $A$ is on reduced column echelon form, iterating the following lemma will remove the non zero coefficients above the pivots of $A$. We will first give a Lemma to remove one of these coefficients, and then the others will be removed by iteration. \\
We denote in the following $A_{i,\cdot}$ the $i$-th row of $A$, and $A_{\cdot,l}$ the $l$-th column of $A$. 
\medskip

Fix $1\le k_0< i_0< j_0$ and define $M:=M_{k_0,i_0}\in \mathcal{M}(n,\C)$ by
$$M_{i,j}=\begin{cases} 0 &   (i,j)\neq (k_0,i_0)  \\ 1 &   (i,j)=(k_0,i_0) \end{cases}.$$

Multiplying a matrix $A$ to the left by $I+aM$ amounts to replacing  the $k_0$-th row $A_{k_0, \cdot}$ by $A_{k_0, \cdot}+a A_{i_0, \cdot}$.

Multiplying a matrix $A$ to the right by 
$$(I+aM)^{-1}=I-aM$$ 
amounts to replacing  the $i_0$-th column  $A_{\cdot, i_0}$ by $A_{\cdot, i_0}-a A_{\cdot, k_0}$.

\medskip

Let $A$ be a nilpotent matrix 
on reduced column echelon form with a pivot
$$A_{i_0,j_0},\quad i_0< j_0.$$
Let  $k_0\in \{1,\dots,  i_0-1\}$ the row index of the last non zero coefficient before the pivot in the $j_0$-th column of $A$, let $M$ as above,
and
$$B=(I-aM)A(I+aM),\quad a=A_{k_0,j_0}.$$

\noindent The following lemma will be used to remove that coefficient ($a=A_{k_0,j_0}$) from the matrix $A$.

\begin{lemma} \label{pivot} The matrix $B$ defined above is on reduced column echelon form with the same  pivots as $A$.

If the only non-zero coefficients in $A_{\cdot, j}$, $j>j_0$ are the pivots, then

$$B_{\cdot, j}=A_{\cdot, j},\quad j>j_0$$
and
$$
B_{i,j_0}=\begin{cases} A_{i,j_0} &  \mathrm{if}\ i \neq k_0  \\ 0 &  \mathrm{if}\  i =k_0  \end{cases}$$ 
\end{lemma}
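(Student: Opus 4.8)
The plan is to verify the two displayed conclusions by directly tracking how the left-multiplication by $I-aM$ and the right-multiplication by $I+aM$ act on the entries of $A$, using the row/column interpretation given just before the statement. Recall $M=M_{k_0,i_0}$ has a single $1$ in position $(k_0,i_0)$, and $k_0<i_0<j_0$. Multiplying on the right by $I+aM$ replaces column $i_0$ by $A_{\cdot,i_0}-aA_{\cdot,k_0}$ and leaves every other column unchanged; multiplying on the left by $I-aM$ replaces row $k_0$ by $A_{k_0,\cdot}+aA_{i_0,\cdot}$ and leaves every other row unchanged. So $B=(I-aM)A(I+aM)$ differs from $A$ only in row $k_0$ and column $i_0$ (and entry $(k_0,i_0)$, where both effects meet). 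In particular $B_{\cdot,j}=A_{\cdot,j}$ for every $j\neq i_0$, which already gives $B_{\cdot,j}=A_{\cdot,j}$ for all $j>j_0$ (since $i_0<j_0<j$) once we know the $k_0$-row modification does not disturb those columns — and it does not, because the only columns changed by the left multiplication are those where row $i_0$ is nonzero, but $A$ being in reduced column echelon form with column $i_0$ shorter than column $j_0$ forces $A_{i_0,j}=0$ for $j>j_0$ unless $A_{i_0,j}$ is itself a pivot; I will need the hypothesis ``the only nonzero coefficients in $A_{\cdot,j}$, $j>j_0$, are the pivots'' precisely to handle the case where $(i_0,j)$ would be a pivot, and argue that then adding $aA_{i_0,\cdot}$ to row $k_0$ with $k_0<i_0$ keeps column $j$ intact because the pivot position is $i_0\neq k_0$. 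I would spell this out column by column for $j>j_0$.

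Next I would treat column $j_0$ itself. Left multiplication adds $a$ times row $i_0$ to row $k_0$; in column $j_0$ this changes $B_{k_0,j_0}$ to $A_{k_0,j_0}+aA_{i_0,j_0}=a+a\cdot 1=2a$? — no: here I must be careful, the choice $a=A_{k_0,j_0}$ and the normalization $A_{i_0,j_0}=1$ (it is a pivot on reduced column echelon form) means that actually the correct computation is to also take into account that column $j_0$ is \emph{not} column $i_0$, so the right multiplication leaves column $j_0$ alone, and only the left multiplication acts: $B_{k_0,j_0}=A_{k_0,j_0}+aA_{i_0,j_0}$. To get $B_{k_0,j_0}=0$ we need $a=-A_{k_0,j_0}/A_{i_0,j_0}=-A_{k_0,j_0}$ since the pivot is $1$. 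I expect the intended reading is $B=(I+aM)^{-1}A(I+aM)$ with $a$ chosen to cancel, i.e. effectively $a=A_{k_0,j_0}$ up to sign convention consistent with ``$(I+aM)^{-1}=I-aM$''; I would simply recompute carefully and confirm that with $a=A_{k_0,j_0}$ one indeed gets $B_{k_0,j_0}=A_{k_0,j_0}-A_{k_0,j_0}=0$ after accounting for the sign in the row operation, while $B_{i,j_0}=A_{i,j_0}$ for $i\neq k_0$ since those rows are untouched and column $j_0\neq i_0$ is untouched by the right factor.

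Finally I would check that $B$ is still on reduced column echelon form with the same pivots. The only columns of $B$ that could differ from $A$ are column $i_0$; all pivots of $A$ in columns $\neq i_0$ survive verbatim by the above. For column $i_0$: $B_{\cdot,i_0}=A_{\cdot,i_0}-aA_{\cdot,k_0}$, and since $A$ is in reduced column echelon form the column $k_0$ is strictly shorter than (or the pivots impose) — more precisely, the pivot of column $k_0$ sits in some row $<$ pivot row of column $i_0$, because $k_0<i_0$ and column lengths are strictly increasing (on the nonzero columns). Hence subtracting a multiple of column $k_0$ from column $i_0$ does not change the length of column $i_0$ nor its pivot, and cannot create a nonzero entry below the pivot of column $i_0$. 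One then also uses that row $k_0$ of the \emph{whole matrix} after the left multiplication has only entries in positions $\geq$ something harmless, so the echelon structure and the pivot set are preserved. I would assemble these observations into the statement.

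The main obstacle I anticipate is purely bookkeeping: the simultaneous left and right multiplication both touch the entry $(k_0,i_0)$ and one must make sure the ``reduced column echelon form'' and ``same pivots'' claims are not broken by the interaction — in particular verifying that the new column $i_0$ has not grown longer and that no pivot has moved — and getting the sign of $a$ consistent with the convention $(I+aM)^{-1}=I-aM$ so that the cancellation $B_{k_0,j_0}=0$ genuinely occurs. None of this is deep; it is a careful entrywise verification using that $k_0<i_0<j_0$ and that $A$ is in reduced column echelon form.
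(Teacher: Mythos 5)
Your route is the paper's: compute $B=(I-aM)A(I+aM)$ entry by entry and check the echelon structure, the pivots, and the two displayed identities. Carried out with the correct signs this works, and your ``intended reading'' $B=(I+aM)^{-1}A(I+aM)$ is exactly the definition in the text, so the cancellation $B_{k_0,j_0}=A_{k_0,j_0}-aA_{i_0,j_0}=a-a\cdot 1=0$ does occur; but note that at the outset you state both operations with the wrong signs (right multiplication by $I+aM$ \emph{adds} $a$ times column $k_0$ to column $i_0$, left multiplication by $I-aM$ \emph{subtracts} $a$ times row $i_0$ from row $k_0$), so the bookkeeping has to be redone consistently, as you yourself anticipate.

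The genuine flaw is in your treatment of the columns $j>j_0$. Column echelon form does not force $A_{i_0,j}=0$ for $j>j_0$ ``unless it is a pivot'': the echelon structure only constrains entries \emph{below} the pivots, and $(i_0,j)$ lies strictly above the pivot of column $j$, because the strictly increasing column lengths put that pivot in a row strictly below $i_0$ (column $j_0$ already has its pivot in row $i_0$). For the same reason $(i_0,j)$ can never be a pivot, so the case you reserve the hypothesis for is vacuous, and your fallback claim --- that even if $A_{i_0,j}$ were a pivot, column $j$ would stay intact because $i_0\neq k_0$ --- is false: a nonzero $A_{i_0,j}$ would change the entry $(k_0,j)$ under the row operation. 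The correct logic, which is the paper's, is that for $j>j_0$ the positions $(k_0,j)$ and $(i_0,j)$ are non-pivot positions, so the hypothesis that the only nonzero coefficients of those columns are the pivots gives $A_{i_0,j}=0$, hence $B_{k_0,j}=A_{k_0,j}-aA_{i_0,j}=A_{k_0,j}$ and $B_{\cdot,j}=A_{\cdot,j}$. The same observations (namely $A_{i_0,j}=0$ for $j<j_0$, since those columns are strictly shorter than column $j_0$; for $j\ge j_0$ the only modified entry of column $j$ sits strictly above its pivot; and column $i_0$ receives a multiple of the strictly shorter column $k_0$) are also what is needed to make precise your vague step about row $k_0$ touching only ``harmless'' positions when you prove, without the extra hypothesis, that $B$ stays in reduced column echelon form with the same pivots.
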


\begin{proof} We have
$$\forall j\neq i_0,\ B_{k_0, j}=A_{k_0, j}-aA_{i_0, j}\quad\ \mathrm{and} \ \quad \forall i\neq k_0,\ B_{i, i_0}=A_{i, i_0}+aA_{i, k_0}$$
and $B_{k_0,i_0}=A_{k_0,i_0}$,
and for $i\not= k_0$ and $j\not= j_0$,
$$B_{i,j}=(A-aMA+aAM-a^2MAM)_{i,j}=A_{i,j}.$$

Since $k_0<i_0$, the column $A_{\cdot, k_0}$ is strictly shorter than $A_{\cdot, i_0}$. The pivot in $B_{\cdot, i_0}$ is therefore the same as in $A_{\cdot, i_0}$.

Since $A_{i_0, j_0}$ is a pivot, we have  $A_{i_0, j}=0 $ for all $j<j_0$. Hence
$$B_{k_0,j}=A_{k_0,j}-aA_{i_0,j}= A_{k_0,j},\quad \forall j<j_0.$$
So if there is a pivot in $B_{k_0,\cdot}$, it is the same as that in $A_{k_0,\cdot}$.

\medskip

If the only non-zero coefficients in $A_{\cdot,j}$, $j>j_0$, are the pivot, then, since $k_0 <j_0$ and $i_0 < j_0$,
$$B_{k_0,j}=A_{k_0,j}-aA_{i_0,j}=0\quad \forall j>j_0.$$
Hence
$B_{\cdot,j}=0$ for all $j>j_0$. Moreover,
$$
B_{i,j_0}=\begin{cases} A_{i,j_0} &  \mathrm{if}\  i \neq k_0  \\  B_{k_0,j_0}=A_{k_0,j_0}-aA_{i_0,j_0}=0&   \mathrm{if}\  i =k_0  \end{cases}.$$ 

\end{proof}

\medskip

Now we can conjugate to the Jordan normal form by iterating Lemma \ref{pivot}:

\begin{proposition}\label{red-JNF} 
Let $A$ be a non zero block diagonal nilpotent matrix, with each block on reduced column echelon form. There exists $S\in Gl(n,\C)$,
$$\Vert S^{\pm 1}\Vert\le C(1+\Vert A\Vert)^{n!},$$
such that $S^{-1}AS$
is on Jordan normal form. Moreover $S$ is also block diagonal with the same block decomposition as $A$.
The constant $C$ only depends on $n$. 

\end{proposition}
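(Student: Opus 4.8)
The plan is to iterate Lemma \ref{pivot} in a carefully chosen order, working one block of $A$ at a time (so we may as well assume $A$ is a single nilpotent block on reduced column echelon form, and treat the general block-diagonal case by block concatenation, with the final $S$ block diagonal). Since each block is on reduced column echelon form, the only obstacle to Jordan normal form is the presence of non-zero entries strictly above the pivots. Lemma \ref{pivot} tells us that, given a pivot $A_{i_0,j_0}$ and $k_0$ the row index of the last non-zero entry strictly above it in column $j_0$, the conjugation $B=(I-aM)A(I+aM)$ with $a=A_{k_0,j_0}$ kills that entry, keeps the matrix on reduced column echelon form with the same pivots, and — crucially, when the columns to the right of $j_0$ already contain only their pivots — does not disturb any column with index $>j_0$ and modifies column $j_0$ only by zeroing the entry at position $(k_0,j_0)$.

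First I would fix the order of elimination: process the columns from right to left, and within a fixed column $j_0$ (with pivot at row $i_0$) eliminate the above-pivot entries from the bottom up, i.e. repeatedly take $k_0$ to be the row of the current last non-zero entry strictly above the pivot. By the hypothesis of the second part of Lemma \ref{pivot}, once all columns to the right of $j_0$ are cleaned (only pivots remain there), each such elimination leaves those columns untouched and leaves the rest of column $j_0$ (entries at rows $\neq k_0$) untouched, so after finitely many steps column $j_0$ itself has only its pivot. Then we move left to the next column. This shows that a finite product $S$ of matrices of the form $I\pm aM$ conjugates $A$ to Jordan normal form; the number of elementary steps is at most the number of above-pivot positions, hence at most $n^2$. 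The block-diagonal structure of $S$ with the same decomposition as $A$ is automatic since every elementary factor acts within a single block.

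The remaining — and main — work is the norm estimate $\Vert S^{\pm1}\Vert\le C(1+\Vert A\Vert)^{n!}$. Here I would track how the entries grow. At each elimination step the scalar $a$ used is an entry of the current matrix $A_k$, so $\vert a\vert\le\Vert A_k\Vert$; the new matrix is $A_{k+1}=(I-aM)A_k(I+aM)=A_k-aMA_k+aA_kM-a^2MA_kM$, so $\Vert A_{k+1}\Vert\le \Vert A_k\Vert(1+c\vert a\vert)^2\le \Vert A_k\Vert(1+c\Vert A_k\Vert)^2$ for a dimensional constant $c$ (with $MA_kM=0$ in the relevant cases since $k_0<i_0<j_0$, but the crude bound suffices). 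Iterating this recursion over the at most $n^2$ steps gives a bound $\Vert A_k\Vert\le P_n(\Vert A\Vert)$ for an explicit polynomial; chasing the exponents, one gets a bound of the form $C(1+\Vert A\Vert)^{N}$ with $N$ growing like a tower but certainly $\le n!$ after the (at most $n^2$) squarings are accounted for. Correspondingly each elementary factor $I\pm aM$ has norm $\le 1+\vert a\vert\le 1+\Vert A_k\Vert\le C(1+\Vert A\Vert)^{N'}$, and $S$ (resp. $S^{-1}$) being the ordered product of at most $n^2$ such factors, submultiplicativity of the operator norm (inequality \eqref{1} specialized to constant matrices) yields $\Vert S^{\pm1}\Vert\le C(1+\Vert A\Vert)^{n!}$ after absorbing constants.

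The step I expect to be the genuine obstacle is the bookkeeping that shows the order of elimination actually works, i.e. that cleaning column $j_0$ never re-introduces non-zero above-pivot entries in columns already cleaned, and never spoils the reduced column echelon form; this is exactly what the ``if the only non-zero coefficients in $A_{\cdot,j}$, $j>j_0$, are the pivots'' clause of Lemma \ref{pivot} is for, so the proof is really an induction over columns (outer loop, right to left) and over above-pivot rows within a column (inner loop, bottom to top), invoking that clause at each stage. The norm estimate is then a routine, if slightly tedious, iteration of the elementary recursion above, and the exponent $n!$ is a comfortable over-estimate for the number of squarings times a constant.
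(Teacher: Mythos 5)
Your elimination scheme is the one the paper uses (columns right to left, within a column bottom up, invoking the second clause of Lemma \ref{pivot} so that already-cleaned columns are never disturbed), and the remark that every elementary factor, hence $S$, respects the block decomposition is correct. But the proof as written has a genuine gap at the end: eliminating all entries above the pivots does \emph{not} yet give a Jordan normal form. What you obtain is a matrix whose only non-zero entries are the pivots (all equal to $1$), and in the basis ordering coming from Corollary \ref{coro-ech} this is in general only a permutation-conjugate of a Jordan matrix: for instance the $3\times 3$ nilpotent matrix whose single non-zero entry is a $1$ in position $(1,3)$ is on reduced column echelon form with only its pivot, yet it is not on Jordan normal form. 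So your sentence ``this shows that a finite product of matrices of the form $I\pm aM$ conjugates $A$ to Jordan normal form'' is false as stated. The paper closes this with one more conjugation, uniformly bounded in terms of $n$ only (it argues that there are finitely many $0$--$1$ matrices; concretely a permutation regrouping the chains suffices, of operator norm $1$ and acting within each block), and this extra step must be included for the conclusion of Proposition \ref{red-JNF}.

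The second problem is the norm bookkeeping. Iterating your per-step recursion $\Vert A_{k+1}\Vert\le \Vert A_k\Vert(1+c\Vert A_k\Vert)^2$ over up to $n^2$ elementary steps yields an exponent of order $3^{n^2}$, which is far larger than $n!$ (compare $n^2\log 3$ with $\log n!\sim n\log n$), so the claim that the exponent is ``certainly $\le n!$'' is simply wrong. The correct accounting, and the one the paper uses, is stage-wise: while you clean a fixed column $j_0$, the second clause of Lemma \ref{pivot} guarantees that the entries of column $j_0$ itself are untouched (you even note this for the correctness of the algorithm, but do not use it in the estimate), so all the scalars $a$ used during that stage are entries of the matrix at the start of the stage, each bounded by its norm; hence one whole column-stage costs only a factor $(1+\Vert A_k\Vert)^{n-1}$ for $S$ and for $S^{-1}$, and the matrix gets re-conjugated only once per stage, with at most $n$ stages. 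This is how the paper gets $\Vert S_1^{\pm 1}\Vert\le C_1(1+\Vert A\Vert)^{n-1}$ for the first stage and then an exponent depending only on $n$ for the full product (for the later application in Proposition \ref{JNF-estimates} any $n$-dependent exponent would do, but your argument as written neither establishes $n!$ nor correctly bounds what your own recursion produces).
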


\medskip

\begin{proof} We start with the column $A_{\cdot,n}$ and apply the lemma \ref{pivot} to each coefficient above the pivot. This gives a $S_1\in Gl(n,\C)$,
$$\Vert S_1^{\pm 1}\Vert\le C_1 (1+\Vert A\Vert)^{n-1},$$
such that
$$A_1=S_1^{-1}AS_1$$
is block diagonal with each block on reduced column echelon form with the same pivots as $A$, and whose only non-zero coefficient in the last column is the pivot. Recall the pivots are equal to 1.

Then we do the same with the next to last column in $A_1$, and so on and so forth. This stops after at most $n$ steps producing a $S\in Gl(n,\C)$,
$$\Vert S^{\pm 1}\Vert\le C_n (1+\Vert A\Vert)^{n!},$$
such that
$$B=S^{-1}AS$$
is block diagonal with blocks on reduced column echelon form with the same pivots as $A$ (they are equal to 1), and whose only non-zero coefficient are the pivots. By construction the block decomposition is the same as $A$.

Since there are only finitely many matrices with only $0$ or $1$ as their coefficients, they can be conjugated to Jordan normal form with a uniform bound on the conjugation and its inverse.

\end{proof}

\subsection{Jordan normal form with estimates}

\begin{proposition}\label{JNF-estimates}
Let $N$ be a non zero nilpotent block diagonal matrix.
For any $\varepsilon\in (0,1)$ and $m \in \N^{\ast}$, there exists $S\in Gl(n,\C)$ block diagonal, a constant  $C>0$ only depending on $n$ and constants $c>0,c'\in (0,\frac{1}{2}]$ only depending on $n$ and $m$ (in particular, they do not depend on $\varepsilon$) such that  

\begin{equation}\label{estim-S} \Vert S^{\pm 1}\Vert \leq  C(\Vert N\Vert+1)^{ c} \varepsilon^{-\frac{1}{2(m+2)} }\end{equation}
    
   \noindent 
\[ S^{-1}N S =  A'+  F',\]
with $A'$ on Jordan normal  form and
\begin{equation}\label{estim-F}
    \Vert S^{\pm 1}\Vert^m \Vert  F' \Vert\le (C(\Vert N\Vert+1)^c)^{m+1}\varepsilon^{c'}.
\end{equation}

\noindent Moreover, $S$ and $A'$ have the same block diagonal decomposition as $N$.

    \end{proposition}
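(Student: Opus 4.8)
The plan is to produce $S$ as a product $S=S_1S_2$: by Proposition~\ref{echelonnee}, $S_1$ brings $N$ to a block-diagonal matrix $A''$ whose blocks are on reduced column echelon form, modulo a controlled error $F$; then by Proposition~\ref{red-JNF}, $S_2$ brings $A''$ to Jordan normal form $A'=S_2^{-1}A''S_2$. Since $S_1,A'',S_2$ all inherit the block decomposition of $N$, so do $S$ and $A'$, and $S^{-1}NS=A'+F'$ with $F'=S_2^{-1}FS_2$. Everything then reduces to choosing the free data in Proposition~\ref{echelonnee} — the scalar it takes and the sequence $(\delta_j)$ — so that, after multiplying all the norm estimates together, the exponents of $\varepsilon$ land in the required ranges.

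First I would fix the parameters. One computes constants $c_1=\tfrac n2+(n+1)n!$, $c_2=\tfrac n2+n\cdot n!$, $c_4=2(n+1)n!+n$ (depending only on $n$), arising from multiplying $\|S_1^{\pm1}\|\le C(\|N\|+2)^{n/2}\tilde\varepsilon^{-n\delta_k/2}$ by $\|S_2^{\pm1}\|\le C(1+\|A''\|)^{n!}$ once the bound $\|A''\|\le\|S_1^{-1}\|\,\|N\|\,\|S_1\|+\|F\|\le C(\|N\|+2)^{n+1}\tilde\varepsilon^{-n\delta_k}$ is inserted. Set $\mu=mc_2+2n\cdot n!+n$, $\rho=2\mu$, $K=\tfrac{n^2}{2}$, and take the geometric sequence $\delta_j=\delta_K\rho^{K-j}$ with $\delta_K=\bigl(2c_2(m+2)\rho^{K-1}\bigr)^{-1}$ (extended decreasingly past index $K$, irrelevantly). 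Then $\delta_1=\bigl(2c_2(m+2)\bigr)^{-1}$, and for every $1\le k\le K$ one has $\delta_{k-1}-\mu\delta_k=\mu\delta_K\rho^{K-k}\ge\mu\delta_K$, so $c':=\min\{\tfrac12,\mu\delta_K\}$ is a positive constant depending only on $n,m$. Finally set $\varepsilon_0=2^{-1/(\delta_K(\rho-1))}$ and $\tilde\varepsilon=\min\{\varepsilon,\varepsilon_0\}$: since the smallest gap of the sequence is $\delta_K(\rho-1)$, one gets $\tilde\varepsilon^{\delta_{j-1}}\le\tfrac12\tilde\varepsilon^{\delta_j}$ for $j\le K$, hence $\sum_{j\le k}\tilde\varepsilon^{\delta_j}\le2\tilde\varepsilon^{\delta_k}$, i.e. \eqref{decroissantedelta} holds for $\tilde\varepsilon$.

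Then I would run the construction with the parameter $\tilde\varepsilon$: Proposition~\ref{echelonnee} returns some $1\le k\le K$ and $S_1,A'',F$ as above; if $A''=0$ take $S_2=I$, $A'=0$, otherwise apply Proposition~\ref{red-JNF}. Using $\tilde\varepsilon^{\delta_{k-1}}\le1$ in the bound for $\|F\|$, submultiplicativity of the operator norm, and $\delta_k\le\delta_1$, one obtains
\[\|S^{\pm1}\|\le C(\|N\|+2)^{c_1}\tilde\varepsilon^{-c_2\delta_k}\le C(\|N\|+2)^{c_1}\tilde\varepsilon^{-\frac1{2(m+2)}},\]
\[\|S^{\pm1}\|^m\|F'\|\le C(\|N\|+2)^{mc_1+c_4}\tilde\varepsilon^{\,\delta_{k-1}-\mu\delta_k}\le C(\|N\|+2)^{mc_1+c_4}\tilde\varepsilon^{\,c'}.\]
To pass from $\tilde\varepsilon$ back to $\varepsilon$: $\tilde\varepsilon\le\varepsilon$ gives $\tilde\varepsilon^{c'}\le\varepsilon^{c'}$ directly, while $\tilde\varepsilon^{-1/(2(m+2))}\le\varepsilon_0^{-1/(2(m+2))}\,\varepsilon^{-1/(2(m+2))}$ since either $\tilde\varepsilon=\varepsilon$ or $\tilde\varepsilon=\varepsilon_0$ and $\varepsilon^{-1/(2(m+2))}\ge1$. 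Taking $c=c_1+c_4$ and enlarging $C$ to absorb the $(n,m)$-dependent factors $2^{mc_1+c_4}$ and $\varepsilon_0^{\pm1}$ then yields exactly \eqref{estim-S} and \eqref{estim-F}; the block-diagonal structure is preserved at every step.

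The main obstacle is precisely this balancing act: the exponent of $\tilde\varepsilon$ in $\|S\|^m\|F'\|$ is $\delta_{k-1}-(mc_2+2n\cdot n!+n)\delta_k$, which must remain bounded below by a fixed positive constant for the \emph{a priori unknown} index $k$ produced by the algorithm of Proposition~\ref{echelonnee}, while at the same time $c_2\delta_k$ must stay below $\tfrac1{2(m+2)}$. This forces the geometric ratio $\rho$ to exceed $\mu$ and the $\delta_j$ to be small constants — and small constant gaps are incompatible with \eqref{decroissantedelta} when $\varepsilon$ is close to $1$, which is why one works with $\tilde\varepsilon=\min\{\varepsilon,\varepsilon_0\}$ rather than with $\varepsilon$. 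Everything else is routine manipulation of the norm inequalities.
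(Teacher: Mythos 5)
Your construction follows the paper's route exactly: first Proposition \ref{echelonnee} to reach reduced column echelon form modulo a controlled error, then Proposition \ref{red-JNF} to reach Jordan form, with $S=S_1S_2$, $F'=S_2^{-1}FS_2$, and a geometric sequence $(\delta_k)$ tuned so that $c_2\delta_k\le\frac{1}{2(m+2)}$ while $\delta_{k-1}-\mu\delta_k$ stays bounded below by a constant depending only on $n,m$, uniformly in the unknown stopping index $k$; your exponent bookkeeping is correct, and the blow-up of $(\|N\|+2)^{mc_1+c_4}$ into the form $(C(\|N\|+1)^c)^{m+1}$ is indeed absorbable with $C=C(n)$. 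The one point where you fall short of the statement is the cutoff $\tilde\varepsilon=\min(\varepsilon,\varepsilon_0)$: since $\varepsilon_0$ depends on $m$, the factor $\varepsilon_0^{-1/(2(m+2))}$ you absorb into $C$ in \eqref{estim-S} makes $C$ depend on $m$ as well as $n$, whereas Proposition \ref{JNF-estimates} claims $C=C(n)$; this is harmless for the way the proposition is used in Proposition \ref{almost-jordan} (only a constant $C(m,n)$ is invoked there), but it is strictly weaker than what is asserted. You correctly identified the underlying irritant — condition \eqref{decroissantedelta} cannot hold when $\varepsilon$ is close to $1$, and the paper's own choice of $(\delta_k)$ is simply asserted to satisfy it — but the cleaner repair is to dispense with \eqref{decroissantedelta} altogether rather than shrink $\varepsilon$: in the proof of Proposition \ref{echelonnee} the accumulated error is a sum of at most $\frac{n^2}{2}$ terms, each bounded by $\varepsilon^{\delta_{k-1}}$ since the $\delta_j$ decrease, so $\varepsilon^{\delta_1}+\dots+\varepsilon^{\delta_{k-1}}\le\frac{n^2}{2}\,\varepsilon^{\delta_{k-1}}$, and the factor $\frac{n^2}{2}$ goes into the $n$-dependent constant. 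With that observation your argument goes through verbatim with $\tilde\varepsilon=\varepsilon$, no $\varepsilon_0$ is needed, and the stated dependence $C=C(n)$ is recovered.
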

 
 \medskip
 
\begin{proof} 
Let $\varepsilon\in (0,1)$ and $m\geq 1$. Let $\delta_0=1$, $c_n =2(n+1)(2n)!+n$, and for all $ k\ge 1$,
$$\delta_{k-1} =  2(m+2)c_n \delta_{k}.$$
Then $(\delta_k)$ is a positive decreasing sequence, and it satisfies inequality \eqref{decroissantedelta} of Proposition \ref{echelonnee}. Apply Proposition \ref{echelonnee} to $N$ with this choice of $(\delta_k)$.
This gives an integer $k$ with $1\le k\le n^2$, a constant $C$ only depending on $n$, and a block diagonal matrix $S_1\in Gl(n,\C)$,
      \[ \Vert S_1^{\pm 1}\Vert \leq  C(\Vert N\Vert+1)^{\frac n2}  \varepsilon^{-\frac n2 \delta_k} \]
  such that
\[ S_1^{-1}N S_1 =  N'+  G,\]
with $N'$ on reduced column echelon form and
\[\Vert  G \Vert \leq C(\Vert N\Vert+1 )^{n} \varepsilon^{- n \delta_{k}+\delta_{k-1} } .\]
Moreover, $S_1$ and $N'$ are block diagonal with the same block structure as $N$.

\noindent Apply now Proposition \ref{red-JNF} to each block of $N'$ to find a block diagonal
 $S_2\in Gl(n,\C)$ satisfying
$$\Vert S_2^{\pm 1}\Vert\le C(1+\Vert N'\Vert)^{n!}$$
such that the block diagonal matrix $A'=S_2^{-1}N'S_2$
is on Jordan normal form.
Then we have, with $S=S_1S_2$,
$$S^{-1}NS=S_2^{-1}(N'+G)S_2=A'+S_2^{-1}GS_2.$$
Moreover, $S$ and $A'$ are block diagonal with the same block structure as $N$.
\medskip

{\it The estimates:}

$$\Vert N'\Vert= \Vert  S_1^{-1}NS_1-G\Vert \le $$
$$\le  C(\Vert N\Vert+1)^{ n}  \varepsilon^{- n \delta_{k}} \Vert  N\Vert+ C(\Vert N\Vert+1 )^{n} \varepsilon^{- n \delta_{k}+\delta_{k-1} } $$
$$\le   C(\Vert N\Vert+1)^{ n+1} \varepsilon^{- n \delta_{k}} $$

\noindent 
Hence
$$\Vert S_2^{\pm 1}\Vert\le C(\Vert N\Vert+1)^{ (n+1)n!} \varepsilon^{-n\cdot n! \delta_{k}}.$$

\noindent 
In particular
$$\Vert S^{\pm 1}\Vert = \Vert (S_1S_2)^{\pm 1} \Vert \leq  C(\Vert N\Vert+1)^{ (n+1)!+\frac n2} \varepsilon^{-n\cdot n! \delta_{k} -\frac n2 \delta_{k}}\leq C(\Vert N\Vert +1)^{c_n}\varepsilon^{-c_n\delta_k}
$$

\noindent 
(recall $c_n =2(n+1)(2n)!+n$)
and by the definition of $(\delta_k)$, $c_n\delta_k\leq \frac{1}{2(m+2)}$
so \eqref{estim-S} holds. Moreover,

$$\Vert S_2^{-1}GS_2\Vert  \le  
C(\Vert N\Vert+1)^{ 2(n+1)!+n} \varepsilon^{-2n\cdot n! \delta_{k}- n \delta_{k}} \varepsilon^{\delta_{k-1} } $$
$$\le  
C(\Vert N\Vert+1)^{c_n} \varepsilon^{-c_n\delta_{k}+\delta_{k-1} }.$$

\noindent 
Hence
$$\Vert S^{\pm 1}\Vert^m\Vert S_2^{-1}GS_2\Vert  \le C^{m+1}(\Vert N\Vert+1)^{(m+1)c_n} \varepsilon^{-(m+1)c_n \delta_{k} +\delta_{k-1} } $$

$$\leq (C(\Vert N\Vert +1)^{c_n})^{m+1} \varepsilon^{(m+1)c_n \delta_k},$$

\noindent so \eqref{estim-F} holds with $c=c_n$ and $c'=(m+1)c_n\delta_k$.
\end{proof}

\section{An almost reducibility result to a Jordan normal form}\label{section4bis}

In this section, we construct an almost conjugation to a cocycle which is in Jordan normal form. A control of the estimates requires to do it in two steps.

\begin{proposition}\label{ARtoblocks}
    Let $X_{\omega,A}$ an almost reducible cocycle in $\mathcal{C}^\infty$, i.e. there exist sequences $(Z_j)$, $(B_j)$ and $(F_j)$ satisfying conditions \eqref{eq1.4} and \eqref{conditionconvergence}. \\
    If  $\omega$ is Diophantine, then there exist sequences $(\hat Z_j)$, $(\hat F_j)$ and $(\hat B_j)$ such that every $\hat B_j$ is block diagonal, each block being upper triangular with only one eigenvalue, 
    
 \[ \partial_\omega \hat Z_j = A \hat Z_j - \hat Z_j(\hat B_j + \hat F_j)\]
    \noindent and for all $m\in\N$,

    \begin{equation}\label{convergence-blocks}
    \Vert \hat Z_j^{\pm 1}\Vert^m_{\CC^m}\Vert \hat F_j\Vert_{\CC^m}\rightarrow 0.
    \end{equation}
\end{proposition}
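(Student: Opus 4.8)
The plan is to take the given almost-reduction data $(Z_j, B_j, F_j)$ and, for each fixed $j$, conjugate the constant matrix $B_j$ to its Jordan (block) normal form using Proposition \ref{JNF-estimates}, then absorb the conjugation into $Z_j$ and the error into $\hat F_j$. First I would write the classical decomposition $B_j = D_j + N_j$ with $D_j$ block diagonal (the semisimple part, one eigenvalue per block after a fixed unitary putting $B_j$ in upper-triangular block form) and $N_j$ nilpotent; more concretely, after conjugating $B_j$ to block upper-triangular form with constant diagonal blocks, each diagonal block is $\alpha I + N$ with $N$ nilpotent, so it suffices to bring the nilpotent parts to Jordan form. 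Apply Proposition \ref{JNF-estimates} to the nilpotent matrix $N_j$ (or to each nilpotent block), with a parameter $\varepsilon_j \to 0$ still to be chosen and the exponent $m$ taken as large as needed: this yields $S_j$ block diagonal with $S_j^{-1} N_j S_j = A_j' + F_j'$, $A_j'$ in Jordan normal form, and estimates
\[
\Vert S_j^{\pm 1}\Vert \le C(\Vert N_j\Vert + 1)^{c}\,\varepsilon_j^{-\frac{1}{2(m+2)}}, \qquad
\Vert S_j^{\pm 1}\Vert^m\,\Vert F_j'\Vert \le \big(C(\Vert N_j\Vert+1)^c\big)^{m+1}\varepsilon_j^{c'}.
\]
Since $S_j$ is constant (and block diagonal, commuting with the scalar-per-block part $D_j$), setting $\hat Z_j = Z_j S_j$ gives
\[
\partial_\omega \hat Z_j = A \hat Z_j - \hat Z_j\big(S_j^{-1}(B_j + F_j) S_j\big)
= A\hat Z_j - \hat Z_j\big(\hat B_j + \hat F_j\big),
\]
where $\hat B_j = D_j + A_j'$ is block diagonal with each block upper triangular with a single eigenvalue, and $\hat F_j = S_j^{-1} F_j S_j + F_j'$.

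The remaining work is to choose $\varepsilon_j$ so that the convergence condition \eqref{convergence-blocks} holds for every $m$. I would handle the two pieces of $\hat F_j$ separately. For the first piece, $\Vert \hat Z_j^{\pm 1}\Vert_{\CC^r} = \Vert (Z_j S_j)^{\pm 1}\Vert_{\CC^r} \le C_r \Vert Z_j^{\pm 1}\Vert_{\CC^r}\Vert S_j^{\pm 1}\Vert$ by \eqref{1}, and similarly $\Vert S_j^{-1} F_j S_j\Vert_{\CC^r} \le C_r \Vert S_j^{-1}\Vert\,\Vert F_j\Vert_{\CC^r}\,\Vert S_j\Vert$; since $\Vert N_j\Vert$ is controlled by $\Vert B_j\Vert$, which is in turn controlled by $\Vert Z_j^{\pm 1}\Vert_{\CC^0}$ and $\Vert F_j\Vert_{\CC^0}$ and $\Vert A\Vert_{\CC^0}$ via \eqref{eq1.4}, the factor $\Vert S_j^{\pm1}\Vert$ is bounded by a fixed power of $\Vert Z_j^{\pm1}\Vert_{\CC^0}$ times $\varepsilon_j^{-\frac{1}{2(m+2)}}$. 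Hence $\Vert \hat Z_j^{\pm1}\Vert_{\CC^r}^m\,\Vert S_j^{-1}F_j S_j\Vert_{\CC^r}$ is bounded by a fixed power of $\Vert Z_j^{\pm1}\Vert_{\CC^r}$ times $\varepsilon_j^{-P}\,\Vert F_j\Vert_{\CC^r}$ for some power $P=P(m,r,n)$; this tends to $0$ along any subsequence on which $\varepsilon_j$ goes to $0$ slowly enough, e.g. taking $\varepsilon_j$ so that $\varepsilon_j^{-P_j}\,\Vert F_j\Vert_{\CC^{r_j}}^{1/2}\to 0$ for a diagonal enumeration $(r_j,m_j,P_j)$. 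For the second piece, $\Vert \hat Z_j^{\pm1}\Vert_{\CC^0}^m\,\Vert F_j'\Vert \le (\text{power of }\Vert Z_j^{\pm1}\Vert_{\CC^0})\cdot \Vert S_j^{\pm1}\Vert^m \Vert F_j'\Vert \le (\text{power of }\Vert Z_j^{\pm1}\Vert_{\CC^0})\cdot \varepsilon_j^{c'}$, which $\to 0$ as soon as $\varepsilon_j\to 0$ fast enough relative to $\Vert Z_j^{\pm1}\Vert_{\CC^0}$; note $F_j'$ is constant, so its $\CC^r$ norm equals its $\CC^0$ norm and no Diophantine input is needed for this piece.

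The main obstacle is the competition between the two pieces: the first error term wants $\varepsilon_j\to 0$ \emph{slowly} (so that $\Vert S_j^{\pm1}\Vert$ does not blow up faster than $\Vert F_j\Vert_{\CC^r}$ decays), while the second wants $\varepsilon_j\to 0$ \emph{fast} (so that $\varepsilon_j^{c'}$ beats the growing powers of $\Vert Z_j^{\pm1}\Vert$). Reconciling these requires exploiting that \eqref{conditionconvergence} gives $\Vert Z_j^{\pm1}\Vert_{\CC^r}^m \Vert F_j\Vert_{\CC^r}\to 0$ for \emph{all} $m$, so $\Vert F_j\Vert_{\CC^r}$ decays super-polynomially in $\Vert Z_j^{\pm1}\Vert_{\CC^r}$; this slack lets one pick, for each $j$, a value $\varepsilon_j$ — depending on $j$, on the sizes $\Vert Z_j^{\pm1}\Vert_{\CC^{r}}$ and $\Vert F_j\Vert_{\CC^{r}}$ for $r\le j$, and on the exponents $c,c',P$ produced by Proposition \ref{JNF-estimates} with $m=j$ — that simultaneously kills both terms for all $m,r\le j$, hence in the limit for every $m,r$. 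One should also be careful that the Diophantine hypothesis is genuinely used only in Lemma \ref{l3.1}-type steps invoked inside the construction of the original $(Z_j,B_j,F_j)$ and in putting $B_j$ into block upper-triangular form with constant diagonal — the Jordan-form step itself is purely algebraic — so the statement of the proposition correctly carries the Diophantine hypothesis inherited from the ambient setup. Finally, since there are only finitely many Jordan block types in dimension $n$, the block decomposition of $\hat B_j$ can be taken from a fixed finite list, and passing to a subsequence if necessary makes it constant in $j$, which is harmless for almost reducibility.
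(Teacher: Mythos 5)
Your overall scheme (conjugate $B_j$ by a constant matrix $S_j$, set $\hat Z_j=Z_jS_j$, absorb the conjugated error into $\hat F_j$, and balance exponents of an auxiliary parameter $\varepsilon_j$ before a diagonal extraction as in Lemma \ref{convergence-m}) is the same as the paper's. But there is a genuine gap at the very step the proposition is really about. You write that ``after conjugating $B_j$ to block upper-triangular form with constant diagonal blocks, each diagonal block is $\alpha I+N$'' and then treat $B_j=D_j+N_j$ with $D_j$ having one eigenvalue per block as classical. This decomposition is purely algebraic only if you do not need bounds; here you do. A unitary Schur conjugation gives an upper triangular matrix, but removing the off-diagonal blocks that couple distinct eigenvalues requires solving Sylvester equations whose solutions blow up like a power of $\Vert B_j\Vert/\mathrm{gap}$, and the spectral gaps of $B_j$ are not controlled by the almost-reducibility data: two eigenvalues of $B_j$ can be distinct but arbitrarily close (much closer than any power of $\Vert F_j\Vert$), in which case no conjugation of controlled norm produces a block with a single eigenvalue, and the Jordan--Chevalley splitting $D_j+N_j$ itself has no norm control. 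Your plan never addresses how to equalize nearly-equal eigenvalues, which is exactly the content of the statement ``each block upper triangular with only one eigenvalue.''

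The paper resolves this by a quantitative clustering argument that your proposal does not contain: it applies Corollary \ref{induction-blocs} with the geometric scale sequence $\Gamma_i=\varepsilon_j^{\gamma_i}$ ($\varepsilon_j=\Vert F_j\Vert_{\CC^m}$, $\gamma_{i+1}=16mn^3\gamma_i$) to find a scale $d_0$ at which the spectrum splits into $\Gamma_{d_0}$-connected clusters that are $\Gamma_{d_0-1}$-separated; the separating conjugation $M_j$ then costs only $\varepsilon_j^{-2n^3\gamma_{d_0-1}}$ (times a power of $\Vert B_j\Vert\le\varepsilon_j^{-\beta}$), while the residual differences of eigenvalues inside a cluster, of size at most $n\varepsilon_j^{\gamma_{d_0}}$, are moved from $\hat B_j$ into $\hat F_j$; the geometric growth of the $\gamma_i$ guarantees $\gamma_{d_0}$ dominates the losses from $\gamma_{d_0-1}$ and $\beta$. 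Without this dichotomy (separate when far at a chosen scale, absorb into the perturbation when close), your estimate of $\Vert S_j^{\pm1}\Vert$ by ``a fixed power of $\Vert Z_j^{\pm1}\Vert_{\CC^0}$ times $\varepsilon_j^{-1/(2(m+2))}$'' has no justification. Finally, note that Proposition \ref{JNF-estimates} is not the right tool at this stage: in the paper it enters only in the subsequent Proposition \ref{almost-jordan}, after the one-eigenvalue-per-block structure has been achieved, and it presupposes a nilpotent block diagonal input that you cannot produce here with controlled constants. Your closing remarks (that the Diophantine condition is not really used in this step, and that the errors are constant matrices) are consistent with the paper, but they do not repair the missing clustering step.
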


\begin{proof}
We will construct such a sequence for a fixed $m$, and then the lemma \ref{convergence-m} will imply the conclusion. 

\bigskip
\noindent Fix $m\in \N \backslash \{0\}$ and define the parameters 

$$\beta = \frac{1}{4n^3},
\gamma_1=\frac{1}{4(16mn^3)^n},$$

\noindent and for $i\geq 1$, the increasing sequence 

$$\gamma_{i+1} = 16mn^3 \gamma_i.$$

\paragraph{
Estimate of \texorpdfstring{$B_j$}{}}

The conjugation relation given by the almost reducibility assumption can be written as
\[B_j = Z_j^{-1}(AZ_j -\partial_\omega Z_j )-F_j\]
and then 
\[\Vert B_j\Vert\leq C_{A, \omega, d} \Vert Z_j\Vert_{\CC^1} \Vert Z_j^{-1}\Vert_{\CC^0} + \Vert F_j\Vert_{\CC^r} \leq C'_{A, \omega, d} \Vert Z_j\Vert_{\CC^1} \Vert Z_j^{-1}\Vert_{\CC^0},\]
with $C_{A, \omega, d}$ and $C'_{A, \omega, d}$ depending only on $A$, $\omega$ and $d$. 

\noindent 
The assumption
\[ \lim_{j \rightarrow + \infty} \Vert Z_j^{\pm 1} \Vert_{\CC^r}^m \Vert F_j \Vert_{\CC^r} = 0, \quad \forall  r,m \in \N\]
implies that for $J_1$ large enough, for all $j \geq J_1$,

\[(\Vert Z_j\Vert_{\CC^m}\Vert Z_j^{- 1}\Vert_{\CC^m})^{\frac{1}{\beta}}\Vert F_j \Vert_{\CC^m} \leq {C'_{A,\omega, d}}^{-\frac{1}{\beta}}\]
therefore for all $j \geq J_1$,
\begin{equation}\Vert B_j \Vert \leq \Vert F_j\Vert_{\CC^0} ^{-\beta}. \label{def-J1}\end{equation}

\noindent 
From now on, we will work with $j \geq J_1$.
\paragraph{Construction of a matrix similar to \texorpdfstring{$B_j$}{}, block diagonal with separated spectrum}

\bigskip
Let $\varepsilon_j =\Vert F_j\Vert_{\CC^m}$.
    From Corollary \ref{induction-blocs} given in appendix and applied to $B_j$ and $\Gamma_i =\varepsilon_j^{\gamma_i}$, there exists $d_0\in \{1,\dots, n\}$ such that 
    we can conjugate $B_j$ to  a matrix $D_j$ which is block diagonal, with each block being upper triangular with $\varepsilon_j^{\gamma_{d_0}}$-connected spectrum (as defined in Section \ref{def-gamma-close}), and if we denote by $M_j\in GL(n,\mathbb{C})$ the conjugation (so that $D_j=M_j^{-1} B_jM_j$ and $G_j =M_j^{-1} F_jM_j $), 
    then 
    
\begin{equation}\label{majoration-de-mu} \Vert M_j \Vert, \Vert M_j^{-1} \Vert \leq 
n^{3n} (\frac{\Vert B_j \Vert}{\varepsilon_j^{2\gamma_{d_0-1}}})^{n^3}.
\end{equation}

\noindent Write $D_j + G_j = \hat B_j + \hat F_j$, where $\hat F_j$ is obtained from $G_j$ by adding a diagonal with coefficients of $D_j$ smaller than $n\Vert F_j\Vert_{\CC^m}^{\gamma_{d_0}}$. Then $\hat B_j$ is block diagonal, with blocks upper triangular having only one eigenvalue (since the difference between close eigenvalues was moved to $\hat F_j$, so the eigenvalues of each block are now the same), and we obtain a conjugation 

$$M_j^{-1} (B_j+F_j)M_j = \hat B_j + \hat F_j
$$

\noindent with the estimate
\[ \Vert \hat F_j \Vert_{\CC^m} \leq 
\Vert G_j\Vert_{\mathcal{C}^m} + n \Vert F_j\Vert_{\mathcal{C}^m}^{\gamma_{d_0}}
\leq \Vert M_j\Vert \Vert M_j^{-1}\Vert \varepsilon_j + n \varepsilon_j^{\gamma_{d_0}}
\leq C_n \varepsilon_j^{1- 2\beta n^3 -4n^3 \gamma_{d_0-1}}+ n \varepsilon_j^{\gamma_{d_0}}
\leq C_n\varepsilon_j^{\gamma_{d_0}}\]

\noindent (where $C_n$ only depends on $n$). Let $\hat Z_j = Z_jM_j$, then 

$$\Vert \hat Z_j^{\pm 1} \Vert_{\CC^m}^m  \Vert \hat F_j \Vert_{\CC^m} \leq \Vert Z_j^{\pm 1}\Vert^m_{\CC^m}\Vert M_j\Vert^m \Vert M_j^{-1}\Vert ^m \Vert \hat F_j\Vert _{\CC^m}\leq 
\Vert Z_j^{\pm 1}\Vert^m_{\CC^m}n^{6mn} (\frac{\Vert B_j \Vert}{\varepsilon_j^{2\gamma_{d_0-1}}})^{2mn^3}
C_n\varepsilon_j^{\gamma_{d_0}}
$$

$$\leq C(m,n)\Vert Z_j^{\pm 1}\Vert^m_{\CC^m}
\varepsilon_j^{-4mn^3\gamma_{d_0-1}-2mn^3\beta +\gamma_{d_0}}
$$

\noindent By the choice of $\beta$ and the sequence $\gamma_i$, the exponent on $\varepsilon_j$ is positive, therefore the almost reducibility assumption implies that this quantity tends to $0$.
\end{proof}

\begin{proposition}\label{almost-jordan}
      Let $X_{\omega,A}$ an almost reducible cocycle in $C^\infty$. If  $\omega$ is Diophantine, then $X_{\omega,A}$ is almost reducible to a sequence $(B_j)$ of matrices that are in Jordan normal form.
\end{proposition}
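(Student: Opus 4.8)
The plan is to combine Proposition \ref{ARtoblocks}, which already brings the cocycle to an almost conjugation with block-diagonal matrices $\hat B_j$ (each block upper triangular with a single eigenvalue), with the Jordan-normal-form result Proposition \ref{JNF-estimates}, applied block by block. The only thing Proposition \ref{ARtoblocks} does not give us is that each block is actually in Jordan normal form: a single-eigenvalue upper triangular block $\hat B_j^{(l)} = \lambda_j^{(l)} I + N_j^{(l)}$ splits into a scalar part (which commutes with everything and causes no trouble) and a nilpotent part $N_j^{(l)}$. So the real content is to conjugate each $N_j^{(l)}$ to Jordan form with good estimates, and this is precisely what Proposition \ref{JNF-estimates} provides.

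\textbf{Step 1.} Fix $m \in \N \setminus \{0\}$ and apply Proposition \ref{ARtoblocks} to obtain sequences $(\hat Z_j)$, $(\hat B_j)$, $(\hat F_j)$ with $\hat B_j$ block diagonal, each block $\hat B_j^{(l)}$ upper triangular with a single eigenvalue $\lambda_j^{(l)}$, the conjugation relation $\partial_\omega \hat Z_j = A \hat Z_j - \hat Z_j(\hat B_j + \hat F_j)$, and $\Vert \hat Z_j^{\pm 1}\Vert_{\CC^m}^m \Vert \hat F_j\Vert_{\CC^m} \to 0$. Write $\hat B_j = \Lambda_j + N_j$ where $\Lambda_j$ is block-diagonal scalar (the $\lambda_j^{(l)} I$ on each block) and $N_j$ is block-diagonal nilpotent.

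\textbf{Step 2.} Apply Proposition \ref{JNF-estimates} to $N_j$ (if $N_j = 0$ we are already done for that $j$), with parameter $\varepsilon = \varepsilon_j := \Vert \hat F_j\Vert_{\CC^m}$ and the same $m$. This gives a block-diagonal $S_j \in Gl(n,\C)$ with the same block decomposition, satisfying $\Vert S_j^{\pm 1}\Vert \le C(\Vert N_j\Vert + 1)^c \varepsilon_j^{-1/(2(m+2))}$, and $S_j^{-1} N_j S_j = A_j' + F_j''$ with $A_j'$ in Jordan normal form and $\Vert S_j^{\pm 1}\Vert^m \Vert F_j''\Vert \le (C(\Vert N_j\Vert+1)^c)^{m+1} \varepsilon_j^{c'}$. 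Since $S_j$ is block-diagonal with the same blocks, it commutes with the scalar block-diagonal matrix $\Lambda_j$, so $S_j^{-1}\hat B_j S_j = \Lambda_j + A_j' + F_j''$; here $\Lambda_j + A_j'$ is in Jordan normal form (adding a scalar on each block to a nilpotent Jordan matrix keeps it a Jordan matrix). Set $\mathbf{Z}_j = \hat Z_j S_j$, $\mathbf{B}_j = \Lambda_j + A_j'$, and $\mathbf{F}_j = S_j^{-1}\hat F_j S_j + F_j''$. Then $\partial_\omega \mathbf{Z}_j = A \mathbf{Z}_j - \mathbf{Z}_j(\mathbf{B}_j + \mathbf{F}_j)$ with $\mathbf{B}_j$ in Jordan normal form.

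\textbf{Step 3 (the estimates).} We must check $\Vert \mathbf{Z}_j^{\pm 1}\Vert_{\CC^m}^m \Vert \mathbf{F}_j\Vert_{\CC^m} \to 0$. Bound $\Vert \mathbf{Z}_j^{\pm 1}\Vert_{\CC^m} \le C_m \Vert \hat Z_j^{\pm 1}\Vert_{\CC^m} \Vert S_j^{\pm 1}\Vert$ by \eqref{1}. For $\mathbf{F}_j$, split into the two terms: the term $\Vert S_j^{-1}\hat F_j S_j\Vert_{\CC^m} \le C_m \Vert S_j\Vert \Vert S_j^{-1}\Vert \Vert \hat F_j\Vert_{\CC^m}$ contributes $\Vert \hat Z_j^{\pm 1}\Vert_{\CC^m}^m \Vert S_j^{\pm 1}\Vert^m \cdot \Vert S_j\Vert\Vert S_j^{-1}\Vert \varepsilon_j$, which by the polynomial-in-$\Vert N_j\Vert$ bound on $\Vert S_j^{\pm 1}\Vert$ and $\varepsilon_j$-exponent $-\tfrac{m+2}{2(m+2)} = -\tfrac12 > -1$ leaves a positive power of $\varepsilon_j$ times polynomial factors, and since $\Vert N_j\Vert \le \Vert \hat B_j\Vert$ is polynomially controlled (as in the estimate of $B_j$ in Proposition \ref{ARtoblocks}) while $\Vert \hat Z_j^{\pm 1}\Vert_{\CC^m}^m \varepsilon_j^\delta \to 0$ for any fixed $\delta>0$ by the almost reducibility hypothesis, this goes to $0$; the term involving $F_j''$ contributes $\Vert \hat Z_j^{\pm 1}\Vert_{\CC^m}^m \cdot \Vert S_j^{\pm 1}\Vert^m \Vert F_j''\Vert \le \Vert \hat Z_j^{\pm 1}\Vert_{\CC^m}^m (C(\Vert N_j\Vert+1)^c)^{m+1}\varepsilon_j^{c'}$, handled the same way. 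Finally, as in Proposition \ref{ARtoblocks}, Lemma \ref{convergence-m} (a statement about deducing all-$r$, all-$m$ convergence from per-$m$ constructions, invoked earlier in the excerpt) upgrades the per-$m$ construction to the full condition \eqref{conditionconvergence}.

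\textbf{The main obstacle} is bookkeeping rather than conceptual: we must make sure the $\varepsilon_j$-powers coming out of Proposition \ref{JNF-estimates} (which carry a fixed negative power $\varepsilon_j^{-1/(2(m+2))}$ in $\Vert S_j^{\pm1}\Vert$, taken to the $m$-th power inside the convergence condition, plus an extra $\Vert S_j\Vert\Vert S_j^{-1}\Vert$) still net a strictly positive power of $\varepsilon_j$ after multiplying everything together — which is exactly why Proposition \ref{JNF-estimates} was stated with the exponent $\frac{1}{2(m+2)}$ and with $c' \in (0,\tfrac12]$, anticipating this use — and that the polynomial factors in $\Vert N_j\Vert$ are absorbed by the $\Vert \hat Z_j^{\pm1}\Vert_{\CC^m}^m\Vert\hat F_j\Vert_{\CC^m}^\delta\to 0$ mechanism, noting $\Vert N_j\Vert$ is polynomially bounded in $\Vert\hat Z_j\Vert_{\CC^1},\Vert\hat Z_j^{-1}\Vert_{\CC^0}$.
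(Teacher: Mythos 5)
Your proposal is correct and follows essentially the same route as the paper: apply Proposition \ref{ARtoblocks}, feed the nilpotent (off-diagonal) part of each $\hat B_j$ into Proposition \ref{JNF-estimates} with $\varepsilon_j=\Vert \hat F_j\Vert_{\CC^m}$, use that the block-diagonal $S_j$ commutes with the scalar part, check convergence for each fixed $m$, and finish with Lemma \ref{convergence-m}. The only (immaterial) difference is bookkeeping: the paper fixes $\beta=\frac{c'}{2c(m+2)}$ and converts $\Vert B_j\Vert$ into $\varepsilon_j^{-\beta}$ so the net $\varepsilon_j$-exponents are explicitly positive, whereas you keep the polynomial factors in $\Vert N_j\Vert$ and absorb them through the all-exponents convergence, which is the same mechanism.
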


\begin{proof}
    By Proposition \ref{ARtoblocks}, there exist sequences $(Z_j),(F_j),(B_j)$ such that 

    $$\partial_\omega Z_j = AZ_j -Z_j (B_j+F_j)
    $$

\noindent 
    where the convergence condition \eqref{convergence-blocks} holds, and the matrices $B_j$ are block diagonal, every block being upper triangular with only one eigenvalue.

    \bigskip
\noindent     Fix $m \in \N$. Define $\beta = \frac{c'}{2c(m+2)}$ where $c>0, c'\in (0,\frac{1}{2}]$ are given by the proposition \ref{JNF-estimates} and depend only on $n,m$.

\noindent     Reasoning similarly as in the proof of Proposition \ref{ARtoblocks}, there exists an index $J_2$ such that if $j\geq J_2$, then

    $$\Vert B_j\Vert \leq \Vert F_j\Vert _{\CC^m}^{-\beta}.
    $$

\noindent     We shall apply Proposition \ref{JNF-estimates} with $N$ being the matrix $B_j$ without its diagonal coefficients, and $\varepsilon =\varepsilon_j=\Vert F_j\Vert_{\CC^m}
$.

\noindent      Let $A'=A'_j,F'=F'_j, S=S_j$ 
     given by proposition \ref{JNF-estimates} (in particular $S_j$ block diagonal with the same block structure as $B_j$).
   Therefore, since the diagonal part of $B_j$ commutes with $S_j$ (recall that $B_j$ has only one eigenvalue for each diagonal block), $B_j  + F_j$ is conjugate via $S_j$ to $\tilde B_j + \tilde F_j$, such that $\tilde B_j$ is in Jordan normal form and $\tilde F_j = F'_j + S^{-1}_j F_j  S$ with 
   
\begin{equation} \Vert S_j^{\pm 1}\Vert \leq  C(\Vert B_j\Vert+1)^{ c} \varepsilon_j^{-\frac{1}{2(m+2)} }\leq C(m,n)\varepsilon_j^{-\beta c -\frac{1}{2(m+2)}}
\end{equation}
    
   \noindent and
\begin{equation}
    \Vert S_j^{\pm 1}\Vert^m \Vert  F_j' \Vert\le (C(\Vert B_j\Vert+1)^c)^{m+1}\varepsilon_j^{c'}
    \leq C(m,n)\varepsilon_j^{-c\beta (m+1)+c'}.
\end{equation}

Therefore
\begin{equation}\label{estimation-tildeF_j} 
    \begin{split}
        \Vert S_j^{\pm 1}\Vert^m \Vert \tilde F_j \Vert_{\CC^m} & \leq  C(m,n)\varepsilon_j^{-c\beta (m+1)+c'} + 
         \Vert S_j^{\pm 1}\Vert^{m+2}\Vert  F_j\Vert_{\CC^m} \\
         & \leq C(m,n)\varepsilon_j^{-c\beta (m+1)+c'} + 
      C(m,n)\varepsilon_j^{-(m+2)(\beta c+\frac{1}{2(m+2)})+1} \\
    \end{split}
\end{equation}
Thus 

$$\Vert Z_j^{\pm 1}\Vert_{\CC^m}^m 
\Vert S_j^{\pm 1} \Vert ^m
 \Vert \tilde F_j\Vert _{\CC^m}
$$
$$\leq C(m,n)\Vert Z_j^{\pm 1}\Vert_{\CC^m}^m ( \varepsilon_j^{-c\beta (m+1)+c'} + 
      \varepsilon_j^{-(m+2)(\beta c+\frac{1}{2(m+2)})+1})
$$

The choice of the parameter $\beta$ implies that the exponent on $\varepsilon_j$ on the right hand side will be positive, thus the convergence condition holds for fixed $m$. Applying Lemma \ref{convergence-m}, there is almost reducibility to the sequence $(\tilde B_j)$.
\end{proof}

\section{Construction of a conjugation to a real matrix}\label{section5}

\medskip

In this section, $B= \mathrm{diag}(B_j)_{j=1}^l $ will be a  block diagonal matrix where each block
$B_j$ is on Jordan normal form with only one eigenvalue $\alpha_j$.
The spectrum of $B$ is
$$\sigma(B)=\{\alpha_1,\dots,\alpha_l\}\quad (\#\sigma(B)=l).$$

\noindent 
We shall study the equation
\begin{equation}\label{conjug2}
F = \partial_\omega V -  BV + V\bar{ B}.\end{equation}
where $V : \T^d \rightarrow Gl(n,\C)$  and $\omega\in \mathcal{DC}(\kappa,\tau)$, and where $F$ is supposed to be ``small''.

\noindent 
This equation decomposes into its block-components
\begin{equation}\label{conjug2bis}
F_i^j = \partial_\omega V_i^j  -  B_iV_i^j  + V_i^j \bar{ B}_j\end{equation}
for each $i,j \in \llbracket 1,l\rrbracket$.

\subsection{$(N,\rho)$-linkedness} 

In this section, we will study resonances between the eigenvalues of the matrix $B$. This way, we want to create sets of eigenvalues with same cardinality, linked by resonances. 
Fix $N\in\mathbb{N},\rho>0$.

\medskip

\begin{definition}
Two complex numbers $\alpha$ and $\beta$ are $(N,\rho)- \mathrm{linked}$   if and only if 
$$\vert 2i\pi \langle k , \omega \rangle -(\alpha - \bar \beta )\vert< \rho$$
for some $\vert k\vert\le N$.
\end{definition}

\medskip

Let $\Gamma$ be a finite set of complex numbers with $0<\#\Gamma\le n$.

\medskip

\begin{definition}
An $(N,\rho)- \mathrm{chain}$  in $\Gamma$ of length $r-1$  is a sequence
$$\alpha_1, \alpha_2,\dots, \alpha_{r-1},\alpha_r$$
in $\Gamma$  such that $\alpha_j$ and $\alpha_{j+1}$ are $(N,\rho)$- linked for all $j$. 
The numbers $\alpha_1$ and $\alpha_r$ are then said to be $(N,\rho)-\mathrm{chain-linked}$.

\noindent 
An $(N,\rho)-\mathrm{loop}$ in $\Gamma$ is an $(N,\rho)$- \textit{chain}
$$\alpha_1, \alpha_2,\dots, \alpha_{r-1},\alpha_r$$
such that $\alpha_1=\alpha_{r}$ . An $(N,\rho)-\mathrm{loop}$ is $\textrm{odd}$ if it is of odd length.
\end{definition}

\medskip

\noindent 
It is easy to verify that if $\alpha$ and $\beta$ are $(N,\rho)$-chain-linked, then they are $(N,\rho)$-chain-linked by a chain of length $\le n$.

\medskip

\begin{lemma}\label{sublemma}
Let
$$\alpha_1, \alpha_2,\dots, \alpha_{r-1},\alpha_r$$
be an  $(N,\rho)- chain$  in $\Gamma$.

\begin{itemize}
\item
If $r-1$ is odd, then $\alpha_1$ and $\alpha_r$ are   $(nN,n\rho)-linked$.
\item
If $r-1$ is even, then $\alpha_1$ and $\bar \alpha_r$ are   $(nN,n\rho)-linked$.
\end{itemize}
\end{lemma}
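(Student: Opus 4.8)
The plan is to prove both statements simultaneously by induction on the chain length $r-1$, carefully tracking two quantities: the modulus bound on the frequency vector (which starts at $N$ and can grow by $1$ at each link) and the error bound (which starts at $\rho$ and grows additively). The base case is immediate: a chain of length $1$ is exactly an $(N,\rho)$-link, so $\alpha_1$ and $\alpha_2$ are $(N,\rho)$-linked, hence trivially $(nN,n\rho)$-linked (using $N\le nN$, $\rho\le n\rho$), and this is the odd case. For the inductive step, suppose the claim holds for chains of length $r-2$. Given a chain $\alpha_1,\dots,\alpha_r$, consider the sub-chain $\alpha_1,\dots,\alpha_{r-1}$ of length $r-2$ together with the final link between $\alpha_{r-1}$ and $\alpha_r$.

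First I would handle the case $r-1$ odd, so $r-2$ is even. By induction, $\alpha_1$ and $\bar\alpha_{r-1}$ are $((r-2)N,(r-2)\rho)$-linked: there exists $|k|\le (r-2)N$ with $|2i\pi\langle k,\omega\rangle - (\alpha_1-\alpha_{r-1})| < (r-2)\rho$. The final link gives $|k'|\le N$ with $|2i\pi\langle k',\omega\rangle - (\alpha_{r-1}-\bar\alpha_r)| < \rho$. Adding these two estimates (the term $\alpha_{r-1}$ telescopes) yields $|2i\pi\langle k+k',\omega\rangle - (\alpha_1-\bar\alpha_r)| < (r-1)\rho$ with $|k+k'|\le (r-1)N$. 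Since a chain can be taken of length $\le n$ (as noted just before the lemma), we have $r-1\le n$, hence $\alpha_1$ and $\bar\alpha_r$ are $(nN,n\rho)$-linked — wait, but the odd case should conclude $\alpha_1$ and $\alpha_r$ are $(nN,n\rho)$-linked. Let me recompute the parity: I should index the induction so that the sub-chain of length $r-2$ has the \emph{opposite} parity, and the conjugation bars alternate correctly. Concretely: if $r-1$ is odd, the sub-chain $\alpha_1,\dots,\alpha_{r-1}$ has even length, so by induction $\alpha_1-\bar\alpha_{r-1}\in$ (approximate resonance set); combined with the link $\alpha_{r-1}-\bar\alpha_r$, I get $\alpha_1 - \bar{\bar\alpha}_r = \alpha_1-\alpha_r$... so actually this gives $\alpha_1$ and $\alpha_r$ linked — no, I need $\alpha_1-\bar\beta$ form for "linked". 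The correct bookkeeping: "$\alpha$ and $\beta$ are linked" means $\alpha-\bar\beta$ is near a resonance; so "$\alpha_1$ and $\alpha_r$ linked" means $\alpha_1-\bar\alpha_r$ near resonance. Even sub-chain gives $\alpha_1-\alpha_{r-1}$ near resonance (no bar); link gives $\alpha_{r-1}-\bar\alpha_r$ near resonance; sum gives $\alpha_1-\bar\alpha_r$ near resonance, i.e. $\alpha_1,\alpha_r$ linked. Good, that is the odd case. Symmetrically, odd sub-chain gives $\alpha_1-\bar\alpha_{r-1}$, link gives $\alpha_{r-1}-\bar\alpha_r$; to chain these I instead conjugate: from $\alpha_1-\bar\alpha_{r-1}$ near resonance and $\alpha_{r-1}-\bar\alpha_r$ near resonance, take the conjugate of the second (resonance set is closed under $k\mapsto -k$) to get $\bar\alpha_{r-1}-\alpha_r$ near resonance, then add to obtain $\alpha_1-\alpha_r$ near resonance, i.e. $\alpha_1,\bar\alpha_r$ linked — the even case.

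I expect the main (and only) obstacle to be this parity/conjugation bookkeeping: one must verify that the set $\{2i\pi\langle k,\omega\rangle : k\in\Z^d\}$ is symmetric under negation (clear, since $k\mapsto -k$) so that taking complex conjugates of approximate resonances stays within the same framework, and that the bars compose correctly at each step. Once that is set up, the modulus bounds add as $|k+k'|\le |k|+|k'|$ and the error bounds add by the triangle inequality; since every chain may be assumed of length $\le n$, both accumulated bounds are at most $nN$ and $n\rho$ respectively, giving the stated conclusion. I would write the induction with the invariant "a chain of length $\ell$ makes $\alpha_1$ and $\alpha_r$ (if $\ell$ odd) or $\alpha_1$ and $\bar\alpha_r$ (if $\ell$ even) $(\ell N,\ell\rho)$-linked", and then specialize $\ell\le n$ at the end.
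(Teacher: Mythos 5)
Your proposal is correct and is essentially the paper's argument in inductive clothing: the paper writes the single telescoping sum $\alpha_1-\bar\alpha_r=(\alpha_1-\bar\alpha_2)+(\bar\alpha_2-\alpha_3)+\dots$ with the even-indexed links conjugated via $k_j\mapsto -k_j$, bounds $\vert k\vert\le nN$ and the error by $n\rho$ via the triangle inequality, and assumes $r-1\le n$ without loss of generality — exactly the ingredients you accumulate one link at a time. The parity bookkeeping and the use of the symmetry of $\{2i\pi\langle k,\omega\rangle\}$ under $k\mapsto -k$ that you settle on match the paper's computation, so no further changes are needed.
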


\medskip

\begin{proof} We can assume without restriction that $r-1\le n$.

\noindent 
We have for all $j$ and for some $k_j$,
$$\vert 2i\pi\langle k_j,\omega\rangle-(\alpha_j-\bar\alpha_{j+1})\vert\le \rho,\quad \vert k_j\vert\le N,$$
Let $k= \sum_{j\ \mathrm{odd}}k_j-  \sum_{j\ \mathrm{even}}k_j$, then 
$$\vert k\vert 
\le nN.$$

\noindent 
If $r-1$ is odd, then
$$\alpha_1-\bar \alpha_r=(\alpha_1-\bar\alpha_2)+(\bar \alpha_2-\alpha_3)+(\alpha_3-\bar\alpha_4)+\dots+(\alpha_{r-1}-\bar\alpha_r)$$
so
$$2i\pi\langle k,\omega\rangle-(\alpha_1-\bar\alpha_{r})=\sum_{j\ \mathrm{odd}}\big( 2i\pi\langle k_j,\omega\rangle-(\alpha_j-\bar\alpha_{j+1})\big)+
\sum_{j\ \mathrm{even}}\big( 2i\pi\langle -k_j,\omega\rangle-(\bar \alpha_j-\alpha_{j+1})\big).$$
This implies that
$$\vert 2i\pi\langle k,\omega\rangle-(\alpha_1-\bar\alpha_{r})\vert\le n\rho.$$

\noindent 
If $r-1$ is even, then
$$\alpha_1-\alpha_r=(\alpha_1-\bar\alpha_2)+(\bar \alpha_2-\alpha_3)+(\alpha_3-\bar\alpha_4)+\dots+(\bar \alpha_{r-1}-\alpha_r)$$
so
$$2i\pi\langle k,\omega\rangle-(\alpha_1-\alpha_{r})=\sum_{j\ \mathrm{odd}}\big( 2i\pi\langle k_j,\omega\rangle-(\alpha_j-\bar\alpha_{j+1})\big)+
\sum_{j\ \mathrm{even}}\big( 2i\pi\langle -k_j,\omega\rangle-(\bar \alpha_j-\alpha_{j+1})\big).$$
This implies that
$$\vert 2i\pi\langle k,\omega\rangle-(\alpha_1-\alpha_{r})\vert\le n\rho.$$
\end{proof}

\medskip

\noindent 
We shall assume that $\Gamma$ is such that
\begin{equation} \label{star2}
\text{for any }\alpha\in\Gamma\text{, there is a }\beta \in \Gamma \text{ such that }\alpha\text{ and }\beta\text{ are } (N,\rho)\text{-linked.}
\end{equation}
Then "being $(N,\rho)$-chain-linked'' is an equivalence relation and we denote by $[\alpha]$
the equivalence class of $\alpha\in\Gamma$ (it depends on $(N,\rho)$).

\medskip

\begin{lemma}\label{paires2} Let $\mathcal{E}\subset \Gamma$ be an equivalence class for the relation of being chain-linked.
If $\mathcal{E}$ contains an odd $(N,\rho)$-loop, then any $\beta\in \mathcal{E}$ is $(nN,n\rho)$-linked to itself.

\end{lemma}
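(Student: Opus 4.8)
The statement is the $(N,\rho)$-analogue of Sublemma \ref{souslemme}, so the proof should follow the same idea, using Lemma \ref{sublemma} to keep track of how the error $\rho$ and the Fourier index bound $N$ inflate along a chain. The plan is as follows.

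First I would fix an odd $(N,\rho)$-loop $\alpha_1,\dots,\alpha_r$ contained in $\mathcal{E}$ (so $r-1$ is odd and $\alpha_r=\alpha_1$). Since $\mathcal E$ is a single equivalence class, any $\beta\in\mathcal E$ is $(N,\rho)$-chain-linked to $\alpha_1$, and by the remark preceding Lemma \ref{sublemma} this can be done by a chain of length at most $n$; call it $\beta=\gamma_0,\gamma_1,\dots,\gamma_p=\alpha_1$ with $p\le n$. Now I would concatenate three pieces: the chain from $\beta$ to $\alpha_1$, then the odd loop from $\alpha_1$ back to $\alpha_1$, then the chain from $\alpha_1$ back to $\beta$ traversed in reverse (legal since $(N,\rho)$-linkedness is symmetric). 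This produces an $(N,\rho)$-chain from $\beta$ to $\beta$ whose length is $p+(r-1)+p=2p+(r-1)$, which is odd because $r-1$ is odd.

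Then I would apply the first bullet of Lemma \ref{sublemma} to this odd chain from $\beta$ to $\beta$: it gives that $\beta$ and $\bar{\bar\beta}=\beta$ — wait, more precisely, an odd chain from $\beta$ to $\beta$ yields that $\beta$ and $\bar\beta$ are $(nN,n\rho)$-linked; but that is exactly the conclusion ``$\beta$ is $(nN,n\rho)$-linked to itself'' in the sense that $\vert 2i\pi\langle k,\omega\rangle-(\beta-\bar\beta)\vert<n\rho$ for some $\vert k\vert\le nN$. The one point requiring a little care is the length bookkeeping in Lemma \ref{sublemma}: that lemma is stated for a chain of length $\le n$, whereas the concatenated chain here has length up to $2n+(r-1)$, possibly exceeding $n$. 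I would handle this exactly as in the proof of Lemma \ref{sublemma} — the telescoping identity $\beta-\bar\beta=(\beta-\bar\gamma_1)+(\bar\gamma_1-\gamma_2)+\cdots$ over an odd chain of length $\ell$ gives a $k=\sum_{j\text{ odd}}k_j-\sum_{j\text{ even}}k_j$ with $\vert k\vert\le \ell N$ and error $\le \ell\rho$ — and simply note that we may first shorten each of the two connecting chains to length $\le n$ and, more importantly, that a chain-linked class on at most $n$ elements always admits a chain-link of length $\le n$; since the only eigenvalue occurrences that matter are in $\Gamma$ with $\#\Gamma\le n$, any $(N,\rho)$-chain can be pruned to length $\le n$ between its endpoints, and in particular a chain from $\beta$ to itself of \emph{odd} length can be pruned to one of odd length $\le n$ (pruning removes even cycles, preserving parity). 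So in the end the constants $nN$ and $n\rho$ suffice.

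The main obstacle is precisely this pruning/parity argument: one must be sure that when shortening a chain one can preserve its odd length, i.e. that a minimal-length odd chain between two (equal) endpoints has length $\le n$. This follows because in a chain of length $>n$ in a set of $\le n$ elements some vertex repeats, splitting the chain into a shorter chain plus a loop; if the loop has even length, drop it and the remaining chain has the same (odd) parity and is shorter; if the loop has odd length, then this odd loop already lies in $[\beta]$ and one may restart the argument with that loop based at $\beta$, and after finitely many steps one reaches an odd chain of length $\le n$. Once this is in place, the inflation of parameters is a direct application of the telescoping computation already carried out in Lemma \ref{sublemma}, so no further work is needed.
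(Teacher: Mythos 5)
Your construction --- connect $\beta$ to the odd loop by a chain, traverse the loop, return along the reversed chain, note that the total length is odd, and feed the resulting loop at $\beta$ into the telescoping of Lemma \ref{sublemma} --- is exactly the skeleton of the paper's proof, and your device of splitting an over-long odd loop at a repeated element into two sub-loops, one of which must be odd, is also the paper's first step. The genuine gap is your final pruning claim: that an odd chain from $\beta$ to itself can always be shortened, preserving parity and the base point, to one of length $\le n$. This is false in general. Take $\mathcal{E}$ consisting of a triangle $v_1,v_2,v_3$ (pairwise linked) together with a pendant chain $v_1,u_1,\dots,u_{n-3}$, $\beta=u_{n-3}$, and no other links: each link of the pendant chain is a bridge, so any closed walk at $\beta$ uses each of these $n-3$ links an even number of times, and to be of odd total length it must use an odd number ($\ge 3$) of triangle links; hence every odd loop at $\beta$ has length $\ge 2(n-3)+3=2n-3>n$ as soon as $n\ge 4$. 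Your fallback (``restart with the odd sub-loop'') does not repair this: the odd sub-loop produced by the splitting need not pass through $\beta$, so reconnecting $\beta$ to it reproduces a walk of the same type, no quantity decreases to force the length below $n$, and the counterexample shows the target of the iteration simply need not exist.

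The paper avoids the problem by bookkeeping lengths \emph{before} concatenating rather than pruning afterwards: it first replaces the given odd loop by a minimal one, of length $r-1\le n$ (your splitting argument), and then takes a shortest chain from $\beta$ to that loop, whose length satisfies $s-1\le n-(r-1)$ by counting distinct elements of $\mathcal{E}$; the concatenated loop at $\beta$ then has odd length $2(s-1)+(r-1)\le 2n-r+1\le 2n-1$ and is handed to Lemma \ref{sublemma}. So the ingredient you are missing is the bound $s-1\le n-(r-1)$ on the access chain (bounding it only by $n$, independently of $r$, is what pushes you toward the impossible pruning), not a sharper reduction of the final loop. Note also that even the paper's loop has length up to $2n-1$: the constants $(nN,n\rho)$ come from the reduction ``$r-1\le n$'' invoked at the start of the proof of Lemma \ref{sublemma}, whereas telescoping directly over a loop of length $2n-1$ yields $((2n-1)N,(2n-1)\rho)$; in particular the correct fix for your worry is the paper's length bookkeeping, not an attempt to force the loop at $\beta$ down to length $\le n$.
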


\medskip

\begin{proof} Consider an odd $(N,\rho)$-loop 
$$\alpha_1, \alpha_2,\dots, \alpha_{r-1},\alpha_r=\alpha_1,\quad r\ge 2.$$
If $r-1\ge n+1$, then there exist $1\le i< j\le r-1$ such that $\alpha_i=\alpha_j$.
Then
$$\alpha_i, \alpha_{i+1},\dots, 
\alpha_j=\alpha_i$$
is an  $(N,\rho)$-loop of length $j-i$.
Moreover
$$\alpha_j, \alpha_{j+1},\dots, \alpha_{r-1},\alpha_r = \alpha_1, \alpha_2,\dots, \alpha_{i-1},\alpha_ i= \alpha_j$$
is an  $(N,\rho)$-loop of length $(r-j)+(i-1)$.

\noindent 
Since
$$(j-i)+(r-j)+(i-1)=r-1$$
is odd, one of these two ``sub-loops'' must be odd. So there exists a shorter odd $(N,\rho)$-loop, and we conclude that there exists an odd $(N,\rho)$-loop of length $< r-1$,
i.e. we can assume that $r-1\le n$.

\noindent 
Let now $\beta\in \mathcal{E}$. Then there is an $(N,\rho)$-chain connecting $\beta$ to the odd $(N,\rho)$-loop 
$$\alpha_1, \alpha_2,\dots, \alpha_{r-1},\alpha_r=\alpha_1,$$
i.e. 
$$\beta=\beta_1, \beta_2,\dots, \beta_{s-1},\beta_s=\alpha_j,\quad s-1\le n-(r-1)$$

\noindent 
We can without restriction assume that $j=1$.
Then
$$\beta=\beta_1, \dots, \beta_{s-1},\beta_s=\alpha_1, \alpha_2,\dots,\alpha_r=\alpha_1=\beta_s,\beta_{s-1},\dots,\beta_2,\beta_1=\beta$$
is an $(N,\rho)$-loop of length
$$(s-1)+(r-1)+(s-1)  \le 2(n-r+1)+(r-1)= 2n-r+1\le 2n-1.$$

\noindent 
This length is odd, thus by Lemma \ref{sublemma}, $\beta$ is $(nN,n\rho)$-linked to itself.
\end{proof}

\medskip 

\begin{lemma}\label{paires3} 
Let $\mathcal{E}\subset \Gamma $ be an equivalence class for the relation of being chain-linked. If $\mathcal{E}$ contains no odd $(N,\rho)$-loops, then  there exists a partition $\mathcal{E}= \Sigma_1 \cup \Sigma_2$ such that
\begin{itemize}

\item[$(i)$] $\beta$ and $\gamma$  are  not $(N,\rho)$-linked if  $\beta, \gamma\in  \Sigma_1$, and the same holds for $\Sigma_2$,
\item[$(ii)$] $\beta$ and $\bar \gamma$  are  $(nN,n\rho)$-linked if  $\beta, \gamma\in  \Sigma_1$, and the same holds for $\Sigma_2$,
\item[$(iii)$] $\beta$ and $\gamma$  are  $(nN,n\rho)$-linked if  $\beta \in  \Sigma_1$  and  $ \gamma\in  \Sigma_2$.

\end{itemize}
\end{lemma}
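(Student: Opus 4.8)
The plan is to recognise that, restricted to $\mathcal E$, the relation ``being $(N,\rho)$-linked'' is the edge relation of a connected graph with no odd cycle, hence a bipartite one, and to take $\Sigma_1,\Sigma_2$ to be its two colour classes. I would make this precise in the language of chains, imitating the argument in Remark \ref{relations} and Lemma \ref{relationsvp}. Introduce, for $\alpha,\beta\in\mathcal E$, the relation $\alpha\sim\beta$ meaning ``there is an $(N,\rho)$-chain of even length from $\alpha$ to $\beta$''. It is an equivalence relation on $\mathcal E$: reflexivity follows from \eqref{star2}, which furnishes, for each $\alpha\in\mathcal E$, some $\beta\in\mathcal E$ that is $(N,\rho)$-linked to $\alpha$, whence the length-$2$ chain $\alpha,\beta,\alpha$; symmetry follows by reversing a chain; transitivity by concatenating two even chains.

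The crucial step is that $\sim$ has exactly two classes, and this is exactly where the hypothesis enters. Since $\mathcal E$ is a single chain-linked class, any $\alpha,\beta\in\mathcal E$ are joined by some $(N,\rho)$-chain; they cannot be joined simultaneously by an even and by an odd chain, for otherwise concatenating one with the reverse of the other would produce an odd $(N,\rho)$-loop in $\mathcal E$, contrary to assumption. Hence every pair $\{\alpha,\beta\}\subseteq\mathcal E$ carries a well-defined parity $p(\alpha,\beta)\in\{0,1\}$ -- the common parity of all chains joining them -- and $p$ is additive under concatenation. Fixing $\alpha_0\in\mathcal E$ and putting $\Sigma_1=\{\beta\in\mathcal E: p(\alpha_0,\beta)=0\}$, $\Sigma_2=\{\beta\in\mathcal E: p(\alpha_0,\beta)=1\}$ then gives the desired partition; it is nonempty on both sides, since $\alpha_0\in\Sigma_1$ while any $(N,\rho)$-neighbour of $\alpha_0$ provided by \eqref{star2} lies in $\Sigma_2$ (it is joined to $\alpha_0$ by a chain of length $1$, hence by no even chain). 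In particular $\#\mathcal E\ge 2$: a one-element class would, via \eqref{star2}, contain the length-$1$ loop $\alpha_0,\alpha_0$, which is odd.

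It remains to read off $(i)$--$(iii)$. For $(i)$, if $\beta,\gamma\in\Sigma_1$ then $p(\beta,\gamma)=p(\alpha_0,\beta)+p(\alpha_0,\gamma)=0$, so no chain joining them has odd length; in particular there is no chain of length $1$, i.e.\ $\beta$ and $\gamma$ are not $(N,\rho)$-linked (the same argument applies in $\Sigma_2$, and the case $\beta=\gamma$ is included, a self-link being a length-$1$ loop). For $(ii)$ and $(iii)$, use that $\mathcal E$ is finite with $\#\mathcal E\le n$, so any two distinct elements of $\mathcal E$ are joined by a \emph{shortest} chain, which is simple and hence of length $\le\#\mathcal E-1\le n$, of parity $p$. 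If $\beta,\gamma\in\Sigma_1$ are distinct, this shortest chain has even length $\le n$, so Lemma \ref{sublemma} yields that $\beta$ and $\bar\gamma$ are $(nN,n\rho)$-linked; if $\beta=\gamma$, apply Lemma \ref{sublemma} to the even chain $\beta,\beta',\beta$ of length $2\le n$, where $\beta'$ is an $(N,\rho)$-neighbour of $\beta$ from \eqref{star2}. This proves $(ii)$ (the argument for $\Sigma_2$ is identical). For $(iii)$, if $\beta\in\Sigma_1$ and $\gamma\in\Sigma_2$ then $p(\beta,\gamma)=1$, so a shortest chain joining them has odd length $\le n$, and Lemma \ref{sublemma} gives that $\beta$ and $\gamma$ are $(nN,n\rho)$-linked.

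The only genuinely substantive point is the well-definedness of the parity $p$, which is precisely the translation of ``no odd $(N,\rho)$-loop'' into a bipartiteness statement; everything else is bookkeeping with chain lengths, the one technical care being to bound the relevant chain lengths by $n$ so that Lemma \ref{sublemma} can be invoked.
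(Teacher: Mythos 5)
Your proof is correct and follows essentially the same route as the paper: fix $\alpha_0\in\mathcal{E}$, split $\mathcal{E}$ into $\Sigma_1,\Sigma_2$ according to the parity of connecting $(N,\rho)$-chains (well-defined precisely because there is no odd loop), deduce $(i)$ from the fact that a link within a class would create an odd loop, and get $(ii)$--$(iii)$ from Lemma \ref{sublemma}. Your extra care in passing to a shortest (hence simple, length $\le n$) chain before invoking Lemma \ref{sublemma} is a harmless refinement of the paper's argument, not a different approach.
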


\medskip

\begin{proof} 
Let $\alpha\in\mathcal{E}$ be chosen arbitrarily. 
Define
$$\Sigma_1=\{ \beta\in\mathcal{E}: \mathrm {there\ exists\ a }\ (N,\rho) \mathrm {-chain\ of\ even\ length\ between }\  \alpha\ \mathrm{and }\ \beta\}\cup\{ \alpha\}$$
and
$$\Sigma_{2}=\{ \beta\in\mathcal{E}: \mathrm {there\ exists\ a}\ (N,\rho) \mathrm {-chain\ of\ odd\ length\ between }\   \alpha\ \mathrm{and }\ \beta\}.$$

\noindent 
Notice that $\Sigma_2$ contains all elements that are $(N,\rho)$-linked to $\alpha$. In particular,  $\Sigma_{2}\not=\emptyset$, by assumption \eqref{star2} on $\Gamma$.

\noindent 
Notice also  that $\Sigma_{1}\cap \Sigma_{2}=\emptyset$, because if not, then $\mathcal{E}$ would contain an odd $(N,\rho)$-loop.

\medskip

\noindent 
Proof of (i): Two elements in $\Sigma_{1}$ cannot be $(N,\rho) $-linked to each other, because then there would be an odd $(N,\rho) $-loop in $\mathcal{E}$. Idem for $\Sigma_{2}$.

\noindent 
Proof of (ii): Any two elements in $\Sigma_{1}$ are linked by an even $(N,\rho)$-chain (by transversality). Idem for $\Sigma_{2}$. Therefore (ii) follows from Lemma \ref{sublemma}.

\noindent 
Proof of (iii): Any  element in $\Sigma_{1}$ is linked to any element of $\Sigma_{2}$ by a chain of odd length (by transversality). Therefore (iii) follows from Lemma \ref{sublemma}.
\end{proof}

\subsection{Analysis of  resonances}

Now consider the equation \eqref{conjug2} (or equivalently the equations \eqref{conjug2bis}).

\begin{lemma}\label{l6.4} 
Let $i,j\in \llbracket 1,n\rrbracket$ and assume the equation \eqref{conjug2bis} holds. If for some $0<\rho\leq 1$,
$$\vert 2i\pi \langle k , \omega \rangle -(\alpha_i - \bar \alpha_j )\vert\ge \rho,
$$
then
$$\Vert \hat V_i^j(k)\Vert\le (\frac2\rho)^{2n-1}\Vert  F\Vert_{\CC^0}.$$
\end{lemma}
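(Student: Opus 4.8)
The idea is to work with a single block equation \eqref{conjug2bis} and to estimate a single Fourier mode $\hat V_i^j(k)$. Write $B_i=\alpha_i I + N_i$ and $B_j=\alpha_j I + N_j$ where $N_i,N_j$ are the nilpotent (upper-triangular Jordan) parts. Taking the $k$-th Fourier coefficient of \eqref{conjug2bis} gives
\[
\hat F_i^j(k) = \big(2i\pi\langle k,\omega\rangle - (\alpha_i-\bar\alpha_j)\big)\hat V_i^j(k) - N_i\,\hat V_i^j(k) + \hat V_i^j(k)\,\bar N_j .
\]
Set $c=2i\pi\langle k,\omega\rangle-(\alpha_i-\bar\alpha_j)$, so $|c|\ge\rho$ by hypothesis, and let $X=\hat V_i^j(k)$, $Y=\hat F_i^j(k)$. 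The relation becomes $Y = cX - N_iX + X\bar N_j$, i.e. $cX = Y + N_iX - X\bar N_j$, or
\[
X = \tfrac1c\,Y + \tfrac1c\big(N_iX - X\bar N_j\big).
\]
This is the core identity; everything reduces to iterating it.

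The plan is to iterate this fixed-point-type relation. Define the linear operator $\mathcal{T}(X)=\tfrac1c(N_iX-X\bar N_j)$ on matrices. Because $N_i$ is nilpotent of index $\le n$ (in fact $\le$ the block size) and similarly $\bar N_j$, the operator $X\mapsto N_iX-X\bar N_j$ is nilpotent of index at most $2n-1$: indeed $(N_iX-X\bar N_j)$ expanded $2n-1$ times is a sum of terms $N_i^{a}X\bar N_j^{\,b}$ with $a+b=2n-1$, and for each such term either $a\ge n$ or $b\ge n$, so every term vanishes. Hence $\mathcal{T}^{2n-1}=0$, and iterating $X=\tfrac1cY+\mathcal{T}(X)$ gives the finite Neumann-type expansion
\[
X = \sum_{s=0}^{2n-2}\mathcal{T}^{s}\!\big(\tfrac1c Y\big).
\]
Now estimate: $\|\mathcal{T}(X)\|\le \tfrac1{|c|}(\|N_i\|+\|\bar N_j\|)\|X\|$. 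Here $N_i,\bar N_j$ are nilpotent parts of Jordan blocks, so (with the operator norm, or after noting all norms are comparable) $\|N_i\|,\|\bar N_j\|\le 1$ and thus $\|\mathcal{T}\|\le \tfrac2{|c|}\le\tfrac2\rho$. Therefore $\|X\|\le \sum_{s=0}^{2n-2}(\tfrac2\rho)^s\cdot\tfrac1{|c|}\|Y\|\le \tfrac1\rho\|Y\|\sum_{s=0}^{2n-2}(\tfrac2\rho)^s$. Since $\rho\le 1$ so $\tfrac2\rho\ge 2>1$, the geometric sum is bounded by $2(\tfrac2\rho)^{2n-2}\le(\tfrac2\rho)^{2n-1}$, and finally $\|Y\|=\|\hat F_i^j(k)\|\le\|F_i^j\|_{\CC^0}\le\|F\|_{\CC^0}$. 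This yields $\|\hat V_i^j(k)\|\le(\tfrac2\rho)^{2n-1}\|F\|_{\CC^0}$.

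The only delicate point is the bookkeeping: making sure the nilpotency index of the operator $X\mapsto N_iX-X\bar N_j$ is correctly $\le 2n-1$ (so that exactly $2n-1$ terms appear in the expansion and $\mathcal{T}^{2n-1}=0$), and checking that $\|N_i\|\le 1$ — which holds because each $N_i$ is a nilpotent Jordan block part whose nonzero entries are $1$'s on the superdiagonal, so its operator norm is exactly $1$ (and in any case the comparison of norms only affects constants, which can be absorbed). I would also note that if $\alpha_i,\alpha_j$ lie in blocks of size strictly less than $n$ the index is even smaller, so the stated bound is safe. No arithmetical condition on $\omega$ is used here; the Diophantine hypothesis only enters later when one needs $|c|\ge\rho$ to hold for all but finitely many $k$. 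That is the whole argument — it is essentially a finite Neumann series for a perturbation of a scalar multiplication by a nilpotent operator.
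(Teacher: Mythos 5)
Your proposal is correct and follows essentially the same route as the paper: extract the $k$-th Fourier mode, split off the nilpotent parts $N_i$, $\bar N_j$, note that $X\mapsto N_iX-X\bar N_j$ is nilpotent of index at most $2n-1$ with norm at most $2$, and sum the resulting finite Neumann series using $\vert c\vert\ge\rho$. The only differences are cosmetic (you correctly write $\bar N_j$ where the paper's notation slightly abuses $N_j$, and your bookkeeping of the geometric sum, $\frac1\rho\cdot 2(\frac2\rho)^{2n-2}=(\frac2\rho)^{2n-1}$, matches the paper's constant).
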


\medskip

\begin{proof} From \eqref{conjug2bis} we have
$$
(2i\pi \langle k , \omega \rangle -(\alpha_i - \bar \alpha_j )) \hat V_i^j(k)=N_i \hat V_i^j(k) -  \hat V_i^j(k)N_j+ \hat F_i^j(k),$$
where
$$N_i=B_i-\alpha_i I\quad\textrm{and}\quad N_j=B_j-\alpha_j I.$$
Changing notations, we can write this as
$$
\gamma X=N_i X-  XN_j+ Y=\mathcal{L}X+Y$$

\noindent (with $X=\hat V_i^j(k),Y=\hat F_i^j(k),\gamma \in \mathbb{C}$).
The operator $\mathcal{L}$ verifies for all matrix $X$ in the domain of $\mathcal{L}$, 
$$\Vert\mathcal{L}X\Vert\le 2\Vert X\Vert\quad\ \mathrm{and}\ \quad \mathcal{L}^{2n-1}=0.$$

\noindent 
Then
$$X=\frac1\gamma\big(Y+\mathcal{L}X)= \frac1\gamma Y+ \frac1{\gamma^2} \mathcal{L}Y+\frac1{\gamma^2} \mathcal{L}^2X+\dots$$
$$=\sum_{0\le j\le 2n-2}\frac1{\gamma^{j+1}}\mathcal{L}^{j}Y.$$
Hence, since $\gamma\geq \rho$,
$$\Vert X\Vert\le \sum_{0\le j\le 2n-2}\frac1{\rho^{j+1}}2^j\Vert Y\Vert\le \frac1{2-\rho} (\frac2{\rho})^{2n-1}\Vert Y\Vert.$$
\end{proof}

\medskip

\begin{lemma}\label{l6.5} Assume \eqref{conjug2} holds. Let $\xi>0$, $N\in \mathbb{N}$, $\rho>0$, $\kappa>0$, $\tau\geq d+1$, and assume $\omega \in DC(\kappa,\tau)$ and 
\begin{enumerate}
\item \label{encadrement-V}
$$\Vert V\Vert_{\CC^0}+\Vert V^{-1}\Vert_{\CC^0}\le \xi,$$
\item

$V$ is a trigonometric polynomial of  degree $\le N,$
\item
$$\rho< (2N)^{-\tau}\kappa,$$
\item \label{small2}
$$\Vert F\Vert_{\CC^0}\le (4^n n! (3N)^d\xi^n)^{-1}\rho^{2n-1}. $$
\end{enumerate}
Then, for any $i$, there exists a $j$ and a unique $\vert k_{i,j}\vert \le N$ such that
$$\vert 2i\pi \langle k_{i,j} , \omega \rangle -(\alpha_i - \bar \alpha_j )\vert< \rho.$$
\end{lemma}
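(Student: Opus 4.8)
The plan is to argue by contradiction: suppose that for some fixed index $i$, there is \emph{no} pair $(j,k)$ with $|k|\le N$ and $|2i\pi\langle k,\omega\rangle-(\alpha_i-\bar\alpha_j)|<\rho$. I want to show this forces $V$ to have a zero column (or more precisely, forces the $i$-th block-row of $V$ to vanish), contradicting the invertibility assumption \eqref{encadrement-V}. First I would invoke Lemma \ref{l6.4}: under the standing hypothesis, for \emph{every} $j$ and every $k$ we then have $|2i\pi\langle k,\omega\rangle-(\alpha_i-\bar\alpha_j)|\ge\rho$, so Lemma \ref{l6.4} gives $\|\hat V_i^j(k)\|\le (2/\rho)^{2n-1}\|F\|_{\CC^0}$ for all $j$ and all $k$. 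Since $V$ is a trigonometric polynomial of degree $\le N$, only the modes with $|k|\le N$ are present; summing over these (there are at most $(3N)^d$ of them, say, counting lattice points in an $\ell^1$-ball, matching the constant appearing in hypothesis \eqref{small2}) and over the $l\le n$ block-columns $j$, I get a $\CC^0$-bound on the whole $i$-th block-row:
\[
\Big\|\big(V_i^1,\dots,V_i^l\big)\Big\|_{\CC^0}\ \le\ n\,(3N)^d\Big(\frac2\rho\Big)^{2n-1}\|F\|_{\CC^0}.
\]

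The second step is to turn a small block-row of $V$ into a contradiction with $\|V^{-1}\|_{\CC^0}\le\xi$. The point is that if one block-row of $V(\theta)$ is uniformly small, then $|\det V(\theta)|$ is uniformly small: expanding the determinant along that block-row, each term is bounded by (the small row bound) times a product of $n-1$ entries of $V$, each at most $\xi$, with a combinatorial factor at most $n!$ (or $4^n n!$ after bookkeeping the block sizes), giving $|\det V(\theta)|\le 4^n n!\,\xi^{n-1}\cdot\big(\text{row bound}\big)$ for all $\theta$. On the other hand, $|\det V(\theta)|\cdot|\det V^{-1}(\theta)|=1$ and $\|\det V^{-1}\|_{\CC^0}\le C_n\|V^{-1}\|_{\CC^0}^n\le C_n\xi^n$ (as in the estimates of Section \ref{section3}), so $|\det V(\theta)|\ge (C_n\xi^n)^{-1}$ — in particular $|\det V(\theta)|$ is bounded \emph{below}. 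Combining the upper and lower bounds and inserting the row bound from Lemma \ref{l6.4} yields an inequality of the shape $\|F\|_{\CC^0}\ge c_n(3N)^{-d}\xi^{-n}\rho^{2n-1}$, which contradicts hypothesis \eqref{small2} once the constants are matched. The Diophantine hypothesis $\omega\in DC(\kappa,\tau)$ together with $\rho<(2N)^{-\tau}\kappa$ guarantees that for $|k|\le N$ one cannot have $2i\pi\langle k,\omega\rangle$ too close to two different resonant values, which is what makes the index $k_{i,j}$ \emph{unique}: if $|2i\pi\langle k,\omega\rangle-(\alpha_i-\bar\alpha_j)|<\rho$ and $|2i\pi\langle k',\omega\rangle-(\alpha_i-\bar\alpha_j)|<\rho$ with $k\ne k'$, then $|2i\pi\langle k-k',\omega\rangle|<2\rho<(2N)^{-\tau}\kappa\cdot 2$, while $|k-k'|\le 2N$ forces $|2\pi\langle k-k',\omega\rangle|\ge\kappa(2N)^{-\tau}$, a contradiction.

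The main obstacle I expect is purely bookkeeping: getting the combinatorial constants in the determinant expansion to come out exactly as $4^n n!(3N)^d\xi^n$, since the matrices are block-structured (Jordan blocks $B_j$ of varying sizes) rather than having one scalar entry per row, so "expanding along the $i$-th block-row" must be done carefully — one should really pick a single scalar row inside the $i$-th block and expand along that, controlling the $(n-1)\times(n-1)$ minors by $\xi^{n-1}$ and counting how the block sizes contribute. There is also a minor subtlety in the mode-counting: one needs $\#\{k\in\Z^d:|k|\le N\}\le (3N)^d$ (or whatever constant the statement fixes), which is an elementary estimate on lattice points in an $\ell^1$-ball but must be stated to land on the precise hypothesis \eqref{small2}. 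None of these is conceptually hard; the real content is Lemma \ref{l6.4} plus the elementary determinant-expansion/invertibility dichotomy, exactly paralleling the reducible case in Lemma \ref{lemma1} where a zero block-column of $U$ was shown to be incompatible with $\det U\ne 0$.
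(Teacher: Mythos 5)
Your strategy for existence (argue by contradiction, apply Lemma \ref{l6.4} to every block $V_i^j$ of the $i$-th block row, count the $\le (3N)^d$ Fourier modes, and play a determinant bound against invertibility) is the paper's, and your uniqueness argument is exactly the paper's and is fine. But there is a genuine quantitative gap at the step ``$\vert\det V(\theta)\vert\ge (C_n\xi^n)^{-1}$ \dots which contradicts hypothesis \ref{small2} once the constants are matched.'' The mismatch is not a constant but a power of $\xi$. Your upper bound reads $\Vert\det V\Vert_{\CC^0}\le C_n\, n!\,\xi^{\,n-1}(3N)^d(\tfrac2\rho)^{2n-1}\Vert F\Vert_{\CC^0}$, and your lower bound is $(C_n\xi^{n})^{-1}$; combining them, a contradiction is reached only if $\Vert F\Vert_{\CC^0}< c_n (3N)^{-d}\xi^{-(2n-1)}\rho^{2n-1}$ (note: your own summary line states the shape $\xi^{-n}$, but your two bounds only give $\xi^{-(2n-1)}$). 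Hypothesis \ref{small2} imposes only $\Vert F\Vert_{\CC^0}\le (4^n n!(3N)^d\xi^{n})^{-1}\rho^{2n-1}$, which is weaker by the factor $\xi^{\,n-1}$; since $\xi$ is large in the intended application ($\xi$ is essentially $\Vert U\Vert_{\CC^0}$ in Proposition \ref{proppresquered}), no contradiction follows for $n\ge 2$. In effect your argument proves the lemma only with $\xi^{2n-1}$ in place of $\xi^{n}$ in hypothesis \ref{small2}, and the exponent $n$ is exactly what the later estimates are calibrated to.

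The missing idea is that $\det V^{-1}$ is itself \emph{small}, not merely bounded by $\xi^{n}$. The paper observes that $V^{-1}$ satisfies the companion equation $\partial_\omega V^{-1}-\bar B V^{-1}+V^{-1}B=V^{-1}FV^{-1}$, and the small divisors attached to the $i$-th block column of $V^{-1}$ have the same modulus as the ones assumed $\ge\rho$ (take complex conjugates and replace $k$ by $-k$), so the same Fourier-mode argument gives $\Vert\det V^{-1}\Vert_{\CC^0}\le n!\,\varepsilon\,\xi^{2}\xi^{\,n-1}$ with $\varepsilon=(3N)^d(\tfrac2\rho)^{2n-1}\Vert F\Vert_{\CC^0}$, the extra $\xi^{2}$ coming from $\Vert V^{-1}FV^{-1}\Vert_{\CC^0}$. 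Then
\[
1=\Vert\det(VV^{-1})\Vert_{\CC^0}\le \big(n!\,\varepsilon\,\xi^{n}\big)^{2},
\]
and hypothesis \ref{small2} makes the right-hand side at most $\tfrac14$, which is the contradiction with precisely the stated constant. Replacing your crude lower bound on $\vert\det V\vert$ by this ``run the same argument on $V^{-1}$'' step turns your proposal into the paper's proof; everything else in your outline (mode counting, row expansion of the determinant, the Diophantine uniqueness) stands.
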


\medskip

\begin{proof}  Given $i$, suppose that
$$\vert 2i\pi \langle k , \omega \rangle -(\alpha_i - \bar \alpha_j )\vert\ge\rho$$
for all $j$ and all $\vert k\vert \le N$. By Lemma \ref{l6.4},
$$\Vert\hat V_i^j(k)\Vert\le  (\frac2\rho)^{2n-1}\Vert  F\Vert_{\CC^0}$$
so
$$\Vert V_i^j\Vert_{\CC^0} \le  (3N)^d(\frac2\rho)^{2n-1}\Vert  F\Vert_{\CC^0}=:\varepsilon.$$
This implies that
$$\Vert \det V\Vert_{\CC^0}\le n! \varepsilon \xi^{n-1}.$$

\noindent 
Since
\begin{equation}\begin{split} \partial_\omega V^{-1} - \bar{  B} V^{-1} + V^{-1} B & = V^{-1}\partial_\omega V V^{-1}- \bar{ B} V^{-1} + V^{-1} B \\
&= V^{-1}( B V - V \bar{ B} + F)V^{-1}-\bar{ B} V^{-1} + V^{-1} B\\
&= V^{-1}FV^{-1}
\end{split}\end{equation}
we find, by a similar reasoning, that
$$\Vert \det( V^{-1})\Vert_{\CC^0}\le n! \varepsilon \xi^2 \xi^{n-1}.$$

\noindent 
Hence
$$1=\Vert \det ( VV^{-1})\Vert_{\CC^0}\le (n!\varepsilon \xi^{n})^2$$
which is forbidden by assumption 4.

\medskip

\noindent 
\textbf{Uniqueness:} Suppose there exists $k\not=l$ such that $\vert k\vert,\   \vert l\vert\le N$ and
$$\vert 2i\pi \langle k , \omega \rangle -(\alpha_i - \bar \alpha_j )\vert<\rho\quad\ \mathrm{and} \ \quad \vert 2i\pi \langle l , \omega \rangle -(\alpha_i - \bar \alpha_j )\vert<\rho.$$
Then, since $\omega \in DC(\kappa,\tau)$,
$$2\pi \frac\kappa{(2N)^{\tau}}\le \vert 2i\pi \langle k -l, \omega \rangle\vert \le 2\rho$$
which is forbidden by assumption 3.
\end{proof}

\medskip

\begin{corollary}\label{cor-equivalence} Under assumptions $1-4$ of Lemma \ref{l6.5}, being $(N,\rho)$-chain-linked is an equivalence relation on $\sigma(B)$.

\end{corollary}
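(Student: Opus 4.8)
The plan is to obtain the corollary essentially for free from Lemma \ref{l6.5}, which contains the only genuinely arithmetic input, together with the elementary combinatorics of chains (symmetry and concatenation) that was already used in Section \ref{section2}.

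First I would record that the relation ``$\alpha$ and $\beta$ are $(N,\rho)$-linked'' is symmetric: if $|2i\pi\langle k,\omega\rangle-(\alpha-\bar\beta)|<\rho$ with $|k|\le N$, then taking complex conjugates and using $\overline{2i\pi\langle k,\omega\rangle}=2i\pi\langle -k,\omega\rangle$ and $-(\bar\alpha-\beta)=\beta-\bar\alpha$ gives $|2i\pi\langle -k,\omega\rangle-(\beta-\bar\alpha)|<\rho$ with $|-k|\le N$, so $\beta$ and $\alpha$ are $(N,\rho)$-linked. Consequently, reversing an $(N,\rho)$-chain produces an $(N,\rho)$-chain, and concatenating two $(N,\rho)$-chains produces an $(N,\rho)$-chain; hence ``being $(N,\rho)$-chain-linked'' is automatically symmetric and transitive on $\sigma(B)$, without any arithmetic assumption.

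The only point requiring work is reflexivity, i.e. that each eigenvalue $\alpha_i$ of $B$ is $(N,\rho)$-chain-linked to itself. Here I would invoke Lemma \ref{l6.5}: under assumptions $1$--$4$, for every block index $i$ there exist an index $j$ and some $|k_{i,j}|\le N$ with $|2i\pi\langle k_{i,j},\omega\rangle-(\alpha_i-\bar\alpha_j)|<\rho$, that is, $\alpha_i$ is $(N,\rho)$-linked to $\alpha_j$. Thus $\sigma(B)$ satisfies property \eqref{star2}. Combining this with the symmetry above, $\alpha_i,\alpha_j,\alpha_i$ is an $(N,\rho)$-chain of length $2$, so $\alpha_i$ is $(N,\rho)$-chain-linked to itself; reflexivity follows, and therefore being $(N,\rho)$-chain-linked is an equivalence relation on $\sigma(B)$.

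The only real obstacle is Lemma \ref{l6.5} itself, which is already established: its proof uses $\omega\in\mathcal{DC}(\kappa,\tau)$ both to bound the off-resonant Fourier modes $\hat V_i^j(k)$ via Lemma \ref{l6.4}, and to force both $\det V$ and $\det V^{-1}$ to be uniformly small (hence contradicting $\det V\cdot\det V^{-1}=1$) whenever the row index $i$ admits no near-resonance. Given that lemma, the corollary is a formality, as sketched above.
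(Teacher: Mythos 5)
Your proposal is correct and is essentially the paper's own argument: the paper's proof simply observes that Lemma \ref{l6.5} gives, for every $\alpha_i\in\sigma(B)$, some $\alpha_j$ to which it is $(N,\rho)$-linked, i.e.\ property \eqref{star2}, after which reflexivity, symmetry and transitivity of chain-linkedness follow exactly as you describe. One harmless slip: conjugation should be applied as $2i\pi\langle k,\omega\rangle-(\beta-\bar\alpha)=-\overline{\,2i\pi\langle k,\omega\rangle-(\alpha-\bar\beta)\,}$, so symmetry holds with the \emph{same} $k$ (your stated bound with $-k$ is not what conjugation yields, though since $|-k|=|k|\le N$ this does not affect anything).
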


\noindent 
\begin{proof}
    The proof is immediate, since these assumptions imply that the condition \eqref{star2} holds for $\sigma(B)$.
\end{proof}

\subsection{The Jordan structure of $B$}

\textbf{Assumption:} From now on, in this section, we assume properties $1$--$4$ of Lemma \ref{l6.5}.

\medskip

\noindent 
Let 
$$\mathcal{E}\subseteq \sigma (B)$$ 
be an equivalence class (for the relation of being chain-linked) that contains no odd $(N,\rho)$-loop. By Lemma \ref{paires3}
  there exists a partition $\mathcal{E}= \Sigma_1 \cup \Sigma_2$ such that
\begin{itemize}
\item[$(i)$] $\beta$ and $\gamma$  are  not $(N,\rho)$-linked if  $\beta, \gamma\in  \Sigma_1$, and the same holds for $\Sigma_2$,
\item[$(ii)$] $\beta$ and $\bar \gamma$  are  $(nN,n\rho)$-linked if  $\beta, \gamma\in  \Sigma_1$, and the same holds for $\Sigma_2$,
\item[$(iii)$] $\beta$ and $\gamma$  are  $(nN,n\rho)$-linked if  $\beta \in  \Sigma_1$  and  $ \gamma\in  \Sigma_2$.
\end{itemize}

\noindent 
Let $\Sigma_3=\sigma (B)\setminus\mathcal{E}$ and define for any matrix $X$,
$$
X_{\Sigma_u}^{\Sigma_v}=\big(X_i^j\big)_{\alpha_i\in \Sigma_u}^{\alpha_j\in \Sigma_v},\quad u,v=1,2,3,$$
We often write $X_{\Sigma_u}$ for the diagonal block $X_{\Sigma_u}^{\Sigma_u}$.

\medskip

\begin{lemma}\label{l6.7} If
$$\Vert F\Vert_{\CC^0}\le  \frac1{4^n 3 n (3N)^d\xi}\rho^{2n-1}$$
then $V_{\Sigma_1}^{\Sigma_2}$ is an invertible square-matrix and
$$
\Vert (V_{\Sigma_1}^{\Sigma_2})^{-1}-(V^{-1})_{\Sigma_2}^{\Sigma_1}\Vert_{\CC^0}\le  
6n(3N)^d\xi^2(\frac{2}{\rho})^{2n-1}\Vert  F\Vert_{\CC^0}.$$
\end{lemma}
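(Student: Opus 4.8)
The goal is to show that the off-diagonal block $V_{\Sigma_1}^{\Sigma_2}$ is square and invertible, and that its inverse is close to the corresponding block $(V^{-1})_{\Sigma_2}^{\Sigma_1}$ of $V^{-1}$. The starting point is that, by Lemma \ref{l6.5} and Corollary \ref{cor-equivalence}, for every $i$ there is a unique $j$ and a unique $|k_{i,j}|\le N$ with $|2i\pi\langle k_{i,j},\omega\rangle-(\alpha_i-\bar\alpha_j)|<\rho$; for all other pairs $(i,j)$ and all $|k|\le N$ the quantity $|2i\pi\langle k,\omega\rangle-(\alpha_i-\bar\alpha_j)|\ge\rho$, so Lemma \ref{l6.4} forces $\hat V_i^j(k)$ to be small. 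Since $V$ is a trigonometric polynomial of degree $\le N$ (assumption 2), those blocks $V_i^j$ are small in $\CC^0$: summing over the at most $(3N)^d$ Fourier modes, $\|V_i^j\|_{\CC^0}\le (3N)^d(2/\rho)^{2n-1}\|F\|_{\CC^0}$ whenever $\alpha_i,\alpha_j$ are \emph{not} $(N,\rho)$-linked.

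**Locating the non-small blocks.** The key structural input is property (i) of the partition: no two elements of $\Sigma_1$ are $(N,\rho)$-linked, and likewise for $\Sigma_2$; also $\Sigma_3$ is a union of other equivalence classes, so no element of $\Sigma_1\cup\Sigma_2$ is $(N,\rho)$-linked to any element of $\Sigma_3$ (being $(N,\rho)$-linked would put them in the same chain-linked class). Hence, up to the small error above, $V$ is block-anti-diagonal on $\mathcal{E}$: $V_{\Sigma_1}^{\Sigma_1}$, $V_{\Sigma_2}^{\Sigma_2}$, and all blocks coupling $\mathcal{E}$ to $\Sigma_3$ are $O((3N)^d(2/\rho)^{2n-1}\|F\|_{\CC^0})$, while $V_{\Sigma_1}^{\Sigma_2}$ and $V_{\Sigma_2}^{\Sigma_1}$ are the only blocks inside $\mathcal{E}$ allowed to be large. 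Exactly the same analysis applied to $V^{-1}$ (using the identity in Lemma \ref{l6.5}, $\partial_\omega V^{-1}-\bar B V^{-1}+V^{-1}B=V^{-1}FV^{-1}$, whose right-hand side has $\CC^0$-norm $\le\xi^2\|F\|_{\CC^0}$) shows that $V^{-1}$ is block-anti-diagonal on $\mathcal{E}$ in the same sense, with error $O((3N)^d(2/\rho)^{2n-1}\xi^2\|F\|_{\CC^0})$.

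**Deducing invertibility and the estimate.** Now evaluate the identity $V^{-1}V=I$, reading off the $(\Sigma_1,\Sigma_1)$ block and the $(\Sigma_2,\Sigma_1)$ block (say, at a fixed $\theta$, or after integrating — a pointwise argument suffices since everything is a trigonometric polynomial). Because the small blocks of both $V$ and $V^{-1}$ are negligible, the dominant contribution to $(V^{-1}V)_{\Sigma_2}^{\Sigma_1}$ is $(V^{-1})_{\Sigma_2}^{\Sigma_1}\,V_{\Sigma_1}^{\Sigma_2}$ — wait, one must be careful with the index bookkeeping; the correct pairing is that $I_{\Sigma_1}^{\Sigma_1}=\sum_v (V^{-1})_{\Sigma_1}^{\Sigma_v}V_{\Sigma_v}^{\Sigma_1}$, and the only non-small term on the right is $(V^{-1})_{\Sigma_1}^{\Sigma_2}V_{\Sigma_2}^{\Sigma_1}$, so $(V^{-1})_{\Sigma_1}^{\Sigma_2}V_{\Sigma_2}^{\Sigma_1}=I+E$ with $\|E\|_{\CC^0}$ bounded by a sum of products of one small and one bounded ($\le\xi$) factor, i.e. $\|E\|_{\CC^0}\le 3n(3N)^d\xi(2/\rho)^{2n-1}\|F\|_{\CC^0}\cdot(1+\xi)$, which is $<1/2$ under the hypothesis $\|F\|_{\CC^0}\le (4^n 3n(3N)^d\xi)^{-1}\rho^{2n-1}$. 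In particular $V_{\Sigma_2}^{\Sigma_1}$ has a left inverse, hence (being square once we know $\#\Sigma_1=\#\Sigma_2$, which follows because both $I+E$ factorizations force the relevant dimensions to match) is invertible; symmetrically so is $V_{\Sigma_1}^{\Sigma_2}$. Then $(V_{\Sigma_1}^{\Sigma_2})^{-1}-(V^{-1})_{\Sigma_2}^{\Sigma_1} = (V_{\Sigma_1}^{\Sigma_2})^{-1}\big(I-V_{\Sigma_1}^{\Sigma_2}(V^{-1})_{\Sigma_2}^{\Sigma_1}\big)$, and the bracket is again a sum of one-small-one-bounded products; bounding $\|(V_{\Sigma_1}^{\Sigma_2})^{-1}\|_{\CC^0}\le 2\xi$ (from $I+E$ invertible with $\|E\|<1/2$ and $\|(V^{-1})_{\Sigma_2}^{\Sigma_1}\|\le\xi$) yields the claimed bound $6n(3N)^d\xi^2(2/\rho)^{2n-1}\|F\|_{\CC^0}$.

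**Main obstacle.** The analytic content — the smallness of the non-resonant blocks — is entirely supplied by Lemmas \ref{l6.4} and \ref{l6.5}; the real care is bookkeeping: tracking which blocks of $V$ and of $V^{-1}$ are small, getting the combinatorial constant $6n$ (resp. $3n$) right from the number of terms in the block products over $u,v\in\{1,2,3\}$ and the at most $n$ rows/columns per block, and — most delicately — justifying that $\Sigma_1$ and $\Sigma_2$ have equal cardinality purely from the near-anti-diagonal structure of an invertible matrix, rather than from a spectral count. That last point is where I expect to have to argue most carefully, essentially a rank argument: a matrix that is, up to an error of operator norm $<1$ after suitable normalization, supported on the $(\Sigma_1,\Sigma_2)$ and $(\Sigma_2,\Sigma_1)$ blocks (plus negligible $\Sigma_3$-couplings) can only be invertible if those two rectangular blocks are each of full rank equal to both their dimensions, forcing $\#\Sigma_1=\#\Sigma_2$.
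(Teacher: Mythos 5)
Your core mechanism works and is genuinely different from the paper's. Both proofs start from the same input: Lemma \ref{l6.4}, property (i) of Lemma \ref{paires3}, and the fact that distinct chain-linked classes cannot be $(N,\rho)$-linked, which give $\Vert V_{\Sigma_u}^{\Sigma_v}\Vert_{\CC^0}\le n(3N)^d(\tfrac2\rho)^{2n-1}\Vert F\Vert_{\CC^0}$ for the six ``cross'' blocks of $V$. From there the paper replaces $V$ by the comparison matrix $W$ that keeps only $V_{\Sigma_1}^{\Sigma_2}$, $V_{\Sigma_2}^{\Sigma_1}$, $V_{\Sigma_3}^{\Sigma_3}$, uses the Neumann series for $W=V\big(I+V^{-1}(W-V)\big)$ to get invertibility of $W$ and $\Vert W^{-1}-V^{-1}\Vert_{\CC^0}\le 2\delta\xi^2$, and then reads the conclusion off the explicit anti-diagonal form of $W^{-1}$ (squareness of the rectangular blocks being forced by invertibility of $W$). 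You instead extract the $(\Sigma_1,\Sigma_1)$ and $(\Sigma_2,\Sigma_2)$ blocks of $VV^{-1}=I$ and $V^{-1}V=I$ and run an injectivity/surjectivity count; this is a legitimate alternative and has the merit of making explicit the dimension argument (note it equalizes the sums of multiplicities, i.e.\ the block dimensions, not the cardinalities of $\Sigma_1,\Sigma_2$ as sets) that the paper leaves implicit, and your final bound $4n(3N)^d\xi^2(\tfrac2\rho)^{2n-1}\Vert F\Vert_{\CC^0}$ fits under the stated $6n$.

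Two corrections are needed, one of substance. The claim that ``exactly the same analysis applied to $V^{-1}$'' shows $V^{-1}$ is near-anti-diagonal is not justified: Lemma \ref{l6.4} only controls the Fourier modes $\widehat{(V^{-1})}_i^j(k)$ with $\vert k\vert\le N$, and the passage to a $\CC^0$ bound on a block uses crucially that $V$ is a trigonometric polynomial of degree $\le N$ (assumption 2 of Lemma \ref{l6.5}); $V^{-1}$ is not a trigonometric polynomial, and its modes with $\vert k\vert >N$ are not controlled by this argument. Fortunately you never need this claim: in every term you discard --- $(V^{-1})_{\Sigma_1}^{\Sigma_1}V_{\Sigma_1}^{\Sigma_1}$, $(V^{-1})_{\Sigma_1}^{\Sigma_3}V_{\Sigma_3}^{\Sigma_1}$, and the analogous terms coming from $VV^{-1}=I$ --- the small factor can be taken to be a block of $V$, with the companion block of $V^{\pm1}$ merely bounded by $\xi$; so delete the assertion about $V^{-1}$ and the proof closes. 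Second, your bound $\Vert E\Vert_{\CC^0}\le 3n(3N)^d\xi(\tfrac2\rho)^{2n-1}\Vert F\Vert_{\CC^0}(1+\xi)$ is not $<\tfrac12$ under the hypothesis, which only yields $3n(3N)^d\xi(\tfrac2\rho)^{2n-1}\Vert F\Vert_{\CC^0}\le\tfrac12$; the spurious factor $(1+\xi)$ disappears once you count correctly: two discarded terms, each at most $n(3N)^d(\tfrac2\rho)^{2n-1}\Vert F\Vert_{\CC^0}\,\xi$, give $\Vert E\Vert_{\CC^0}\le\tfrac13<\tfrac12$, after which your conclusion follows as you outlined.
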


\medskip

\begin{proof} We can assume without restriction that
$$\Sigma_1=\{\alpha_1,\dots,\alpha_r\},\quad \Sigma_2=\{\alpha_{r+1},\dots,\alpha_{r+s}\}$$
so
$$V=\big(U_u^v\big)_{u,v=1,2,3},\quad  U_u^v=V_{\Sigma_u}^{\Sigma_v}.$$

\noindent 
By Lemma  \ref{l6.4}, for any $\vert k\vert\le N$,

$$\Vert \hat V_i^j(k)\Vert \le (\frac2\rho)^{2n-1}\Vert  F\Vert_{\CC^0}=\varepsilon\quad \textrm{if}\ 
\begin{cases} 
 &\alpha_i\in \Sigma_1 \quad \alpha_j\in \Sigma_1\cup \Sigma_3\\
\mathrm{or}& \alpha_i\in \Sigma_2 \quad \alpha_j\in \Sigma_2\cup \Sigma_3\\
\mathrm{or}& \alpha_i\in \Sigma_3 \quad \alpha_j\in \Sigma_1\cup \Sigma_2\\
\end{cases}$$
which implies that
$$\Vert \hat U_u^v(k)\Vert\le n\varepsilon\quad \textrm{if}\ 
\begin{cases}  u=1& v=1,3\\
u=2 & v=2,3\\
u=3& v=1,2.
\end{cases}$$
Hence
$$\Vert U_u^v\Vert_{\CC^0} \le n (3N)^d \varepsilon \quad \textrm{if}\ 
\begin{cases}  u=1& v=1,3\\
u=2 & v=2,3\\
u=3& v=1,2.
\end{cases}$$

\noindent 
Let now
$$W=\begin{pmatrix} 0 & U_1^2 & 0\\ U_2^1 & 0& 0\\ 0&0& U_3^3\end{pmatrix}.$$

\noindent 
Since
$$\Vert W-V\Vert_{\CC^0}
\le 3n(3N)^d\varepsilon = \delta$$
we get that $W$ is invertible and
$$\Vert W^{-1}-V^{-1}\Vert_{\CC^0}\le \sum_{j\ge 1}\Vert  V^{-1} (W-V) \Vert _{\CC^0}^j\Vert V^{-1}  \Vert_{\CC^0}\le \sum_{j\ge 1}(\delta\xi)^j\xi $$
Now if
$$\delta\xi\le \frac12\quad \iff\quad \varepsilon\le  \frac1{6n (3N)^d\xi}\quad \iff\quad \Vert F\Vert_{\CC^0} \le  \frac1{4^n 3 n(3N)^d\xi}\rho^{2n-1}$$
(which holds by assumption),
then
$$
\Vert W^{-1}-V^{-1}\Vert_{\CC^0}\le\frac1{1-\delta\xi}\delta\xi^2\le 2\delta\xi^2 = 6n(3N)^d\varepsilon\xi^2= 6n(3N)^d(\frac2\rho)^{2n-1}\xi^2\Vert  F\Vert_{\CC^0}$$

\noindent 
Finally, by a computation,
$$W^{-1}=\begin{pmatrix} 0 & (U_2^1)^{-1} & 0\\ (U_1^2)^{-1} & 0& 0\\ 0&0& (U_3^3)^{-1}\end{pmatrix},$$
thus the estimate on $(V_{\Sigma_1}^{\Sigma_2})^{-1}-(V^{-1})_{\Sigma_2}^{\Sigma_1}$ holds.
 \end{proof}
 
 \medskip
 
 \begin{lemma}\label{l6.8} There exists a constant $C$ only depending on $n$ and $d$ such that if
  \begin{equation}\label{small3}\begin{cases} 
  \Vert  F\Vert_{\CC^0}<\frac1{CN^d\xi^n}\rho^{2n-1} \\
 \rho <\frac1{CN^d\xi^{n+1}},\end{cases}\end{equation}
then $B_{\Sigma_1}$ and $B_{\Sigma_2}$ have the same Jordan structure, i.e. for any $k\geq 1$,  they have the same number of Jordan blocks of dimension $k$.
\end{lemma}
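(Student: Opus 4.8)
The plan is to follow the strategy of Proposition \ref{reducprincipal} (in particular the argument showing that $B'_{[\alpha]}$ and $\bar B'_{[\alpha]}$ are algebraically conjugate), but to keep track of the quantitative errors. Restrict the system \eqref{conjug2bis} to the rectangular block indexed by $\Sigma_1\times\Sigma_2$. Writing $B_{\Sigma_1}=\mathrm{diag}(\alpha_iI)+\NN_1$ and $\bar B_{\Sigma_2}=\mathrm{diag}(\bar\alpha_jI)+\NN_2$, where $\NN_1,\NN_2$ are the block-diagonal nilpotent parts — real $0$–$1$ matrices, since the blocks of $B$ are in Jordan form, and $\NN_2$ is also the nilpotent part of $B_{\Sigma_2}$ because complex conjugation leaves the real $0$–$1$ part unchanged — one has $F_{\Sigma_1}^{\Sigma_2}=\partial_\omega V_{\Sigma_1}^{\Sigma_2}-B_{\Sigma_1}V_{\Sigma_1}^{\Sigma_2}+V_{\Sigma_1}^{\Sigma_2}\bar B_{\Sigma_2}$. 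By Lemma \ref{l6.7}, $V_{\Sigma_1}^{\Sigma_2}$ is a \emph{square} invertible matrix-valued function whose pointwise inverse has norm $\le 2\xi$ (using assumption $1$ of Lemma \ref{l6.5} and the estimate of Lemma \ref{l6.7}); in particular $\NN_1$ and $\NN_2$ have the same size. The goal is to extract from $V_{\Sigma_1}^{\Sigma_2}$ a \emph{constant} invertible matrix $C$ with $\Vert C^{\pm1}\Vert\lesssim\xi$ that almost intertwines $\NN_1$ and $\NN_2$, and then to conclude that $\NN_1$ and $\NN_2$ have the same Jordan structure by a rigidity argument.

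First I remove resonances by exponential shifts, since the eigenvalues of the two blocks differ by resonant quantities of size $O(1)$ rather than $O(\rho)$. Fix $\beta_\ast\in\Sigma_1$. Properties $(ii)$ and $(iii)$ of Lemma \ref{paires3}, together with the uniqueness of near-resonant integer vectors (a consequence of $\omega\in\mathcal{DC}(\kappa,\tau)$ and of the standing smallness of $\rho$), provide integer vectors $a_i$ ($\alpha_i\in\Sigma_1$) and $c_j$ ($\alpha_j\in\Sigma_2$), of size $\le nN$, such that $a_i+c_j$ is the unique $k$ of size $\le 2nN$ with $\vert 2i\pi\langle k,\omega\rangle-(\alpha_i-\bar\alpha_j)\vert<2n\rho$. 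Put $P_1(\theta)=\mathrm{diag}(e^{2i\pi\langle a_i,\theta\rangle}I)$ and $P_2(\theta)=\mathrm{diag}(e^{2i\pi\langle c_j,\theta\rangle}I)$ (diagonal, unitary) and set $\tilde V=P_1^{-1}V_{\Sigma_1}^{\Sigma_2}P_2$. A direct computation turns the equation above into $\tilde F=\partial_\omega\tilde V-\NN_1\tilde V+\tilde V\NN_2+R$, where $\tilde F$ is obtained from $F_{\Sigma_1}^{\Sigma_2}$ by unit-modulus factors (so $\Vert\tilde F\Vert_{\CC^0}\le C_n\Vert F\Vert_{\CC^0}$), and $R$ is the matrix whose $(i,j)$-block is $\mu_{ij}\tilde V_i^j$ with $\mu_{ij}=2i\pi\langle a_i+c_j,\omega\rangle-(\alpha_i-\bar\alpha_j)$, $\vert\mu_{ij}\vert<2n\rho$.

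Next I extract the constant matrix. Since $V$ is a trigonometric polynomial of degree $\le N$, the Fourier coefficient $\hat{\tilde V}_i^j(m)=\hat V_i^j(m+a_i+c_j)$ vanishes for $\vert m+a_i+c_j\vert>N$; and for $m\neq0$ with $\vert m+a_i+c_j\vert\le N$ the mode $m+a_i+c_j$ is non-resonant (Diophantine condition, $\rho$ small), so Lemma \ref{l6.4} gives $\Vert\hat{\tilde V}_i^j(m)\Vert\le(2/\rho)^{2n-1}\Vert F\Vert_{\CC^0}$. Summing over the $O(N^d)$ relevant modes yields $\Vert\tilde V-C\Vert_{\CC^0}\le C_{n,d}N^d\rho^{-(2n-1)}\Vert F\Vert_{\CC^0}$ with $C:=\hat{\tilde V}(0)$. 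As $\tilde V(\theta)$ is pointwise invertible with inverse $\le 2\xi$, a Neumann-series argument shows (once $\Vert F\Vert_{\CC^0}\lesssim\rho^{2n-1}/(N^d\xi)$, which is ensured by \eqref{small3}) that $C$ is invertible with $\Vert C\Vert,\Vert C^{-1}\Vert\le 4\xi$. Integrating the $\tilde V$-equation over $\T^d$ (the $\partial_\omega$-term has zero mean) gives $\NN_1C-C\NN_2=\widehat R(0)-\widehat{\tilde F}(0)$, hence $\Vert\NN_1C-C\NN_2\Vert\le C_n(\rho\xi+\Vert F\Vert_{\CC^0})$ and so $\Vert C^{-1}\NN_1C-\NN_2\Vert\le C_n\xi(\rho\xi+\Vert F\Vert_{\CC^0})=:\eta$.

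Finally I use the rigidity statement: if $M\in Gl(n,\C)$ with $\Vert M\Vert,\Vert M^{-1}\Vert\le\xi$ and $\NN_1,\NN_2$ are $0$–$1$ nilpotent matrices in Jordan form with $\Vert M^{-1}\NN_1M-\NN_2\Vert\le\eta$, then for $\eta$ small enough relative to $\xi$ one has $\mathrm{rank}(\NN_1^k)=\mathrm{rank}(\NN_2^k)$ for every $k\ge1$, hence $\NN_1$ and $\NN_2$ have the same number of Jordan blocks of each dimension. Indeed, expanding $\NN_2^k=M^{-1}\NN_1^kM+E_k$ gives $\Vert E_k\Vert\le C_n\xi^{2(n-1)}\eta$ (using $\Vert\NN_i\Vert\le n$), while every nonzero singular value of $M^{-1}\NN_1^kM$ is $\ge(\Vert M\Vert\,\Vert M^{-1}\Vert)^{-1}\ge\xi^{-2}$, because the nonzero singular values of a power of a Jordan nilpotent are all equal to $1$; Weyl's perturbation inequality for singular values then forces $\mathrm{rank}(\NN_2^k)\ge\mathrm{rank}(\NN_1^k)$ as soon as $C_n\xi^{2(n-1)}\eta<\xi^{-2}$, and the reverse inequality follows by exchanging the roles of $\NN_1,\NN_2$ (conjugating by $M$ and using $\Vert MEM^{-1}\Vert\le\xi^2\eta$). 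Applying this with $M=C$ and $\eta\le C_n\xi(\rho\xi+\Vert F\Vert_{\CC^0})$, the required smallness reduces precisely to inequalities of the form $\Vert F\Vert_{\CC^0}<(CN^d\xi^{n})^{-1}\rho^{2n-1}$ and $\rho<(CN^d\xi^{n+1})^{-1}$ with $C=C(n,d)$, which also subsume what was used for the invertibility of $C$ and the uniqueness of the resonant integer vectors. I expect this last step to be the real obstacle: it is where the \emph{discrete} conclusion (identical Jordan block sizes) is wrung from the \emph{analytic} hypothesis (an approximate conjugation). The a priori control of $\Vert C^{-1}\Vert$ by $\xi$ is essential here — an approximate conjugation whose conjugating matrix is not controlled can change the Jordan type — and balancing $\Vert C^{-1}\Vert$ against the size $\eta$ of the defect is exactly what produces the precise smallness conditions \eqref{small3} in the statement.
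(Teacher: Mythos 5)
Your overall strategy is the right one and is close in spirit to the paper's: produce an invertible matrix, with norm and inverse controlled by $\xi$, that approximately intertwines the nilpotent parts $\NN_1$ and $\NN_2$, and then invoke a rigidity statement for $0$--$1$ nilpotent matrices in Jordan form. The paper, however, never extracts a constant matrix: it bounds each Fourier coefficient of $\NN_1 V_{\Sigma_1}^{\Sigma_2}-V_{\Sigma_1}^{\Sigma_2}\NN_2$ directly from the mode equation (by $\rho\xi+\Vert F\Vert_{\CC^0}$ at near-resonant modes, by Lemma \ref{l6.4} elsewhere), sums over the $(3N)^d$ modes, and applies the determinant-based rigidity (Lemma \ref{rank}, Proposition \ref{pApp2}) pointwise with $C=V_{\Sigma_1}^{\Sigma_2}(\theta)$, whose invertibility and bound come from Lemma \ref{l6.7} exactly as you use them. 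Your two deviations (resonance-shifting to extract $C=\hat{\tilde V}(0)$, and a singular-value rigidity argument in place of Proposition \ref{pApp2}) are where the proposal develops genuine gaps.

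First, the quantitative reduction at the end is not established: your rigidity step conjugates by $C^{-1}$ and applies Weyl's inequality to $\NN_2^k=C^{-1}\NN_1^kC+E_k$, and your own threshold $C_n\xi^{2(n-1)}\eta<\xi^{-2}$ with $\eta\le C_n\xi(\rho\xi+\Vert F\Vert_{\CC^0})$ forces conditions of the form $\rho\le c(n)\,\xi^{-(2n+2)}$ and $\Vert F\Vert_{\CC^0}\le c(n)\,\xi^{-(2n+1)}$, in which no power of $N$ appears to trade against powers of $\xi$. These are not implied by \eqref{small3}: the hypothesis $\rho<1/(CN^d\xi^{n+1})$ is much weaker than $\rho\lesssim\xi^{-(2n+2)}$ whenever $N^d\ll\xi^{n+1}$, and $N,\xi$ are independent parameters, so no choice of $C=C(n,d)$ repairs this. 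The stated exponents $\xi^n,\xi^{n+1}$ come precisely from the determinant argument of Proposition \ref{pApp2}, whose smallness threshold $\Vert \NN_1C-C\NN_2\Vert<1/(n\cdot n!(2\xi)^n)$ applies to the \emph{unconjugated} defect; your route loses extra factors of $\xi$ both in passing to $C^{-1}\NN_1C$ and in the singular-value floor, so the sentence ``the required smallness reduces precisely to \eqref{small3}'' is unsubstantiated as written. Second, the non-resonance claim underlying the extraction of $C$ is not covered by the available hypotheses: you need every mode $k\neq a_i-c_j$ of $V_i^j$ with $\vert k\vert\le N$ to have divisor $\ge\rho$, and ruling out a second near-resonance against $\vert 2i\pi\langle a_i-c_j,\omega\rangle-(\alpha_i-\bar\alpha_j)\vert<2n\rho$ via the Diophantine condition requires $(2n+1)\rho<2\pi\kappa/((2n+1)N)^{\tau}$, which is stronger (by an $n,\tau$-dependent factor) than the standing assumption $\rho<(2N)^{-\tau}\kappa$ of Lemma \ref{l6.5}, and cannot come from \eqref{small3}, which contains no $\kappa$ at all; if that isolation fails, a single large resonant coefficient of $V_i^j$ sits at a nonzero mode of $\tilde V$ and destroys the bound on $\tilde V-C$. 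The paper avoids both problems at once by keeping the $\theta$-dependent $V_{\Sigma_1}^{\Sigma_2}$: at a resonant mode it needs no smallness of $\hat V_i^j(k)$, only the commutator bound $\rho\xi+\Vert F\Vert_{\CC^0}$ read off from the equation, and its rigidity threshold is the one that makes \eqref{small3} come out. Your argument can be salvaged (sharpen the rank comparison, e.g.\ compare $\sigma_r(\NN_1^kC)\ge 1/\Vert C^{-1}\Vert$ with $C\NN_2^k$ instead of conjugating, and add an explicit $\kappa$-dependent smallness of $\rho$), but as it stands it proves the conclusion only under strictly stronger hypotheses than those in the statement.
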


\medskip

\begin{proof}  From \eqref{conjug2bis} we have
$$(2i\pi \langle k , \omega \rangle -(\alpha_i - \bar \alpha_j )) \hat V_i^j(k)=N_i \hat V_i^j(k) -  \hat V_i^j(k)N_j+ \hat F_i^j(k),\quad \forall 1\leq i,j\leq l,\ \forall k\in\mathbb{Z}^d,$$
where
$$N_i=B_i-\alpha_i I\quad\textrm{and}\quad N_j=B_j-\alpha_j I.$$

\noindent 
If
$$\vert 2i\pi \langle k , \omega \rangle -(\alpha_i - \bar \alpha_j ) \vert \ge\rho,$$
then, by Lemma \ref{l6.4},
$$\Vert \hat V_i^j(k)\Vert\le (\frac2\rho)^{2n-1}\Vert  F\Vert_{\CC^0}.$$
If
$$\vert 2i\pi \langle k , \omega \rangle -(\alpha_i - \bar \alpha_j ) \vert <\rho,$$
then
$$\Vert N_i \hat V_i^j(k) -  \hat V_i^j(k)N_j\Vert\le \rho\xi+\Vert  F\Vert_{\CC^0}.$$
Hence in any case, 
$$\Vert N_i \hat V_i^j(k) -  \hat V_i^j(k)N_j\Vert\le \max( 2(\frac2\rho)^{2n-1}\Vert  F\Vert_{\CC^0},  \rho\xi+\Vert  F\Vert_{\CC^0}  )=\varepsilon$$
and
$$\Vert N_{i} V_{i}^{j} -   V_{i}^{j}N_j\Vert_{\CC^0} \le (3N)^d\varepsilon.$$
This implies that
$$\Vert N_{\Sigma_1} V_{\Sigma_1}^{\Sigma_2} -   V_{\Sigma_1} ^{\Sigma_2}N_{\Sigma_2}  \Vert_{\CC^0} \le n (3N)^d\varepsilon.$$

\noindent 
By Lemma \ref{l6.7}, the bound on $\Vert F\Vert_{\mathcal{C}^0}$ in \eqref{small3} implies 
$$\Vert V_{\Sigma_1}^{\Sigma_2}\Vert_{\CC^0},\ \Vert (V_{\Sigma_1}^{\Sigma_2})^{-1}\Vert_{\CC^0} \ \le\ 2\xi,$$
so, by Proposition \ref{pApp2}, $N_{\Sigma_1}$ and $N_{\Sigma_2}$, hence $B_{\Sigma_1}$ and $B_{\Sigma_2}$,  have the same Jordan structure if
$$
 n (3N)^d\varepsilon< \frac1{n\cdot n! 2^n\xi^n}\quad\iff\quad \varepsilon <\frac1{n^2\cdot n! 2^n(3N)^d\xi^n}.$$
 This holds if
 $$\Vert  F\Vert_{\CC^0}<\frac1{n^2\cdot n! 8^n(3N)^d\xi^n}\rho^{2n-1} $$
 and
$$  \rho <\frac1{n^2\cdot n! 2^{n+1}(3N)^d\xi^{n+1}}.$$
 
\end{proof}

\subsection{Construction of \texorpdfstring{$W$}{} and conjugation to a real matrix.}
\begin{lemma}\label{construction-de-W}
Assume that $V$ is a trigonometric polynomial of degree $N$; let $\xi\geq 0$ such that 

\begin{equation}\label{boundV}\Vert V\Vert _{\mathcal{C}^0} +\Vert V^{-1}\Vert _{\mathcal{C}^0}  \leq \xi.\end{equation}

There exists a constant $C$ which only depends on $n,d$ such that if

\begin{equation}\label{small2bis}\Vert F\Vert_{\mathcal{C}^0} \leq \frac{\rho^{2n-1}}{CN^d\xi^n},
\end{equation}

and 

$$\rho\leq \frac{1}{C} \min(\frac{1}{N^d\xi^{n+1}}, \frac{\kappa}{N^\tau}),
$$
    
    then there exist $W : \T^d \rightarrow Gl(n, \C)$ of class $\mathcal{C}^\infty$, $B' \in gl(n,\R)$ and $B'' \in gl(n,\C)$ such that
    \begin{enumerate}
    \item \label{item0}
\[ \partial_{\frac{\omega}{2}}W =  BW - W(B'+B''),\]
\item $W$ commutes with $B,B' $ and $B''$, and $B'$ has the same diagonal block structure as $B$,
\item \label{item01}
\[ \Vert B'' \Vert  \leq 2n\rho, \]
    \item \label{item1}
    for all $r\in\mathbb{R}$,
\[ \Vert W^{\pm 1} \Vert_{\CC^r} \leq (4n\pi N)^{r},\]
\item \label{item3} there is the estimate 
\[\Vert B'\Vert \leq C\Vert B\Vert.\]
\end{enumerate}
\end{lemma}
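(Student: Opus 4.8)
The plan is to carry out the construction of $W$ from the proof of Proposition \ref{reducprincipal}, but quantitatively, with the exact resonance relations replaced by the approximate ones recorded in Lemmas \ref{paires2} and \ref{paires3}. First I would fix the constant $C=C(n,d)$ large enough that \eqref{boundV}, \eqref{small2bis} and the bound on $\rho$ force assumptions $1$--$4$ of Lemma \ref{l6.5} to hold, as well as \eqref{small3}. By Corollary \ref{cor-equivalence} the relation ``$(N,\rho)$-chain-linked'' is then an equivalence relation on $\sigma(B)$ and \eqref{star2} holds, so Lemmas \ref{paires2} and \ref{paires3} apply to every equivalence class, and Lemma \ref{l6.8} is available.

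I would then build $W$ block by equivalence class, exactly as in Proposition \ref{reducprincipal}. If a class $\mathcal E\subseteq\sigma(B)$ contains an odd $(N,\rho)$-loop, Lemma \ref{paires2} gives, for each $\alpha_i\in\mathcal E$, a vector $k_i\in\Z^d$ with $|k_i|\le nN$ and $|2i\pi\langle k_i,\omega\rangle-(\alpha_i-\bar\alpha_i)|<n\rho$, and I set $w_i(\theta)=e^{2i\pi\langle k_i,\theta\rangle}I$. If $\mathcal E$ contains no odd loop, Lemma \ref{paires3} gives $\mathcal E=\Sigma_1\cup\Sigma_2$; fixing $\alpha\in\Sigma_1$, parts (ii) and (iii) of that lemma furnish, for $\alpha_i\in\Sigma_1$ (resp. $\alpha_i\in\Sigma_2$), a vector $k_i\in\Z^d$ with $|k_i|\le nN$ and $|2i\pi\langle k_i,\omega\rangle-(\alpha_i-\alpha)|<n\rho$ (resp. $|2i\pi\langle k_i,\omega\rangle-(\alpha_i-\bar\alpha)|<n\rho$), and I set $w_i(\theta)=e^{4i\pi\langle k_i,\theta\rangle}I$. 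Put $W=\mathrm{diag}(w_i)$: it is a trigonometric polynomial with values in $Gl(n,\C)$, it commutes with $B$ since each $w_i$ is a scalar matrix on the block $B_i$, and since $|k_i|\le nN$ every frequency occurring in $W^{\pm1}$ is at most $4\pi nN$, so differentiating the monomials gives $\|W^{\pm1}\|_{\CC^r}\le(4n\pi N)^r$, which is item \ref{item1}.

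Next I would read off $B'$ and $B''$. Each $w_i$ satisfies $\partial_{\frac\omega2}w_i=\lambda_iw_i$, with $\lambda_i=i\pi\langle k_i,\omega\rangle$ in the odd-loop case and $\lambda_i=2i\pi\langle k_i,\omega\rangle$ otherwise, so the relation in item \ref{item0} is equivalent, block by block, to $B'_i+B''_i=B_i-\lambda_iI$. Writing $B_i=\alpha_iI+N_i$ with $N_i$ the nilpotent part of the Jordan block, I would keep in $B'_i$ the matrix $\operatorname{Re}(\alpha_i)I+N_i$ in the odd-loop case, and $\alpha I+N_i$ or $\bar\alpha I+N_i$ in the remaining case according as $\alpha_i\in\Sigma_1$ or $\alpha_i\in\Sigma_2$, leaving the scalar remainder in $B''_i$. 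By the choice of $k_i$ this remainder is $i\bigl(\operatorname{Im}\alpha_i-\pi\langle k_i,\omega\rangle\bigr)I$, of norm $<\tfrac n2\rho$, in the first case, and $\bigl(\alpha_i-2i\pi\langle k_i,\omega\rangle-\alpha\bigr)I$ or $\bigl(\alpha_i-2i\pi\langle k_i,\omega\rangle-\bar\alpha\bigr)I$, of norm $<n\rho$, in the second; hence $\|B''\|\le n\rho\le 2n\rho$, i.e. item \ref{item01}. All of $B,B',B''$ are block diagonal with the same decomposition, they commute pairwise and with $W$ (each $B''_i$ and each $w_i$ being scalar), and since $|\lambda_i|\le\|B\|$ and $\|N_i\|\le 2\|B\|$ one gets $\|B'\|\le 3\|B\|\le C\|B\|$, i.e. item \ref{item3} and the commutation claim in item 2.

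Finally, $B'$ can be taken real: in the no-odd-loop case the block of $B'$ carried by $\Sigma_1$ has the single eigenvalue $\alpha$ and the same Jordan structure as $B_{\Sigma_1}$, the block carried by $\Sigma_2$ has eigenvalue $\bar\alpha$ and the Jordan structure of $B_{\Sigma_2}$, and by Lemma \ref{l6.8} these two structures coincide; thus on each such class $B'$ has a complex-conjugate pair of eigenvalues with identical Jordan structure, hence is conjugate to a real matrix by Lemma \ref{spectre-conjugué} (the odd-loop blocks being already real). I expect the only genuine work to be this bookkeeping: checking that the smallness hypotheses propagate correctly from \eqref{small2bis} and the bound on $\rho$ to the hypotheses of Lemmas \ref{l6.5} and \ref{l6.8}, and verifying that the error $B''$ collected from the \emph{approximate} resonance relations stays within $2n\rho$. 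The clean estimate $(4n\pi N)^r$ relies on the fact, built into Lemmas \ref{sublemma}, \ref{paires2} and \ref{paires3}, that all chains and loops can be taken of length $\le n$, so that $|k_i|\le nN$ throughout.
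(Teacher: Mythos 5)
Your proposal follows essentially the same route as the paper: the same choice of resonant frequencies $k_i$ via Lemmas \ref{paires2} and \ref{paires3}, the same diagonal exponential $W$, the same splitting of $B_i-\lambda_iI$ into a principal part $B'$ and a small scalar remainder $B''$, and the same use of Lemma \ref{l6.8} together with Lemma \ref{spectre-conjugué} to realify $B'$. The only blemish is the irrelevant claim $\vert\lambda_i\vert\le\Vert B\Vert$; since $\lambda_i$ does not enter $B'$ (whose diagonal entries lie in $\Re\sigma(B)\cup\sigma(B)\cup\overline{\sigma(B)}$ and are bounded by $\Vert B\Vert$), the estimate $\Vert B'\Vert\le C\Vert B\Vert$ stands as you argue.
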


\begin{proof}
    By Corollary \ref{cor-equivalence}, $\sigma(B)$ satisfies the property  \eqref{star2}.
 Let $\mathcal{E} \subset \sigma(B)$ be an equivalence class. 
\begin{itemize}
    \item Case 1: if $\mathcal{E}$ contains an odd $(N,\rho)$-loop, then by Lemma \ref{paires2}, for all $\alpha_i \in \mathcal{E}$, there exists $k_{i} \in \Z^d$ with $\vert k_{i} \vert \leq nN$ such that 
     \[\vert \alpha_i - \bar \alpha_i - 2i\pi \langle k_{i}, \omega \rangle \vert \leq n\rho
     .\]
   
    \item Case 2: if $\mathcal{E}$ does not contain any odd $(N,\rho)$-loop, let $\mathcal{E} = \Sigma_1 \cup \Sigma_2$ be the partition given by the Lemma \ref{paires3}. Choose arbitrarily $\alpha_0 \in  \Sigma_1 \cap \mathcal{E} $.
    Then 
    for all $\alpha_i \in \Sigma_2\cap \mathcal{E}$, by Lemma \ref{sublemma}, there exists $k_{i} \in \Z^d$, $\vert k_{i} \vert \leq n N$ such that
    \[ \vert \alpha_i - \bar \alpha_0 -2i\pi \langle k_{i}, \omega \rangle \vert \leq n \rho.\]
    And
    for all $\alpha_i \in \Sigma_1\cap \mathcal{E}$, there exists $k_{i} \in \Z^d$, $\vert k_{i} \vert \leq  nN$ such that
    \[ \vert \alpha_i -  \alpha_0 -2i\pi \langle k_{i}, \omega \rangle \vert \leq n \rho.\]

\noindent and if $\alpha_i=\alpha_0$ then $k_i=0$; 
    
 %
\end{itemize}

Let $W \in \CC^0(\mathbb{T}^d,Gl(n, \C))$ the diagonal matrix whose diagonal coefficients are $(w_j)$ defined as follows: 
given an eigenvalue $\alpha_i \in \sigma(B)$ associated with $k_i$ as defined before, if $\alpha_i$ appears on line $j$ in $B$, then \[ w_j(\theta) = e^{2i\pi \langle k_{i}, \theta \rangle}I\] if $[\alpha_i]$ is in case (1) and \[w_j(\theta) = e^{4i\pi \langle k_{i}, \theta \rangle}I\] 
if $[\alpha_i]$ is in case (2).


Then it holds that
\[\partial_{\frac{\omega}{2}}W =BW-W(B'+B'') \]

\noindent
where the coefficients of $B'+B''$ will be defined as follows:
\begin{itemize}
    \item if $[\alpha_i]$ is in case (1), since
    \[\alpha_i-i\pi \langle k_i, \omega \rangle  = \frac{1}{2}(\alpha_i+\bar \alpha_i) + \frac{1}{2}(\alpha_i-\bar \alpha_i - 2i\pi \langle k_i,\omega \rangle),\]
    one can define $\Re\alpha_i$ as the coefficient of $B'$ and $\frac{1}{2}(\alpha_i - \bar \alpha_i - 2i\pi \langle k_i, \omega \rangle)$ as the coefficient of $B''$;
    \item if $[\alpha_i]$ is in case (2), if
     $\alpha_i \in \Sigma_2 \cap \mathcal{E}$,
     \[ \alpha_i-2i\pi \langle k_i, \omega \rangle  =  \bar \alpha_0 +( \alpha_i - \bar \alpha_0 - 2i\pi \langle k_i, \omega \rangle) \]
    Then $\bar \alpha_0$ is the coefficient of $B'$ and $ \alpha_i - \bar \alpha_0 - 2i\pi \langle k_i, \omega \rangle$ is the coefficient of $B''$. If
     $\alpha_i \in \Sigma_1 \cap \mathcal{E}$,
    \[ \alpha_i-2i\pi \langle k_i, \omega \rangle  =  \alpha_0 +(\alpha _i- \alpha_0-2i\pi \langle k_i, \omega \rangle) \]
    Then $\alpha_0$ is the coefficient of $B'$ and $ \alpha_i - \alpha_0 - 2i\pi \langle k_i, \omega \rangle$ is the coefficient of $B''$.
\end{itemize}

\noindent
Therefore, if $\alpha$ is an eigenvalue of $B'$, then $\bar \alpha$ is also an eigenvalue with the same multiplicity. Moreover, by Lemma \ref{l6.8}, the blocks with eigenvalues $\alpha$ and $\bar\alpha$ have the same Jordan structure. Thus, by Lemma \ref{spectre-conjugué}, one can assume up to a unitary transformation that $B'$ is in real Jordan normal form.

\noindent With our choices of the $k_i$, the values of $W$ commute with $B$, $B'$ and $B''$. The norm of $W$ and the norm of $W^{-1}$ follows from the fact that $\vert k_i \vert \leq nN$.

\bigskip
The matrix $B''$ is bounded by $n\rho$.
Moreover, the coefficients of the matrix $B'$ are the same as those of $B$ outside the diagonal. The diagonal coefficients of $B'$ are in $\sigma(B)\cup \overline{\sigma(B)}\cup \Re \sigma(B)$, 
which implies
\[ \Vert B' \Vert \leq C(n) \Vert B \Vert.\]
\end{proof}

\subsection{Application of the main lemma \ref{construction-de-W}}

To apply the result of the previous section, it will be necessary to have an estimate of the truncation of an application $U$ satisfying $U^{-1}=\bar U$, which will be obtained in the following lemma.

\begin{lemma}\label{troncation}
Let $N \in \N$. Let $U : \T^d \rightarrow Gl(n,\C)$ of class $\CC^\infty$  such that
\[ U^{-1} = \bar U,\]
and let
\[V(\theta) = \mathcal{T}_N U(\theta) := \sum_{\vert k \vert \leq N} \hat U(k)e^{2i\pi \langle k, \theta \rangle}.\]
There exists a constant $C_d > 0$ depending only on $d$ such that if $N$ satisfies 
\begin{equation}  C_d \Vert U \Vert_{\CC^{d+1}}\Vert U \Vert_{\CC^0}\leq N, \label{taillesigman}
\end{equation}
then
\begin{equation}\label{conclusion-lemme-Vbar} \Vert V^{-1} -\bar V \Vert_{\CC^0} \leq \frac{1}{4}\Vert V \Vert_{\CC^0}.
\end{equation}
\end{lemma}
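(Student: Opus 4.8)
The plan is to show that $V=\mathcal T_NU$ is $\CC^0$-close to $U$, that $\bar V$ is $\CC^0$-close to $U^{-1}$, and hence that $V\bar V$ is $\CC^0$-close to $UU^{-1}=I$; then $V\bar V$ — and therefore $V$ itself — is invertible, one has $V^{-1}=\bar V(V\bar V)^{-1}$, and $V^{-1}-\bar V$ is controlled by $\Vert V\bar V-I\Vert_{\CC^0}$ through a Neumann series. The only real work is quantitative: turning ``$V$ close to $U$'' into an explicit rate in $N$, and keeping track of the powers of $\Vert U\Vert_{\CC^0}$ that inversion introduces.

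First I would establish the truncation estimate
\[ \Vert U-V\Vert_{\CC^0}\ \le\ \frac{C_d}{N}\,\Vert U\Vert_{\CC^{d+1}}. \]
Since $U-V=\sum_{\vert k\vert>N}\hat U(k)\,e^{2i\pi\langle k,\theta\rangle}$, it suffices to bound $\sum_{\vert k\vert>N}\Vert\hat U(k)\Vert$. For $k\neq0$, choosing a coordinate $i$ with $\vert k_i\vert\ge\vert k\vert/d$ and integrating by parts $d+1$ times in the $i$-th variable gives $\widehat{\partial_i^{\,d+1}U}(k)=(2i\pi k_i)^{d+1}\hat U(k)$, whence $\Vert\hat U(k)\Vert\le (2\pi)^{-(d+1)}d^{\,d+1}\Vert U\Vert_{\CC^{d+1}}\vert k\vert^{-(d+1)}$. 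Combined with $\#\{k\in\Z^d:\vert k\vert=j\}\le C_dj^{d-1}$, this yields $\sum_{\vert k\vert>N}\vert k\vert^{-(d+1)}\le C_d\sum_{j>N}j^{-2}\le C_d/N$, and the estimate follows after absorbing dimensional constants into $C_d$. The same argument applied to $U^{-1}$, together with the facts that conjugation is an isometry for every $\CC^r$-norm and $U^{-1}=\bar U$ (so $\Vert U^{-1}\Vert_{\CC^{d+1}}=\Vert U\Vert_{\CC^{d+1}}$, and $\bar V=\overline{\mathcal T_NU}=\mathcal T_N\bar U=\mathcal T_N(U^{-1})$), gives $\Vert U^{-1}-\bar V\Vert_{\CC^0}=\Vert U-V\Vert_{\CC^0}\le C_dN^{-1}\Vert U\Vert_{\CC^{d+1}}$.

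Then I would set $\eta:=\Vert U-V\Vert_{\CC^0}$ and note $\Vert U\Vert_{\CC^0}=\Vert\bar U\Vert_{\CC^0}=\Vert U^{-1}\Vert_{\CC^0}\ge1$ (since $\Vert U\Vert_{\CC^0}^2\ge\Vert UU^{-1}\Vert_{\CC^0}=1$). Writing $V\bar V-I=(V-U)\bar V+U(\bar V-U^{-1})$ and using $\Vert\bar V\Vert_{\CC^0}\le\Vert U\Vert_{\CC^0}+\eta$ and submultiplicativity of $\Vert\cdot\Vert_{\CC^0}$, one gets $\Vert V\bar V-I\Vert_{\CC^0}\le c_1\Vert U\Vert_{\CC^0}\,\eta$ with $c_1$ numerical (once $\eta\le\Vert U\Vert_{\CC^0}$). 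By hypothesis \eqref{taillesigman}, choosing $C_d$ large enough forces $\eta\le\frac12$ and $\Vert V\bar V-I\Vert_{\CC^0}\le\frac12$; hence $V\bar V$ is invertible via a Neumann series, $V$ is invertible, $V^{-1}=\bar V(V\bar V)^{-1}$, and
\[ \Vert V^{-1}-\bar V\Vert_{\CC^0}=\Vert\bar V(V\bar V)^{-1}(I-V\bar V)\Vert_{\CC^0}\le 2\,\Vert\bar V\Vert_{\CC^0}\,\Vert V\bar V-I\Vert_{\CC^0}\le c_2\,\Vert U\Vert_{\CC^0}^2\,\eta. \]

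Finally, $\Vert V\Vert_{\CC^0}\ge\Vert U\Vert_{\CC^0}-\eta\ge\frac12\Vert U\Vert_{\CC^0}$, so \eqref{conclusion-lemme-Vbar} reduces to $c_2\Vert U\Vert_{\CC^0}^2\eta\le\frac18\Vert U\Vert_{\CC^0}$, i.e.\ $c_2\Vert U\Vert_{\CC^0}\,\eta\le\frac18$; since $\eta\le C_dN^{-1}\Vert U\Vert_{\CC^{d+1}}$, this holds whenever $N\ge 8c_2C_d\,\Vert U\Vert_{\CC^0}\Vert U\Vert_{\CC^{d+1}}$. Taking the constant $C_d$ in the statement to be the largest of the finitely many constants encountered closes the argument. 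I expect the only genuinely delicate point to be the bookkeeping of norms: inversion produces a factor $\Vert U\Vert_{\CC^0}^2$, whereas the right-hand side of \eqref{conclusion-lemme-Vbar} supplies only $\Vert V\Vert_{\CC^0}\gtrsim\Vert U\Vert_{\CC^0}$; this one-power mismatch is precisely why the hypothesis is stated with the product $\Vert U\Vert_{\CC^{d+1}}\Vert U\Vert_{\CC^0}$ and not with $\Vert U\Vert_{\CC^{d+1}}$ alone.
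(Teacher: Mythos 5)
Your proof is correct and follows essentially the same strategy as the paper: a Fourier-decay truncation estimate $\Vert U-V\Vert_{\CC^0}\lesssim N^{-1}\Vert U\Vert_{\CC^{d+1}}$, then inversion of a near-identity product by a Neumann series, with the hypothesis $C_d\Vert U\Vert_{\CC^{d+1}}\Vert U\Vert_{\CC^0}\le N$ used exactly to absorb the extra power of $\Vert U\Vert_{\CC^0}$ produced by the inversion. The only (cosmetic) difference is that you invert $V\bar V$ and use the identity $V^{-1}-\bar V=\bar V(V\bar V)^{-1}(I-V\bar V)$, whereas the paper inverts $X=\bar V U$ and expands $V^{-1}-\bar V=(X^{-1}-I)\bar V+\bar U(\bar X^{-1}-I)$; both yield the same bounds.
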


\begin{proof}
We have 
\[ \Vert \hat U (k) \Vert \leq (\frac{1}{2\pi \vert k \vert})^{d+1} \Vert U \Vert_{\CC^{d+1}}, \quad \forall k \in \Z^d,\]
which implies
\begin{equation}\label{V-U-sigma_N}\Vert V-U \Vert_{\CC^{0}} \leq \sum_{\vert k \vert > N} \Vert \hat U (k) \Vert \leq \frac{C}{2\pi N}\Vert U \Vert_{\CC^{d+1}} =:\sigma_N,\end{equation}
where $C$ depends only on $d$. Therefore
\begin{align*}
     \Vert \bar V U - I \Vert_{\CC^0} & \leq \Vert \bar V - U^{-1} \Vert_{\CC^0} \Vert U \Vert_{\CC^0} \\
        & \leq \Vert \bar V - \bar U \Vert_{\CC^0}\Vert U \Vert_{\CC^0} \quad \text{ from } U^{-1} = \bar U  \\
        & \leq \Vert V - U \Vert_{\CC^0} \Vert U \Vert_{\CC^0} \\
        & \leq \sigma_N \Vert U \Vert_{\CC^0} \\
        & \leq \frac{1}{2} \qquad (\ast_1)
\end{align*}
The last line of the equation is satisfied by assumption $\eqref{taillesigman}$ with $C_d \geq  \frac{C}{\pi}$. This implies that, for all $\theta$, $X(\theta)=\bar V(\theta) U(\theta)$ is invertible and

\[X^{-1}= (I+(\bar{V}U-I))^{-1}=\sum_{k\geq 0} (-1)^k(\bar{V}U-I)^k\]
hence
\[ \Vert X^{-1} -I \Vert_{\CC^0} 
\leq \sum_{k\geq 1} \Vert (\bar{V}U-I)\Vert_{\CC^0}^k
\leq  2 \Vert V - U \Vert_{\CC^0} \Vert U \Vert_{\CC^0}\leq 2\sigma_N \Vert U\Vert _{\mathcal{C}^0}. 
\]

\noindent
Therefore
\begin{align*}
 \bar V^{-1} & = U U^{-1} \bar V^{-1} = U X^{-1} = U + U(X^{-1}-I) \\
 & = \bar U ^{-1} + U(X^{-1} - I)  \quad \text{ since } \bar U = U^{-1} \\
 & = \bar X ^{-1} V + U(X^{-1} - I) \\
 & = V +(\bar X^{-1} - I)V + U(X^{-1}-I),
\end{align*}
and then
\[ V^{-1}-\bar V = ( X^{-1} - I)\bar V + \bar U( \bar X^{-1}-I)\]
which gives
\begin{align*}
 \Vert V^{-1} - \bar V \Vert_{\CC^0} & = \Vert ( X^{-1} - I)\bar V + \bar U( \bar X^{-1}-I) \Vert_{\CC^0} \\
    & \leq \Vert X^{-1} - I \Vert_{\CC^0} (\Vert V \Vert_{\CC^0} + \Vert U \Vert_{\CC^0}) \\
 & \leq 2\sigma_N \Vert U\Vert_{\mathcal{C}^0}(\Vert V \Vert_{\CC^0} + \Vert U \Vert_{\CC^0})\\
    & \leq 
4\sigma_N \Vert U \Vert_{\CC^0}( \Vert V \Vert_{\CC^0} + \frac{1}{2}\sigma_N) \\
    \end{align*}
(by definition of $\sigma_N$), therefore, using \eqref{taillesigman} with $C_d\geq \frac{16C}{\pi}$,
    \begin{equation}\label{ast_2}
 \Vert V^{-1} - \bar V \Vert_{\CC^0} 
     \leq \frac{1}{8} (\Vert V \Vert_{\CC^0} + \frac{\sigma_N}{2}) 
     \leq \frac{1}{8} \Vert V \Vert_{\CC^0} + \frac{1}{16} \sigma_N .\qquad  
\end{equation}

\noindent 
Now \eqref{taillesigman} also implies 

\[
\sigma_N\leq \frac{1}{32\Vert U\Vert_{\mathcal{C}^0}}
\]

\noindent and the property that $U^{-1}=\bar U$ implies that $\Vert U\Vert _{\mathcal{C}^0}\geq 1$, so $3\sigma_N\leq 2 \Vert U\Vert _{\mathcal{C}^0}$ also holds. This, together with \eqref{V-U-sigma_N}, implies that

\[
\sigma_N\leq 2(\Vert U\Vert _{\mathcal{C}^0}-\sigma_N)\leq 2(\Vert U\Vert _{\mathcal{C}^0} - \Vert V-U\Vert _{\mathcal{C}^0})\leq 2\Vert V\Vert _{\mathcal{C}^0}.
\]

\noindent
Thus \eqref{ast_2} implies that \eqref{conclusion-lemme-Vbar} holds. 
\end{proof}

The following proposition allows us to apply the previous lemma to the change of variables given by the almost reducibility, in order to conjugate the almost reducible cocycle to a cocycle arbitrarily close to a real one.

\begin{proposition}\label{proppresquered}
Let $N \in \N$, $\rho>0$.
Let $U : \T^d \rightarrow Gl(n, \C)$ of class $\CC^{d+1}$ such that
\[ U^{-1} = \bar U. \]
Let $B \in gl(n, \C)$ in Jordan normal form, 
and let
\[ G = \partial_\omega U - BU + U \bar B. \]
There exists a constant $C_1 \geq 0$ (depending only on $n,d$) such that if

\begin{enumerate}
\item  \begin{equation}\label{taille-sigman-prop}C_1 \Vert U \Vert_{\CC^{d+1}} \Vert U \Vert_{\CC^0}\leq N,  \end{equation}



\item 
\begin{equation}\label{smallness-prop}\Vert G \Vert_{\CC^{d+1}} 
\leq C_1^{-1}\rho^{2n-1}   N^{-(d-1)} \Vert U \Vert_{\CC^0}^{-n}, \end{equation}


\item 
\begin{equation}\label{kappagrand}
\rho \leq C_1^{-1}\min(\frac{1}{N^d \Vert U\Vert _{\mathcal{C}^0}^{n+1}},\frac{\kappa}{N^\tau})
,\end{equation}

\end{enumerate}
then there exist $W \in \CC^{\infty}(\T^d, Gl(n, \C))$, $B' \in gl(n, \C)$, and $B'' \in gl(n, \C)$ such that
\[ \partial_{\frac{\omega}{2}}W = BW - W(B'+B'')\]
where $B'$ is as in Lemma \ref{construction-de-W}.
Moreover, 
\[ \Vert B'' \Vert \leq 2n^2\rho ,\]
and
\[ \forall s \in \N,\ \Vert W^{\pm1} \Vert_{\CC^{s}} \leq (2\pi n N)^{2s} .\]
\end{proposition}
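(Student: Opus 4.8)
The plan is to deduce the statement from Lemma~\ref{construction-de-W} by replacing the $\CC^\infty$ map $U$ with a trigonometric polynomial, namely its truncation
\[ V := \mathcal{T}_N U = \sum_{\vert k\vert\le N}\hat U(k)e^{2i\pi\langle k,\theta\rangle}. \]
Since $\mathcal{T}_N$ commutes with $\partial_\omega$ and with left and right multiplication by the constant matrices $B,\bar B$, the cohomological defect of $V$ is exactly the truncation of that of $U$:
\[ F := \partial_\omega V - BV + V\bar B = \mathcal{T}_N\big(\partial_\omega U - BU + U\bar B\big) = \mathcal{T}_N G. \]
Thus $F$ is small because $G$ is, and $V$ is a trigonometric polynomial of degree $\le N$; what remains is to control $\Vert V\Vert_{\CC^0}+\Vert V^{-1}\Vert_{\CC^0}$ and then to apply Lemma~\ref{construction-de-W}.

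First I would record the truncation estimates. From the Fourier decay $\Vert\hat H(k)\Vert\le(2\pi\vert k\vert)^{-(d+1)}\Vert H\Vert_{\CC^{d+1}}$ (applied to $H=G$ and to $H=U$) one bounds $\Vert F\Vert_{\CC^0}=\Vert\mathcal{T}_N G\Vert_{\CC^0}$ in terms of $\Vert G\Vert_{\CC^{d+1}}$ (and $N$), and one gets $\Vert V-U\Vert_{\CC^0}\le\sigma_N$ with $\sigma_N$ a constant multiple of $\Vert U\Vert_{\CC^{d+1}}/N$. Once $C_1\ge C_d$, hypothesis~\eqref{taille-sigman-prop} implies hypothesis~\eqref{taillesigman} of Lemma~\ref{troncation}, and, since $U^{-1}=\bar U$ forces $\Vert U\Vert_{\CC^0}\ge1$, it also makes $\sigma_N$ small compared to $\Vert U\Vert_{\CC^0}$; hence $\Vert V\Vert_{\CC^0}\le 2\Vert U\Vert_{\CC^0}$, and Lemma~\ref{troncation} gives $\Vert V^{-1}-\bar V\Vert_{\CC^0}\le\frac{1}{4}\Vert V\Vert_{\CC^0}$. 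In particular $V$ is invertible with $\Vert V^{-1}\Vert_{\CC^0}\le\frac{5}{4}\Vert V\Vert_{\CC^0}$, so that $\xi:=\Vert V\Vert_{\CC^0}+\Vert V^{-1}\Vert_{\CC^0}\le C_n\Vert U\Vert_{\CC^0}$ for a constant $C_n$ depending only on $n$.

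It then remains to verify that, for $C_1$ chosen large enough in terms of $n$ and $d$, hypotheses~\eqref{taille-sigman-prop}--\eqref{kappagrand} imply all the hypotheses of Lemma~\ref{construction-de-W} for $V$ with this $\xi$: that $V$ is a trigonometric polynomial of degree $\le N$ is clear; the bound $\Vert F\Vert_{\CC^0}\le\rho^{2n-1}/(CN^d\xi^n)$ follows from~\eqref{smallness-prop}, from the estimate on $\Vert\mathcal{T}_N G\Vert_{\CC^0}$, and from $\xi\le C_n\Vert U\Vert_{\CC^0}$; and $\rho\le C^{-1}\min\big(N^{-d}\xi^{-(n+1)},\kappa N^{-\tau}\big)$ follows from~\eqref{kappagrand} and again $\xi\le C_n\Vert U\Vert_{\CC^0}$. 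Lemma~\ref{construction-de-W} then produces $W\in\CC^\infty(\T^d,Gl(n,\C))$ (in fact a trigonometric polynomial), a matrix $B'$ of the stated form, and $B''$, with $\partial_{\frac{\omega}{2}}W=BW-W(B'+B'')$, with $\Vert B''\Vert\le 2n\rho\le 2n^2\rho$, and with $\Vert W^{\pm1}\Vert_{\CC^s}\le(4\pi nN)^s\le(2\pi nN)^{2s}$ for every $s$ (the last inequality because $\pi nN\ge1$). This is exactly the conclusion of the proposition.

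The one slightly delicate point is this last verification: the powers of $N$ and of $\Vert U\Vert_{\CC^0}$ introduced by the truncation — in the estimate for $\Vert F\Vert_{\CC^0}=\Vert\mathcal{T}_N G\Vert_{\CC^0}$, in $\sigma_N$, and in $\xi\le C_n\Vert U\Vert_{\CC^0}$ — must be absorbed, through a suitable choice of the absolute constant $C_1=C_1(n,d)$, into the smallness demanded by Lemma~\ref{construction-de-W} and by hypothesis~\eqref{taillesigman} of Lemma~\ref{troncation}. This is routine once the exponents are tracked with care; the only genuine inputs are Lemma~\ref{troncation} and Lemma~\ref{construction-de-W}, both already established.
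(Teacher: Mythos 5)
Your proposal follows the paper's own proof essentially verbatim: truncate $U$ to $V=\mathcal{T}_N U$ so that the defect becomes $F=\mathcal{T}_N G$, use hypothesis \eqref{taille-sigman-prop} together with Lemma \ref{troncation} to control $\Vert V\Vert_{\CC^0}+\Vert V^{-1}\Vert_{\CC^0}$ by a constant times $\Vert U\Vert_{\CC^0}$, and then apply Lemma \ref{construction-de-W} to $V$, the stated conclusions ($\Vert B''\Vert\le 2n\rho\le 2n^2\rho$, $\Vert W^{\pm1}\Vert_{\CC^s}\le(4\pi nN)^s\le(2\pi nN)^{2s}$) being exactly those of that lemma.

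One caveat about the step you dismiss as routine. Taking \eqref{smallness-prop} literally, the best you get is $\Vert F\Vert_{\CC^0}\le C_d\Vert G\Vert_{\CC^{d+1}}\le C_d C_1^{-1}\rho^{2n-1}N^{-(d-1)}\Vert U\Vert_{\CC^0}^{-n}$, whereas hypothesis \eqref{small2bis} of Lemma \ref{construction-de-W} demands $\Vert F\Vert_{\CC^0}\le \rho^{2n-1}/(CN^{d}\xi^{n})$ with $\xi$ comparable to $\Vert U\Vert_{\CC^0}$; these differ by a factor $N$, which no choice of a constant $C_1=C_1(n,d)$ can absorb. The paper's own proof in fact bounds $\Vert F\Vert_{\CC^0}$ by $2C_2C_1^{-1}\rho^{2n-1}\bigl(N\Vert U\Vert_{\CC^0}\bigr)^{-dn^2}$, i.e.\ it quietly works with a stronger smallness condition on $G$ than the printed \eqref{smallness-prop} (and it is this stronger condition that is verified when the proposition is invoked in Section \ref{section-proof-main-result}), so the mismatch originates in the statement rather than in your strategy; but as written, your deduction of \eqref{small2bis} from \eqref{smallness-prop} does not close, and you should either strengthen the hypothesis on $\Vert G\Vert_{\CC^{d+1}}$ (e.g.\ to the form actually used in the paper) or track the exponents explicitly rather than asserting them.
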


\begin{proof}
Denote $\xi = \Vert U \Vert_{\CC^0} \geq 1$ (the lower bound comes from $U^{-1} = \bar U$). Let \[V = \mathcal{R}_{N}U = \sum_{\vert k \vert \leq N} \hat U(k)e^{2i\pi \langle k,\theta\rangle}.\] We have
\[ \partial_\omega V = BV - V\bar B +F,\ \mathrm{where}\  \quad F = \mathcal{R}_N G.\]

\noindent 
We will now apply lemma \ref{construction-de-W} and for this, we need to find an upper bound of $\Vert V \Vert_{\CC^0}$ and an upper bound of $\Vert V^{-1} \Vert_{\CC^0}$. We have
\[ \Vert V - U \Vert_{\CC^{0}} \leq \sum _{\vert k \vert > N}\Vert \hat U(k) \Vert  \leq C_2 \frac{1}{ N}\Vert U \Vert_{\CC^{d+1}}\]
where $C_2\geq 1$ only depends on $d$. Suppose that $C_1 \geq 32C_2$ and that $N \geq  C_1 \Vert U \Vert_{\CC^{d+1}}\Vert U \Vert_{\CC^0}$,  then 

\[ \Vert V- U \Vert_{\CC^{0}}  
\leq C_2 \frac{1}{ N} \frac{ N}{ C_1 \Vert U \Vert_{\CC^0}} 
\leq \frac{1}{2\xi}.\]
Therefore since $\xi \geq 1$,
\[ \Vert V \Vert_{\CC^0} \leq \Vert U \Vert_{\CC^0} + \Vert V-U \Vert_{\CC^0}  \leq \xi + \frac{1}{2\xi} \leq 2\xi. \]

 \noindent 
 Let $C_d$ the constant given by lemma \ref{troncation}. If moreover $C_1 \geq C_d$, we have
 \[N \geq C_d \Vert U \Vert_{\CC^{d+1}}\Vert U \Vert_{\CC^0},\]
and we can apply lemma \ref{troncation} which gives
\[\Vert V^{-1}\Vert_{\CC^0} \leq \Vert V^{-1} - \bar V \Vert_{\CC^0} + \Vert \bar V \Vert_{\CC^0} \leq \frac{1}{4}\Vert V \Vert_{\CC^0} + \Vert V\Vert_{\CC^0} \leq 3\xi, \]
and the assumption \eqref{boundV} of lemma \ref{construction-de-W} is satisfied with $4\xi$ instead of $\xi$.
Now,
\[ \Vert F \Vert_{\CC^0} \leq \Vert G\Vert_{\CC^0}+ \Vert F-G \Vert_{\CC^0} \leq \Vert G \Vert_{\CC^0} + C_2 (\frac{1}{2\pi N}) \Vert G\Vert_{\CC^{d+1}}\]
\[ \leq  2 C_2\Vert G \Vert_{\CC^{d+1}} \leq 
2C_2C_1^{-1}\rho^{2n-1} \big(   N \Vert U \Vert_{\CC^0} \big)^{-dn^2} \]
so, if $C_1$ is large enough depending on $n,d$, the assumption \eqref{small2bis} of Lemma \ref{construction-de-W} holds. 


\noindent 
Therefore one can apply the Lemma \ref{construction-de-W} which directly gives the conclusions.
\end{proof}

\section{Proof of the main result}\label{section-proof-main-result}

\begin{theorem}
Let $X_{\omega, A}$ an almost reducible cocycle in $\CC^\infty$. If $X_{\omega, A}$ is real and if $\omega$ is Diophantine, then $X_{\frac{\omega}{2}, A_2}$ is real almost reducible in $\CC^\infty$, where $A_2(\theta) = A(2\theta)$ for all $\theta \in \T^d$.
\end{theorem}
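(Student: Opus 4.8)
\noindent The plan is to combine the three principal results already established: Proposition~\ref{almost-jordan} (almost reducibility to a Jordan normal form), Proposition~\ref{proppresquered} (the half-period construction of $W$ for a map $U$ with $U^{-1}=\bar U$), and Proposition~\ref{presqueconjugaisonreelle} (almost reducibility to a sequence of real matrices implies real almost reducibility). Since $\frac{\omega}{2}\in\mathcal{DC}(\frac{\kappa}{2},\tau)$ whenever $\omega\in\mathcal{DC}(\kappa,\tau)$, and since $A_2$ is real-valued, Proposition~\ref{presqueconjugaisonreelle} applied with frequency $\frac{\omega}{2}$ reduces the statement to the following: $X_{\frac{\omega}{2},A_2}$ is almost reducible to a sequence of \emph{real} matrices.

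\noindent First I would apply Proposition~\ref{almost-jordan} to obtain sequences $Z_j:\T^d\to Gl(n,\C)$, $F_j:\T^d\to gl(n,\C)$ and matrices $B_j$ in Jordan normal form with $\partial_\omega Z_j=AZ_j-Z_j(B_j+F_j)$ and the convergence \eqref{conditionconvergence}. Then, mimicking the reducible case of Section~\ref{section2}, I would use that $A$ is real by setting $U_j=Z_j^{-1}\bar Z_j$: differentiating and substituting the equations satisfied by $\bar Z_j$ and by $Z_j^{-1}$ gives $\partial_\omega U_j=B_jU_j-U_j\bar B_j+G_j$ with $G_j=F_jU_j-U_j\bar F_j$, and, crucially, $U_j^{-1}=\bar U_j$. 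Thus each $U_j$ is of exactly the type handled by Proposition~\ref{proppresquered}, with $B=B_j$ and $G=G_j$.

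\noindent Next I would run Proposition~\ref{proppresquered} on each $U_j$, taking $N_j$ to be the least integer satisfying \eqref{taille-sigman-prop} (so $N_j$ is bounded by a constant depending on $n,d$ times $\Vert U_j\Vert_{\CC^{d+1}}^{2}$) and taking the resonance width to be a fixed small power of the error, say $\rho_j=\Vert F_j\Vert_{\CC^{d+1}}^{1/(4n)}$. The reason this works is a competition of scales: by \eqref{1} and $U_j^{-1}=\bar U_j$, the quantity $N_j$ and all the relevant norms of $U_j$ are only polynomially large in $\Vert Z_j^{\pm1}\Vert_{\CC^{d+1}}$, whereas \eqref{conditionconvergence} forces $\Vert F_j\Vert_{\CC^{d+1}}$, hence $\Vert G_j\Vert_{\CC^{d+1}}$ and $\rho_j$, to decay faster than any inverse power of those norms; consequently the hypotheses \eqref{taille-sigman-prop}, \eqref{smallness-prop} and \eqref{kappagrand} all hold for $j$ large. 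Proposition~\ref{proppresquered} then yields $W_j\in\CC^{\infty}(\T^d,Gl(n,\C))$, real matrices $B_j'$, and matrices $B_j''$ with $\Vert B_j''\Vert\le 2n^2\rho_j$ and $\Vert W_j^{\pm1}\Vert_{\CC^s}\le(2\pi nN_j)^{2s}$ for all $s$. Setting $Z_j'(\theta)=Z_j(2\theta)W_j(\theta)$, a direct computation, as in the proof of the real reducibility theorem of Section~\ref{section2} (there the conjugating matrix $P$ is here trivial, $B_j$ being already in Jordan form), gives
\[\partial_{\frac{\omega}{2}}Z_j'=A_2Z_j'-Z_j'\bigl(B_j'+\tilde F_j\bigr),\qquad \tilde F_j(\theta)=B_j''+W_j(\theta)^{-1}F_j(2\theta)W_j(\theta),\]
with $B_j'$ real.

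\noindent The only remaining work — and the step I expect to be the main obstacle — is to check the strong convergence \eqref{conditionconvergence} for the triple $(Z_j',B_j',\tilde F_j)$. By \eqref{1}, $\Vert (Z_j')^{\pm1}\Vert_{\CC^r}$ and $\Vert\tilde F_j\Vert_{\CC^r}$ are each bounded by a fixed power of $N_j$ times, respectively, $\Vert Z_j^{\pm1}\Vert_{\CC^r}$ and $\rho_j+\Vert F_j\Vert_{\CC^r}$; and since $N_j$ is itself bounded by a fixed power of $\max(\Vert Z_j\Vert_{\CC^{d+1}},\Vert Z_j^{-1}\Vert_{\CC^{d+1}})$, for each fixed $r,m$ the product $\Vert (Z_j')^{\pm1}\Vert_{\CC^r}^m\Vert\tilde F_j\Vert_{\CC^r}$ is bounded by a fixed power of $\Vert Z_j^{\pm1}\Vert_{\CC^R}$, with $R=\max(r,d+1)$, times $\rho_j+\Vert F_j\Vert_{\CC^R}$, which tends to $0$ because both $\rho_j=\Vert F_j\Vert_{\CC^{d+1}}^{1/(4n)}$ and $\Vert F_j\Vert_{\CC^R}$ beat any fixed power of $\Vert Z_j^{\pm1}\Vert_{\CC^R}$ by \eqref{conditionconvergence}. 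This establishes almost reducibility of $X_{\frac{\omega}{2},A_2}$ to the real matrices $B_j'$, and Proposition~\ref{presqueconjugaisonreelle} then finishes the argument. The genuinely delicate part is the exponent bookkeeping: one must choose $N_j$ and $\rho_j$ so that Proposition~\ref{proppresquered} applies and yet the conjugation it outputs is tame enough for \eqref{conditionconvergence} to survive — which is possible precisely because that convergence condition is super-polynomially strong.
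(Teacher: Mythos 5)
Your proposal follows essentially the same route as the paper's own proof: Proposition \ref{almost-jordan}, then $U_j=Z_j^{-1}\bar Z_j$ (with $U_j^{-1}=\bar U_j$) fed into Proposition \ref{proppresquered}, the composite conjugation $Z_j(2\theta)W_j(\theta)$ to the real matrices $B_j'$ with the super-polynomial condition \eqref{conditionconvergence} absorbing the polynomial losses in $N_j$ and $\Vert U_j\Vert$, and finally Proposition \ref{presqueconjugaisonreelle} at frequency $\frac{\omega}{2}$. The only difference is bookkeeping: the paper fixes $m,r$, takes $\rho_j$ depending on $r$, and then invokes the diagonal-extraction Lemma \ref{convergence-m}, whereas your uniform choice $\rho_j=\Vert F_j\Vert_{\CC^{d+1}}^{1/(4n)}$ makes the construction independent of $m,r$ and the same estimates go through.
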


\begin{proof}
By the proposition \ref{almost-jordan}, there exist $Z_j : \T^d \rightarrow Gl(n, \C)$ of class $\CC^\infty$, 
$B_j \in gl(n,\C)$ in Jordan normal form and $F_j \in C^\infty( \T^d , gl(n, \C))$ such that
\[ \forall j,\ \partial_\omega Z_j = A Z_j - Z_j(B_j+F_j)\]
\begin{equation} \label{convergence}  \quad \Vert Z_j ^{\pm 1} \Vert^m _{\CC^r}\Vert F_j \Vert_{\CC^r} \rightarrow_{j \rightarrow \infty} 0, \quad \forall r,m \in \N. \end{equation}
\bigskip
Fix $r \geq d+1$ and denote $\Vert F_j \Vert_{\CC^r} = \varepsilon_j$. By a similar reasoning as when proving \eqref{def-J1}, and from the convergence condition \eqref{convergence}, given $\beta' < \min(\frac{1}{2+8n^3\tau}, \frac{1}{16n^6})$, there exists $J_3$ such that, for all $j \geq J_3$,
\begin{equation} 
\Vert Z_j^{\pm 1} \Vert_{\CC^{r}} \leq \varepsilon_j^{-\beta'}. \label{def-J3}\end{equation}

\noindent Let $U_j = {Z_j}^{-1}\bar{Z_j}$. 
Then $U_j^{-1} = \bar U_j$ and 
\[ \xi_j = \Vert U_j \Vert_{\CC^0} = \Vert U_j^{-1}\Vert_{\CC^0} \geq 1,\]
and
\[ \Vert U_j \Vert_{\CC^{r}} \leq \varepsilon_j^{-2 \beta '}\]
Moreover
\begin{align*}
\partial_\omega U_j & = \partial_\omega({Z_j}^{-1}\bar Z_j)\\
			      & =  \partial_\omega ({Z_j}^{-1}) \bar Z_j + {Z_j}^{-1}\partial_\omega \bar{Z_j} \\
			      & = (( B_j +  F_j){Z_j}^{-1}-{Z_j}^{-1} A)\bar Z_j + {Z_j}^{-1} (\overline{AZ_j - Z_j( B_j+  F_j)}) \\
			      & =  B_j {Z_j}^{-1}\bar  Z_j +  F_j {Z_j}^{-1}\bar Z_j - {Z_j}^{-1}\bar Z_j \bar B_j - {Z_j}^{-1}\bar Z_j  \bar F_j +{Z_j}^{-1}(\bar A - A) \bar Z_j
\end{align*}
and since $A$ is real, 
\[ \partial_\omega U_j =  B_jU_j - U_j \bar B_j + G_j\]
where 
\[  G_j = F_j U_j - U_j \bar F_j\]
with, from inequality \eqref{1} given in introduction, 
\[ \Vert G_j \Vert_{\CC^r} \leq 2C_r\Vert U_j \Vert_{\CC^r} \Vert  F_j \Vert_{\CC^r}.\]
 Let
\[ \varepsilon'_j = 2C_r \Vert U_j \Vert_{\CC^{r}} \Vert  F_j \Vert_{\CC_r}\]
\[ N_j = C_1 \Vert U_j \Vert_{\CC^{d+1}} \Vert U_j \Vert_{\CC^0}\]
with $C_1$ the constant of proposition \ref{proppresquered}. We will apply Proposition \ref{proppresquered} with $N=N_j, \rho=\rho_j={\varepsilon'_j}^{\frac{1}{2n^3}}$, $U=U_j, B=B_j,G=G_j$. There are three assumptions to check. The assumption \eqref{taille-sigman-prop} holds by definition of $N$.

\noindent By definition of $\beta'$, we have
\[ \rho_j \Vert U_j\Vert_{\CC^0}^{n+1}(2N_j)^\tau \rightarrow_{j \rightarrow + \infty} 0,\]
and there exists $J_4$ (which we can choose $\geq J_3$) such that for all $j \geq J_4$,
\[ \kappa > C_1\rho_j\Vert U_j\Vert_{\CC^0}^{n+1} (2N_j)^\tau,\]
therefore the assumption \eqref{kappagrand} of Proposition \ref{proppresquered} holds.
Moreover, 
by definition of $N_j$ and $\varepsilon_j'$,
\begin{align*}
\rho_j^{-(2n-1)}( N_j  \Vert U_j \Vert_{\CC^0})^{dn^2}\varepsilon'_j & 
\leq( \varepsilon_j')^{1-\frac{2n-1}{2n^3}}(C_1\varepsilon_j^{-6\beta'})^{dn^2}\\
& \leq C (\varepsilon_j^{-2\beta'})^{1-\frac{2n-1}{2n^3}+3dn^2}\varepsilon_j^{1-\frac{2n-1}{2n^3}}
\end{align*}
where $C$ is a constant depending only on $d,n,r$, and the right hand side tends to $0$ as $j$ tends to infinity
. Therefore there exists $J_5$ ($\geq J_4$) such that, for $j \geq J_5$,
\[ \Vert G_j \Vert_{\CC^r} \leq \varepsilon'_j \leq  C_1^{-1}\rho_j^{2n-1}(N_j \Vert U_j \Vert_{\CC^0})^{-dn^2}\]
therefore the assumption \eqref{smallness-prop} of Proposition \ref{proppresquered} holds.
Apply the proposition \ref{proppresquered}: 
there exist $W_j : \T^d \rightarrow Gl(n, \C)$ of class $\CC^\infty$, $B'_j \in gl(n,\R)$ and $B''_j\in gl(n, \C)$ such that $B'_j$ 
has the same block diagonal structure as $B_j$, 
with
\[ \partial_\frac{\omega}{2}W_j =  B_jW_j - W_j(B'_j+B''_j),\]
\[ \Vert B''_j \Vert \leq 2n\rho = 2n{\varepsilon'}_j^{\frac{1}{2n^3}} \leq C_{n,r}\Vert U_j \Vert_{\CC^r}^{\frac{1}{2n^3}} \varepsilon_j^{\frac{1}{2n^3}},\]
\begin{equation} \Vert W_j^{\pm1} \Vert_{\CC^s} \leq (2 \pi nN_j)^{2s}, \quad \forall s \in \N, \label{estimationW}\end{equation}\\
Moreover, denoting $G_j = B_j''+W_j^{-1} F_j W_j$, we get
\begin{align*}
    \Vert G_j \Vert_{\CC_r} & \leq C_{n,r}\Vert U_j \Vert_{\CC^r}^{\frac{1}{2n^3}}\varepsilon_j^{\frac{1}{2n^3}}+ (2\pi nN_j)^{4r}\varepsilon_j \\
    & \leq C_{n,r}\Vert U_j \Vert_{\CC^r}^{\frac{1}{2n^3}}\varepsilon_j^{\frac{1}{2n^3}} + (32 C_1n)^{4r} \Vert U_j \Vert_{\CC^{d+1}}^{4r} \Vert U_j \Vert_{\CC^0}^{4r} \varepsilon_j  \\
    & \leq C_{r,n}' \Vert U_j \Vert_{\CC^r}^{8r} \varepsilon_j^{\frac{1}{2n^3}} .
\end{align*} 


Finally, for our fixed $m$ and $r \geq d+1$,
\begin{align*}
\Vert  (Z_jW_j)^{\pm 1}\Vert_{\CC^r}^m \Vert G_j \Vert_{\CC^r} &\leq  C_{r,n}''\Vert W_j^{\pm 1}\Vert_{\mathcal{C}^r} ^m \Vert Z_j^{\pm 1} \Vert_{\CC^r}^m \Vert U_j \Vert_{\CC^r}^{8r}\varepsilon^{\frac{1}{2n^3}} \\
& \leq C_{r,n}''' N_j^{4rm} \Vert Z_j^{\pm 1} \Vert_{\CC^r}^m\Vert U_j \Vert_{\CC^r}^{8r}\varepsilon^{\frac{1}{2n^3}} \\
& \leq C_{r,n}'''' \Vert U_j  \Vert_{\CC^r}^{8rm + 8r} \Vert Z_j^{\pm 1} \Vert_{\CC^r}^m\varepsilon^{\frac{1}{2n^3}} \\
& \leq C_{r,n}''''' \Vert Z_j^{\pm 1} \Vert_{\CC^r}^{16rm +16r+m}\varepsilon^{\frac{1}{2n^3}}
\end{align*} 
which tends to $0$ as $j \rightarrow \infty$ by assumption \eqref{convergence}. 

\bigskip
\noindent This construction depends on $m,r$; however, applying Lemma \ref{convergence-m}, one gets almost reducibility to a sequence of real matrices.

\bigskip
\noindent Notice that the transformations $ Z_jW_j$ are not real, but we can now apply Proposition \ref{presqueconjugaisonreelle} to get real ones.
\end{proof}

\section{Appendix}\label{blocsproches}

\subsection{A small divisors lemma}

The following classical lemma is useful to control the small divisors which might occur:

\begin{lemma}\label{majoration}
Let $f : \T^d \rightarrow \C$ of class $\CC^\infty$ and $N \in \N^{\ast}$ such that \[ \hat f(k) = 0, \quad \vert k \vert > N.\]
Let $\rho>0$. If $\alpha \in \C$ is such that 
\[ \vert \alpha - 2i\pi \langle k , \omega \rangle \vert \geq  \rho, \quad \forall \vert k \vert \leq N,\]
then the equation

\[
\left\{\begin{array}{l}
\partial_\omega u(\theta) = \alpha u(\theta) + f(\theta), \quad \theta \in \T^d, \\
\hat u(k)=0, \quad \text{if } \vert k \vert > N
\end{array}\right. \tag{$\ast$}
\]
has a unique solution $u: \T^d \rightarrow \C$ of class $\CC^\infty$ and it satisfies
\[ \Vert u \Vert_{\CC^0} \leq C \rho^{-1} N^{\frac{d+1}{2}}\Vert f\Vert_{\CC^0}\]
where $C$ is a constant depending on $d$.
\end{lemma}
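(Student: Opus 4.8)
The plan is to solve $(\ast)$ Fourier mode by Fourier mode and then bound the resulting finite trigonometric sum by Cauchy--Schwarz together with Bessel's inequality; this is an entirely classical computation.

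First I would expand in Fourier series. Since $f$ is a trigonometric polynomial of degree $\le N$ and $(\ast)$ imposes $\hat u(k)=0$ for $|k|>N$, the equation $\partial_\omega u=\alpha u+f$ reads, mode by mode for $|k|\le N$,
\[(2i\pi\langle k,\omega\rangle-\alpha)\hat u(k)=\hat f(k).\]
By the hypothesis $|2i\pi\langle k,\omega\rangle-\alpha|\ge\rho>0$ for all $|k|\le N$, each divisor is nonzero, so the Fourier coefficients are uniquely determined:
\[\hat u(k)=\frac{\hat f(k)}{2i\pi\langle k,\omega\rangle-\alpha}\quad(|k|\le N),\qquad \hat u(k)=0\quad(|k|>N).\]
This yields both existence and uniqueness, and $u$ is a trigonometric polynomial of degree $\le N$, hence automatically of class $\CC^\infty$.

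For the estimate, the divisor bound gives $|\hat u(k)|\le\rho^{-1}|\hat f(k)|$, so
\[\Vert u\Vert_{\CC^0}\le\sum_{|k|\le N}|\hat u(k)|\le\rho^{-1}\sum_{|k|\le N}|\hat f(k)|\le\rho^{-1}\big(\#\{k\in\Z^d:|k|\le N\}\big)^{1/2}\Big(\sum_{k\in\Z^d}|\hat f(k)|^2\Big)^{1/2}\]
by Cauchy--Schwarz. Bessel's inequality gives $\sum_{k}|\hat f(k)|^2=\Vert f\Vert_{L^2}^2\le\Vert f\Vert_{\CC^0}^2$, and the $\ell^1$-ball of radius $N$ in $\Z^d$ contains at most $C_dN^d$ lattice points, whence $\Vert u\Vert_{\CC^0}\le C\rho^{-1}N^{d/2}\Vert f\Vert_{\CC^0}$; since $N\ge1$ this is in particular $\le C\rho^{-1}N^{(d+1)/2}\Vert f\Vert_{\CC^0}$, which is the claim.

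There is no real obstacle here: the only points worth a sentence are that the divisors do not vanish (so the formal solution is well defined and unique) and that a band-limited function is $\CC^\infty$ for free. One could instead use the cruder estimate $|\hat f(k)|\le\Vert f\Vert_{\CC^0}$ directly, but that would only produce the power $N^d$; the Cauchy--Schwarz step is what brings the exponent below the $N^{(d+1)/2}$ that appears in the statement.
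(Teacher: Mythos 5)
Your proof is correct and follows essentially the same route as the paper: solve $(\ast)$ Fourier mode by mode using the non-vanishing divisors, then bound $\Vert u\Vert_{\CC^0}$ by Cauchy--Schwarz (the paper's ``H\"older'') together with $\Vert f\Vert_{L^2}\le\Vert f\Vert_{\CC^0}$. Your lattice-point count even gives the slightly sharper exponent $N^{d/2}$, which you correctly note is dominated by the stated $N^{(d+1)/2}$.
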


\begin{proof}
Decomposing the first equation $(\ast)$ into Fourier coefficients, we have
\[ 2i\pi \langle k ,\omega \rangle \hat u (k) = \alpha \hat u(k) + \hat f(k), \quad k\in \Z^d\]
and the unique solution of $(\ast)$ is defined by
\[
\hat u (k) = 
\left\{\begin{array}{c}
\frac{1}{2i \pi \langle k, \omega \rangle - \alpha } \hat f(k), \quad \forall \vert k \vert \leq N \\
 0, \quad \forall \vert k \vert > N.
\end{array}\right.
\]
By H\"older's inequality,

\[ \Vert u \Vert_{\CC^0} \leq \sum_k \vert \hat u(k) \vert \leq \frac{1}{\rho}\sum_{\vert k \vert \leq N} \vert \hat f(k) \vert \]
\[\leq \frac{1}{\rho}\sqrt{\sum_{\vert k \vert \leq N}1} \sqrt{\sum_{\vert k \vert \leq N} \vert \hat f(k) \vert^2} \leq C \frac{1}{\rho}N^{\frac{d+1}{2}} \Vert f \Vert_{L^2} \leq C \frac{1}{\rho}N^{\frac{d+1}{2}}\Vert f\Vert_{\CC^0},\]
where $C$ depends only on $d$.
\end{proof}

\subsection{Separating the spectrum of a matrix}

\begin{definition}[$\Gamma$-separation, $\Gamma$-connection]\label{def-gamma-close}
     Let $E_1$, $E_2$ two finite sets of complex numbers and let $\Gamma > 0$. We say that $E_1$ and $E_2$ are $\Gamma$-separated if for all $\alpha \in E_1$ and for all $\beta \in E_2$, $|\alpha-\beta|>\Gamma$. 

     We say that $E_1=\{\alpha_1, \dots, \alpha_n\}$ is $\Gamma$-connected if there is no decomposition $E_1=E'_1\cup E''_1$ with $E'_1$ and $E''_1$ $\Gamma$-separated. 
\end{definition}

\begin{remark}  If $E_1$ is $\Gamma$-connected, then for all $\alpha,\beta\in E_1$, $|\alpha-\beta|\leq \# E_1\cdot \Gamma$.
\end{remark}

\begin{lemma}\label{conjugaison-blocs} Let $B\in \mathcal{M}(n, \C)$ and $\Gamma > 0$. There exists $\tilde M\in Gl(n, \C)$ such that $\tilde M^{-1}B\tilde M$ is block diagonal, with each block being upper triangular with $\Gamma$-connected spectrum, and such that the spectrum of two distinct blocks are $\Gamma$-separated. Moreover $\tilde{M}$ satisfies 

\[ \Vert \tilde M \Vert \leq n^{3n} (\frac{\Vert B \Vert}{\Gamma^2})^{n^3} \]
\[ \Vert \tilde M^{-1} \Vert \leq n^{3n} (\frac{\Vert B \Vert}{\Gamma^2})^{n^3}, \]
and
\[ \Vert \tilde M^{-1}B \tilde M \Vert \leq C(n)B\]
where $C(n)$ is a constant depending on $n$.

\end{lemma}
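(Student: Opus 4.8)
The plan is to put $B$ in Schur (upper triangular) form and then to block-diagonalise it along the spectral subspaces attached to the $\Gamma$-connected components of $\sigma(B)$, the whole difficulty being to control the conditioning of the resulting basis change through a resolvent estimate; I would avoid the alternative route of reordering the Schur form and killing the off-diagonal blocks by Sylvester equations, since that route does not give the bound $\Vert\tilde M^{-1}B\tilde M\Vert\le C(n)\Vert B\Vert$ for free. Concretely, first write $U^{-1}BU=T=D+N$ with $U$ unitary, $D=\mathrm{diag}(\alpha_1,\dots,\alpha_n)$ and $N$ strictly upper triangular (so $N^{n}=0$ and $\Vert N\Vert\le\Vert T\Vert=\Vert B\Vert$), and partition $\sigma(B)$ into the connected components $\Sigma_1,\dots,\Sigma_k$ of the graph having an edge between $\alpha,\beta\in\sigma(B)$ exactly when $\vert\alpha-\beta\vert\le\Gamma$; by construction each $\Sigma_m$ is $\Gamma$-connected and distinct $\Sigma_m$ are pairwise $\Gamma$-separated. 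If $\Vert B\Vert<\Gamma/2$ then any two eigenvalues differ by at most $2\Vert B\Vert\le\Gamma$, so $k=1$ and $\tilde M=U$ works; from now on assume $\Vert B\Vert\ge\Gamma/2$. For $1\le m\le k$ put $V_m=\bigoplus_{\alpha\in\Sigma_m}\ker(B-\alpha I)^{n}$, a $B$-invariant subspace, so that $\C^{n}=V_1\oplus\dots\oplus V_k$, and let $P_m$ be the associated (in general oblique) spectral projection onto $V_m$.

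The crux is the estimate on $P_m$. I realise it as $P_m=\frac{1}{2i\pi}\oint_{\gamma_m}(zI-B)^{-1}\,dz$, where $\gamma_m$ is the boundary of the $(\Gamma/2)$-neighbourhood of $\Sigma_m$: this is a cycle of length $\le\pi n\Gamma$ that encloses exactly $\Sigma_m$ and stays at distance $\ge\Gamma/2$ from all of $\sigma(B)$ --- here the $\Gamma$-separation of the components is used, and this is the one genuinely delicate point. For $z$ on $\gamma_m$, conjugating by $U$ and factoring $zI-T=(zI-D)\bigl(I-(zI-D)^{-1}N\bigr)$, the Neumann series terminates since $(zI-D)^{-1}N$ is strictly upper triangular, and $\Vert(zI-D)^{-1}\Vert\le2/\Gamma$; hence
\[ \Vert(zI-B)^{-1}\Vert\ \le\ \Bigl(\sum_{j=0}^{n-1}\bigl(2\Vert B\Vert/\Gamma\bigr)^{j}\Bigr)\frac{2}{\Gamma}\ \le\ n\,2^{n}\,\frac{\Vert B\Vert^{\,n-1}}{\Gamma^{n}}, \]
and therefore $\Vert P_m\Vert\le\frac{\pi n\Gamma}{2\pi}\cdot n\,2^{n}\,\Vert B\Vert^{n-1}/\Gamma^{n}\le n^{2}2^{n}\bigl(\Vert B\Vert/\Gamma\bigr)^{n-1}$.

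For the conclusion, choose for each $m$ an orthonormal basis $Q_m$ of $V_m$ in which $B\vert_{V_m}$ is upper triangular (a Schur form inside $V_m$) and set $\tilde M=[\,Q_1\mid\dots\mid Q_k\,]$. Since $BV_m\subseteq V_m$ and $Q_m^{*}Q_m=I$, one gets $\tilde M^{-1}B\tilde M=\mathrm{diag}(B_1,\dots,B_k)$ with $B_m=Q_m^{*}BQ_m$ upper triangular and $\sigma(B_m)=\Sigma_m$ --- precisely the required block-diagonal, block-upper-triangular form with $\Gamma$-connected and pairwise $\Gamma$-separated spectra. Orthonormality of the columns of each $Q_m$ gives $\Vert\tilde M\Vert\le\sqrt{k}\le\sqrt{n}$; writing $\tilde M^{-1}$ in block-row form as $(Q_1^{*}P_1,\dots,Q_k^{*}P_k)$ (using $\sum_m P_m=I$) gives $\Vert\tilde M^{-1}\Vert\le\sqrt{n}\,\max_m\Vert P_m\Vert\le n^{5/2}2^{n}(\Vert B\Vert/\Gamma)^{n-1}$; and $\Vert\tilde M^{-1}B\tilde M\Vert=\max_m\Vert Q_m^{*}BQ_m\Vert\le\Vert B\Vert$, which is the last estimate with $C(n)=1$.

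Finally I would match the claimed exponents: in the regime in which the lemma is invoked one has $\Gamma\le1$ and $\Vert B\Vert\ge\Gamma$, so $1\le\Vert B\Vert/\Gamma\le\Vert B\Vert/\Gamma^{2}$, whence $\sqrt{n}\le n^{3n}(\Vert B\Vert/\Gamma^{2})^{n^{3}}$ and $n^{5/2}2^{n}(\Vert B\Vert/\Gamma)^{n-1}\le n^{3n}(\Vert B\Vert/\Gamma^{2})^{n^{3}}$ (using $n-1\le n^{3}$ and $n^{5/2}2^{n}\le n^{3n}$), which gives the two stated bounds on $\Vert\tilde M^{\pm1}\Vert$; the case $n=1$ is trivial. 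The single substantial step in the whole argument is the resolvent estimate on $\gamma_m$ --- and, within it, making sure that $\gamma_m$ genuinely separates $\Sigma_m$ from the rest of $\sigma(B)$, which is exactly where the $\Gamma$-separation of the clusters enters --- everything else being routine bookkeeping with the deliberately generous exponents $n^{3n}$, $n^{3}$.
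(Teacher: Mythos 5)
Your argument is correct, but it follows a genuinely different route from the paper's. The paper also starts from a Schur form, but then proceeds purely algebraically: it groups the eigenvalues into $\Gamma$-connected, pairwise $\Gamma$-separated packets, reorders the triangular matrix, and kills the off-diagonal blocks one packet at a time by solving the Sylvester equations $T_1R-RT_4=-T_2$ coefficient-by-coefficient (back-substitution, each division by an eigenvalue difference of modulus $>\Gamma$), composing at most $n-1$ unipotent block-triangular conjugations; the block-diagonal limit is literally the triangular form with entries deleted, which is where its bound $\Vert\tilde M^{-1}B\tilde M\Vert\le C(n)\Vert B\Vert$ comes from. You instead build the conjugation from the Riesz spectral projections $P_m=\frac{1}{2i\pi}\oint_{\gamma_m}(zI-B)^{-1}dz$, controlling the resolvent on the contour through the terminating Neumann series in the Schur form, and then orthonormalize inside each spectral subspace. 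Your route buys noticeably better constants: $\Vert\tilde M\Vert\le\sqrt n$, $\Vert\tilde M^{-1}\Vert\lesssim n^{5/2}4^{n}(\Vert B\Vert/\Gamma)^{n-1}$, and $C(n)=1$ in the last estimate since each diagonal block is a compression $Q_m^{*}BQ_m$ by isometries (so your aside that the Sylvester route does not give that bound ``for free'' is not quite fair to the paper, which gets it immediately from entry deletion); the paper's route buys elementarity (no contour integration) and a conjugation that stays inside the triangular calculus used elsewhere in the appendix. Two small repairs: first, $\Vert N\Vert\le\Vert T\Vert$ is not justified --- what is true is $\Vert D\Vert\le\Vert T\Vert$ for the diagonal part, hence $\Vert N\Vert\le 2\Vert T\Vert=2\Vert B\Vert$, which only changes $2^{n}$ into $4^{n}$ and is absorbed by your generous exponents; second, your matching of the stated constants uses $\Vert B\Vert/\Gamma^{2}\ge1$, which is indeed needed --- when $\Vert B\Vert/\Gamma^{2}$ is small the stated bounds cannot be met by any invertible $\tilde M$ (since $\Vert\tilde M\Vert\,\Vert\tilde M^{-1}\Vert\ge1$), so this normalization is implicit in the statement (and in the paper's own proof), and your explicit restriction to the regime in which the lemma is applied is an acceptable, even clarifying, way to handle it.
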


\begin{proof}
\textbf{Step 1:} By Schur decomposition, there exist an unitary matrix $Q \in Gl(n, \C)$ and an upper triangular matrix $T\in gl(n,\C)$ such that
\[ B = QTQ^{-1}\]

\noindent 
Obviously $\Vert B\Vert =\Vert T\Vert$.

\bigskip
\textbf{Step 2:} Let $E_1, \dots, E_l$ be $\Gamma$-separated sets of eigenvalues of $B$, each of them being $\Gamma$-connected, and write (up to a permutation, which does not change the estimates) 
\[ B = \left(\begin{array}{cc}T_1 & T_2  \\0 & T_4\end{array}\right), \quad D = \left(\begin{array}{cc}T_1 & 0  \\0 & T_4\end{array}\right)\]
where $T_1$ is the block corresponding to the eigenvalues of $E_1$ and $T_4$ is the block corresponding to the eigenvalues of $E_2 \cup \dots \cup E_l$.
Write

\[ M = \left(\begin{array}{cc}I & R  \\0 & I\end{array}\right), M^{-1} = \left(\begin{array}{cc}I & -R  \\0 & I\end{array}\right),  \]
then the conjugation $MBM^{-1}=D$ is equivalent to
\[ T_1 R + T_2 = RT_4\]

\noindent where $R$ is the unknown.
Let $n_1$ be the dimension of $T_1$ and $n_2$ the dimension of $T_4$. 
Decomposing to coefficients, the previous matrix equation can be written
\[ -(T_2)_{i,j}= \sum_{k= i}^{n_1} (T_1)_{i,k}R_{k,j}-\sum_{l=1}^jR_{i,l}(T_4)_{l,j}\]

\noindent 
Solving these equations in the following order: \[(i,j)=(n_1,1),(n_1,2),\dots, (n_1,n_2),(n_1-1,1),\dots, (n_1-1,n_2),\dots, (1,1),(1,2),\dots,(1,n_2),\]
one sees that the coefficients of $R$ have upper bound $ (n_1n_2)(\frac{\Vert B\Vert}{\Gamma^2})^{n_1n_2}$ which implies that 

$$\Vert M\Vert\leq n(n_1n_2)(\frac{\Vert B\Vert}{\Gamma^2})^{n_1n_2}.$$

\noindent 
Iterate step 2 replacing $B$ by $T_4$ and $T_4$ by the blocks corresponding to $E_{l-k}\cup\dots \cup E_l$.
The algorithm stops when $T_4$ only has the eigenvalues of $E_l$, that is to say after at most $n-1$ steps.

\bigskip
\noindent 
Finally there exist an invertible matrix $\tilde{M}$ of dimension $n$, a block diagonal matrix $\tilde D=(\tilde D_1, \dots, \tilde D_l)$ of dimension $n$, with each block $T_i$ corresponding to the group of eigenvalues $E_i$ (which remains $\Gamma$-separated). 

\[\tilde D = \tilde{M}B\tilde{M}^{-1}\] with \[\Vert \\\tilde{M}\Vert,\Vert\bar{M}^{-1}\Vert \leq  n^{3n}(\frac{\Vert B\Vert}{\Gamma^2})^{n^3}. \]

\noindent 
Since $\tilde D$ is constructed from $B$ by removing coefficients, its norm is $\leq C(n) \Vert B \Vert$ where $C(n)$ is a constant depending on $n$.
\end{proof}

\begin{corollary}\label{induction-blocs}
    
    Let $A\in gl(n, \C)$. Given a positive decreasing sequence $(\Gamma_i)$, there exists $d_0 \in \N$, $d_0\leq n$, depending on $A$, and there exists $B \in gl(n, \C)$ a block diagonal matrix where each block is upper-triangular with $\Gamma_{d_0}$-connected spectrum, and $S$ invertible such that
    \[ A = SB S^{-1}\]
\[ \Vert S \Vert, \Vert S^{-1} \Vert  
\leq n^{3n} (\frac{\Vert B \Vert}{\Gamma_{d_0-1}^2})^{n^3},\]
and
\[ \Vert A \Vert \leq C(n) \Vert B \Vert\]
where $C(n)$ is a constant depending on $n$.
\end{corollary}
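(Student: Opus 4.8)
The plan is to avoid any iteration and reduce the statement to a single, carefully chosen application of Lemma \ref{conjugaison-blocs}. For a parameter $\Gamma>0$ let $\pi(\Gamma)$ denote the partition of $\sigma(A)$ into the connected components of the graph on $\sigma(A)$ whose edges join two eigenvalues at distance $\le\Gamma$. Each such component is a $\Gamma$-connected set in the sense of Definition \ref{def-gamma-close}, and two distinct components are $\Gamma$-separated; conversely, any partition of $\sigma(A)$ into $\Gamma$-connected pieces which are pairwise $\Gamma$-separated must have each piece a union of components and, being $\Gamma$-connected, a single component. Hence such a partition is unique and equals $\pi(\Gamma)$, and in particular the block decomposition produced by Lemma \ref{conjugaison-blocs} applied with parameter $\Gamma$ is exactly $\pi(\Gamma)$.

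First I would record the monotonicity of $\Gamma\mapsto\pi(\Gamma)$: if $\Gamma\ge\Gamma'$, every edge of the $\Gamma'$-graph is an edge of the $\Gamma$-graph, so $\pi(\Gamma')$ refines $\pi(\Gamma)$. Since $(\Gamma_i)$ is decreasing, $\big(\pi(\Gamma_i)\big)_i$ is a refining chain of partitions of $\sigma(A)$, hence $\big(\#\pi(\Gamma_i)\big)_i$ is a non-decreasing sequence of integers lying in $\{1,\dots,\#\sigma(A)\}\subseteq\{1,\dots,n\}$. If this sequence is not strictly increasing on its first $n$ terms, there is $d_0\in\{2,\dots,n\}$ with $\#\pi(\Gamma_{d_0-1})=\#\pi(\Gamma_{d_0})$, and since these nested partitions then coincide we get $\pi(\Gamma_{d_0-1})=\pi(\Gamma_{d_0})$. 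Otherwise the sequence is strictly increasing among the first $n$ terms, which forces $\#\pi(\Gamma_1)=1$, i.e. $\sigma(A)$ is $\Gamma_1$-connected; then I would take $d_0=1$, in which case a Schur triangularisation of $A$ is already a single $\Gamma_1$-connected block and the conjugation (unitary) costs nothing.

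Finally I would apply Lemma \ref{conjugaison-blocs} to $A$ with the parameter $\Gamma:=\Gamma_{d_0-1}$ selected above. It produces $S\in Gl(n,\C)$ and a block diagonal matrix $B=S^{-1}AS$ with upper triangular blocks whose block decomposition is $\pi(\Gamma_{d_0-1})$, together with the bounds on $\Vert S\Vert$, $\Vert S^{-1}\Vert$ and the comparison between $\Vert A\Vert$ and $\Vert B\Vert$ stated in the corollary (these are exactly the conclusions of Lemma \ref{conjugaison-blocs}). Since $\pi(\Gamma_{d_0-1})=\pi(\Gamma_{d_0})$, the spectrum of each block of $B$ is one cluster of $\pi(\Gamma_{d_0})$, hence a $\Gamma_{d_0}$-connected set, which is the remaining assertion. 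The one thing that has to be done with care is the middle step: one must know that the output of Lemma \ref{conjugaison-blocs} realises the canonical clustering $\pi(\Gamma)$, so that stabilisation of the partition from level $d_0-1$ to level $d_0$ makes the $\Gamma_{d_0-1}$-connected blocks automatically $\Gamma_{d_0}$-connected, and one must handle the degenerate globally $\Gamma_1$-connected case separately; the rest is a direct transcription of Lemma \ref{conjugaison-blocs}.
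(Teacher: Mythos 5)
Your proof is correct and runs on the same engine as the paper's own argument --- the cluster count of $\sigma(A)$ can only grow under refinement and is bounded by $n$, so it stabilizes between two consecutive levels $\Gamma_{d_0-1},\Gamma_{d_0}$ with $d_0\le n$ --- but you package it differently. The paper applies Lemma \ref{conjugaison-blocs} iteratively with $\Gamma_0,\Gamma_1,\dots$ (always to the same $A$) and stops the first time the blocks produced at one level are already connected at the next level, termination coming from the fact that otherwise some block splits; you instead determine $d_0$ purely combinatorially beforehand, via the canonical clustering $\pi(\Gamma)$ by components of the proximity graph, its refinement monotonicity and a pigeonhole on $\#\pi(\Gamma_i)$, and then invoke the lemma exactly once with parameter $\Gamma_{d_0-1}$. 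What your route buys is that it makes explicit what the paper leaves implicit: any partition of $\sigma(A)$ into pairwise $\Gamma$-separated, $\Gamma$-connected pieces is unique and equals $\pi(\Gamma)$, hence the lemma's block structure is canonical, successive decompositions refine one another, and stabilization of $\pi$ is exactly why the $\Gamma_{d_0-1}$-blocks are automatically $\Gamma_{d_0}$-connected (this is the rigorous content behind the paper's phrase ``you split a block at each step''). Two minor points: the paper's sequence starts at $\Gamma_0$, so your degenerate case $\#\pi(\Gamma_1)=1$ needs no separate unitary Schur argument --- then $\pi(\Gamma_0)=\pi(\Gamma_1)$ and the general step applies with $d_0=1$, which also keeps the estimate in the stated form with $\Gamma_{d_0-1}$; and, exactly as in the paper's own proof, you quote the norm estimates verbatim from Lemma \ref{conjugaison-blocs}, whose conclusions are phrased in terms of the input norm (here $\Vert A\Vert$) and give $\Vert B\Vert\le C(n)\Vert A\Vert$ rather than literally the inequalities written in the corollary; that discrepancy sits in the paper's statement, not in your argument.
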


\begin{proof}
    By induction on $\Gamma_i$:
\textbf{Base case:} first apply lemma \ref{conjugaison-blocs} on $A$ with $\Gamma = \Gamma_0$. This gives a conjugation $S$ from $A$ to a matrix $B$ where each block is upper triangular with $\Gamma_0$-connected spectrum. If the spectrum of every block of $B$ is also $\Gamma_1$-connected, then we are done, if not, apply lemma \ref{conjugaison-blocs} to $A$ with $\Gamma_1$. \\
\textbf{Induction step:} given $d \in \N^{\ast}$, apply lemma \ref{conjugaison-blocs} on $A$ with $\Gamma_d$. This gives a conjugation $S$ from $A$ to a block diagonal matrix $B$ where each block is upper triangular with $\Gamma_d$-connected spectrum. If the spectrum of every block of $B$ is also $\Gamma_{d+1}$-connected, then we are done, if not, apply lemma \eqref{conjugaison-blocs} to $A$ with $\Gamma_{d+1}$. \\

After each step, the number of eigenvalues in each block decreases (because you split a block at each step). Then there exists an index $d_0$ such that applying lemma \eqref{conjugaison-blocs} on $A$ with $\Gamma_{d_0-1}$, this gives $B$ and $S$ where the spectrum of every block of $B$ is $\Gamma_{d_0}$-connected and the matrix $S$ satisfies the estimate with $\Gamma_{d_0-1}$. 
\end{proof}
\begin{remark}
If $\# \sigma(A) = n_0$, then if the induction is applied $n_0$ times, each block contains only one eigenvalue. Also notice that the induction does not change $A$, whence the exponents in the estimate.
\end{remark}

\subsection{Stability of the Jordan structure}

Let $A=(a_i^j)_{1\le i,j\le n}$ and $B=(b_i^j)_{1\le i,j\le n}$ be two matrices with entries $0$ and $1$ such that for some $r\in \{1,\dots, n-1\}$,
$$a_i^j=b_i^j=0\quad \mathrm{if}\  j-i\not=r.$$

 \medskip
 
\begin{lemma}\label{rank} Let $\varepsilon, \xi > 0$. If there exists an invertible matrix $C$ such that
$$\Vert C\Vert ,\Vert C^{-1}\Vert \le \xi$$
and
$$\Vert AC-CB\Vert=\varepsilon < \frac1{n!\xi^n},$$
then $\mathrm{rank}\ A=\mathrm{rank}\ B$.
\end{lemma}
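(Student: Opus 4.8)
The plan is to prove the two inequalities $\mathrm{rank}\,A\le\mathrm{rank}\,B$ and $\mathrm{rank}\,B\le\mathrm{rank}\,A$ separately, the second by transposing the relation. First note that $\xi\ge 1$, since $\Vert C\Vert\,\Vert C^{-1}\Vert\ge\Vert CC^{-1}\Vert=1$; hence $\varepsilon<\frac1{n!\xi^n}\le\frac1\xi$, so $\varepsilon\Vert C^{-1}\Vert\le\varepsilon\xi<1$. This turns out to be the only way the hypothesis on $\varepsilon$ enters, so the stated bound is comfortably more than enough.

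Next I would record the two elementary features of the $0$--$1$, single-superdiagonal structure. Put $J=\{\,i+r:\ 1\le i\le n-r,\ a_i^{i+r}=1\,\}$. The nonzero columns of $A$ are precisely the pairwise distinct standard basis vectors $\{e_i:\ i+r\in J\}$, so $\mathrm{rank}\,A=|J|$ and $\ker A=\{v\in\C^n:\ v_c=0\ \text{for all }c\in J\}$. Moreover, since every entry of $A$ is $0$ or $1$, for the Hermitian norm on $\C^n$ one has the identity
\[\Vert Av\Vert=\Vert P_J v\Vert\qquad\text{for all }v\in\C^n,\]
where $P_J$ is the orthogonal projection onto $\Span\{e_c:c\in J\}$; similarly for $B$ with $J'=\{i+r:b_i^{i+r}=1\}$.

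The core step is then short. Set $E=AC-CB$, so $\Vert E\Vert=\varepsilon$. For $v\in\ker B$ we get $ACv=CBv+Ev=Ev$, so, writing $w=Cv$,
\[\Vert P_J w\Vert=\Vert Aw\Vert=\Vert Ev\Vert\le\varepsilon\Vert v\Vert\le\varepsilon\Vert C^{-1}\Vert\,\Vert w\Vert\le\varepsilon\xi\,\Vert w\Vert .\]
Since $C$ is invertible, $C(\ker B)$ is a subspace of dimension $\dim\ker B=n-\mathrm{rank}\,B$, and on it $\Vert(I-P_J)w\Vert^2=\Vert w\Vert^2-\Vert P_J w\Vert^2\ge\bigl(1-(\varepsilon\xi)^2\bigr)\Vert w\Vert^2>0$ for $w\ne0$. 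Hence $w\mapsto(I-P_J)w$ is injective on $C(\ker B)$, and its image lies in $\Span\{e_c:c\notin J\}$, of dimension $n-|J|=n-\mathrm{rank}\,A$; therefore $n-\mathrm{rank}\,B\le n-\mathrm{rank}\,A$, i.e.\ $\mathrm{rank}\,A\le\mathrm{rank}\,B$. Transposing the relation gives $C^{T}A^{T}-B^{T}C^{T}=E^{T}$, where $A^{T},B^{T}$ are again $0$--$1$-valued (now on the $r$-th \emph{sub}-diagonal), $C^{T}$ is invertible with $\Vert(C^{T})^{-1}\Vert=\Vert C^{-1}\Vert\le\xi$ and $\Vert E^{T}\Vert=\varepsilon$; running the same argument verbatim with $(B^{T},A^{T},C^{T})$ in place of $(A,B,C)$ yields $\mathrm{rank}\,B=\mathrm{rank}\,B^{T}\le\mathrm{rank}\,A^{T}=\mathrm{rank}\,A$, and combining the two inequalities gives $\mathrm{rank}\,A=\mathrm{rank}\,B$. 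I do not expect a real obstacle here; the one point that must be handled carefully is that the smallness of $E$ alone cannot control the rank — a generic perturbation raises it — so the argument must feed the $0$--$1$ hypothesis in through $\Vert Av\Vert=\Vert P_J v\Vert$, which is exactly what converts the bound $\varepsilon\xi<1$ into injectivity of $I-P_J$ on $C(\ker B)$.
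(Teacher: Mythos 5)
Your proof is correct, and it takes a genuinely different route from the paper's. The paper's own argument is a determinant argument: writing $I=\{i:a_i^{i+r}=1\}$ and $J=\{j:b_{j-r}^{j}=1\}$, the relation $\Vert AC-CB\Vert=\varepsilon$ forces the entries $c_{i+r}^{j}$ (for $i\in I$, $j\notin J$) and $c_i^{j-r}$ (for $i\notin I$, $j\in J$) to be $\le\varepsilon$; if $\#I\neq\#J$ this makes every term of the Leibniz expansion of $\det C$ contain a small factor, so $\vert\det C\vert\le n!\varepsilon\xi^{n-1}$, and the same reasoning applied to $C^{-1}$ (via $\Vert C^{-1}A-BC^{-1}\Vert\le\varepsilon\xi^{2}$) gives $\vert\det C^{-1}\vert\le n!\varepsilon\xi^{n+1}$, whence $1=\vert\det(CC^{-1})\vert\le(n!\xi^{n}\varepsilon)^{2}$, contradicting the hypothesis; hence $\#I=\#J$, i.e.\ the ranks agree. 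You instead argue with kernels and the orthogonal projection $P_J$: the identity $\Vert Av\Vert=\Vert P_Jv\Vert$ (valid because the nonzero columns of $A$ are distinct standard basis vectors) converts $ACv=Ev$ on $\ker B$ into the statement that $P_J$ contracts by $\varepsilon\xi<1$ on $C(\ker B)$, so $I-P_J$ is injective there with image inside $\Span\{e_c:c\notin J\}$, giving $n-\mathrm{rank}\,B\le n-\mathrm{rank}\,A$, and transposition gives the reverse inequality. Both proofs are complete; the trade-offs are as follows. Your version only needs $\varepsilon\xi<1$ (and only the bound on $\Vert C^{-1}\Vert$ for the first inequality), which is much weaker than $\varepsilon<1/(n!\xi^{n})$, so it would propagate slightly better thresholds into Proposition \ref{pApp2} and hence Lemma \ref{l6.8}; on the other hand it is tied to the operator norm induced by the Hermitian inner product (Pythagoras for $I-P_J$), which is the paper's default norm, whereas the determinant argument works directly from entrywise bounds and is the same device the paper reuses elsewhere (e.g.\ in the proofs of Lemmas \ref{lemma1} and \ref{l6.5}), which explains the paper's choice and its more generous constant.
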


\medskip

\begin{proof} Let
$$I=\{ i\in \llbracket 1,n-r \rrbracket : a_i^{i+r}=1\}\quad\&\quad J=\{ j\in\llbracket r+1,n \rrbracket : b_{j-r}^{j}=1\}.$$
Then
$$(AC-CB)_i^j=a_i^{i+r}c_{i+r}^j- c_i^{j-r} b_{j-r}^j=
\begin{cases}
c_{i+r}^j & \mathrm{if} \ i\in I\ \text{ and } j\notin J \\
-c_i^{j-r} & \mathrm{if} \ i\notin I\ \text{ and } j\in J.
\end{cases}
$$
thus 
$$
\begin{cases}
\vert c_{i+r}^j\vert \le \varepsilon  & \mathrm{if} \ i\in I\ \text{ and } j\notin J \\
\vert c_i^{j-r}\vert\le \varepsilon  & \mathrm{if} \ i\notin I\ \text{ and } j\in J.
\end{cases} $$

\noindent 
It follows that 
if $\#I\neq \#J$,
$$\vert \det C\vert\le n!\varepsilon\xi^{n-1}.$$

\noindent 
Now
$$\Vert C^{-1}A-BC^{-1} \Vert=\Vert C^{-1}\big( AC-CB\big) C^{-1} \Vert \leq  \varepsilon \xi^2$$
so, in the same way as for $C$, 
$$\vert \det C^{-1}\vert\le n!\varepsilon\xi^2\xi^{n-1}$$
if $\#I$ is different from $ \# J$.

\noindent 
Therefore
$$1=\vert \det(C C^{-1})\vert\le (n!\xi^n\varepsilon)^2$$
which is impossible by assumption. Hence $\#I=\# J$.
\end{proof}

\medskip

\noindent 
Let now $A$ and $B$ be two nilpotent $n\times n$-matrices on Jordan normal form.

\begin{proposition}\label{pApp2} If there exists an invertible matrix $C$,
$$\Vert C\Vert ,\Vert C^{-1}\Vert \le \xi$$
such that
$$\Vert AC-CB\Vert=\varepsilon < \frac1{n\cdot n!\xi^n},$$
then $A$ and $B$ have the same Jordan structure, i.e. they have the same number of Jordan blocks
of dimension $k$, for all $k=1,2,\dots$.
\end{proposition}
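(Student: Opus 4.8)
The plan is to reduce the statement to the rank equality of Lemma~\ref{rank} applied to the successive powers $A^k,B^k$. The starting point is the classical fact that the Jordan structure of a nilpotent matrix $N$ is entirely encoded in the ranks of its powers: writing $r_k=\mathrm{rank}(N^k)$ (so $r_0=n$), the number of Jordan blocks of $N$ of size exactly $k$ equals $r_{k-1}-2r_k+r_{k+1}$. Hence it suffices to show $\mathrm{rank}(A^k)=\mathrm{rank}(B^k)$ for every $k\ge 1$.

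First I would record two elementary properties of a nilpotent matrix in Jordan normal form. (a) For each $k$, every entry of $A^k$ is $0$ or $1$, and $(A^k)_i^j=0$ unless $j-i=k$; the same holds for $B^k$. In particular $A^k$ and $B^k$ satisfy exactly the structural hypotheses of Lemma~\ref{rank} with $r=k$. (b) In the operator norm $\Vert A^k\Vert\le 1$ and $\Vert B^k\Vert\le 1$, since $A^k$ and $B^k$ are partial shift matrices (at most one nonzero entry, equal to $1$, in each row and in each column). For $k\ge n$ both powers vanish and their ranks trivially coincide, so only $1\le k\le n-1$ needs to be treated.

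Next I would transport the smallness of the commutator to the powers via the telescoping identity
\[ A^kC-CB^k=\sum_{j=0}^{k-1}A^{\,k-1-j}\,(AC-CB)\,B^{\,j}, \]
which together with (b) and submultiplicativity of the operator norm gives $\Vert A^kC-CB^k\Vert\le k\,\varepsilon\le (n-1)\,\varepsilon$. Since by hypothesis $\varepsilon<\frac1{n\cdot n!\,\xi^n}$, this yields $\Vert A^kC-CB^k\Vert<\frac1{n!\,\xi^n}$ for every $k\in\{1,\dots,n-1\}$.

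Finally, for each such $k$ I would apply Lemma~\ref{rank} to the pair $(A^k,B^k)$, with the same $C$, the same bound $\xi$, the exponent $r=k$, and $\Vert A^kC-CB^k\Vert$ playing the role of $\varepsilon$: property (a) supplies the hypotheses on entries and support, and the previous estimate supplies the smallness hypothesis. Hence $\mathrm{rank}(A^k)=\mathrm{rank}(B^k)$ for all $k\ge 1$ (and also $k=0$, where both ranks equal $n$), and the block-counting formula recalled above shows that $A$ and $B$ have the same number of Jordan blocks of each size. I do not expect a genuine obstacle here: the only points requiring a little care are checking that the powers of a Jordan-form nilpotent matrix fit the template of Lemma~\ref{rank}, and the operator-norm bound $\Vert A^k\Vert\le 1$ used in the telescoping step.
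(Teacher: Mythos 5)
Your proposal is correct and follows essentially the same route as the paper: bound $\Vert A^kC-CB^k\Vert\le k\varepsilon\le(n-1)\varepsilon<\frac{1}{n!\xi^n}$ (the paper does this by the induction $X_{k+1}=X_kB+A^kX_1$, you by the equivalent telescoping sum) and then apply Lemma~\ref{rank} to each pair $(A^k,B^k)$ to get $\mathrm{rank}\,A^k=\mathrm{rank}\,B^k$ for all $k$. Your added details (the $0$--$1$ superdiagonal structure and norm bound of the powers, and the block-counting formula $r_{k-1}-2r_k+r_{k+1}$) are exactly the facts the paper uses implicitly in its final ``this implies the statement.''
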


\medskip

\begin{proof} Let
$$X_k=A^kC-C B^k.$$
Since $\Vert X_1\Vert\le \varepsilon$,
it follows by an easy induction that
$$\Vert X_k\Vert\le k\varepsilon\le (n-1)\varepsilon:$$

\begin{itemize}
\item For $k=1$, this is the assumption;
\item For $k\geq 1$, notice
$$X_{k+1}=A^{k+1}C -CB^{k+1}= A^k AC-CB^{k+1} = A^k (CB+X_1) -CB^k B =X_k B + A^k X_1
$$
and the estimate follows by induction since $A,B$ have norm $\leq 1$.
\end{itemize}

\noindent 
Then, by the lemma \ref{rank},
$$\mathrm{rank}\ A^k=\mathrm{rank}\ B^k,\quad \forall k\geq 1$$
This implies the statement.

\end{proof}

\subsection{Conjugation to a real matrix whose spectrum is stable by complex conjugation}

\begin{lemma}\label{spectre-conjugué}
Let $A$ a matrix in  Jordan normal form, such that if $\alpha$ is an eigenvalue, then $\bar \alpha$ is also an eigenvalue and the Jordan blocks of $\alpha$ and $\bar \alpha$ are identical. 
There exists a unitary matrix $P$ 
such that $P^*AP$ is on real Jordan normal form.
\end{lemma}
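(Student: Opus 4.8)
The plan is to write $A$ as a block-diagonal matrix of Jordan blocks and to group these blocks into three families: (a) Jordan blocks attached to a real eigenvalue; (b) for each non-real eigenvalue $\alpha$ with $\operatorname{Im}\alpha>0$, the pair consisting of a Jordan block $J_k(\alpha)$ and its partner $J_k(\bar\alpha)$ (these exist in equal numbers and with identical sizes by hypothesis). For a block of type (a) there is nothing to do: it is already real, and the corresponding part of $P$ is the identity (which is unitary). So the whole problem reduces to a single conjugate pair $J_k(\alpha)\oplus J_k(\bar\alpha)$, and it suffices to exhibit one unitary $2k\times 2k$ matrix $P$ with $P^*\big(J_k(\alpha)\oplus J_k(\bar\alpha)\big)P$ in real Jordan form; the global $P$ is then the block-diagonal (hence unitary) matrix assembled from these pieces and the identity blocks, after an initial permutation (also unitary) regrouping the blocks — and a permutation conjugation does not change the Jordan structure.

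For the single pair, I would proceed as follows. The real Jordan block for the pair $\alpha=a+ib$, $b>0$, of size $k$ is the $2k\times 2k$ matrix $R_k(a,b)$ built from $2\times2$ diagonal blocks $\begin{pmatrix} a & -b\\ b & a\end{pmatrix}$ with $2\times2$ identity blocks on the super-diagonal. The classical change of basis diagonalizing $R_k(a,b)$ over $\C$ pairs, for each index $j$, the two real coordinate vectors $e_{2j-1},e_{2j}$ with the complex combinations $\tfrac{1}{\sqrt2}(e_{2j-1}+i e_{2j})$ and $\tfrac{1}{\sqrt2}(e_{2j-1}-i e_{2j})$; these are orthonormal, so the matrix $Q$ whose columns are these vectors (in the order that sends $R_k(a,b)$ to $J_k(\alpha)\oplus J_k(\bar\alpha)$) is unitary. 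Then $P:=Q$ (or a permutation of its columns) satisfies $P^*\big(J_k(\alpha)\oplus J_k(\bar\alpha)\big)P = R_k(a,b)$, which is on real Jordan normal form. I would verify the intertwining relation $Q R_k(a,b) = \big(J_k(\alpha)\oplus J_k(\bar\alpha)\big)Q$ by a direct computation on the two spanning families: on $\tfrac{1}{\sqrt2}(e_{2j-1}\pm i e_{2j})$ the map $R_k(a,b)$ acts by multiplication by $\alpha$ resp. $\bar\alpha$ plus a shift to index $j-1$, which is exactly the action of $J_k(\alpha)$ resp. $J_k(\bar\alpha)$ in the new basis.

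The only mildly delicate point is bookkeeping: making sure the column order of $Q$ is chosen so that the image is literally $J_k(\alpha)$ followed by $J_k(\bar\alpha)$ (rather than an interleaving), and that "identical Jordan blocks for $\alpha$ and $\bar\alpha$" is used precisely to guarantee the sizes match pair by pair so that each elementary $2k\times2k$ piece makes sense. Everything else — that a direct sum of unitaries is unitary, that permutation matrices are unitary and preserve Jordan structure — is routine. I do not expect any genuine obstacle; the lemma is essentially the standard statement that a complex Jordan form stable under conjugation is unitarily equivalent to the real Jordan form, and the proof is the explicit orthonormal change of basis just described.
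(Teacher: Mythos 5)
Your proposal is correct and follows essentially the same route as the paper: reduce to a single conjugate pair of identical Jordan blocks, interleave the two copies of $\C^k$, and conjugate by the standard unitary built from $\tfrac{1}{\sqrt2}\begin{pmatrix}1 & -i\\ 1 & i\end{pmatrix}$ blocks (your $Q$ is exactly the paper's permutation composed with its block-diagonal unitary $P$). The only residual point, which you already flag, is the sign/ordering bookkeeping matching $\alpha$ versus $\bar\alpha$ to the two column families, and that is harmless.
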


\begin{proof}

It suffices to prove this for $A=\left(\begin{array}{cc}
\alpha I+N & 0\\
0 & \bar \alpha I +N\\
\end{array}\right)$, where $N$ is a nilpotent Jordan block (of dimension $n\times n$). After the permutation $(e_1,e_2,\dots, e_{2n})\mapsto (e_1,e_{n+1},e_2,e_{n+2},\dots, e_{2n-1},e_{2n})$, the matrix $A$ takes the block triangular form $A=(A_i^j)_{i,j=1,\dots, n}$ with

$$A_i^j= \left\{\begin{array}{l}
U\ \mathrm{if} \ j=i\\
I \ \mathrm{if} \ j-i=1\\
0 \ \mathrm{otherwise}\\
\end{array}\right.
$$

\noindent and $U= \left(\begin{array}{cc}
\alpha & 0\\
0 & \bar \alpha\\
\end{array}\right)$. Let $C=\frac{1}{\sqrt{2}}\left(\begin{array}{cc}
1 & -i\\
1 & i\\
\end{array}\right)$ (which is unitary) and let $P=\operatorname{diag} (P_j)_{j=1,\dots , n},\ P_j=C$. Then 
$P^*AP=(B_i^j)_{i,j=1,\dots , n}$
with 
$B_i^j = \left(\begin{array}{cc}
\operatorname{Re}\alpha & \operatorname{Im} \alpha  \\
-\operatorname{Im}\alpha & \operatorname{Re}\alpha\\
\end{array}\right)$ if $i=j$, $B_i^j = I$ if $j-i=1$, and $B_i^j=0$ otherwise.
\end{proof}

\subsection{A lemma about almost reducibility}

\begin{lemma}\label{convergence-m}
    Assume for all $m' \in \N$, there exist $(Z_{j,m'})_j $ a sequence of $\CC^{\infty}$ maps defined on $\T^d$ and $(F_{j,m'})_j$ a sequence of $\CC^{\infty}$ maps defined on $\T^d$ such that
    \[ \Vert Z_{j,m'}^{\pm 1} \Vert_{\CC^{m'}}^{m'} \Vert F_{j,m'} \Vert_{\CC^{m'}} \rightarrow_{j \rightarrow + \infty} 0,\]
    then there exists $(\tilde Z_j)$ and $(\tilde F_j)$, subsequences of $(Z_{j,m'})_j $ and $(F_{j,m'})_j$ extracted from the same indices, such that for all $m,r \in \N$, 
     \[ \Vert \tilde Z_{j}^{\pm 1} \Vert_{\CC^r}^{m} \Vert \tilde F_{j} \Vert_{\CC^r} \rightarrow_{j \rightarrow + \infty} 0.\]
\end{lemma}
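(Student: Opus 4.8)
The plan is a standard diagonal extraction, together with one elementary remark about the $\CC^r$-norms of an invertible matrix-valued map. First I would record the fact: if $Z:\T^d\to Gl(n,\C)$ is of class $\CC^r$, then, by submultiplicativity of the operator norm and since $\Vert\cdot\Vert_{\CC^r}\ge\Vert\cdot\Vert_{\CC^0}$,
\[ \Vert Z\Vert_{\CC^r}\Vert Z^{-1}\Vert_{\CC^r}\ \ge\ \Vert Z\Vert_{\CC^0}\Vert Z^{-1}\Vert_{\CC^0}\ \ge\ \sup_{\theta}\big(\Vert Z(\theta)\Vert\,\Vert Z(\theta)^{-1}\Vert\big)\ \ge\ 1, \]
so that $\max\!\big(\Vert Z\Vert_{\CC^r},\Vert Z^{-1}\Vert_{\CC^r}\big)\ge 1$. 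Using $\max(a,b)^{s}\le a^{s}+b^{s}$ for $a,b\ge 0$ and applying this to $Z=Z_{j,m'}$ (which is invertible, as the notation $Z_{j,m'}^{\pm1}$ presupposes), the hypothesis $\Vert Z_{j,m'}^{\pm1}\Vert_{\CC^{m'}}^{m'}\Vert F_{j,m'}\Vert_{\CC^{m'}}\to 0$ gives, for each fixed $m'$,
\[ \Vert F_{j,m'}\Vert_{\CC^{m'}}\ \le\ \Vert Z_{j,m'}\Vert_{\CC^{m'}}^{m'}\Vert F_{j,m'}\Vert_{\CC^{m'}}+\Vert Z_{j,m'}^{-1}\Vert_{\CC^{m'}}^{m'}\Vert F_{j,m'}\Vert_{\CC^{m'}}\ \to\ 0 \quad (j\to+\infty). \]

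Next, for each $p\in\N$ I would pick an index $j_p$, with $j_0<j_1<j_2<\cdots$, large enough (possible by the hypothesis and the previous step) that
\[ \Vert Z_{j_p,p}^{\pm1}\Vert_{\CC^{p}}^{p}\,\Vert F_{j_p,p}\Vert_{\CC^{p}}\le\frac1{p+1}\qquad\text{and}\qquad \Vert F_{j_p,p}\Vert_{\CC^{p}}\le\frac1{p+1}, \]
and set $\tilde Z_p:=Z_{j_p,p}$, $\tilde F_p:=F_{j_p,p}$. By construction the pair $(\tilde Z_p,\tilde F_p)$ is taken from the single pair $(Z_{j_p,p},F_{j_p,p})$, i.e. from the same indices.

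Finally I would verify the conclusion. Fix $m,r\in\N$ and set $M=\max(m,r)$. For every $p\ge M$ one has $r\le p$ and $m\le p$, so, using that $\Vert\cdot\Vert_{\CC^{s}}$ is non-decreasing in $s$ and that $\max(1,x)^{p}\le 1+x^{p}$ for $x\ge 0$,
\begin{align*}
\Vert \tilde Z_p^{\pm1}\Vert_{\CC^r}^{m}\,\Vert\tilde F_p\Vert_{\CC^r}
&\le \max\!\big(1,\Vert Z_{j_p,p}^{\pm1}\Vert_{\CC^{p}}\big)^{p}\,\Vert F_{j_p,p}\Vert_{\CC^{p}}\\
&\le \Vert Z_{j_p,p}^{\pm1}\Vert_{\CC^{p}}^{p}\,\Vert F_{j_p,p}\Vert_{\CC^{p}}+\Vert F_{j_p,p}\Vert_{\CC^{p}}\ \le\ \frac2{p+1},
\end{align*}
which tends to $0$ as $p\to+\infty$. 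Since this holds for both signs and for every $m,r\in\N$, the lemma follows. There is essentially no obstacle here; the only point deserving care is that the individual $\CC^r$-norms of the $Z_{j,m'}$ need not be bounded below by $1$, which is why the truncation $\max(1,\cdot)$ is used rather than directly comparing $\Vert Z\Vert^m$ with $\Vert Z\Vert^p$.
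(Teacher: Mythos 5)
Your proof is correct and follows essentially the same diagonal-extraction argument as the paper: pick, for each regularity index, a threshold beyond which the corresponding quantity is small, select the diagonal sequence, and conclude by monotonicity of the $\CC^r$-norms. Your $\max(1,\cdot)$ truncation (together with the preliminary observation that $\Vert F_{j,m'}\Vert_{\CC^{m'}}\to 0$) is in fact slightly more careful than the paper's version, which raises the exponent from $\tilde m$ to $m(j)$ and thereby implicitly uses $\Vert Z_{j,m(j)}^{\pm 1}\Vert_{\CC^{\tilde m}}\geq 1$.
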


\begin{proof}
For all $m\in\N$, let $j_{m} \in \N$ such that, for all $j \geq j_{m}$,
\[\Vert {Z}_{j_{m},m}^{\pm 1} \Vert^{m} _{\CC^{m}}\Vert F_{j_{m},m} \Vert_{\CC^{m}} \leq \frac{1}{m}\]
and such that the sequence $(j_{m})_{m\in \N}$ is strictly increasing. Then, for all $j$ large enough, there exists $m := m(j) \in \N$ such that $j_m \leq j < j_{m+1}$. Denote then, for all $j$ large enough
\[ \tilde F_j = F_{j,m(j)},\ \ \tilde Z_j = Z_{j, m(j)}.\]

\noindent 
Let $\tilde m \in \N$. Then for all $j$ large enough such that $m(j) > \tilde m$,
\[ \Vert \tilde {Z}_j^{\pm 1} \Vert_{\CC^{\tilde m}}^{\tilde m} \Vert \tilde F_j \Vert_{\CC^{\tilde m}} \stackrel{def}{=} \Vert  {Z}_{j,m(j)}^{\pm 1} \Vert_{\CC^{\tilde m}}^{\tilde m} \Vert F_{j,m(j)} \Vert_{\CC^{\tilde m}}\leq \Vert  {Z}_{j,m(j)}^{\pm 1} \Vert_{\CC^{\tilde m}}^{m(j)} \Vert  F_{j,m(j)} \Vert_{\CC^{\tilde m}} \leq \frac{1}{m(j)}.\]
Since the constructed sequence $(j_m)_m$ is increasing, the sequence $(m(j))_j$ is also increasing, and then 

\begin{equation}\label{conv1}\Vert \tilde {Z_j}^{\pm 1} \Vert_{\CC^{\tilde m}}^{\tilde m} \Vert \tilde F_j \Vert_{\CC^{\tilde m}} \rightarrow 0,\quad j\rightarrow \infty.\end{equation}

\noindent 
Now let $r, m\in \mathbb{N}$.
Since \eqref{conv1} holds for all $\tilde m$, then in particular for $\tilde m = \max(r,m)$, 
 \[ \Vert \tilde Z_j^{\pm 1} \Vert_{\CC^{\tilde m}}^{\tilde m} \Vert \tilde F_j \Vert_{\CC^{\tilde m}} \rightarrow 0 \]
\noindent which implies the convergence condition is satisfied.

\end{proof}



\begin{thebibliography}{10}

\bibitem{AK06}A.Avila, R.Krikorian, \textit{Reducibility or nonuniform hyperbolicity for quasiperiodic Schr\"odinger cocycles}, Annals of Mathematics, 164 (2006), 911-940

\bibitem{BCL21} A.Bounemoura, C.Chavaudret, S.Liang, \textit{Reducibility of ultra-differentiable quasi-periodic cocycles under an adapted arithmetic condition}, Proceedings of the American Mathematical Society, vol. 151 (2021), n°4, p. 2999-3012

\bibitem{CC23}M.Chatal, C.Chavaudret, \textit{Almost reducibility of quasiperiodic SL(2,R)-cocycles in ultradifferentiable classes}, Journal of Differential Equations 356 n.7 (2023):243-288

\bibitem{C10} C. Chavaudret, 
\textit{Reducibility of Quasi-Periodic Cocycles in Linear Lie Groups}, Ergodic Theory  Dynam. Systems, 31 (2010), 741-769

\bibitem{CM12}C.Chavaudret, S.Marmi, \textit{Reducibility of cocycles under a Brjuno-R\"ussmann arithmetical condition}, Journal of Modern Dynamics 6, n.1, 2012,  59-78

\bibitem{E92}
L. H. Eliasson, \textit{Floquet solutions for the $1$-dimensional quasi-periodic Schr\"odinger equation}, Comm. Math. Phys, 146 (1992), 447-482

\bibitem{E01}
L. H. Eliasson, \textit{Almost reducibility of linear quasi-periodic systems}, Proc. Sympos. Pure Math., 69, Amer. Math. Soc., Providence, RI, (2001), 679-705

\bibitem{E02}
L. H. Eliasson, \textit{Ergodic skew-systems on $\T^d\times SO(3,\R)$}, in  Ergodic Theory  Dynam. Systems, 22 (2002), 1429-1449

\bibitem{FK09} B. Fayad, R.Krikorian, \textit{Regidity results for quasiperiodic $SL(2,\R)$-cocycles},  j. Mod. Dyn. , 3 (2009), 497-510

\bibitem{HY12}Hou, X., You, J. Almost reducibility and non-perturbative reducibility of quasi-periodic linear systems. Invent. math. 190, 209–260 (2012)

\bibitem{Y18}J.You, \textit{Quantitative almost reducibility and its applications}, Proc. int. cong. of math., vol.1, 2018, 2231-2154

\end{thebibliography}
\end{document}